\renewcommand{\leq}{\leqslant}
\renewcommand{\geq}{\geqslant}
\theoremstyle{plain}
\newtheorem{theorem}{Theorem}[section]
\newtheorem{theoremx}{Theorem}
\newtheorem{corollary}[theorem]{Corollary}
\newtheorem{lemma}[theorem]{Lemma}
\newtheorem{proposition}[theorem]{Proposition}
\theoremstyle{definition}
\newtheorem{definition}[theorem]{Definition}
\newtheorem{setup}[theorem]{Setup}
\newtheorem{example}[theorem]{Example}
\newtheorem{remark}[theorem]{Remark}
\numberwithin{equation}{subsection}
\newtheorem{question}[theorem]{Question}
\newtheorem{notation}[theorem]{Notation}
\newcommand{\R}[1]{R^{1/p^{#1}}}
\newcommand{\m}{\mathfrak{m}}
\newcommand{\n}{\mathfrak{n}}
\newcommand{\RR}{\mathbb{R}}
\newcommand{\NN}{\mathbb{Z}_{\geq 0}}
\newcommand{\ZZ}{\mathbb{Z}}
\newcommand{\QQ}{\mathbb{Q}}
\newcommand{\fn}{\mathbf{n}}
\newcommand{\fx}{\mathbf{x}}
\newcommand{\fa}{\mathbf{a}}
\newcommand{\J}{\mathcal{J}}
\newcommand{\IN}{\operatorname{in}}
\newcommand{\fp}{\mathfrak{p}}
\newcommand{\cM}{\mathcal{M}}
\newcommand{\Cech}{ \check{\rm{C}}}
\DeclareMathOperator{\mcm}{{mcm}}
\newcommand{\reg}{\operatorname{reg}}
\newcommand{\Spec}{\operatorname{Spec}}
\newcommand{\Hom}{\operatorname{Hom}}
\newcommand{\gr}{\operatorname{gr}}
\newcommand{\pf}{\operatorname{pf}}
\newcommand{\Ker}{\operatorname{Ker}}
\newcommand{\Max}{\operatorname{Max}}
\newcommand{\Ass}{\operatorname{Ass}}	
\DeclareMathOperator{\chara}{{{char}}}
\newcommand{\bh}{\operatorname{bigheight}}	
\newcommand{\depth}{\operatorname{depth}}
\newcommand{\height}{\operatorname{height}}	
\newcommand{\rk}{\operatorname{rk}}	
\newcommand{\Ht}{\operatorname{ht}}
\def \cJ{\mathcal J}
\def \cI{\mathcal I}
\def \R{\mathcal R}
\newcommand{\HH}[3]{\operatorname{H}^{#1}_{#2}\left(#3\right)}
\DeclareMathOperator{\grade}{{grade }}
\DeclareMathOperator{\lcm}{{lcm}}
\newcommand{\ls}{\leqslant}%
\newcommand{\gs}{\geqslant}
\newcommand{\ov}[1]{\overline{#1}}
\newcommand{\II}{\mathbb{I}}
\author[A. De Stefani]{Alessandro De Stefani}
\address{Dipartimento di Matematica, Universit{\`a} degli Studi di Genova, Via Dodecaneso 35, 16146 Genova, Italy}
\email{destefani@dima.unige.it}
\author[J. Monta{\~n}o]{Jonathan Monta{\~n}o}
\address{School of Mathematical and Statistical Sciences, Arizona State University, P.O. Box 871804, Tempe, AZ 85287-18041}
\email{montano@asu.edu}
\author[L. N{\'u}{\~n}ez-Betancourt]{Luis N{\'u}{\~n}ez-Betancourt}
\address{Centro de Investigaci{\'o}n en Matem{\'a}ticas, Guanajuato, Gto., M{\'e}xico}
\email{luisnub@cimat.mx}
\subjclass[2020]{Primary 13A30; Secondary 13A35, 13A02, 13C15}
\keywords{Symbolic powers, Rees algebras, determinantal ideals, $F$-splittings, $F$-purity, $F$-regularity, Gr{\"o}bner deformations.}
\begin{document}

\title[Blowup algebras of determinantal ideals in prime characteristic]{Blowup algebras of determinantal ideals in prime characteristic}

\dedicatory{In memory of Professor Wolmer Vasconcelos}

\begin{abstract} 
We study when blowup algebras are $F$-split or strongly $F$-regular.
Our main focus is on algebras given by 
 symbolic and ordinary powers of ideals of minors of a generic matrix, a symmetric matrix, and a Hankel matrix.  We also study ideals of Pfaffians of a skew-symmetric matrix.
We use these results to obtain bounds on the degrees of the defining equations for these algebras. We also prove that the limit of the normalized regularity of the symbolic powers of these ideals exists and that their depth stabilizes. Finally, we show that, for determinantal ideals, there exists a monomial order for which taking initial ideals commutes with taking symbolic powers. To obtain these results we develop the notion of $F$-split filtrations and symbolic $F$-split ideals.
\end{abstract}

\maketitle
\setcounter{secnumdepth}{1}
\setcounter{tocdepth}{1}
 \tableofcontents

\section{Introduction}

Let $R$ be a Noetherian ring. 
A {\it filtration}  $\II=\{I_n\}_{n\in\NN}$ is a sequence of ideals such that $I_0=R$, $I_{n+1}\subseteq I_n$ for every $n\in \NN$, and $I_m I_n\subseteq I_{m+n}$ for every $n,m\in\NN$. Classical examples of filtrations include ordinary and symbolic powers. By taking the initial ideals of a filtration under a monomial order, one obtains a filtration of monomial ideals.
 Given a filtration, one can construct its Rees algebra $\R(\II)$ and associated algebra $\gr(\II)$.
Notably, the Rees algebra of  the ordinary  powers of an ideal $I$ gives the coordinate ring of the blowup of $\Spec(R)$ along the variety defined by $I$.

In this manuscript we provide several results regarding ordinary and symbolic powers of determinantal ideals, and their Rees and associated graded algebras. Specifically, we study ideals of minors of generic, symmetric, and Hankel matrices of variables. 
We also study ideals of Pfaffians of a skew-symmetric matrix of variables. These objects have been intensively studied together with the varieties that they define, and they have connections with other areas of mathematics. For more information on this topic, we refer the interested reader to Bruns and Vetter's book 
\cite{BookDet}, and to the more recent book of Bruns, Conca, Raicu and Varbaro \cite{BCRV}.

In what follows, $I_t(-)$ denotes the ideal generated by $t$-minors, and $P_{2t}(-)$ the ideal generated by $2t$-Pfaffians. In our first set of results, we  show that the Rees and associated graded algebras of determinantal ideals have mild singularities from the perspective of Frobenius \cites{HoHu1,HoHu2,HoHu3,HoHuStrong}. In particular, we show that several of them are strongly $F$-regular, or at least $F$-split.
 These singularities are regarded as the characteristic $p$ analogue of log-terminal and log-canonical singularities \cites{H98,HW02,MS97,SmithFrational,SmithComm,Smith97} (see also \cite{SurveyTestIdeals}).
We recall that strongly $F$-regular rings are Cohen-Macaulay and normal \cite{HoHuStrong}. They are also simple as modules over their ring of differential operators \cite{DModFSplit}. We point out that the local cohomology modules of $F$-split rings satisfy desirable vanishing theorems \cites{HRFpurity,DSGNB} and their defining ideals satisfy Harbourne's conjecture on symbolic powers \cites{HC1,HC2,GrifoHuneke}. In the following result, we denote by $\R(I)$ the Rees algebra corresponding to the ordinary powers of the ideal $I$, and by $\R^s(I)$ and $\gr^s(I)$ the Rees and associated graded algebras corresponding to the  symbolic powers of the ideal $I$.

\begin{theoremx}\label{MainThmFsing}
Let $K$ be an $F$-finite field of prime characteristic $p>0$. Let $X$ be a 
generic matrix, $Y$ be  a 
generic symmetric matrix,  $Z$  be 
a generic skew-symmetric matrix, and $W$ be a 
generic Hankel matrix. 
For an integer $t>0$ we have
\begin{enumerate}
\item 
$\R^s(I_t(X))$ and $\gr^s(I_t(X))$ are strongly $F$-regular (Theorem  \ref{ThmSymbFregGen}).
\item 
If $p \gg 0$, then $\R(I_t(X))$ is  $F$-split (Theorem \ref{ThmReesFpureGen}).
\item 
$\R^s(I_t(Y))$ and $\gr^s(I_t(Y))$ are   $F$-split (Theorem \ref{ThmSFPSym}).
\item 
If $p \gg 0$, then $\R(I_t(Y))$ is  $F$-split (Theorem \ref{ThmReesFpureSym}).
\item 
$\R^s(P_{2t}(Z))$ and $\gr^s(P_{2t}(Z))$ are strongly $F$-regular (Theorem \ref{ThmSymbFregPf}).
\item 
If $p \gg 0$, then  $\R(P_{t}(Z))$ is  $F$-split (Theorem \ref{ThmReesFpurePf}).
\item 
$\R^s(I_t(W))$ and $\gr^s(I_t(W))$ are   $F$-split (Theorem \ref{ThmSFPHankel}).
\item 
$\R(I_t(W))$ is  $F$-split (Theorem \ref{ThmReesFpureHankel}).
\end{enumerate}
\end{theoremx}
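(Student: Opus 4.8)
The plan is to derive all eight items from a single mechanism developed in the body of the paper: a criterion for \emph{symbolic $F$-purity} (and a companion criterion for the ordinary filtration) reducing $F$-purity --- respectively strong $F$-regularity --- of a blowup algebra of a prime ideal to the existence of one explicit Frobenius splitting section compatible with the filtration, together with the straightening-law combinatorics available for each of the four families of determinantal ideals. In other words, for a filtration $\II=\{I_n\}$ on an $F$-finite ring one packages the question ``is $\R(\II)$ (and then $\gr(\II)$) $F$-split?'' into a Fedder-type condition on a distinguished element of $\R(\II)$, after which the problem becomes purely combinatorial.

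For the symbolic statements (1), (3), (5), (7) the input is the explicit description of the symbolic powers: for a generic matrix this is the theorem of De Concini--Eisenbud--Procesi that $I_t(X)^{(n)}$ is spanned by the standard bitableaux (products of minors) whose shape has ``$t$-weight'' at least $n$, and there are parallel statements for symmetric minors, for Pfaffians, and --- with more care --- for Hankel minors, each via the straightening law on the relevant poset. This exhibits $\R^s(I)$ as a finitely generated algebra with straightening law, and one then writes down the splitting section as a suitable product of minors raised to the $(p-1)$-st power --- in the spirit of the canonical Frobenius splittings of flag and Schubert varieties --- and verifies the Fedder condition. In the generic and skew-symmetric cases the section can be chosen to witness strong $F$-regularity (equivalently: $\R^s(I)$ degenerates, under a diagonal term order, to a normal affine semigroup ring, which is a direct summand of a polynomial ring, hence strongly $F$-regular, and this transfers back along the degeneration), giving (1) and (5); in the symmetric and Hankel cases strong $F$-regularity genuinely fails --- already symmetric determinantal rings are not $F$-regular in characteristic $2$ --- so one claims only $F$-purity, giving (3) and (7). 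The same constructions apply to $\gr^s(I)$.

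For the ordinary-powers statements (2), (4), (6), (8) the Rees algebra $\R(I)$ is in general neither normal nor Cohen--Macaulay, so the semigroup-ring route is unavailable, and one constructs a Frobenius splitting of $\R(I)$ by hand. Presenting $\R(I)$ as a quotient of $K[X][T_1,\dots,T_m]$ --- the $T_i$ lifting a generating set of $t$-minors --- by its defining ideal $\mathcal{J}$, Fedder's criterion asks whether a designated element of $\mathcal{J}^{[p]}:\mathcal{J}$ avoids $\m^{[p]}$; a direct expansion reduces this to the non-vanishing modulo $p$ of a multinomial coefficient attached to a chosen product of minors. The bound $p>\min\{t,\,r-t,\,s-t\}$, and its analogues $p>\min\{t,\,r-t\}$ for the symmetric matrix, $p>\min\{2t,\,r-2t\}$ for Pfaffians, and \emph{no} bound for the Hankel matrix (where the combinatorics is essentially one-dimensional and no such coefficient can vanish), is precisely what makes at least one admissible choice --- three such choices in the generic case, two in the symmetric and skew cases, by the symmetries of the matrix --- yield a coefficient that is a unit mod $p$.

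The hard part is the ordinary-powers statements. In the symbolic case the new content is ``structured'': assembling the straightening-law description of the symbolic powers in each family (classical for generic, symmetric and skew-symmetric matrices, but delicate for Hankel) and exhibiting the splitting section, after which strong $F$-regularity or $F$-purity follows formally --- and the same Gr\"obner-degeneration technology then also yields the paper's concluding statement that initial ideals commute with symbolic powers for a suitable monomial order. For the ordinary Rees algebras, by contrast, there is no normal or Cohen--Macaulay model to fall back on; the splitting must be produced directly on the defining ideal, and one must then pin down exactly which primes $p$ make the relevant multinomial coefficient nonzero --- a genuine combinatorial estimate on products of minors, which is the origin of, and the reason one cannot avoid, the characteristic hypotheses in (2), (4) and (6).
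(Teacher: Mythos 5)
There are genuine gaps in both halves of your plan, and the mechanism you propose is in several places the opposite of what the paper does. For the symbolic statements you lean on the straightening law (standard bitableaux \`a la De Concini--Eisenbud--Procesi) and a Sagbi/Gr\"obner degeneration of $\R^s(I)$ to a normal affine semigroup ring, ``transferring strong $F$-regularity back along the degeneration.'' That transfer is precisely the unproved step: the degeneration route is the one used previously in \cite{FRatGeneric}, and it only yields $F$-rationality, which does not imply $F$-purity, let alone strong $F$-regularity; there is no general descent of strong $F$-regularity from the initial algebra. The paper deliberately avoids straightening laws and Sagbi bases altogether. What it actually does is: (i) introduce $F$-pure filtrations and prove that an $F$-pure filtration forces $\R(\II)$ and $\gr(\II)$ to be $F$-pure (Theorem \ref{mainCharP}); (ii) verify the Fedder-type criterion of Corollary \ref{CorH} by exhibiting one explicit product of minors/Pfaffians $f$ with $f\in I^{(\Ht I)}$ and $\IN_<(f)$ square-free (so $f^{p-1}\notin\m^{[p]}$) --- this part of your proposal is close to the paper; and (iii) for the strong $F$-regularity claims (1) and (5), run an induction on $t$ using the localization isomorphism $R[x_{r,1}^{-1}]\cong S[x_{r,1}^{-1}]$ that reduces $t$-minors to $(t-1)$-minors, together with a splitting sending $x_{r,1}^{(p-1)/p}\mapsto 1$ and the Hochster--Huneke criterion \cite[Theorem 3.3]{HoHuStrong}. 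Without some such argument your proof of (1) and (5) does not close.

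For the ordinary-power statements your plan is to present $\R(I)$ as $K[X][T_1,\dots,T_m]/\mathcal{J}$ and apply Fedder's criterion to $\mathcal{J}^{[p]}:\mathcal{J}$. This is not actionable: the defining equations $\mathcal{J}$ of these Rees algebras are unknown (the paper stresses this, and indeed only \emph{bounds} their degrees as a corollary of $F$-purity), so one cannot compute $\mathcal{J}^{[p]}:\mathcal{J}$, and no multinomial coefficient ever appears. The paper instead builds the splitting upstairs on $R^{1/p}\to R$: it shows $f^{p-1}I_t^{np}\subseteq (I_t^n)^{[p]}$ by intersecting the containments $f^{p-1}I_\ell^{(np+1)}\subseteq (I_\ell^{(n+1)})^{[p]}$ over the known primary decomposition $I_t^n=\bigcap_{\ell}I_\ell^{((t-\ell+1)n)}$, and then invokes Lemma \ref{Lemma initial square-free}(2). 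Consequently you have also misidentified the source of the characteristic hypotheses in (2), (4), (6): they are exactly the hypotheses under which that primary decomposition of ordinary powers is valid (\cite[Corollary 10.13]{BookDet}, \cite[Theorem 4.4]{JMV}, \cite[Proposition 2.6]{DNPaf}), and the Hankel case needs no hypothesis because Conca's decomposition \cite[Theorem 3.16]{conca98} holds in all characteristics --- not because ``no coefficient can vanish.''
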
 

The wide variety of determinantal objects that we are able to cover in Theorem \ref{MainThmFsing} highlights the fact that our techniques have a broad range of applications. 

Note that, since $K[X]/I_t(X)$ and $K[Z]/P_{t}(Z)$ are direct summands of $\gr^s(I_t(X))$ and $\gr^s(P_{2t}(Z))$, respectively, Theorem \ref{MainThmFsing} (1) and (5) imply the known results that  $K[X]/I_t(X)$ and $K[Z]/P_{t}(Z)$ are strongly $F$-regular. In fact, the proofs of  Theorem \ref{MainThmFsing} (1) and (5) can be specialized  to give alternative proofs for the strong $F$-regularity of $K[X]/I_t(X)$ and $K[Z]/P_{t}(Z)$. 

Although it was already known that  $\R^s(I_t(X))$ is $F$-rational \cite{FRatGeneric}, hence Cohen-Macaulay and normal,
$F$-rationality does not imply that the ring is $F$-split.  Therefore, Theorem \ref{MainThmFsing}(1) improves this result, as strong $F$-regularity implies that the ring is both $F$-rational and $F$-split. Cohen-Macaulayness was also known for symbolic Rees algebras of ideals of Pfaffians of a generic skew-symmetric matrix \cite{baetica98}, and this is now also a consequence of Theorem \ref{MainThmFsing}(5). 
 We point out that the new techniques we use to study $F$-singularities of blowup algebras are neither based on the theory of Sagbi  bases \cites{S1,S2,Sagbi} nor on that of straightening laws  \cites{DRS,FRatGeneric}. We only invoke known results that use Sagbi bases in order to have that some blowup algebras we consider are Noetherian.  
 Our strategy uses the new notion of  $F$-split filtrations (Definition \ref{DefFpureFilt}), classical methods in tight closure theory \cite{HoHuStrong}, and the choice of certain polynomials inspired by  Seccia's work on Knutson ideals \cites{SecciaGen,SecciaHankel}.

Since all the Rees  and associated graded algebras in Theorem \ref{MainThmFsing} are  $F$-split, their $a$-invariants are not  positive \cite{HRFpurity}. As a consequence, we obtain bounds for the Castelnuovo-Mumford regularity and  the degrees of  the defining equations of such algebras; see Theorems \ref{ThmDegGenOrd}, \ref{ThmDegGenSymb}, \ref{ThmDegSymOrd}, \ref{ThmDegSymSymb},
\ref{ThmDegPfOrd}, \ref{ThmDegPfSymb},
\ref{ThmDegHankelOrd}, and  \ref{ThmDegHankelSymb}.
We point out that, even for monomial ideals, it was generally not known how to bound the degrees of the defining equations 
of these Rees algebras in terms of the generators of the ideal. Significant work has been done over the years in order to find such equations via different methods 
\cites{FK2015,Ha02,Ha2002,HSV12,Huckaba,KPU2011,LP2014,MP2013,Morey97,MU96,STV98,SUV95,V91}. 

A related question is whether the limit of normalized Castelnuovo-Mumford regularities, $\lim\limits_{n\to\infty} \frac{\reg(R/I^{(n)})}{n}$, always exists \cite{HHT}. See also the work of Cutkosky on the subject \cite{Cutkosky_irrational}. 
Several authors have approached this question in a variety of cases;  however, it remains widely open in general. Some classes of ideals for which this limit is known to exist are square-free monomial ideals \cite{HoaTrung} and ideals of small dimension \cite{HHT}. 
We obtain this property for determinantal ideals in prime characteristic.

\begin{theoremx}
Let $K$ be an $F$-finite field of prime characteristic. Let $X$ be a 
generic matrix, $Y$ be a 
generic symmetric matrix,  $Z$ be a 
generic skew-symmetric matrix, and $W$ be a 
generic Hankel matrix. 
For an integer $t>0$ we have that
\begin{enumerate}
\item 
$\displaystyle \lim\limits_{n\to\infty} \frac{\reg(K[X]/I_t(X)^{(n)})}{n}$ exists (Theorem \ref{mainDetReg}).
\item 
$\displaystyle \lim\limits_{n\to\infty} \frac{\reg(K[Y]/I_t(Y)^{(n)})}{n}$ exists (Theorem \ref{mainDetRegSym}).
\item 
$\displaystyle \lim\limits_{n\to\infty} \frac{\reg(K[Z]/P_{2t}(Z)^{(n)})}{n}$ exists (Theorem \ref{mainDetRegPf}).
\item 
$\displaystyle \lim\limits_{n\to\infty} \frac{\reg(K[W]/I_t(W)^{(n)})}{n}$ exists (Theorem \ref{mainDetRegHankel}).
\end{enumerate}
\end{theoremx}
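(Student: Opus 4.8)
The plan is to obtain all four statements at once as a consequence of Theorem~\ref{MainThmFsing} together with the asymptotic theory of Castelnuovo--Mumford regularity of powers. In each case write $R$ for the ambient polynomial ring and $I$ for the determinantal ideal in question ($I_t(X)$, $I_t(Y)$, $P_{2t}(Z)$, or $I_t(W)$), and recall that parts (1), (3), (5), (7) of Theorem~\ref{MainThmFsing} show that $\R^s(I)$ is strongly $F$-regular or $F$-pure; in particular $\R^s(I)$ is a Noetherian ring (a finitely generated $K$-algebra). Thus it suffices to prove the following general statement, of which Theorems~\ref{mainDetReg}, \ref{mainDetRegSym}, \ref{mainDetRegPf}, and \ref{mainDetRegHankel} will be instances: \emph{if $R$ is standard graded over a field and $I\subseteq R$ is a homogeneous ideal such that $\R^s(I)=\bigoplus_{n\geq 0}I^{(n)}$ is Noetherian, then $\lim_{n\to\infty}\reg(R/I^{(n)})/n$ exists.}

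For the general statement, I would first pass to a Veronese: since $\R^s(I)$ is a Noetherian $\NN$-graded ring with Noetherian degree-zero piece $R$, there is an integer $k\geq 1$ for which the $k$-th Veronese subalgebra $\R^s(I)^{(k)}=\bigoplus_{n\geq 0}I^{(kn)}$ is generated in degree one over $R$, i.e.\ $I^{(kn)}=(I^{(k)})^{n}$ for all $n$. Setting $J:=I^{(k)}$, the $k$-th Veronese is exactly the ordinary Rees algebra $\R(J)$, so the theorem of Cutkosky--Herzog--Trung (or Kodiyalam's) gives that $n\mapsto\reg(R/I^{(kn)})=\reg(R/J^{n})$ coincides with a linear polynomial $c\,n+d$ for $n\gg 0$, and in particular $\reg(R/I^{(kn)})/(kn)\to c/k$.

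It then remains to control $\reg(I^{(n)})$ — equivalently $\reg(R/I^{(n)})=\reg(I^{(n)})-1$ for $n\gg 0$ — along the other residue classes modulo $k$. The key point is that $\R^s(I)$ is a finitely generated module over its Veronese $\R(J)$ (it is integral over $\R(J)$, since any homogeneous $x$ of degree $d$ satisfies $T^{k}-x^{k}=0$ with $x^{k}\in\R(J)$, and it is a finitely generated algebra), while $\R(J)$ is generated in degree one over $R$. Consequently, for each $0\leq r<k$ the graded piece $M_r:=\bigoplus_{n\geq 0}I^{(kn+r)}$ is a finitely generated graded $\R(J)$-module, and the asymptotic regularity theory for finitely generated modules over a Rees algebra (Cutkosky--Herzog--Trung, Trung--Wang) shows that $n\mapsto\reg\bigl((M_r)_n\bigr)=\reg(I^{(kn+r)})$ agrees with a linear polynomial for $n\gg 0$. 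Thus $n\mapsto\reg(R/I^{(n)})$ is, for $n\gg 0$, a quasi-linear function of period dividing $k$; comparing with the ordinary powers of $J$ through the inclusions $(I^{(k)})^{n+1}=I^{(k(n+1))}\subseteq I^{(kn+r)}\subseteq I^{(kn)}=(I^{(k)})^{n}$ and the associated short exact sequences, and using that all of the auxiliary $\R(J)$-modules that arise have component-regularity of slope at most $c$, forces the leading coefficient of this quasi-linear function to equal the single number $c$ on every residue class. Hence $\reg(R/I^{(n)})/n\to c/k$, which proves the general statement and therefore all four cases.

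The main obstacle is this last step. Because $\R^s(I)$ is not standard graded, the Cutkosky--Herzog--Trung theorem does not apply to it directly; one only gets eventual quasi-linearity of $\reg(R/I^{(n)})$, and since Castelnuovo--Mumford regularity is not monotone along the chain $I^{(m+1)}\subseteq I^{(m)}$, equality of the leading coefficients on different residue classes cannot be read off by a crude sandwich argument but has to be extracted from the module-finiteness of $\R^s(I)$ over the standard graded Veronese $\R(J)$ and a careful bookkeeping of the regularities in the short exact sequences above. (If a statement of the form ``the symbolic Rees algebra is Noetherian $\Rightarrow$ $\lim_{n}\reg(R/I^{(n)})/n$ exists'' is available off the shelf, then the proof simply reduces to citing it and invoking Theorem~\ref{MainThmFsing}.)
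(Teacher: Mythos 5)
There is a genuine gap, and it sits exactly where you flag ``the main obstacle.'' Your first two steps match the paper: Noetherianity of $\R^s(I)$ (which the paper gets from \cite{BookDet}, \cite{conca98}, \cite{baetica98}, etc., and which you could also extract from Theorem~\ref{MainThmFsing}) plus the Trung--Wang theorem give that $\reg(R/I^{(n)})$ eventually agrees with a linear quasi-polynomial, say $c_jn+b_j$ for $n\equiv j \pmod w$. But your step for forcing $c_1=\cdots=c_w$ does not work as described. The sandwich $(I^{(k)})^{n+1}\subseteq I^{(kn+r)}\subseteq (I^{(k)})^{n}$ gives no control on regularity, since $\reg$ is not monotone along inclusions, and the module-finiteness of $\R^s(I)$ over the Veronese $\R(J)$ only re-derives eventual quasi-linearity of $\reg((M_r)_n)$ --- it says nothing about the slopes of the different $M_r$ being equal. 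Indeed, the statement you hope to prove in general (``$\R^s(I)$ Noetherian $\Rightarrow \lim_n \reg(R/I^{(n)})/n$ exists'') is precisely the open question of Herzog--Hoa--Trung that the paper is addressing in special cases; it is not available off the shelf, and the phrase ``careful bookkeeping of the regularities in the short exact sequences'' is not a proof of it.

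The paper closes this gap using an input you never invoke: the \emph{symbolic $F$-purity} of these ideals (Theorems~\ref{ThmSFPGen}, \ref{ThmSFPSym}, \ref{ThmSFPPf}, \ref{ThmSFPHankel}), i.e.\ that $\{I^{(n)}\}$ is an $F$-pure filtration --- a strictly stronger fact than $F$-purity of $\R^s(I)$. The splitting $\phi_e$ of Proposition~\ref{from1toall} realizes $I^{(n+1)}$ as a direct summand of $(I^{(np^e+s)})^{1/p^e}$ for \emph{every} $1\leq s\leq p^e$, which yields the $a$-invariant inequality $a_i(I^{(n+1)})\leq \frac{1}{p^e}\,a_i(I^{(np^e+s)})$ of Proposition~\ref{PropSymbFpureDepthAinv}(2). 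Because $s$ ranges over a full interval of length $p^e>w$, one can hit every residue class mod $w$ with indices $np^e+s$ while anchoring at a fixed class for $n+1$, and this is what forces $c_j\leq c_i$ for all $i,j$ in the proof of Theorem~\ref{mainRegDepth}(2). If you want to repair your argument, you should replace step 4 by this Frobenius comparison (or an equivalent), and correspondingly cite the symbolic $F$-purity theorems rather than only the $F$-regularity/$F$-purity of the blowup algebras.
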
 

If the ground field is the field of complex numbers,  there are  linear formulas for  $\reg(R/I^{(n)})$ when $I$ is the ideal of $t$-minors of a generic matrix \cite{Raicu} or $2t$-Pfaffians of a generic skew-symmetric  matrix \cite{Perlman}. These results were obtained  using representation theory in characteristic zero. The case of ideals of $t$-minors of a generic matrix was recently further extended to fields of any characteristic \cite{BCRV}. It is worth mentioning that, in general, the function $\reg(R/I^{(n)})$ is not eventually  linear, not even for square-free monomial ideals \cite{ExNotLin}. In particular, this linearity may fail even if   the symbolic Rees algebra, $\R^s(I)$,   is Noetherian.

We also obtain that the depth of symbolic powers of determinantal ideals stabilizes, and in some cases, we obtain the stable value.
Our approach shows that the stable value equals the minimum of the depths among all the symbolic powers.
This minimum value was already computed \cite{BookDet}; however,  to the best of our knowledge, it was not shown that the stable and minimum values coincide. 

\begin{theoremx}
Let $K$ be an $F$-finite field of prime characteristic. Let $X$ be a 
generic matrix, $Y$ be a 
generic symmetric matrix,  $Z$ be a 
generic skew-symmetric matrix, and $W$ be a 
generic Hankel matrix. 
For an integer $t>0$ we have that
  \begin{enumerate}
\item 
$\lim\limits_{n\to\infty} \depth(K[X]/I_t(X)^{(n)})=t^2-1$ (Theorem \ref{mainDetReg}).
\item 
$ \depth(K[Y]/I_t(Y)^{(n)})$ stabilizes for $n \gg 0$ (Theorem \ref{mainDetRegSym}).
\item  
$\lim\limits_{n\to\infty} \depth(K[Z]/P_{2t}(Z)^{(n)})=t(2t-1)-1$ (Theorem \ref{mainDetRegPf}).
\item 
$\depth(K[W]/I_t(W)^{(n)})$ stabilizes for $n \gg 0$ (Theorem \ref{mainDetRegHankel}).
\end{enumerate}
\end{theoremx}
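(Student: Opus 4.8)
The plan is to prove, uniformly in all four cases, that $\depth(R/I^{(n)})$ stabilizes and that its stable value equals $\min_{n\geq 1}\depth(R/I^{(n)})$; in cases (1) and (3) one then identifies this minimum with $t^2-1$, respectively $t(2t-1)-1$, using the computation of the depths of symbolic powers recorded in \cite{BookDet}. Throughout, $R$ is the relevant polynomial ring over the $F$-finite field $K$ and $I$ is one of $I_t(X),I_t(Y),P_{2t}(Z),I_t(W)$ in the indicated range. Two inputs combine. The first is that $\depth(R/I^{(n)})$ is \emph{eventually periodic}: since $\R^s(I)$ is Noetherian, some Veronese subalgebra $\bigoplus_k I^{(k\ell)}t^{k\ell}$ is standard graded over $R$ and $\bigoplus_k I^{(k\ell+i)}t^{k\ell+i}$ is a finitely generated graded module over it for each $i$, so by the graded analogue of Brodmann's theorem the depth of its $(k\ell+i)$-th graded component is eventually constant in $k$; hence there are $N$ and $d$ with $\depth(R/I^{(n)})=\depth(R/I^{(n+d)})$ for all $n\geq N$. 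The second input, where the geometry enters, is the compatibility of Frobenius with the symbolic filtration given by the fact that this filtration is an $F$-pure filtration in the sense of Definition~\ref{DefFpureFilt} --- the property underlying the assertions of Theorem~\ref{MainThmFsing}.

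The crux is the inequality
\[
\depth\bigl(R/I^{(m)}\bigr)\ \leq\ \depth\bigl(R/I^{(\ceil{m/p^e})}\bigr)\qquad\text{for all }m\geq 1\text{ and }e\geq 1.
\]
To prove it, fix $m,e$, set $n=\ceil{m/p^e}$, and take $\psi\in\Hom_R(F^e_*R,R)$ with $\psi(1)=1$ and $\psi\bigl(F^e_*I^{(m')}\bigr)\subseteq I^{(\ceil{m'/p^e})}$ for every $m'$, provided by the $F$-pure filtration property. Since $\bigl(I^{(n)}\bigr)^{[p^e]}\subseteq I^{(np^e)}\subseteq I^{(m)}$ (using $np^e\geq m$ and that Frobenius powers of symbolic powers lie in the expected symbolic power), the rule $\bar x\mapsto\overline{x^{p^e}}$ defines an $R$-linear map $R/I^{(n)}\to F^e_*(R/I^{(m)})$; composing with the $R$-linear map $F^e_*(R/I^{(m)})\to R/I^{(n)}$ induced by $\psi$ (legitimate because $\psi(F^e_*I^{(m)})\subseteq I^{(n)}$) gives the identity of $R/I^{(n)}$, since $\psi(x^{p^e})=\psi(x^{p^e}\cdot 1)=x$. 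Hence $R/I^{(n)}$ is an $R$-module direct summand of $F^e_*(R/I^{(m)})$, and since $R$ is $F$-finite we have $\depth_R F^e_*(R/I^{(m)})=\depth(R/I^{(m)})$; the inequality follows.

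To finish, let $\mu=\min_{n\geq 1}\depth(R/I^{(n)})$, attained at $n_0$. If $n_0=1$ the displayed inequality already forces $\depth(R/I^{(n)})\equiv\mu$, so assume $n_0\geq 2$ and choose $e$ with $p^e>d$ and $p^e(n_0-1)+1\geq N$. Every integer $m$ with $p^e(n_0-1)<m\leq p^e n_0$ satisfies $\ceil{m/p^e}=n_0$, hence $\depth(R/I^{(m)})\leq\depth(R/I^{(n_0)})=\mu$ by the displayed inequality; these are $p^e>d$ consecutive integers, all at least $N$, so they meet every residue class modulo $d$, and eventual periodicity then gives $\depth(R/I^{(n)})\leq\mu$ for all $n\geq N$. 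As the reverse inequality is immediate, $\depth(R/I^{(n)})=\mu$ for all $n\geq N$, so the sequence stabilizes with stable value $\min_{n\geq 1}\depth(R/I^{(n)})$. Combining with the values of these minima from \cite{BookDet} yields $t^2-1$ in case (1) and $t(2t-1)-1$ in case (3); cases (2) and (4) are the bare stabilization statement.

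The step I expect to be the main obstacle is the displayed inequality, and within it the use of the \emph{sharp} Frobenius compatibility $\psi(F^e_*I^{(m)})\subseteq I^{(\ceil{m/p^e})}$ for \emph{all} $m$, as opposed to the weaker $\psi(F^e_*I^{(p^e n)})\subseteq I^{(n)}$ along multiples of $p^e$ that one extracts directly from an $F$-splitting of the blowup algebra $\R^s(I)$. It is precisely the sharp form --- the content of "the symbolic filtration is an $F$-pure filtration" --- that lets blocks of $p^e$ consecutive symbolic powers cover every residue class modulo the eventual period, upgrading "eventually periodic" to "eventually constant" and pinning the stable value to the global minimum. Verifying the eventual periodicity input is routine but must be stated carefully, as it is the place where Noetherianity of $\R^s(I)$ is used.
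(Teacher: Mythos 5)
Your proposal is correct and follows essentially the same route as the paper: your key inequality $\depth(R/I^{(m)})\leq\depth(R/I^{(\lceil m/p^e\rceil)})$ is Proposition \ref{PropSymbFpureDepthAinv}(1), established by the same splitting argument drawn from Proposition \ref{from1toall}, and your combination of this with eventual periodicity (from Noetherianity of $\R^s(I)$) via a block of $p^e$ consecutive indices hitting every residue class is exactly the proof of Theorem \ref{mainRegDepth}(1). The identification of the stable value with the known minimum from \cite{BookDet} in cases (1) and (3) likewise matches Theorems \ref{mainDetReg} and \ref{mainDetRegPf}.
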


It is known that the initial ideals of the determinantal ideals treated in this work are radical with respect certain monomial orders (see Section \ref{SubsectionDeterminantal}). Then, it is natural to compare the initial ideal of their symbolic powers and the symbolic powers of their initial ideals. Sullivant showed that $\IN_<(I^{(n)})\subseteq \IN_<(I)^{(n)}$ if $K$ is algebraically closed and   $\IN_<(I)$ is  radical \cite{Sull}. In the case of ideals of minors of generic matrices \cite{BCInitial}, and of Hankel matrices of variables \cite{conca98}, not only the containment, but in fact equality is known to hold. As a consequence of the techniques introduced in this article we recover these results, and we also obtain equality in the case of Pfaffians.

\begin{theoremx}\label{MainThmInitial}
Let $K$ be a perfect field of prime characteristic. Let $X$ be a 
generic matrix,  $Z$ be a 
generic skew-symmetric matrix, and $W$ be a 
generic Hankel matrix. 
In each case, let $<$ be the monomial order introduced in Section \ref{SubsectionDeterminantal}. For an integer $t>0$ we have that
  \begin{enumerate}
\item  
$\gr\big(\{\IN_<\big(I_t(X)^{(n)}\big)\}_{n\in \NN}\big)$ is   $F$-split, therefore
$$\IN_<(I_t(X)^{(n)})=\IN_<(I_t(X))^{(n)}$$ for every $n\in\NN$ (Theorems \ref{ThmInitialGen}, and  \ref{ThmInitialGen}).
\item 
$\gr\big(\{\IN_<\big(P_{2t}(Z)^{(n)}\big)\}_{n\in \NN}\big)$  is   $F$-split, therefore
$$\IN_<(P_{2t}(Z)^{(n)})=\IN_<(P_{2t}(Z))^{(n)}$$ for every $n\in\NN$ (Theorems  \ref{ThmInitialPf}, and \ref{ThmInitialPf}).
\item  
$\gr\big(\{\IN_<\big(I_t(W)^{(n)}\big)\}_{n\in \NN}\big)$ is   $F$-split, therefore
$$\IN_<(I_t(W)^{(n)})=\IN_<(I_t(W))^{(n)}$$ for every $n\in\NN$ (Theorems \ref{ThmInitialHankel}, and \ref{ThmInitialHankel}).
\end{enumerate}
\end{theoremx}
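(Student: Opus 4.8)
The plan is to handle the three families uniformly. Let $I$ denote one of $I_t(X)$, $P_{2t}(Z)$, $I_t(W)$, let $<$ be the monomial order fixed in Section~\ref{SubsectionDeterminantal}, and set $J=\IN_<(I)$, which by the results recalled there is a squarefree — hence radical — monomial ideal. Write $\cI=\{\IN_<(I^{(n)})\}_{n\in\NN}$; this is a Noetherian filtration of monomial ideals, and its Rees algebra $\R(\cI)=\bigoplus_n\IN_<(I^{(n)})T^n$ is the initial algebra of $\R^s(I)=\bigoplus_n I^{(n)}T^n$ with respect to a weight order refining $<$ and giving $T$ weight zero, so that the weight degeneration carries $\gr^s(I)$ to $\gr(\cI)$. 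Two things must be proved: $(a)$ $\gr(\cI)$ is $F$-pure, and $(b)$ $\cI_n=J^{(n)}$ for all $n$, the latter being the stated equality $\IN_<(I^{(n)})=\IN_<(I)^{(n)}$.

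The core is $(a)$. I would not try to describe $\IN_<(I^{(n)})$ by hand, which would beg the question; instead I would use that by Theorem~\ref{MainThmFsing} the algebra $\gr^s(I)$ — indeed $\R^s(I)$ — is $F$-pure (strongly $F$-regular for $X$ and $Z$), together with the fact that the Frobenius splitting witnessing this can be built from a product of Seccia-type polynomials in such a way that it is \emph{homogeneous with respect to the weight filtration}. I would then run the weight degeneration on $\R^s(I)$, verify that the chosen splitting specializes, and read off $F$-purity of $\gr(\cI)$; this is precisely what the general machinery of $F$-pure filtrations and their Gröbner deformations developed earlier is for. What remains is to carry it out in each of the three combinatorial models — minors of a generic matrix, Pfaffians of a skew-symmetric matrix, minors of a Hankel matrix — which requires identifying the leading monomials of the relevant minors and Pfaffians, and of the generators of the symbolic powers, and checking that the splitting condition survives the degeneration.

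For $(b)$, I would first invoke Sullivant's theorem: passing to $\overline K$ — which is exactly where the hypothesis that $K$ be perfect enters, ensuring that symbolic powers and initial ideals behave well under the base change — applying his containment for the radical ideal $J$, and descending, yields $\cI_n\subseteq J^{(n)}$ for every $n$. With $\cI_1=J$ this gives $J^n=\cI_1^n\subseteq\cI_n\subseteq J^{(n)}\subseteq J$, so $\sqrt{\cI_n}=J$ and $\MIN(\cI_n)=\MIN(J)$; localizing at any $\p\in\MIN(J)$, where $JR_\p=\p R_\p$, forces $\cI_nR_\p=(\p R_\p)^n=J^{(n)}R_\p$. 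Hence $\cI_n$ and $J^{(n)}=\bigcap_{\p\in\MIN(J)}\p^{(n)}$ have the same primary components at the minimal primes, and $(b)$ comes down to showing $\cI_n$ has no embedded primes. That last reduction is where $(a)$ re-enters: from reducedness of $\gr(\cI)$, the inclusions $\Ass_R(R/\cI_{n+1})\subseteq\Ass_R(R/\cI_n)\cup\Ass_R(\cI_n/\cI_{n+1})$ and $\Ass_R(\cI_n/\cI_{n+1})\subseteq\Ass_R(\gr(\cI))=\MIN(\gr(\cI))$, together with a control of the minimal primes of $\gr(\cI)$ (they contract into $\MIN(J)$, using that $\gr(\cI)$ inherits equidimensionality from $\gr^s(I)$ in these cases), one concludes $\Ass_R(R/\cI_n)\subseteq\MIN(J)$.

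I expect $(a)$ to be the main obstacle. Unlike $F$-rationality, neither $F$-purity nor strong $F$-regularity passes to initial ideals in general — already $R/(xy+z^2)$ is strongly $F$-regular while $R/\IN_<(xy+z^2)$ need not even be reduced — so one genuinely needs the determinantal ideals to be ``Knutson ideals'' for a suitable product of Seccia's polynomials, which is what makes the splitting compatible with the weight degeneration. Pinning down that compatibility and doing the leading-term bookkeeping uniformly across the generic, skew-symmetric, and Hankel cases is the heart of the matter; once it is in hand, $(b)$ follows as sketched and the displayed equalities $\IN_<(I^{(n)})=\IN_<(I)^{(n)}$ drop out.
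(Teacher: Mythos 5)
Your part (a) is essentially the paper's argument. The paper does not literally degenerate a splitting of $\gr^s(I)$; it checks the Fedder-type criterion for the filtration $\{\IN_<(I^{(n)})\}_n$ directly (Proposition \ref{propInSymbOrd}): from $f^{p-1}\in (I^{(n+1)})^{[p]}:I^{(np+1)}$, where $f$ is the Seccia-type polynomial with square-free initial term already produced in Section 6, one gets $\IN_<(f)^{p-1}\,\IN_<(I^{(np+1)})=\IN_<(f^{p-1}I^{(np+1)})\subseteq \IN_<(I^{(n+1)})^{[p]}$ together with $\IN_<(f)^{p-1}\notin\m^{[p]}$. This is exactly the "compatibility of the splitting with the degeneration" you describe, and the combinatorial bookkeeping you defer to has already been done in Theorems \ref{ThmSFPGen}, \ref{ThmSFPPf} and \ref{ThmSFPHankel}. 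So (a) is sound.

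The genuine gap is in (b), at the step where you claim that the minimal primes of $\gr(\cI)$ contract into $\MIN(J)$ "using equidimensionality". Equidimensionality of $\gr(\cI)$ (Lemma \ref{GisEquid}, which already presupposes that $\IN_<(\R^s(I))$ is Noetherian --- a nontrivial Sagbi-type input that you assert without justification, and which is open for symmetric matrices, which is precisely why that case is excluded from the theorem) only tells you that $\dim(\gr(\cI)/Q)=d$ for each minimal prime $Q$; it does not by itself bound the fiber dimension of $\gr(\cI)$ over $q=Q\cap R$, which is what you need to force $q\in\MIN(J)$. In the adic case this correspondence is Huneke--Simis--Vasconcelos (Lemma \ref{reducedAndTF}), but your filtration $\cI$ is only squeezed between $J^n$ and $J^{(n)}$ and is not a priori adic, so HSV does not apply. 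Extending the correspondence to this setting is the technical heart of the paper (Theorem \ref{Theorem initial torsion free} and Corollary \ref{CorInSymbEq}), and its proof needs substantially more than equidimensionality: the containment $\IN_<(I^{(n+1)})\subseteq\m\,\IN_<(I^{(n)})$, proved via differential operators and Zariski--Nagata and requiring $p$ larger than the generating degrees of the initial algebra (Lemma \ref{lemma EM}); a system-of-parameters argument in $S/\m S$ bounding the degrees of the parameters; and a delicate analysis showing that, after localizing at $q$, the ideal $\IN_<(I)$ is generated by variables, so that the filtration becomes adic and HSV can finally be invoked. None of this is supplied by your sketch, so as written the deduction of $\IN_<(I^{(n)})=\IN_<(I)^{(n)}$ from the reducedness of $\gr(\cI)$ does not go through.
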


If $K$ is a field of characteristic zero, then we obtain the equalities 
$\IN_<(I_t(X)^{(n)})=\IN_<(I_t(X))^{(n)}$, 
$\IN_<(P_{2t}(Z)^{(n)})=\IN_<(P_{2t}(Z))^{(n)}$, and
$\IN_<(I_t(W)^{(n)})=\IN_<(I_t(W))^{(n)}$ 
for every $n\in\NN$ (see Corollaries \ref{CorInitialGen}, \ref{CorInitialPf}, and \ref{CorInitialHankel}), via reduction to prime characteristic. We point out that we do not obtain analogous results for generic symmetric matrices $Y$ because we do not know whether the Rees algebra associated to the filtration $\{\IN_<(I_t(Y)^{(n)})\}_{n \in \NN}$ is Noetherian.

Theorem \ref{MainThmInitial} allows us to find bounds for numerical invariants 
of determinantal ideals in terms of  their initial ideals (see Remarks \ref{RemW} and  \ref{RemResurgence}).
We also  obtain that the regularity and depth of the initial ideals of such determinantal ideals satisfy desirable properties (see Corollary \ref{CorInitialTHms}). 
In this context, we provide new examples of existence of limits of normalized Castelnuovo-Mumford regularities for filtrations given by initial ideals. This is closely related to  questions  previously  asked by Herzog, Hoa, and Trung \cite{HHT}. 

We stress that our strategy to show Theorem \ref{MainThmInitial} makes no use of the standard techniques employed before to obtain 
results about initial ideals of determinantal rings and their ordinary and symbolic powers \cites{baetica98,BCInitial,dCEP,dCP,Hankel,SKRS}. In particular, we use neither the straightening laws \cite{DRS} nor the Knuth-Robinson-Schensted correspondence.  Indeed, our techniques to prove Theorem \ref{MainThmInitial} rely on methods in prime characteristic, and a test for the equality $\IN_<(I^{(n)})=\IN_<(I)^{(n)}$ (see Theorem \ref{Theorem initial torsion free} and Corollary \ref{CorInSymbEq}) inspired by the work of Huneke, Simis and Vasconcelos \cite{HSV89}.

Our main tool in this manuscript is our new  notion of  $F$-split filtration (Definition \ref{DefFpureFilt}). 
 If the  $F$-split filtration is given by symbolic powers of an ideal, we say that the ideal is symbolic $F$-split (Definition \ref{DefSymbolicFpure}). 
Ideals that are symbolic $F$-split produce symbolic Rees algebras and symbolic associated graded algebras that  are  $F$-split (see Theorem \ref{mainCharP}).  As for the classical notion of $F$-purity, there exists a criterion that allows us to test when an ideal is symbolic $F$-split  (see Theorem \ref{ThmFedders}) which resembles the one given by Fedder \cite{FedderFputityFsing}. We note that if an ideal is symbolic $F$-split, then its quotient ring is $F$-split. However, the converse is not true: in Example \ref{ExFpureNotSymb} we show that even strong $F$-regularity  does not imply that the ideal is symbolic $F$-split.
Examples of symbolic $F$-split ideals include  square-free monomial  ideals  (see Example \ref{SqFree_Example}) and determinantal ideals (see Theorems \ref{ThmSFPGen}, \ref{ThmSFPSym}, \ref{ThmSFPPf}, and \ref{ThmSFPHankel}).
We refer to  Corollary \ref{CorH}  and Example \ref{ExZhibek} for additional examples. Using these ideas, we are able to answer a question raised by Huneke\footnote{ BIRS-CMO workshop on {\it Ordinary and Symbolic Powers of Ideals} Summer of 2017,  Casa Matem\'atica Oaxaca, Mexico.} regarding $F$-K\"onig ideals (see Example \ref{negativeHuneke}) which arose in connection to the Conforti-Cornu\'ejols conjecture \cite{CC}. 
 We also show that $a$-invariants and depths of symbolic $F$-split ideals have good behavior (see Proposition \ref{PropSymbFpureDepthAinv}).
In addition, there is a finite test to verify that their symbolic and ordinary powers coincide (see Theorem \ref{ThmCompare}).


\section{Notations and preliminaries}\label{Notations and Prelmininaries}

Throughout this manuscript, all rings commutative with identity.
 We begin this section by recalling some notation and preliminary results that we  use in the article.

\subsection{Graded algebras}
A $\NN$-graded ring is a ring $A$ which admits a direct sum decomposition $A= \bigoplus_{n \gs 0} A_n$ of Abelian groups, with $A_{i} \cdot A_{j} \subseteq A_{i+j}$ for all $i$ and $j$.

Assume that $A$ is a $\NN$-graded algebra over a Noetherian ring $A_0$ and let $M = \bigoplus_{n\in \ZZ} M_n$ and $N=\bigoplus_{n \in \ZZ} N_n$ be graded $A$-modules. An $A$-homomorphism $\varphi:M \to N$ is called {\it homogeneous of degree $c$} if $\varphi(M_n) \subseteq N_{n+c}$ for all $n \in \ZZ$. The set of all graded homomorphisms $M \to N$ of all degrees form a graded submodule of $\Hom_R(M,N)$. In general, these two modules are not the same, but they coincide when $M$ is finitely generated \cite{BrHe}.

Given a Noetherian $\NN$-graded algebra $A$, there exist $f_1,\ldots, f_r \in A$  homogeneous elements   such that $A = A_0[f_1,\ldots, f_r]$, which is equivalent to  $\oplus_{n>0}A_n=(f_1,\ldots, f_r)$ \cite[Proposition 1.5.4]{BrHe}. Therefore, if $A_0$ is local, or $\NN$-graded over a field, there is a minimal set of integers $d_1,\ldots, d_r$ such that there exist such $f_1,\ldots, f_r$  of degree $d_1,\ldots, d_r$ respectively. We call these numbers the {\it generating degrees of $A$ as $A_0$-algebra}. 

Let $S=A_0[y_1,\ldots, y_r]$ be a polynomial ring over $A_0$ with $\deg(y_i)=d_i$ for  $1\ls i\ls r$, and let $\phi: S\to A$ be an $A_0$-algebra homomorphism defined by $\phi(y_i)=f_i$ for  $1\ls i\ls r$. Consider the ideal $\cI=\Ker(\phi)$. We call any minimal set of homogeneous generators of $\cI$ the {\it defining equations of $A$} over $A_0$.


 \subsection{Methods in prime characteristic}\label{secMprime}

In this subsection we assume that $A$ is reduced and that it has prime characteristic $p>0$. For $e \in \NN$, let $F^e:A\to A$ denote the $e$-th iteration of the Frobenius endomorphism on $A$. If $A^{1/p^e}$ denotes the ring of $p^e$-th roots of $A$ taken in the total field of fractions of $A$, we can identify $F^e$ with the natural inclusion $\iota: A \hookrightarrow A^{1/p^e}$. Throughout this article, any $A$-linear map $\phi:A^{1/p^e} \to A$ such that $\phi \circ \iota = {\rm id}_A$ is called a splitting of Frobenius, or just a splitting. 

Given an $A$-module $M$, we let $M^{1/p^e}$ denote the $A$-module which has the same additive structure as $M$ and scalar multiplication defined by $a \cdot m^{1/p^e} := (a^{p^e}m)^{1/p^e}$, for all $a \in A$ and $m^{1/p^e} \in M^{1/p^e}$. 

For an ideal $I$ generated by $\{f_1,\ldots, f_u\}$ we denote by $I^{[p^e]}$ the ideal generated by $\{f_1^{p^e},\cdots, f_u^{p^e}\}$. We note that $IA^{1/p^e}=(I^{[p^e]})^{1/p^e}.$

In the case in which $A=\oplus_{n\gs 0}A_n$ is $\ZZ_{\gs 0}$-graded, we can view $A^{1/p^e}$ as a $\frac{1}{p^e}\NN$-graded module in the following way:   we  write $f \in A$ as $f=f_{d_1}+\ldots+f_{d_n}$, with $f_{d_j} \in A_{d_j}$. Then, $f^{1/p^e} = f_{d_1}^{1/p^e}+\ldots+f_{d_n}^{1/p^e}$ where each $f_{d_j}^{1/p^e}$ has degree $d_j/p^e$. Similarly, if $M$ is a $\ZZ$-graded $A$-module, we have that $M^{1/p^e}$ is a $\frac{1}{p^e}\ZZ$-graded $A$-module. As a submodule of $A^{1/p^e}$, $A$  inherits a natural $\frac{1}{p^e}\NN$ grading, which is compatible with its original grading. In other words, if $f \in A$ is homogeneous of  degree $d$ with respect to its original grading, then it has degree $d=dp^e/p^e$ with respect to the inherited $\frac{1}{p^e}\NN$ grading.

\begin{definition} \label{DefnFSing} 
Let $A$ be a Noetherian ring of positive characteristic $p$. We say that $A$ is  {\it $F$-finite} if it is a finitely generated $A$-module via the action induced by the Frobenius endomorphism $F: A \to A$ or, equivalently, if $A^{1/p}$ is a finitely generated $A$-module.  If $(A,\m,K)$ is a $\NN$-graded $K$-algebra, then $A$ is $F$-finite if and only if $K$ is $F$-finite, that is, if and only if $[K:K^p]< \infty$. A ring $A$ is called {\it $F$-pure} if $F$ is a pure homomorphism, that is, if and only if the map $A \otimes_A M \to A^{1/p} \otimes_A M$ induced by the inclusion $\iota$ is injective for all $A$-modules $M$. A ring $A$ is  called {\it $F$-split} if $F$ is a split monomorphism. Finally, an $F$-finite ring $A$ is called {\it strongly $F$-regular} if for every $c \in A$ not in any minimal prime, the map $A \to A^{1/p^e}$ sending $1 \mapsto c^{1/p^e}$ splits for some (equivalently, all) $e \gg 0$.
\end{definition}

\begin{remark}\label{FpureFsplit}
 We have that $A$ is $F$-split if and only if $A$ is a direct summand of $A^{1/p}$. If $A$ is an $F$-finite ring, then $A$ is $F$-pure if and only $A$ is $F$-split  \cite[Corollary $5.3$]{HRFpurity}. 
\end{remark}

\begin{remark}\label{Trace}
 Assume $A$ is an $F$-finite regular  local ring, or a polynomial ring over an $F$-finite field, then $\Hom_A(A^{1/p^e},A)$ is a free $A^{1/p^e}$-module \cite[Lemma 1.6]{FedderFputityFsing}. If $\Phi$ is a generator  (homogeneous in the graded case)  of this module as an $A^{1/p^e}$-module, then for ideals $I,J\subset A$ we have that the map $\phi:=f^{1/p^e} \cdot \Phi=\Phi(f^{1/p^e}-)$ satisfies 
$\phi\big(J^{1/p^e}\big)\subseteq I$ 
if and only if
$
f^{1/p^e}\in  \big(IA^{1/p^e} :_{A^{1/p^e}} J^{1/p^e}\big)
$
or, equivalently,
$
f\in  \big(I^{[p^e]} :_{A} J\big)
$
\cite[Proposition 1.6]{FedderFputityFsing}. In particular, 
 $\phi$ is surjective if and only if $f^{1/p^e}\not\in \m A^{1/p^e}$, that is  $f\not\in \m^{[p^e]}$.

Now, assume that  $A=K[x_1,\ldots,x_d]$ is a polynomial ring and that $\gamma:K^{1/p^e}	\to K$ is a splitting. Let
$\Phi: A^{1/p^e}\to A$ be the $A$-linear map  defined by
$$\Phi\left(c^{1/p^e} x_1^{\alpha_1/p^e}\cdots x_d^{\alpha_d/p^e}\right)=
\begin{cases}
\gamma(c^{1/p^e}) x_1^{(\alpha_1-p^e+1)/p^e}\cdots x_d^{(\alpha_d-p^e+1)/p^e}& \text{if } p^e| (\alpha_i-p^e+1) \ \ \forall  i,\\
0& \text{otherwise.}
\end{cases}$$
We have that $\Phi$ is a generator of   $\Hom_A(A^{1/p^e},A)$ as an $ A^{1/p^e}$-module \cite[Page 22]{Brion_Kumar_book}.  The map  $\Phi$ is often called the {\it trace map} of $A$. We point out that, if $K$ is not perfect,   $\Phi$ depends on $\gamma$, but this is usually omitted from the notation.
\end{remark}

\subsection{Local cohomology and Castelnuovo-Mumford regularity}

For an ideal $I\subseteq A$, we define the {\it $i$-th local cohomology of $M$ with support in $I$} as
$H^i_I(M):=H^i(\Cech^\bullet(\underline{f};A)\otimes_A M)$,
where $\Cech^\bullet(\underline{f};A)$ is the {\v C}ech complex on a set of generators $\underline{f}=f_1,\ldots,f_\ell$ of $I$. We note that  $H^i_I(M)$ does not depend on the choice of generators of $I$.
Moreover, it only depends on the radical of $I$.
We recall that the $i$-th local cohomology functor $H^i_I(-)$ can also be defined as the $i$-th right derived functor of $\Gamma_I(-)$, where $\Gamma_I(M) = \{m \in M \mid I^n m = 0$ for some $n \in \NN\}$. 
If $I = \m$ is a maximal ideal  and $M$ is finitely generated, then $H^i_\m(M)$ is Artinian.

If  $M = \bigoplus_{\frac{n}{p^e} \in \frac{1}{p^e} \ZZ} M_{\frac{n}{p^e}}$ is a $\frac{1}{p^e}\NN$-graded $R$-module, 
and we let $A_+=\bigoplus_{n> 0} A_n$, then $H^i_{A_+}(M)$ is a $\frac{1}{p^e}\ZZ$-graded $A$-module. Moreover, $[H^i_{A_+}(M)]_{\frac{n}{p^e}}$ is a finitely generated $A_0$-module for every $n \in \ZZ$, and $H^i_{A_+}(M)_{\frac{n}{p^e}} = 0$ for $n \gg 0$ \cite[Theorem 16.1.5]{BroSharp}. 
 We define the {\it $a_i$-invariant of $M$} as
$$
a_i(M)=\max\left\{ \frac{n}{p^e} \; \bigg| \;  [H^i_{A_+}(M)]_{\frac{n}{p^e}} \neq 0\right\}
$$
if $H^i_{A_+}(M)\neq 0$, and $a_i(M)=-\infty$ otherwise.

\begin{remark} 
Given a finitely generated $\ZZ$-graded $A$-module $M$, we have that $a_i(M^{1/p^e}) = a_i(M)/p^e$ for all $i \in \NN$. In fact, $H^i_{A_+}(M^{1/p^e}) \cong H^i_{A_+}(M)^{1/p^e}$ since the functor $(-)^{1/p^e}$ is exact. 
\end{remark}



\begin{remark} \label{FpureNeg} 
If $A$ is an  $F$-split $\NN$-graded ring, then $a_i(A) \ls 0$ for all $i \in \ZZ$ \cite[Lemma $2.3$]{HRFpurity}. 
\end{remark}

Given a finitely generated $\ZZ$-graded $A$-module, the {\it Castelnuovo-Mumford regularity of $M$} is defined as
$$
\reg(M)=\max\{a_i(M)+i \mid i \in \NN\}.
$$

\begin{remark}\label{remRegPR}
If $A=A_0[x_1,\ldots, x_r]$ is a polynomial ring over $A_0$, such that $x_i$ has degree $d_i>0$ for every $1\ls i\ls r$, then $\reg(A)= r-\sum_{i=1}^r d_i$.
\end{remark}



 \subsection{Filtrations and blowup algebras}\label{subBA}
Let $R$ be a commutative ring. We say that a sequence of ideals $\{I_n\}_{n\in \NN}$ of $R$ is a {\it filtration} if $I_0=R$, $I_{n+1}\subseteq I_n$ for every $n\in \NN$, and $I_nI_m\subseteq I_{n+m}$ for every $n,m\in \NN$.

\begin{definition} \label{blowup} 
Let $R$ be a ring. Consider the following graded algebras associated to a filtration  $\II = \{I_n\}_{n \in \NN}$:
\begin{enumerate}
\item[(i)] The the {\it Rees algebra} of $\II$: $\R(\II)=\bigoplus_{n\in \NN}I_{n}T^n \subseteq R[T]$, where $T$ is a variable.
\item[(ii)] The {\it  associated graded algebra} of $\II$: $\gr(\II)=\bigoplus_{n\in \NN}I_{n}/I_{n+1}$.
\end{enumerate} 
\end{definition}
We generally refer to the above as the blowup algebras associated to the filtration $\II$ \cite{Vasconcelos}.

If the Rees algebra is Noetherian, we can compute the dimensions of the blowup algebras. We show this in the next proposition.

\begin{proposition}\label{dimOfblow}
Assume that the ideal $I_1$ has positive height and that $\R(\II)$ is finitely generated as an $R$-algebra. Then, $\dim(\R(\II))=\dim(R)+1$ and $\dim(\gr(\II))=\dim(R).$
\end{proposition}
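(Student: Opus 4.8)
The plan is to reduce everything to the classical case of the Rees algebra of an ordinary filtration and then apply standard dimension theory for graded rings. First I would observe that since $\R(\II)$ is a finitely generated $R$-algebra, it is Noetherian (as $R$ is Noetherian), and similarly $\gr(\II)$ is Noetherian, being a quotient of $\R(\II)$ by the ideal $\R(\II)_+ T^{-1}$ — more precisely, $\gr(\II) \cong \R(\II)/I_1\R(\II)$ via the natural surjection $\bigoplus_n I_n T^n \onto \bigoplus_n I_n/I_{n+1}$, whose kernel is $\bigoplus_n I_{n+1}T^n = I_1 T \cdot \R(\II)$ in the appropriate sense; I would make this identification precise. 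This reduces the second equality to the first together with the fact that $I_1$ contains a nonzerodivisor-like element — actually, since $I_1$ has positive height, it is not contained in any minimal prime, so cutting by (a suitable power of) $I_1$ drops dimension by exactly one, giving $\dim(\gr(\II)) = \dim(\R(\II)) - 1 = \dim(R)$.

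For the first equality, the upper bound $\dim(\R(\II)) \le \dim(R)+1$ follows because $\R(\II) \subseteq R[T]$ and $R[T]$ has dimension $\dim(R)+1$; since $\R(\II)$ is a finitely generated $R$-algebra that sits between $R$ and $R[T]$, and is module-finite over... no — I would instead argue that $\R(\II)[T^{-1}] $ localizes to $R[T, T^{-1}]$, or more directly note that $R[T]$ is integral over... Let me restructure: the clean argument is that $\R(\II)$ is a subring of $R[T]$, both are Noetherian, and $R[T]$ is a finitely generated $\R(\II)$-module? This need not hold. Instead I would use: for any finitely generated $R$-algebra $A$ with a chain of primes, $\dim A \le \dim R + \operatorname{trdeg}$, and here $\R(\II) \otimes_R \operatorname{Frac}(R/\p)$ for a minimal prime has transcendence degree at most $1$ over the residue field (since it embeds in a polynomial ring in one variable), so by the dimension formula for finitely generated algebras over Noetherian domains (Grothendieck), $\dim \R(\II) \le \dim R + 1$.

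For the lower bound $\dim(\R(\II)) \ge \dim(R)+1$: I would take the minimal prime $\p$ of $R$ with $\dim(R/\p) = \dim(R)$; then $\R(\II)/\p\R(\II) = \R(\{(I_n+\p)/\p\})$ is the Rees algebra of a filtration over the domain $R/\p$. Passing to this domain (which we may, since $I_1$ has positive height means $I_1 \not\subseteq \p$, so $I_1(R/\p) \ne 0$ and the filtration is nontrivial), the element $fT \in \R(\II)/\p\R(\II)$ for any nonzero $f \in I_1(R/\p)$ is transcendental over $R/\p$ inside the domain, giving a polynomial subring $(R/\p)[fT]$, hence $\dim \R(\II) \ge \dim \R(\II)/\p\R(\II) \ge \dim (R/\p)[fT] = \dim(R/\p) + 1 = \dim(R)+1$, where the middle inequality uses that $(R/\p)[fT] \hookrightarrow \R(\II)/\p\R(\II)$ is module-finite — and here is the one genuine point requiring care: since $\R(\II)$ is Noetherian, $\R(\II)/\p\R(\II)$ is a finitely generated algebra over $(R/\p)[fT]$? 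Not obviously module-finite. The fix is to instead directly exhibit a chain of primes of length $\dim(R)+1$: lift a maximal chain in $R/\p$ to $\R(\II)/\p\R(\II)$ using going-up along $R \hookrightarrow \R(\II)$ (each $I_n \ne 0$ ensures the generic fiber over $R/\p$ is $1$-dimensional so the extension is "almost integral" in the relevant sense), then extend by one more prime using the height-one generic fiber.

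The main obstacle I anticipate is making the lower bound airtight without assuming integrality of the extension $R \hookrightarrow \R(\II)$, which fails in general for filtrations; I expect the cleanest route is the dimension formula $\Ht(P) + \dim(\R(\II)/P) = \Ht(P \cap R) + \dim((\R(\II)/P\cap R\R(\II)) \otimes \kappa(P\cap R))$ valid since $\R(\II)$ is a finitely generated algebra over the Noetherian (universally catenary, if we localize at $R/\p$) ring $R$, combined with the observation that the generic fiber $\R(\II) \otimes_R \operatorname{Frac}(R/\p)$ is a finitely generated domain of Krull dimension exactly $1$ — it contains the transcendental $T$ and embeds in $\operatorname{Frac}(R/\p)[T]$. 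Granting the dimension formula, taking $P$ a maximal ideal of $\R(\II)$ contracting into a height-$\dim R$ maximal chain yields $\dim \R(\II) = \dim R + 1$ on the nose, and the $\gr$ statement follows as above.
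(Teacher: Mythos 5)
There is a genuine gap, and it is precisely the point you keep circling without resolving. The paper's proof rests on one fact you are missing: since $\R(\II)$ is Noetherian, there exists $\ell>0$ with $I_{n+\ell}=I_\ell I_n$ for all $n\geq \ell$ (Ratliff). Hence $\R(\II)$ is \emph{integral} over the ordinary Rees algebra $R[I_\ell T^\ell]$, and the extended Rees algebra $B=R[\II T,T^{-1}]$ is integral over $R[I_\ell T^\ell,T^{-1}]$. This reduces everything to the classical $I$-adic case, where $\dim R[I_\ell T]=\dim R+1$ because $I_\ell\supseteq I_1^{\ell}$ has positive height. You correctly observe twice that $R\hookrightarrow\R(\II)$ need not be integral and that $(R/\p)[fT]\hookrightarrow \R(\II)/\p\R(\II)$ is ``not obviously module-finite,'' but the patches you propose do not close the gap: going-up is unavailable for a non-integral extension, and the dimension formula you invoke for the generic fiber requires $R$ to be universally catenary (or at least a careful catenarity argument), a hypothesis not present in the statement. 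The Ratliff lemma supplies the integrality you need and makes all of this unnecessary.

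Your treatment of $\gr(\II)$ is also incorrect as written. The kernel of the surjection $\R(\II)\onto\gr(\II)$ is $\bigoplus_n I_{n+1}T^n$, whereas $I_1\R(\II)=\bigoplus_n I_1I_nT^n$; one has $I_1I_n\subseteq I_{n+1}$ with equality only for adic-type filtrations, so the identification $\gr(\II)\cong\R(\II)/I_1\R(\II)$ fails for general filtrations (in particular for symbolic powers, the case of interest here). Moreover, the claim that cutting by an ideal of positive height ``drops dimension by exactly one'' is false: not lying in a minimal prime of maximal coheight only gives a drop of \emph{at least} one, and the reverse inequality is the hard direction. The paper avoids both problems by passing to $B$, where $T^{-1}$ is a homogeneous nonzerodivisor with $B/(T^{-1})\cong\gr(\II)$ canonically, and $\dim B=\dim R+1$ again follows from integrality over $R[I_\ell T^\ell,T^{-1}]$.
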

\begin{proof}

Consider the extended Rees algebra $B:=R[\II T,T^-1 ]=\oplus_{n\in \ZZ} I_n T^n$ where $I_n=R$ for $n\ls0$. Since $\R(\II)$ is Noetherian, there exists $\ell \in \ZZ_{>0}$ such that
\begin{equation}\label{eventSG}
 I_{n+\ell}=I_{\ell}I_{n} \text{ for every } n\gs \ell \, \text{ \cite[Remark 2.4.3]{ratliff1979notes}.}
\end{equation}
Thus,   $B$ is an integral extension of $R[I_\ell T,T^{-1}]=\oplus_{n\in \ZZ} I_\ell^n T^n$, and $\R(\II)$ is an integral extension of $R[I_\ell T]=\oplus_{n\in \ZZ} I_\ell^n T^n$, and hence  they both have  dimension $\dim(R) +1$ \cite[Theorem 2.2.5, Theorem 5.1.4(1)(2)]{huneke2006integral}. Now, $T^{-1}$ is  homogeneous regular element of $B$, thus  $B/(T^{-1})\cong \gr(\II)$ has dimension $\dim(R)$, finishing the proof.
\end{proof}


\section{Generators of defining equations of $F$-split blowup algebras}


This section is devoted to find bounds for the degrees of the defining equations of the algebras introduced in Definition \ref{blowup} when they are  $F$-split. The main results of this section are  Theorems \ref{defEqblowup1} and \ref{defEqblowup2}.  



Let $A=\oplus_{n\gs 0}A_n$ be a $\NN$-graded Noetherian ring. Given a finitely generated graded $A$-module $M=\oplus_{n\in \ZZ}M_n$, we let  
$$\beta_A(M)=\inf\{i\mid M = A \cdot (\oplus_{n\ls i}M_n)\},$$
that is, the largest degree of a minimal homogeneous generator of $M$. The following lemma states an upper bound on $\beta_A(M)$  in terms of the Castelnuovo-Mumford regularity. This statement can be found in the literature when $A$ is generated as an algebra in degree one \cite[Theorem $16.3.1$]{BroSharp}, or  when $A_0$ is a field \cite[Theorem $3.5$]{DalzottoSbarra} \cite[Theorem $2.2$]{CutkoskyKurano}. While the same result in our setup may be well-known to experts, we could not find a reference in the literature. We include its proof here  for the sake of completeness.

\begin{proposition}\label{PropDegGenReg}
Let $A=\oplus_{n\gs 0}A_n$ be a $\NN$-graded Noetherian ring.
Let  $d_1,\ldots, d_r>0$ be the generating degrees of $A$ as an $A_0$-algebra. Let $M$ be a finitely generated $\ZZ$-graded $A$-module. Then, $ \beta_A(M)\ls \reg(M) + \sum_{i=1}^r (d_i-1)$.
\end{proposition}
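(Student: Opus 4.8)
The plan is to reduce the general statement to the known case $A_0 = K$ a field (or, equivalently, the polynomial‑ring case) by a base‑change/standard‑reduction argument that does not change either side of the inequality. Concretely: write $A = A_0[f_1,\dots,f_r]$ with $\deg f_i = d_i$, and let $S = A_0[y_1,\dots,y_r]$ with $\deg y_i = d_i$, so that $A$ is a quotient of $S$ and $M$ is a finitely generated graded $S$‑module. Since for an $S$‑module the quantities $\beta_A(M)$, $\beta_S(M)$ agree (a generating set over $A$ in degrees $\le i$ pulls back to one over $S$ and vice versa, because the positive‑degree part of $A$ is generated by the images of the $y_i$) and $\reg$ is computed via local cohomology with support in the irrelevant ideal, which is insensitive to whether we regard $M$ over $A$ or over $S$, it suffices to prove the bound for the polynomial ring $S = A_0[y_1,\dots,y_r]$ over an arbitrary Noetherian base $A_0$.

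The core of the argument is then the following. Set $\m = S_+ = (y_1,\dots,y_r)S$ and consider the Koszul complex $K_\bullet$ on $y_1,\dots,y_r$, which is a free resolution of $S/\m \cong A_0$ over $S$, with $K_j$ generated in degree $\sum_{\ell\in T}d_\ell$ over the $j$‑subsets $T$; in particular the top nonzero homological degree contributes degree $D := \sum_{i=1}^r d_i$. Tensoring $M$ with $K_\bullet$ and running the two hyperhomology spectral sequences computes $\mathrm{Tor}^S_\bullet(M, A_0)$, whose top graded degree in homological degree $0$ — namely $\max\{ n \mid [M\otimes_S A_0]_n \ne 0\}$ — is exactly $\beta_S(M)$ by graded Nakayama. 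On the other side, $H^i_\m(M)$ is computed by the Čech (stable Koszul) complex on $y_1,\dots,y_r$, and one has the standard duality/comparison: the graded pieces of $\mathrm{Tor}^S_j(M,A_0)$ are bounded in terms of the $a_i(M)$ shifted by the degrees appearing in $K_\bullet$. Chasing the spectral sequence $E_2^{p,q} = \mathrm{Tor}^S_p(H_q(K_\bullet\otimes M),\dots)$ — or, more directly, using that $\beta_S(M) = a_r\!\big(K_\bullet \otimes_S M\big) + r$ type bookkeeping via the Čech complex $\Cech^\bullet(\underline y;M)$ whose terms are localizations of $M$ — yields $\beta_S(M) \le \max_i\{a_i(M) + (\text{max degree of a generator of } K_{r-i})\}$. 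Since a generator of $K_{r-i}$ has degree at most $D - i\cdot 1 = D - i$ (one omits $i$ of the $y_\ell$'s, each of degree $\ge 1$), we get $\beta_S(M) \le \max_i\{a_i(M) + D - i\} = \max_i\{a_i(M)+i\} + D - \max_i(2i) \le \reg(M) + D - r \cdot \text{(no)}$ — so I must be careful with the arithmetic: the clean form is $\beta_S(M) \le \max_i\{(a_i(M)+i) + (D - i) - i\}$, and bounding $D-2i \le D-r\cdot 0$ loosely is wrong. The honest bookkeeping is $D - i$ summed against $a_i$, and then $a_i(M) + (D-i) = (a_i(M)+i) + (D-2i)$; but the intended bound $\reg(M) + \sum(d_i-1) = \reg(M) + D - r$ comes out because the relevant Čech term $M_{y_1\cdots\widehat{y_{i_1}}\cdots}$ in homological position $i$ contributes a shift of exactly $\sum_{\ell}d_\ell$ minus the degrees of the $r-i$ inverted variables, i.e. at most $D$, and the $+i$ from homological degree combines with a $-r$ coming from the top term. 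I would lay this out via the explicit double complex $\Cech^\bullet(\underline y;-)\otimes K_\bullet$ so the shifts are visible.

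The step I expect to be the main obstacle is precisely this grading bookkeeping over a non‑field base: matching the degrees in the Koszul/Čech double complex so that the shifts collapse to $\sum_i(d_i-1)$ rather than something weaker, and making sure the spectral‑sequence argument is valid when $A_0$ is merely Noetherian (so $\mathrm{Tor}^S_j(M,A_0)$ need not have finite length, but each graded piece is a finitely generated $A_0$‑module, which is all that is used). Once the double complex is written down, comparing the two filtrations and reading off that the $E_\infty$ term in total degree $0$ (which contains $M\otimes_S A_0$ as a subquotient) is concentrated in internal degrees $\le \max_i\{a_i(M) + D - i\}$, and then observing $\max_i\{a_i(M)+D-i\} \le \max_i\{a_i(M)+i\} + (D-r)$ because $D - i \le (D-r) + i$ for all $0 \le i \le r$, completes the proof. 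I would close by noting that applying this with $M = A$ recovers Remark~\ref{remRegPR} via $\reg(A) = r - D$, as a sanity check.
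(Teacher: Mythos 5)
Your overall strategy --- reduce to the weighted polynomial ring $S=A_0[y_1,\dots,y_r]$ and compare $\beta_S(M)=\max\{n \mid (M/S_+M)_n\neq 0\}$ with the $a_i(M)$ via a \v{C}ech--Koszul double complex --- is genuinely different from the paper's proof and is viable in principle: the reduction step is fine, and the first spectral sequence does degenerate over an arbitrary Noetherian $A_0$ because the Koszul homology $H_j(K_\bullet\otimes M)=\operatorname{Tor}_j^S(M,A_0)$ is $S_+$-torsion. However, as written the key degree bookkeeping is wrong, and you close the argument with a false inequality. In the double complex $\Cech^\bullet(\underline{y})\otimes K_\bullet(\underline{y})\otimes M$, the contributions to $M\otimes_S A_0$ (total degree $0$) come from $H^i_{S_+}(M)$ paired with $K_i$, not with $K_{r-i}$: a generator of $K_i$ has degree $\sum_{\ell\in T}d_\ell$ with $|T|=i$, which is at most $D-(r-i)=(D-r)+i$, and this yields directly $\beta_S(M)\ls \max_i\{a_i(M)+i\}+(D-r)=\reg(M)+\sum_{i}(d_i-1)$. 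Your pairing instead produces the intermediate bound $\max_i\{a_i(M)+D-i\}$, and the inequality you invoke to finish, $D-i\ls (D-r)+i$ for all $0\ls i\ls r$, is equivalent to $2i\gs r$ and fails for $i<r/2$; already in the standard graded case your intermediate bound reads $\max_i\{a_i(M)+r-i\}$, which can strictly exceed $\reg(M)$. You yourself flag this bookkeeping as the main obstacle and do not resolve it, so the proof as written has a genuine gap. (A minor additional confusion: the \v{C}ech terms are plain localizations of $M$ and carry no degree shift; all shifts come from the Koszul factor.)

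For comparison, the paper avoids the spectral sequence entirely: it inducts on $\sum_i d_i$, with base case the standard graded statement from the literature, and in the inductive step passes to $A'=A[y]$ with $\deg y=1$, cuts by the $M'$-regular element $f_r-y^{d_r}$ of degree $d_r$ to reduce to $A_0[f_1,\dots,f_{r-1},y]$, bounds $\reg(M'/fM')\ls\reg(M)+d_r-1$ from the resulting short exact sequence, and concludes by graded Nakayama. If you want to keep your route, fix the indexing as above; otherwise the inductive argument is shorter and sidesteps the degree chase altogether.
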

\begin{proof}
Let  $f_1,\ldots,f_r$  be homogeneous  generators of $A$ as an $A_0$-algebra  of degree $d_1,\ldots, d_r$, respectively. Without loss of generality, we may assume that $1 \leq d_1\leq \ldots \leq d_r$. Observe that ${A_+} = \bigoplus_{n>0} A_n = (f_1,\ldots,f_r)$. 

We now proceed by induction on $\sum_{i=1}^r d_i \geq r$. The base case $d_i=1$ for all $1 \leq i \leq r$ is known \cite[Theorem 16.3.1]{BroSharp}. Let $A'=A[y]$, where $\deg(y)=1$ and $M'=M\otimes_A A'$. Since $y$ is regular on $M'$, a standard argument via the long exact sequence of local cohomology of $$0\to M'(-1)\xrightarrow{y} M'\to M'/yM\cong M\to 0$$ shows that $\reg(M) = \reg(M')$, where   the regularity of $M'$ is computed with respect to the ideal ${A_+}' =\oplus_{n>0} A'_n$. 
We observe that $f=f_r - y^{d_r}$ is a homogeneous element of degree $d_r$ which is regular on $M'$. The short exact sequence $$0 \to M'(-d_r) \xrightarrow{f} M' \to M'/fM' \to 0$$ gives  $\reg(M'/fM') \leq \max\{\reg(M')+d_r-1,\reg(M')\} = \reg(M')+d_r-1 \ls \reg(M)+d_r-1$. Note that $M'/fM'$ is an  $A'/fA'$-module  and that $A'/fA' \cong A_0[f_1,\ldots,f_{r-1},y]$. Since  $\sum_{i=1}^{r-1} d_i+1 < \sum_{i=1}^{r} d_i$,  by induction, we have that $$\beta_{A'/fA'}(M'/fM') \leq \reg(M'/fM') + \sum_{i=1}^{r-1}(d_i-1) \leq \reg(M) +\sum_{i=1}^{r}(d_i-1).$$ 
Let $N$ be the $A'$-submodule of $M'$ generated by elements of degree at most $\reg(M) +\sum_{i=1}^{r}(d_i-1)$. We have just shown that $M'=N+fM'$, and therefore $M'=N + \m' M'$, where $\m' = \m_0 + {A_+}'$. Thus, from the graded Nakayama's lemma it follows that $M'=N$. In particular, $\beta_{A'}(M') \leq \reg(M) + \sum_{i=1}^r (d_i-1)$. Since $\beta_{A'}(M') = \beta_A(M)$, the proof is complete.
\end{proof}

We need one more lemma before stating the main result of this section. 

\begin{lemma}\label{lemmaDefEq}
Let $A=\oplus_{n\gs 0}A_n$ be a Noetherian $F$-finite and $F$-split graded ring.
Let  $d_1,\ldots, d_r$ be the generating degrees of $A$ as an $A_0$-algebra. Then the defining equations of $A$ over $A_0$ have degree at most  
\[
\dim(A)+\sum^r_{i=1}d_i - \max\{ \dim(A), r\}.
\]
\end{lemma}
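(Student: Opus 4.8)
The plan is to realize the defining ideal $\cI$ as a module over the polynomial presentation $S = A_0[y_1,\ldots,y_r]$ with $\deg(y_i) = d_i$, and bound the degrees of a minimal generating set of $\cI$ by $\beta_S(\cI)$. Since $\beta_S(\cI) \leq \reg_S(\cI) + \sum_{i=1}^r(d_i - 1)$ by Proposition \ref{PropDegGenReg} (the generating degrees of $S$ as an $A_0$-algebra are exactly $d_1,\ldots,d_r$), the task reduces to controlling $\reg_S(\cI)$. First I would use the short exact sequence $0 \to \cI \to S \to A \to 0$ of graded $S$-modules, which gives $\reg_S(\cI) \leq \max\{\reg_S(S),\, \reg_S(A) + 1\}$. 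By Remark \ref{remRegPR}, $\reg_S(S) = r - \sum_{i=1}^r d_i$, which is $\leq 0$ and will not be the dominant term, so the real content is the bound $\reg_S(\cI) \leq \reg_S(A) + 1$, hence the defining equations have degree $\leq \reg_S(A) + 1 + \sum_{i=1}^r(d_i-1) = \reg_S(A) + \sum_{i=1}^r d_i - r + 1$.

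Next I would compute $\reg_S(A)$ using $F$-purity. Because $A$ is $F$-finite and $F$-pure, Remark \ref{FpureNeg} gives $a_i(A) \leq 0$ for all $i$, where these $a$-invariants are taken with respect to $A_+ = \bigoplus_{n>0}A_n$. The subtlety here is that $\reg_S(A)$ is computed via $H^i_{S_+}(A)$, but since $S_+ S$ and $A_+$ cut out the same radical in $A$ (both are the irrelevant ideal), $H^i_{S_+}(A) \cong H^i_{A_+}(A)$, so $a_i^S(A) = a_i(A) \leq 0$. The local cohomology $H^i_{A_+}(A)$ vanishes for $i > \dim A$, so $\reg_S(A) = \max\{a_i(A) + i : 0 \leq i \leq \dim A\} \leq \dim A$. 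Combining, the degrees of the defining equations are at most $\dim(A) + 1 + \sum_{i=1}^r d_i - r$.

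Finally I would sharpen the constant from $+1-r$ to $-\max\{\dim A, r\}$. There are two regimes. If $r \geq \dim A$, then comparing: we want degree $\leq \dim A + \sum d_i - r$. If instead $\dim A \geq r$: here one should be more careful with the $\cI$-vs-$A$ regularity comparison — when $\reg_S(A) \geq \reg_S(S)$ one genuinely gets $\reg_S(\cI) \leq \reg_S(A) + 1$, but a finer analysis of the exact sequence (or of which $H^i$ can be nonzero for $\cI$, which has $\dim \cI = \dim S$) shows that the relevant top cohomological degree for $\cI$ contributes $a_i^S(\cI) + i$ with the $+1$ already absorbed; the upshot should be that in both cases one lands at $\dim(A) + \sum_{i=1}^r d_i - \max\{\dim A, r\}$. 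I expect the main obstacle to be exactly this bookkeeping with the two cases and the $+1$ shift: making precise why, when $\dim A < r$ (equivalently $\dim S = r > \dim A$), the $\dim S = r$ term rather than $\dim A + 1$ governs $\reg_S(\cI)$ — this comes from $H^r_{S_+}(\cI) \neq 0$ in general (as $\cI$ has dimension $r$) forcing $\reg_S(\cI) \geq $ something of size $r$, but more importantly the upper bound $\reg_S(\cI) \leq \max\{r - \sum d_i + \text{(shift)}, \dim A + 1\}$ must be traced through carefully using $\reg_S(\cI) \leq \max\{\reg_S(A)+1, \reg_S(S)\}$ and noting $\reg_S(S) = r - \sum d_i$. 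Writing $-\max\{\dim A, r\}$ is just the uniform way to state: add $1$ and subtract $r$ when $r \leq \dim A$, but subtract $r$ without the $+1$ — i.e. the bound from $\reg_S(S)$ dominates — when $r > \dim A$.
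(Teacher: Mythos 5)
Your strategy is the same as the paper's: present $A$ as $S/\cI$ with $S=A_0[y_1,\ldots,y_r]$, bound the generating degrees of $\cI$ by $\reg_S(\cI)+\sum_{i=1}^r(d_i-1)$ via Proposition \ref{PropDegGenReg}, and control $\reg_S(\cI)$ using the sequence $0\to\cI\to S\to A\to 0$ together with the $F$-purity bound $a_i(A)\ls 0$ from Remark \ref{FpureNeg}. The part you carry out in full is correct: $\reg_S(\cI)\ls\max\{\reg_S(S),\reg_S(A)+1\}\ls\dim(A)+1$, giving a degree bound of $\dim(A)+1+\sum_{i=1}^r d_i-r$.

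The genuine gap is the final ``sharpening'' to $\dim(A)+\sum d_i-\max\{\dim(A),r\}$, which you rightly flag as the delicate step but never actually execute — and it cannot be executed, because the extra $+1$ is not absorbable. From the long exact sequence one gets $H^{i}_{S_+}(\cI)\cong H^{i-1}_{S_+}(A)$ for $i\ls r-1$ and an injection $H^{r-1}_{S_+}(A)\hookrightarrow H^{r}_{S_+}(\cI)$, so $a_{\dim(A)+1}(\cI)\gs a_{\dim(A)}(A)$, and the latter can be $0$ for an $F$-pure ring; this forces $\reg_S(\cI)=\dim(A)+1$ in general. Concretely, $A=K[x,y]/(xy)$ is $F$-pure with $r=2$, $d_1=d_2=1$, $\dim(A)=1$: the defining equation $y_1y_2$ has degree $2$, while the lemma's bound is $1$. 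So the statement as printed is off by one whenever $r>\dim(A)$, and the paper's own proof commits exactly the error you were worried about — it asserts $a_i(\cI)=-\infty$ for $i>\min\{r,\dim(A)\}$, which fails at $i=\dim(A)+1$. The bound your argument actually proves, $\dim(A)+1+\sum_{i=1}^r d_i-\max\{\dim(A)+1,r\}$, is the correct one; the two-regime bookkeeping in your last paragraph should simply be discarded rather than completed.
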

\begin{proof}
Let  $f_1,\ldots,f_r$  be homogeneous  generators of $A$ as an $A_0$-algebra  of degree $d_1,\ldots, d_r$, respectively.  
Let $S=A_0[y_1,\ldots, y_r]$ be a polynomial ring over $A_0$ with $\deg(y_i)=d_i$ for  $1\ls i\ls r$, and let $\phi: S\to A$ be the graded $A_0$-algebra homomorphism defined by
  $\phi(y_i)=f_i$ for  $1\ls i\ls r$.  Let $\cI=\Ker(\phi)$. 
  
Set $S_+=(y_1,\ldots, y_r)\subseteq S$ and consider the  homogeneous  short exact sequence: $$
 0\to \cI\to S\to A\to 0.
 $$
 From the long exact sequence of local cohomology modules with support in $S_+$, we obtain 
$H^i_{S_+} (A)\cong H^{i+1}_{S_+}(\cI)$ for $i\leq r-2$, and an exact sequence
$$
0\to H^{r-1}_v (A)\to
H^{r}_{S_+} (\cI )\to
H^{r}_{S_+} (S)\to
H^{r}_{S_+} (A) \to 0.
$$ 

Since $a_i(A) \leq 0$ for every $i \in \NN$ by Remark \ref{FpureNeg}, and $a_r (S)=-\sum^r_{i=1}d_i$ 
by Remark \ref{remRegPR}, 
we have that $a_i(\cI)\leq 0$ 
for every $i\in \NN$.
Thus, 
\[
\reg(\cI)=\max\{a_i(\cI)+i\}\leq \min\{r,\dim(A)\},
\]
as $a_i(\cI)=-\infty$ for $i>\min\{r,\dim(A)\}$ \cite[Theorems 3.3.1 and 6.1.2]{BroSharp}. The result now follows by Proposition \ref{PropDegGenReg}, after performing some easy calculations.
\end{proof}

The following is the main theorem of this section, as it provides bounds for the degrees of generators of defining equations of  $F$-split blowup algebras.

\begin{theorem}\label{defEqblowup1}
Let $R$ be a Noetherian $F$-finite and  $F$-split ring  of  characteristic $p>0$.
Let $\II=\{I_n\}_{n\in \NN}$ be a filtration such that  $\R(\II)$ is a finitely generated  $F$-split $R$-algebra. Let $e_1,\ldots, e_\ell$ be the generating degrees of $\R(\II)$ as an $R$-algebra, that is, $\R(\II)=R[I_{e_1}T^{e_1}, \ldots,  I_{e_\ell}T^{e_\ell}]$, and let $v_1,\ldots,v_\ell$ be the number of generators of $I_1,\ldots,I_\ell$, respectively. Further assume that $I_1$ has positive height. The defining equations of $\R(\II) = \bigoplus_{n \geq 0} I_nT^n$ over $R$ have degree at most  
\[
\dim(R)+1+ \sum^\ell_{i=1}e_i v_i- \max\left\{ \dim(R)+1,\sum_{i=1}^\ell v_i\right\}.
\]
 Moreover, if $\gr(\II)$ is  $F$-split, then 
 the defining equations of $\gr(\II) = \bigoplus_{n \geq 0} I_n/I_{n+1}$ over $R/I_1$ have degree at most  
 \[
 \dim(R)+\sum^\ell_{i=1}e_iv_i - \max\left\{\dim(R), \sum_{i=1}^\ell v_i\right\}.
 \]
 \end{theorem}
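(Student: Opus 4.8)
The plan is to reduce both assertions of Theorem \ref{defEqblowup1} to Lemma \ref{lemmaDefEq} by identifying the relevant $A_0$-algebra $A$, its generating degrees, and the number of generators, and then applying Proposition \ref{dimOfblow} to compute $\dim(A)$.

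\medskip

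\noindent\textbf{Setting up the Rees algebra case.} First I would take $A = \R(\II)$ and $A_0 = R$. By hypothesis $\R(\II) = R[I_{e_1}T^{e_1},\ldots,I_{e_\ell}T^{e_\ell}]$ is a finitely generated $F$-pure $R$-algebra, and since $I_1$ has positive height, Proposition \ref{dimOfblow} gives $\dim(\R(\II)) = \dim(R)+1$. Now I need to describe a homogeneous algebra generating set and its degrees. A minimal homogeneous generating set of $\R(\II)$ as an $R$-algebra is obtained by choosing, for each $i=1,\ldots,\ell$, a minimal generating set of the $R$-module $I_{e_i}$ (say with $v_i$ elements, matching the statement's convention that $v_1,\ldots,v_\ell$ are the numbers of generators), each contributing a generator of degree $e_i$ in the $T$-grading. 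This gives $\sum_{i=1}^\ell v_i$ algebra generators, and the multiset of generating degrees is $e_1$ repeated $v_1$ times, \dots, $e_\ell$ repeated $v_\ell$ times, so $\sum (\text{generating degrees}) = \sum_{i=1}^\ell e_i v_i$. Here one should be slightly careful that these are genuinely the \emph{generating degrees} in the sense of the excerpt (i.e.\ come from a minimal choice); since $A_0 = R$ need not be local or graded over a field, I would instead argue directly that Proposition \ref{PropDegGenReg} (and hence the proof of Lemma \ref{lemmaDefEq}) applies verbatim to \emph{any} homogeneous algebra generating set, so working with this explicit one is harmless. Plugging $\dim(A) = \dim(R)+1$, $r = \sum v_i$, and $\sum d_i = \sum e_i v_i$ into the bound of Lemma \ref{lemmaDefEq} yields exactly
\[
\dim(R)+1+\sum_{i=1}^\ell e_i v_i - \max\Bigl\{\dim(R)+1,\ \sum_{i=1}^\ell v_i\Bigr\}.
\]

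\medskip

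\noindent\textbf{The associated graded case.} For the second statement I would take $A = \gr(\II)$ and $A_0 = R/I_1$. Assuming $\gr(\II)$ is $F$-pure, and noting it is finitely generated (it is a quotient of $\R(\II)\otimes_R R/I_1$, or one uses that $\R(\II)$ Noetherian forces $\gr(\II)$ Noetherian), Proposition \ref{dimOfblow} gives $\dim(\gr(\II)) = \dim(R)$. The key observation is that $\gr(\II) = \R(\II)/I_1\R(\II)$ is generated as an $R/I_1$-algebra by the images of the same $\sum_{i=1}^\ell v_i$ elements, in the same degrees $e_i$; the images may fail to be a \emph{minimal} generating set, but again, since Lemma \ref{lemmaDefEq}'s proof only needs \emph{a} homogeneous generating set with those degrees, that is fine. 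Then Lemma \ref{lemmaDefEq} applied with $\dim(A) = \dim(R)$, $r = \sum v_i$, $\sum d_i = \sum e_i v_i$ gives precisely
\[
\dim(R)+\sum_{i=1}^\ell e_i v_i - \max\Bigl\{\dim(R),\ \sum_{i=1}^\ell v_i\Bigr\}.
\]

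\medskip

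\noindent\textbf{Main obstacle.} The only real subtlety is bookkeeping around the word ``generating degrees'': Lemma \ref{lemmaDefEq} as stated refers to the \emph{minimal} generating degrees, which are only guaranteed to be well-defined when $A_0$ is local or graded over a field, whereas here $A_0 = R$ (resp.\ $R/I_1$) is an arbitrary Noetherian ring. The fix — which I expect to be the crux of the write-up — is to check that the bound produced by the proof of Lemma \ref{lemmaDefEq} (which ultimately rests on Proposition \ref{PropDegGenReg} and the regularity estimate $\reg(\cI) \le \min\{r,\dim A\}$ coming from $a_i(\cI)\le 0$) depends only on the chosen homogeneous algebra generators and is monotone in the natural sense, so that using the explicit, possibly non-minimal generating set coming from generators of the $I_{e_i}$ still yields a valid (indeed the claimed) bound. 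Everything else is a direct substitution into Lemma \ref{lemmaDefEq} together with the dimension formulas of Proposition \ref{dimOfblow}.
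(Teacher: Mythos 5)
Your proposal is correct and follows exactly the paper's (one-line) argument: the paper simply cites Lemma \ref{lemmaDefEq} and Proposition \ref{dimOfblow}, which is precisely the substitution $\dim(A)=\dim(R)+1$ (resp.\ $\dim(R)$), $r=\sum v_i$, $\sum d_i=\sum e_i v_i$ that you carry out. Your additional care about whether the chosen algebra generators are minimal is a reasonable point that the paper's proof silently glosses over, and your observation that the argument of Lemma \ref{lemmaDefEq} only needs \emph{some} homogeneous generating set is the right way to dispose of it.
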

\begin{proof} 
This follows from Lemma \ref{lemmaDefEq} and Proposition \ref{dimOfblow}.
\end{proof}

\begin{theorem}\label{defEqblowup2}
Let $K$ be an $F$-finite field, and $R$ be an  $F$-split graded $K$-algebra, generated over $K$ by $\mu$ elements of degree one.
Let $\II=\{I_n\}_{n\in \NN}$ be a filtration such that  $\R(\II)$ is a finitely generated  $F$-split $R$-algebra. Let $e_1,\ldots, e_\ell$ be the generating degrees of $\R(\II)$ as an $R$-algebra, that is, $\R(\II)=R[I_{e_1}T^{e_1}, \ldots,  I_{e_\ell}T^{e_\ell}]$. Set $w_i=\beta_R(I_{e_i})$ for  $1\ls i\ls \ell$ and  let $\nu_1,\ldots, \nu_\ell$  be the number of generators of $I_1,\ldots, I_\ell$ respectively. 
 Further assume that $I_1$ has positive height. The defining equations of $\R(\II)$ over $K$ have total degree at most  
  \[
\dim(R) + 1 + \mu + \sum^\ell_{i=1}\nu_i(w_i+e_i) - \max\left\{\dim(R)+1, \mu+\sum_{i=1}^\ell v_i \right\}.
 \]
Moreover, if $\gr(\II)$ is  $F$-split, then 
 the defining equations of $\gr(\II)$ over $K$ have total degree at most  
  \[
\dim(R) + \mu + \sum^\ell_{i=1}\nu_i(w_i+e_i) - \max\left\{\dim(R), \mu+\sum_{i=1}^\ell v_i \right\}.
 \]
 \end{theorem}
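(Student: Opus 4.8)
The plan is to reduce Theorem~\ref{defEqblowup2} to Theorem~\ref{defEqblowup1} by presenting $\R(\II)$ and $\gr(\II)$ as algebras over the ground field $K$ rather than over $R$, and then bounding the generating degrees of these algebras over $K$ in terms of the invariants $\mu$, $e_i$, $w_i$, and $\nu_i$. The key point is that Lemma~\ref{lemmaDefEq} applies verbatim once we know the ring is $F$-pure and we control its generating degrees over $K$ and its Krull dimension. Since $R$ is an $F$-pure graded $K$-algebra generated in degree one by $\mu$ elements, and $\R(\II)$ (resp.\ $\gr(\II)$) is an $F$-pure $R$-algebra by hypothesis, both blowup algebras are $F$-pure $K$-algebras; their dimensions are $\dim(R)+1$ and $\dim(R)$ by Proposition~\ref{dimOfblow}, using that $I_1$ has positive height.

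First I would write down an explicit generating set of $\R(\II)$ over $K$. We have $\R(\II)=R[I_{e_1}T^{e_1},\dots,I_{e_\ell}T^{e_\ell}]$, and $R$ itself is generated over $K$ by $\mu$ elements $x_1,\dots,x_\mu$ of degree one; for each $i$, the module $I_{e_i}$ is generated over $R$ by $\nu_i$ elements, but to get a generating set over $K$ we instead use that $I_{e_i}$ is generated as an $R$-module (equivalently, as a $K$-vector space together with the degree-one generators of $R$) by homogeneous elements of degree at most $w_i=\beta_R(I_{e_i})$. Here is the subtlety: $\beta_R(I_{e_i})$ only tells us the top degree needed, not the number of generators in each degree, so to bound the \emph{number} of degree-one-or-less generating data of $\R(\II)$ over $K$ one has to be a little careful. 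The clean way is: $\R(\II)$ is generated over $K$ by the $\mu$ degree-one variables of $R$ together with, for each $i$, the elements $fT^{e_i}$ where $f$ ranges over a set of $\nu_i$ $R$-module generators of $I_{e_i}$ chosen of degree at most $w_i$; such $fT^{e_i}$ then has degree (in the $K$-grading of $\R(\II)$) at most $w_i+e_i$. This produces $\mu+\sum_i\nu_i$ generators over $K$, of degrees bounded by $1$ (the $x_j$) and $w_i+e_i$ (the $fT^{e_i}$). Then $\sum$ of generating degrees is at most $\mu\cdot 1 + \sum_i\nu_i(w_i+e_i)$, and the number of generators is $\mu+\sum_i\nu_i$ — wait, the paper writes $\mu+\sum v_i$ in the $\max$, with $v_i$ the number of generators of $I_i$, so I should reconcile $v_i$ versus $\nu_i$: in this theorem $\nu_i$ is defined to be the number of generators of $I_1,\dots,I_\ell$, so $v_i=\nu_i$ and the formulas match. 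Plugging into Lemma~\ref{lemmaDefEq} with $\dim(\R(\II))=\dim(R)+1$ gives exactly the stated bound
\[
\dim(R)+1+\mu+\sum_{i=1}^\ell\nu_i(w_i+e_i)-\max\Bigl\{\dim(R)+1,\ \mu+\sum_{i=1}^\ell v_i\Bigr\}.
\]

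For the associated graded algebra, the argument is identical: $\gr(\II)$ is $F$-pure by hypothesis, it is generated over $K$ (equivalently over $R/I_1$, which is a quotient of $R$ hence generated in degree one by at most $\mu$ elements) by the images of the same data, with the same degree bounds, and $\dim(\gr(\II))=\dim(R)$ by Proposition~\ref{dimOfblow}; Lemma~\ref{lemmaDefEq} then yields the second displayed bound. The main obstacle I expect is the bookkeeping in the passage from "module generators of $I_{e_i}$ over $R$" to "algebra generators of $\R(\II)$ over $K$", specifically making sure the quantity $w_i=\beta_R(I_{e_i})$ is used correctly as a degree bound rather than a generator count, and that the grading conventions (degree of $T^{e_i}$ equals $e_i$, degree of $x_j$ equals $1$) are consistent so that $\deg_K(fT^{e_i})\le w_i+e_i$; once the generating degrees over $K$ are pinned down, the rest is a direct invocation of Lemma~\ref{lemmaDefEq} plus the arithmetic already carried out in the proof of Theorem~\ref{defEqblowup1}.
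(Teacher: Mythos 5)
Your proposal is correct and follows exactly the paper's (very terse) proof: present the blowup algebras as graded $K$-algebras with generating degrees $1$ (the $\mu$ generators of $R$) and at most $w_i+e_i$ (the elements $fT^{e_i}$), compute their dimensions via Proposition~\ref{dimOfblow}, and feed this into Lemma~\ref{lemmaDefEq}. Your bookkeeping of the $K$-grading and the reconciliation of $v_i$ with $\nu_i$ is precisely the ``easy calculation'' the paper leaves implicit.
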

\begin{proof} 
Both parts of the result follow from Lemma \ref{lemmaDefEq} and Proposition \ref{dimOfblow}.
\end{proof}


\section{$F$-split filtrations }

Throughout this section  we assume the following setup.

\begin{setup}\label{setupFpure}
Let $R$ be a Noetherian $F$-finite and  $F$-split ring  of  characteristic $p>0$ which is either local or $\NN$-graded. In the local case, we let $\m$ denote its unique maximal ideal, and $K=R/\m$ its residue field. In the graded case, we assume that $R=\bigoplus_{n \geq 0} R_0$ is a finitely generated $R_0$-algebra, where $(R_0,\m_0)$ is a local ring. We let $R_+ = \bigoplus_{n >0} R_n$, and $\m=\m_0+R_+$. We further assume that $R$ generated in degree one, that is, $R=R_0[R_1]$. In the graded case, every object we consider is homogeneous with respect to the given grading.
\end{setup}

\subsection{$F$-split filtrations of ideals} 

We introduce the main object of study of this paper:  {\it  $F$-split filtrations}. For a related notion in the case of ordinary powers, see \cite{LauritzenThomsen}.

\begin{definition}\label{DefFpureFilt}
Assume Setup \ref{setupFpure}. We say that a sequence of $R$-ideals $\II=\{I_n\}_{n\in \NN}$ is an {\it  $F$-split filtration} if  $I_0=R$, $I_{n+1}\subseteq I_n$ for every $n\in \NN$, $I_nI_m\subseteq I_{n+m}$ for every $n,m\in \NN$, and there exists a splitting $\phi:R^{1/p}\to R$ such that
$\phi\big((I_{np+1})^{1/p}\big)\subseteq I_{n+1}$ for every $n\in \NN$.  
\end{definition}

We now study properties regarding $F$-splittings, depth and regularity for ideals appearing in  these filtrations (see  Theorem  \ref{mainCharP} and Theorem \ref{mainRegDepth}). In particular, we will show that $F$-split filtrations yield  $F$-split blowup algebras.

\begin{remark} 
Suppose that $\phi:R^{1/p}\to R$ is a surjective map such that
$\phi\big((I_{np+1})^{1/p}\big)\subseteq I_{n+1}$ for every $n\in \NN$.
Let $g\in R$ such that $\phi (g^{1/p})=1$. Then, $\varphi(-)=\phi(g^{1/p}-)$ induces a splitting such that
$\varphi\big((I_{np+1})^{1/p}\big)\subseteq I_{n+1}$ for every $n\in \NN$. Then, it suffices to assume that $\phi$ is surjective in Definition \ref{DefFpureFilt}
\end{remark}

\begin{remark}\label{symbFpareFp} 
We observe that if $\II=\{I_n\}_{n\in \NN}$ is an  $F$-split filtration, then $R/I_1$ is  $F$-split. In fact, by considering $n=0$ in Definition \ref{DefFpureFilt}, one gets an induced splitting $\phi:(R/I_1)^{1/p} \to R/I_1$. In particular, $I_1$ is a  radical ideal.
\end{remark}

\begin{proposition}\label{from1toall}
Assume Setup \ref{setupFpure} and  let $\II = \{I_n\}_{n \in \NN}$  be a filtration. The following statements are equivalent:
\begin{enumerate}
\item $\II$ is an  $F$-split filtration.
\item There exists a splitting $\phi_e:R^{1/p^e}\to R$ such that
$\phi_e\big((I_{np^e+s})^{1/p^e}\big) = I_{n+1}$ for every $e\in \ZZ_{>0}$, $n\in\NN$, and $s\in \ZZ$ such that $1\ls s\ls p^e$.
\item There exists a splitting $\phi_e:R^{1/p^e}\to R$ such that
$\phi_e\big((I_{np^e+1})^{1/p^e}\big)\subseteq I_{n+1}$ for some $e\in \ZZ_{>0}$ and every $n\in\NN$.
\end{enumerate}
\end{proposition}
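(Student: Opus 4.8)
The plan is to establish the cycle of implications $(2)\Rightarrow(3)\Rightarrow(1)\Rightarrow(2)$; note that a careful reading shows the only genuine content is the passage from the hypothesis at level $p$ (a single power, $s=1$) to the full statement at all powers $p^e$ and all residues $1\le s\le p^e$, so the bulk of the work lies in $(1)\Rightarrow(2)$. The implications $(2)\Rightarrow(3)$ is immediate (take $s=1$ and one value of $e$), and $(3)\Rightarrow(1)$ will follow by a descent argument: if $\phi_e\big((I_{np^e+1})^{1/p^e}\big)\subseteq I_{n+1}$, one factors $\phi_e$ through intermediate roots $R^{1/p^e}\subseteq R^{1/p^{e-1}}\subseteq\cdots\subseteq R^{1/p}\subseteq R$; writing $\phi_e=\psi\circ(\text{restriction})$ and using that $R$ is $F$-pure (hence each $R^{1/p^{j}}$ is a direct summand of $R^{1/p^{j+1}}$), one extracts from $\phi_e$ a splitting $\phi:R^{1/p}\to R$ with $\phi\big((I_{np+1})^{1/p}\big)\subseteq I_{n+1}$. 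The cleanest route here is to observe that $I_{np^e+1}\supseteq I_{((np^{e-1})p+1)p^{e-1}}\supseteq\big(I_{(np^{e-1})p+1}\big)^{[p^{e-1}]}$-type containments coming from the multiplicativity $I_aI_b\subseteq I_{a+b}$, allowing one to peel off one power of $p$ at a time.

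The heart of the argument, $(1)\Rightarrow(2)$, I would do by induction on $e$. For $e=1$: given the splitting $\phi=\phi_1$ with $\phi\big((I_{np+1})^{1/p}\big)\subseteq I_{n+1}$, first upgrade $\subseteq$ to $=$ by noting $\phi$ is surjective and $I_{n+1}=\phi(R^{1/p}\cdot(I_{np+1})^{1/p})\subseteq$ (the image) by using $I_{n+1}^{[p]}\subseteq I_{(n+1)p}\subseteq I_{np+1}$, so $I_{n+1}=(I_{n+1}^{[p]})^{1/p}\cdot$(splitting)$\subseteq\phi\big((I_{np+1})^{1/p}\big)$; and upgrade $s=1$ to arbitrary $1\le s\le p$ using $I_{np+s}\supseteq I_{np+p}=I_{(n+1)p}\supseteq I_{n+1}^{[p]}$ for the lower bound and $I_{np+s}\subseteq I_{np+1}$ for the upper bound, which sandwiches $\phi\big((I_{np+s})^{1/p}\big)$ between $I_{n+1}$ and $I_{n+1}$. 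For the inductive step, set $\phi_{e+1}=\phi_e\circ\iota_e$ where $\iota_e\colon R^{1/p^{e+1}}\to R^{1/p^e}$ is induced by $\phi_1$ applied to $p^e$-th roots, i.e.\ realize $\phi_{e+1}$ as the composite of $e+1$ copies of $\phi_1$; then chase the containment $I_{np^{e+1}+s}\subseteq I_{mp^e+1}$ for a suitable $m$ with $n\le \cdots$, apply $\phi_e$ to get inside $I_{m+1}\subseteq I_{np+1}$, and apply $\phi_1$ once more. The bookkeeping of indices is the main technical obstacle: one must track how $np^{e+1}+s$ factors as $(\text{something})\cdot p + (\text{residue in }[1,p])$ at each of the $e+1$ stages and verify at every stage that the residue stays in the admissible range $[1,p^{\bullet}]$, which is exactly where the multiplicativity $I_aI_b\subseteq I_{a+b}$ and the elementary inequality $\lceil (np^{e+1}+s)/p\rceil = np^e + \lceil s/p\rceil$ with $1\le\lceil s/p\rceil\le p^e$ get used.

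I expect the genuinely delicate point to be the descent $(3)\Rightarrow(1)$ (equivalently, the converse direction in the induction), because there one is given a splitting only at a high power $p^e$ and must produce one at the first power, and it is not a priori obvious that an arbitrary $\phi_e$ factors through $R^{1/p}$ in a way compatible with the filtration. The resolution I would use: since $R$ is $F$-finite and $F$-pure, $\Hom_R(R^{1/p},R)\otimes_R\cdots$ generates all higher $\Hom_R(R^{1/p^e},R)$ by composition — more precisely every $R$-linear $\theta:R^{1/p^e}\to R$ with $\theta(1)=1$ can, after the standard premultiplication trick from the Remark following Definition \ref{DefFpureFilt}, be assumed surjective, and then one defines $\phi:R^{1/p}\to R$ by $\phi(x)=\theta\big((x^{p^{e-1}})^{1/p^e}\big)$ wait — better, iterate: $\theta$ restricted to $R^{1/p}$ (viewed inside $R^{1/p^e}$ via the Frobenius) composed appropriately yields the desired $\phi$, and the filtration condition transfers because $(I_{np+1})^{[p^{e-1}]}\subseteq I_{(np+1)p^{e-1}}\subseteq I_{np^e+1}$ (using $p^{e-1}\ge 1$ and $(np+1)p^{e-1}=np^e+p^{e-1}\ge np^e+1$). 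Once this factorization is in hand, everything reduces to the case $e=1$ handled above, and the proposition follows. The only other thing to be careful about is the graded case, where one must check all the auxiliary splittings can be taken homogeneous of degree $0$ — but this is automatic since in Setup \ref{setupFpure} $R$ is generated in degree one and the trace map of Remark \ref{Trace} is homogeneous, so I would simply remark that every construction above preserves homogeneity.
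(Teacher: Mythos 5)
Your proposal is correct and follows essentially the same route as the paper: for $(1)\Rightarrow(2)$ you build $\phi_e$ as the composite of $e$ copies of the level-one splitting (the paper's maps $\varphi_j(r^{1/p^j})=(\phi(r^{1/p}))^{1/p^{j-1}}$) and obtain the equality for all $1\ls s\ls p^e$ by the same sandwich $I_{n+1}\subseteq \phi_e\big(I_{n+1}R^{1/p^e}\big)\subseteq \phi_e\big((I_{np^e+s})^{1/p^e}\big)\subseteq I_{n+1}$, while for $(3)\Rightarrow(1)$ your final formulation — precompose $\phi_e$ with the inclusion $R^{1/p}\hookrightarrow R^{1/p^e}$ and use $(I_{np+1})^{[p^{e-1}]}\subseteq I_{np^e+p^{e-1}}\subseteq I_{np^e+1}$ — is exactly the paper's argument. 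The intermediate hesitations (the unnecessary "premultiplication" detour and the garbled index in the first sketch of the descent) are resolved correctly by the end, so there is no substantive difference.
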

\begin{proof}
We consider the implication $(1)\Rightarrow (2)$. Let $\phi$ be as in Definition \ref{DefFpureFilt}.
For every $j>0$ we consider the $R$-linear map $\varphi_j:R^{1/p^j}\to R^{1/p^{j-1}}$ defined as $\varphi_j(r^{1/p^j})=(\phi(r^{1/p}))^{1/p^{j-1}}$.
We observe that 
$\varphi_j\big((I_{np^j+s})^{1/p^j}\big)\subseteq \varphi_j\big((I_{np^j+1})^{1/p^j}\big)\subseteq (I_{np^{j-1}+1})^{1/p^{j-1}}$ for every $n\in \NN$ and $j,s\in \ZZ_{>0}$ .
Then,  we have
$$
\varphi_1\circ \varphi_{2}\circ\cdots \circ \varphi_e\big((I_{np^e+s})^{1/p^e}\big)\subseteq I_{n+1}
$$
for every  $e>0$, $n\geq 0$, and $s>0$. Set $\phi_e := \varphi_1\circ \varphi_{2}\circ\cdots \circ \varphi_e$. It remains to show that $I_{n+1}\subseteq \phi_e\big((I_{np^e+s})^{1/p^e}\big) $ for $s\ls p^e$.
But this inclusion follows by noticing that
  $$I_{n+1}\subseteq \phi_e\big(I_{n+1}R^{1/p^e}\big)\subseteq \phi_e\big((I_{np^e+s})^{1/p^e}\big)$$
for $s\ls p^e$.

Since $(2)\Rightarrow (3)$ is clear, it remains to show the implication $(3)\Rightarrow (1)$. 
We consider the natural inclusion $\iota:R^{1/p}\to R^{1/p^e}$ and set $\phi:= \phi_e\circ\iota$.
We note that, 
$\iota\big( (I_{np+1})^{1/p}\big)\subseteq (I_{np^e+p^{e-1}})^{1/p^e}\subseteq (I_{np^e+1})^{1/p^e}$.
As a consequence we have that
$$
\phi\big( (I_{np+1})^{1/p}\big) \subseteq \phi_e\big( (I_{np^e+1})^{1/p^e}\big) \subseteq  I_{n+1}$$ for every $n\in \NN$, and the result follows.
\end{proof}

For ideals in a regular ring    $R$, we state an effective criterion for  $F$-split filtrations  analogous to the classical one by Fedder \cite{FedderFputityFsing}.
\begin{proposition}\label{PropFedder} Assume Setup \ref{setupFpure} with $R$ regular. In the graded case, we further assume that $R_0$ is a field, so that $\m=R_+$.  
We have that
$\II=\{I_n\}_{n\in \NN}$ is an  $F$-split filtration if and only 
$$\bigcap_{n\in\NN} \big(( I_{n+1})^{[p]}:_R I_{np+1}\big) \not\subseteq\m^{[p]}.$$
\end{proposition}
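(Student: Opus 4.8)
The plan is to reduce this to the classical Fedder-type criterion applied to the $e$-th iterate of Frobenius, using the equivalence of Proposition \ref{from1toall} to pass between powers of $p$. First I would recall, via Remark \ref{Trace}, that since $R$ is regular (and $R_0$ a field in the graded case), the module $\Hom_R(R^{1/p^e},R)$ is free over $R^{1/p^e}$ on a single generator $\Phi_e$, namely the $e$-th trace map built from a splitting $\gamma$ of $K^{1/p^e} \to K$. Any $R$-linear map $R^{1/p^e} \to R$ then has the form $\phi = \Phi_e(f^{1/p^e} \cdot -)$ for a unique $f \in R$, and by \cite[Proposition 1.6]{FedderFputityFsing} one has $\phi\big((I_{np^e+1})^{1/p^e}\big) \subseteq I_{n+1}$ precisely when $f \in \big((I_{n+1})^{[p^e]} :_R I_{np^e+1}\big)$, while $\phi$ is surjective precisely when $f \notin \m^{[p^e]}$.

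Next I would carry out the forward direction. If $\II$ is an $F$-pure filtration, Proposition \ref{from1toall} gives, for every $e$, a splitting $\phi_e : R^{1/p^e} \to R$ with $\phi_e\big((I_{np^e+1})^{1/p^e}\big) \subseteq I_{n+1}$ for all $n$; writing $\phi_e = \Phi_e(f^{1/p^e}\cdot -)$, the above translates to $f \in \bigcap_n \big((I_{n+1})^{[p^e]} :_R I_{np^e+1}\big)$ and, since $\phi_e$ is surjective, $f \notin \m^{[p^e]}$. Taking $e=1$ gives exactly $\bigcap_{n}\big((I_{n+1})^{[p]} :_R I_{np+1}\big) \not\subseteq \m^{[p]}$. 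For the converse, suppose $g \in \bigcap_{n}\big((I_{n+1})^{[p]} :_R I_{np+1}\big)$ with $g \notin \m^{[p]}$. Then $\phi := \Phi_1(g^{1/p}\cdot -)$ is a map $R^{1/p}\to R$ with $\phi\big((I_{np+1})^{1/p}\big) \subseteq I_{n+1}$ for every $n$ and, because $g\notin \m^{[p]}$, $\phi$ is surjective; by the remark following Definition \ref{DefFpureFilt} (composing with multiplication by an element mapping to a unit) $\phi$ may be adjusted to an honest splitting $\varphi$ still satisfying $\varphi\big((I_{np+1})^{1/p}\big)\subseteq I_{n+1}$, which is exactly the condition that $\II$ be an $F$-pure filtration. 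In the graded case one checks that $g$, hence $\Phi_1$ and $\varphi$, can be chosen homogeneous, which is automatic since the colon ideals and $\m^{[p]}$ are homogeneous and $\Phi_1$ is a graded generator.

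The only genuinely delicate point is that Definition \ref{DefFpureFilt} demands a \emph{single} splitting $\phi$ working simultaneously for all $n$, whereas a priori the intersection condition might only be witnessed by different elements for different $n$; this is resolved precisely because the criterion is phrased as $\bigcap_{n}\big((I_{n+1})^{[p]} :_R I_{np+1}\big) \not\subseteq \m^{[p]}$, so a single $g$ lies in every colon ideal at once. The other subtlety — that an intersection of infinitely many ideals need not be finitely generated, and that the containment $\bigcap_n(\cdots)\subseteq \m^{[p]}$ is the correct negation — is handled by noting we only need existence of one element outside $\m^{[p]}$, for which no Noetherianity of the intersection is required. Thus the main obstacle is purely bookkeeping: matching the trace-map description of splittings with the colon-ideal condition of Fedder's lemma, and verifying the "surjective can be upgraded to splitting" step in the graded setting, both of which are routine given Remark \ref{Trace} and the remark after Definition \ref{DefFpureFilt}.
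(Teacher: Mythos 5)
Your proposal is correct and follows essentially the same route as the paper: both use the trace map $\Phi$ as a free generator of $\Hom_R(R^{1/p},R)$ over $R^{1/p}$, translate the containments $\phi\big((I_{np+1})^{1/p}\big)\subseteq I_{n+1}$ into membership of $f$ in the intersection of colon ideals via Fedder's lemma, identify surjectivity with $f\notin\m^{[p]}$, and upgrade a surjective map to a splitting via the remark after Definition \ref{DefFpureFilt}. Your detour through Proposition \ref{from1toall} in the forward direction is unnecessary (the definition itself already provides the $e=1$ splitting) but harmless.
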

\begin{proof}
Since $R$ is regular, 
we can pick the trace $\Phi$, which is a generator of $\Hom_R(R^{1/p},R)$ as a free $R^{1/p}$-module described, see Remark \ref{Trace}. Then, for $f\in R$ and $\phi:=f^{1/p} \cdot \Phi=\Phi(f^{1/p}-)$ we have
$\phi\big((I_{np+1})^{1/p}\big)\subseteq I_{n+1}$  for every $n\in \NN$
if and only if
$$f\in\bigcap_{n\in\NN} \left(( I_{n+1})^{[p]}:_R I_{np+1}\right),$$
by Remark \ref{Trace}.  
In addition,
 $\phi$ is surjective if and only if $f \not\in \m^{[p]}$, and the result follows.
\end{proof}

\subsection{ $F$-split blowup algebras} In this subsection we obtain the first  significant property for $F$-split filtrations. In the following theorem  we prove  that if $\II$ is an  $F$-split filtration,  then the algebras in Definition \ref{blowup} are  $F$-split. This is one of the main motivations for introducing  $F$-split filtrations.

\begin{theorem}\label{mainCharP}
Assume Setup \ref{setupFpure}. If $\II$ is an  $F$-split filtration,
then $\R(\II)$ and $\gr(\II)$ are  $F$-split.
\end{theorem}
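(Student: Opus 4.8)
The plan is to construct an explicit Frobenius splitting of each blowup algebra from the given splitting $\phi$ of $R$, using the graded structure of the Rees and associated graded algebras. Recall that $\R(\II) = \bigoplus_{n \geq 0} I_n T^n \subseteq R[T]$ and $\gr(\II) = \bigoplus_{n \geq 0} I_n / I_{n+1}$. Since $R$ is $F$-finite, so are $\R(\II)$ and $\gr(\II)$ (they are finitely generated algebras over $R$, at least after checking Noetherianity is not needed here — $F$-finiteness passes to finitely generated algebras and, more basically, to quotients and to the Rees construction via the ambient $R[T]$), so by Remark \ref{FpureFsplit} it suffices to produce a splitting of Frobenius.

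First I would handle $\R(\II)$. The key point is that Proposition \ref{from1toall} upgrades the hypothesis to a splitting $\phi_e : R^{1/p^e} \to R$ with $\phi_e\big((I_{np^e+s})^{1/p^e}\big) = I_{n+1}$ for all $e > 0$, $n \geq 0$, and $1 \leq s \leq p^e$; taking $e = 1$ and also noting $\phi(R^{1/p}) = R$, $\phi(I_n^{1/p}) \subseteq \phi((I_{np+1})^{1/p}) \subseteq \cdots$ one sees $\phi$ respects the filtration appropriately. Now $\R(\II)^{1/p}$ is $\frac{1}{p}\NN$-graded, and a general element of degree $\frac{m}{p}$ sitting inside $R[T]^{1/p}$ looks like $x^{1/p} T^{m/p}$ with $x \in I_m$. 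I would define $\Psi : \R(\II)^{1/p} \to R[T]$ on monomials in $T^{1/p}$ by
\[
\Psi\big(x^{1/p} T^{k/p}\big) = \begin{cases} \phi(x^{1/p})\, T^{(k - p + 1)/p} & \text{if } p \mid (k-p+1),\\ 0 & \text{otherwise,}\end{cases}
\]
extended $R$-linearly (this is precisely the standard splitting of $R[T] \supseteq \R(\II)$ twisted by $\phi$ in the coefficients, analogous to the trace map of Remark \ref{Trace} in the variable $T$). Two things then need checking: that $\Psi$ actually lands in $\R(\II)$, i.e. that the $T^n$-coefficient lies in $I_n$; and that $\Psi$ restricts to a splitting on $\R(\II)$. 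For the first, if $x^{1/p} T^{k/p}$ is a homogeneous component of an element of $\R(\II)^{1/p}$ of degree $\frac{m}{p}$ then $x \in I_m$ and, writing $k = p(n) + (p-1)$ so the output degree is $n$ and $m = np + (p-1) = (n)p + s$ with $s = p - 1$... more carefully one arranges $m = np + s$ with $1 \le s \le p$, and then $\phi(x^{1/p}) \in \phi\big((I_{np+s})^{1/p}\big) \subseteq I_{n+1}$ when $s \geq 1$; a small bookkeeping with the floor function shows the output sits in the correct graded piece $I_n T^n$ or $I_{n+1}T^{n+1}$. For the splitting property, evaluate $\Psi$ on $y T^n \in \R(\II)$ with $y \in I_n$: its image under Frobenius is $y^p T^{np}$, and $\Psi(y^{p/p} T^{np/p})$ picks out the term with $np - p + 1 \equiv 0$... one must instead twist so that the identity is recovered, which is done by choosing the coefficient splitting $\phi$ (surjective) together with an element $g$ as in the Remark after Definition \ref{DefFpureFilt} so that $\phi(g^{1/p}) = 1$; then composing with multiplication by $g^{1/p} T^{(p-1)/p}$ one gets $\Psi \circ F = \mathrm{id}$ on $\R(\II)$.

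For $\gr(\II)$, I would observe that $\gr(\II) = \R(\II) / I_1\text{-stuff}$; more precisely there is a surjection $\R(\II) \onto \gr(\II)$ obtained by killing $\bigoplus_n I_{n+1} T^n$, equivalently $\gr(\II) \cong \R(\II)/ (I_1 \R(\II) + (\text{positive-degree part shifted}))$. The cleanest route: use the extended Rees algebra $B = \bigoplus_{n \in \ZZ} I_n T^n$ with $I_n = R$ for $n \leq 0$ as in Proposition \ref{dimOfblow}, note $B / (T^{-1}) \cong \gr(\II)$, show the splitting $\Psi$ above extends to $B$ (the negative-degree pieces are copies of $R$ with the trace map of $R[T, T^{-1}]$, compatibly), and that $\Psi$ maps $T^{-1} B^{1/p}$ into $T^{-1} B$ — i.e. $\Psi$ descends modulo $(T^{-1})$ — because $\Psi(T^{-1/p} \cdot -)$ lands in $T^{-1}\cdot(\text{stuff})$ by the degree shift. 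A Frobenius splitting of $B$ compatible with the ideal $(T^{-1})$ induces one on the quotient $B/(T^{-1}) = \gr(\II)$, giving $F$-purity of $\gr(\II)$. Alternatively one defines a splitting on $\gr(\II)$ directly by the same formula, reading coefficients in $I_n/I_{n+1}$; the containment $\phi((I_{np+1})^{1/p}) \subseteq I_{n+1}$ guarantees the induced map on successive quotients is well defined.

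I expect the main obstacle to be the bookkeeping that makes $\Psi$ both land in the subalgebra $\R(\II)$ (this is where the filtration condition $\phi((I_{np+s})^{1/p}) \subseteq I_{n+1}$, in the strong form from Proposition \ref{from1toall}, is used in an essential way — without it the coefficient of $T^n$ would only be known to lie in $R$, not in $I_n$) and simultaneously restrict to a genuine splitting, together with pinning down the correct power of $T^{1/p}$ and the correct coefficient-level splitting so that $\Psi \circ F = \mathrm{id}$ rather than merely $\Psi \circ F$ being surjective. The graded case adds only the routine verification that all maps constructed are homogeneous of degree zero, which is immediate from the construction; and the deduction for $\gr(\II)$ from $B/(T^{-1})$ uses the standard fact that a compatibly-split ideal yields a split quotient, which I would cite or reprove in one line.
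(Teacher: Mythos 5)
Your proposed splitting, after the twist by $T^{(p-1)/p}$, is precisely the paper's map $\varphi(r^{1/p}T^{n/p})=\phi(r^{1/p})T^{n/p}$ for $p\mid n$ (and $0$ otherwise), and the containment you isolate, $\phi\big((I_{np+s})^{1/p}\big)\subseteq I_{n+1}$ for $1\ls s\ls p$, is exactly the input the paper uses; likewise your ``alternative'' route for $\gr(\II)$ --- descending along the compatibly split ideal $\bigoplus_{n} I_{n+1}T^n$ of $\R(\II)$ --- is the paper's argument verbatim. So the proposal is correct and essentially the same as the paper's proof; the only superfluous point is the worry about $F$-finiteness of $\R(\II)$, which is not needed since exhibiting a splitting already yields purity.
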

\begin{proof}
Let $\phi$ be  such that $\II$ is   $F$-split with respect to $\phi$, see Definition \ref{DefFpureFilt}.
We note that $\R(\II)$ is reduced. Then, we can consider the ring of $p$-th roots $\R(\II)^{1/p}=\bigoplus_{n \in \NN}(I_{n})^{1/p}T^{n/p}$. 
We define $\varphi:\R(\II)^{1/p}\to  \R(\II)$ as the homogeneous homomorphism of $\R(\II)$-modules  induced  by
 $\varphi(r^{1/p} T^{n/p}) =\phi(r^{1/p})T^{n/p}$ if $p$ divides $n$, and $\varphi(r^{1/p} T{n/p}) =0$ otherwise. 
The map $\varphi$ is well-defined because
$$
\phi \big(( I_{(n+1)p} )^{1/p}\big) \subseteq 
\phi \big(( I_{np+1} )^{1/p}\big) \subseteq   I_{n+1} 
$$
for every $n\in \NN$, and it is $\R(\II)$-linear since $\phi$ is $R$-linear.
If $r\in I_{n}\subseteq ( I_{np})^{1/p}$, then $\varphi(rt^{np/p})=\phi(r) T^n= rT^n$ because $\phi$ is 
a splitting. We conclude that $\varphi$ is a splitting of the inclusion $\R(\II) \to \R(\II)^{1/p}$, and hence $\R(\II)$ is  $F$-split.

Consider the ideal $\cJ=\bigoplus_{n\in \NN} I_{n+1}T^n\subseteq  \R(\II)$. Since 
$\phi\big( ( I_{np+1})^{1/p}\big)\subseteq I_{n+1}$ for every $n\in\NN$, we obtain
$\varphi\big(( I_{np+1})^{1/p} T^{np/p}\big)\subseteq  I_{n+1} T^n$ for every $n\in\NN$.
Therefore, $\varphi(\cJ^{1/p})\subseteq \cJ$. This induces a splitting $\overline{\varphi}: (\R(\II)/\cJ)^{1/p} \to \R(\II)/\cJ$ and then $\R(\II)/\cJ \cong \gr(\II)$ is also  $F$-split. 
\end{proof}

\begin{remark}\label{remOnlyR(I)F-pure}
We remark that in order to prove that $\R(\II)$ is  $F$-split, we only need a splitting $\phi':R^{1/p}\to R$ such that $\phi'\big((I_{(n+1)p})^{1/p}\big)\subseteq I_{n+1}$ for every $n\in \NN$. The stronger requirement in the definition of  $F$-split filtrations is to ensure that $\gr(\II)$ is  $F$-split as well.
\end{remark}


\subsection{Depth and regularity of  $F$-split filtrations}

In this subsection we study the asymptotic behavior the depth and Castelnuovo-Mumford regularity of  $F$-split filtrations. We assume Setup \ref{setupFpure}. In the graded case, by depth of a graded $R$-module we mean its grade with respect to the maximal ideal $\m$, i.e., the length of a maximal regular sequence for $M$ inside $\m_0 + {R_+}$. On the other hand, by Castelnuovo-Mumford regularity we mean the regularity computed with respect to the ideal ${R_+}$.

In  Theorem \ref{mainRegDepth} we show that the sequences $\{\depth(I_n)\}_{n\in \NN}$ and $\{\frac{\reg(I_n)}{n}\}_{n\in \NN}$ converge to a limit under mild assumptions. We begin with the following technical result.

 \begin{proposition}\label{PropSymbFpureDepthAinv}
Assume Setup \ref{setupFpure}, and let $\II = \{I_n\}_{n \in \NN}$ be an  $F$-split filtration. Then
 \begin{enumerate}
  \item $\depth(I_n)\ls \depth(I_{\lceil\frac{n}{p^e}\rceil})$ for every $n,\, e\in \NN$.
 \item If $R$ is graded, then $a_i(I_{n})\gs p^ea_i(I_{\lceil\frac{n}{p^e}\rceil})$ for every $n,\, e\in \NN$ and $0\ls i\ls \dim(R/I_1)$. 
 \end{enumerate}
 \end{proposition}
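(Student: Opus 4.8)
The plan is to produce, for each $n$ and $e$, a direct-summand decomposition $(I_n)^{1/p^e}\cong I_{\ceil{n/p^e}}\oplus C$ of $R$-modules (graded, by degree-preserving maps, in the graded case), and then to extract both inequalities from how depth and $a$-invariants behave under the exact functor $(-)^{1/p^e}$ and under finite direct sums.

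First I would fix $n,e\in\NN$, set $m=\ceil{n/p^e}$, and write $n=(m-1)p^e+s$ with $1\ls s\ls p^e$. Proposition \ref{from1toall} supplies a Frobenius splitting $\phi_e\colon R^{1/p^e}\to R$ with $\phi_e\big((I_{(m-1)p^e+s})^{1/p^e}\big)=I_m$; in the graded case $\phi_e$ is homogeneous of degree $0$, since it splits the degree-preserving inclusion $\iota\colon R\hookrightarrow R^{1/p^e}$. Restricting $\phi_e$ therefore yields a surjection $(I_n)^{1/p^e}\twoheadrightarrow I_m$ of (graded) $R$-modules. The key step --- and the only place any real idea is needed --- is to see that this surjection splits. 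Here I would use the multiplicativity of the filtration: $I_m^{[p^e]}\subseteq I_m^{p^e}\subseteq I_{mp^e}\subseteq I_n$, the last containment because $mp^e\gs n$. This means $\iota$ restricts to a (degree-preserving) $R$-linear map $I_m\hookrightarrow (I_n)^{1/p^e}$, and $\phi_e\circ\iota=\operatorname{id}_{I_m}$ because $\phi_e$ splits Frobenius; hence $(I_n)^{1/p^e}\cong I_m\oplus C$ with $C=\ker\big(\phi_e|_{(I_n)^{1/p^e}}\big)$ a finitely generated (graded) $R$-module.

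From the decomposition, part $(1)$ is immediate: depth over the (graded-)local ring $R$ takes direct sums to minima, so $\depth(I_m)\gs\depth\big((I_n)^{1/p^e}\big)$, and $\depth\big((I_n)^{1/p^e}\big)=\depth(I_n)$ because a sequence $x_1,\ldots,x_k$ in $\m$ is regular on $(I_n)^{1/p^e}$ exactly when $x_1^{p^e},\ldots,x_k^{p^e}$ is regular on $I_n$, and depth can be computed using regular sequences of $p^e$-th powers (as $\m^{[p^e]}$ and $\m$ have the same radical, and one may always choose a $p^e$-th power avoiding finitely many associated primes). For part $(2)$, $H^i_{R_+}(-)$ commutes with finite direct sums, so $a_i\big((I_n)^{1/p^e}\big)=\max\{a_i(I_m),a_i(C)\}\gs a_i(I_m)$; combining with $a_i\big((I_n)^{1/p^e}\big)=a_i(I_n)/p^e$ (exactness of $(-)^{1/p^e}$) gives $a_i(I_n)\gs p^ea_i(I_{\ceil{n/p^e}})$ for all $i$, in particular in the stated range $0\ls i\ls\dim(R/I_1)$.

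I expect no serious obstacle: the content is concentrated in the splitting, which only combines the already-established Proposition \ref{from1toall} (for surjectivity onto $I_m$) with the elementary inclusion $I_m^{p^e}\subseteq I_{mp^e}$ (for the section). The points requiring a little care are the degree-$0$ claim for $\phi_e$, the invariance of depth under $(-)^{1/p^e}$, and the $a$-invariant scaling, all of which must be phrased correctly in the $\tfrac{1}{p^e}\NN$-graded setting.
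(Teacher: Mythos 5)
Your proposal is correct and follows essentially the same route as the paper: both arguments rest on Proposition \ref{from1toall} providing the splitting $\phi_e$ of the natural inclusion $I_{\lceil n/p^e\rceil}\hookrightarrow (I_n)^{1/p^e}$ (which exists because $I_{\lceil n/p^e\rceil}^{[p^e]}\subseteq I_n$), and then transfer depth and $a$-invariants through the resulting direct-summand relation together with the identification $H^i_{\bullet}\big((I_n)^{1/p^e}\big)=\big(H^i_{\bullet}(I_n)\big)^{1/p^e}$. The only cosmetic difference is that you package the splitting as a module decomposition $(I_n)^{1/p^e}\cong I_{\lceil n/p^e\rceil}\oplus C$ and argue part (1) via behavior of depth under direct sums, whereas the paper reads off the vanishing of the local cohomology summands directly.
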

\begin{proof}
By Proposition \ref{from1toall} the natural map $\iota :I_{n+1}\to (I_{np^e+s})^{1/p^e}$ splits for every $n,\, e\in \NN $ and $1\ls s\ls p^e$ via a splitting $\phi_e$. 
Therefore,  the module $\HH{i}{\m}{I_{n+1}}$ is a direct summand of $\HH{i}{\m}{(I_{np^e+s})^{1/p^e}}$ for every $1\ls i\ls\dim(R/I_1)$. 
We note that  
\begin{equation*}\label{eqds}
\HH{i}{\m}{(I_{np^e+s})^{1/p^e}}=\left( \HH{i}{\m}{ I_{np^e+s}}\right)^{1/p^e}.
\end{equation*}
Thus, 
$\HH{i}{\m}{I_{np^e+s}}=0$
implies that  $\HH{i}{\m}{(I_{np^e+s})^{1/p^e}}=0$, and hence $\HH{i}{\m}{R/I_{n+1}}=0$.
Therefore, we have that $\depth(I_{n+1})\gs \depth(  I_{np^e+s})$, which proves the first part.

We note that  $\HH{i}{{R_+}}{I_{n+1}}$ is also a direct summand of $\HH{i}{{R_+}}{(I_{np^e+s})^{1/p^e}}$ and 
\begin{equation*}
\HH{i}{{R_+}}{(I_{np^e+s})^{1/p^e}}=\left( \HH{i}{{R_+}}{ I_{np^e+j}}\right)^{1/p^e},
\end{equation*}
thus we obtain
$$
a_i(I_{n+1})\ls a_i \left( (I_{np^e+s})^{1/p^e}\right)=\frac{1}{p^e} a_i \left(  I_{np^e+s}\right),
$$
and the second part follows.
\end{proof}

The following is the main result  of this section.

\begin{theorem}\label{mainRegDepth}
Assume Setup \ref{setupFpure}, and let $\II$ be an  $F$-split filtration such that $\R(\II)$ is Noetherian. Then
\begin{enumerate}
\item  $ \depth(I_{n})$ stabilizes  and the stable value  is equal to $\min\{ \depth(I_{n})\}$.
\item If $R$ is graded, then $\lim\limits_{n\to\infty} \frac{\reg(I_{n})}{n}$ exists. As a consequence, if $R$ is regular, then $\lim\limits_{n\to\infty} \frac{\reg(R/I_{n})}{n}$ exists.
\end{enumerate}

\end{theorem}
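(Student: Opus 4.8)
The plan is to combine the two parts of Proposition~\ref{PropSymbFpureDepthAinv} with the Noetherianity of $\R(\II)$, which forces the filtration to be eventually "periodically multiplicative" in the sense of \eqref{eventSG}. First I would treat the depth statement. Fix $\ell \in \ZZ_{>0}$ as in \eqref{eventSG}, so that $I_{n+\ell}=I_\ell I_n$ for all $n\gs \ell$; after enlarging $\ell$ we may assume $\ell$ is a power of $p$, say $\ell = p^e$, since \eqref{eventSG} continues to hold for any multiple of a valid $\ell$. Now for any $n \gs p^e$, write $n = mp^e + s$ with $1 \ls s \ls p^e$ and $m \gs 1$; then $\lceil n/p^e\rceil = m+1$, and $m+1 < n$ once $n$ is large, while $m+1 \gs 2$. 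Part~(1) of Proposition~\ref{PropSymbFpureDepthAinv} gives $\depth(I_n) \ls \depth(I_{m+1})$. Iterating this inequality produces a non-increasing subsequence of depths indexed by a strictly decreasing sequence of integers, so it must stabilize; combined with the fact that $\depth$ takes only finitely many values (bounded between $0$ and $\dim R$), one concludes that $\depth(I_n)$ is eventually constant. To see the stable value equals $\min_n \depth(I_n)$: the argument shows every value $\depth(I_n)$ is $\gs$ the eventual stable value is not quite automatic, so instead I would argue that, given any $n_0$ realizing the minimum, one can use the semigroup relation $I_{kp^e} \supseteq$-style comparisons together with part~(1) applied in the reverse direction — more precisely, since $\depth(I_N) \ls \depth(I_{\lceil N/p^e \rceil})$, and any index can be "reached" from a large index by repeatedly multiplying by $p^e$ and adding a remainder, the stable value is $\ls \depth(I_{n_0})$ for the minimizing $n_0$, hence equals the minimum.

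For part~(2), the strategy is to show $a_i(I_n)/n$ converges for each fixed $i$ with $0 \ls i \ls \dim(R/I_1)$, since $\reg(I_n) = \max_i \{a_i(I_n)+i\}$ and the maximum of finitely many convergent sequences (after dividing by $n$, the $+i$ is negligible) converges. Part~(2) of Proposition~\ref{PropSymbFpureDepthAinv} gives $a_i(I_n) \gs p^e a_i(I_{\lceil n/p^e\rceil})$. Writing $b_n := a_i(I_n)$, this says $b_n \gs p^e b_{\lceil n/p^e\rceil}$, i.e. roughly $b_n/n \gs b_{n/p^e}/(n/p^e)$, so the sequence $b_n/n$ is, up to the ceiling correction, non-decreasing along the dyadic-type refinement. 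On the other hand, Noetherianity of $\R(\II)$ implies $\reg(I_n)$ — hence each $a_i(I_n)$ — grows at most linearly in $n$: indeed $\bigoplus_n I_n T^n$ is a finitely generated graded module over the Noetherian standard-graded-after-Veronese algebra $\R(\II)$, and for such modules the Castelnuovo–Mumford regularity in the internal grading is eventually linear, or at least $O(n)$, so $b_n/n$ is bounded above. A bounded, essentially monotone sequence converges; I would make the "essentially monotone" precise by passing to the subsequence $n = p^{ek}$ for fixed residue data, showing convergence there, and then using the sub/super-multiplicative inequality to sandwich the general term. The consequence for $\reg(R/I_n)$ when $R$ is regular follows from the exact sequence $0 \to I_n \to R \to R/I_n \to 0$, which gives $\reg(R/I_n) = \reg(I_n) - 1$ for $n$ large (once $I_n \ne R$), so the two limits differ trivially.

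The main obstacle I anticipate is making the convergence argument for $a_i(I_n)/n$ fully rigorous: Proposition~\ref{PropSymbFpureDepthAinv}(2) only compares $a_i(I_n)$ with $a_i(I_{\lceil n/p^e\rceil})$ for a single $e$ at a time, and the ceiling function destroys exact multiplicativity, so one does not immediately get a Fekete-type subadditive sequence. The cleanest fix is probably to fix $e$ so that $p^e = \ell$ works in \eqref{eventSG}, restrict to the subsequence $c_k := a_i(I_{n_0 p^{ek}})$ for a fixed base index $n_0$, observe $c_{k+1} \gs p^e c_k$ hence $c_k/(n_0 p^{ek})$ is non-decreasing and bounded, so it converges to some $L_i(n_0)$; then show $L_i(n_0)$ is independent of $n_0$ and that the full sequence $a_i(I_n)/n$ is squeezed between values of this form using Proposition~\ref{PropSymbFpureDepthAinv}(2) applied to both $n$ and nearby multiples of $p^e$. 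One must also double-check the upper linear bound on $a_i(I_n)$; this is where Noetherianity of $\R(\II)$ is essential, and it should follow from standard facts about Castelnuovo–Mumford regularity of finitely generated modules over Noetherian $\NN$-graded algebras (e.g. the references already cited, \cite{BroSharp}, or the Veronese trick to reduce to the standard-graded case). Everything else — the depth stabilization, the passage from $\reg(I_n)$ to $\reg(R/I_n)$ — is routine once these two ingredients are in place.
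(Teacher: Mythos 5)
Your treatment of part (1) has a genuine gap: Proposition \ref{PropSymbFpureDepthAinv}(1) only yields $\depth(I_n)\ls\depth(I_{\lceil n/p^e\rceil})$, i.e.\ it bounds depths of \emph{high}-index ideals by depths of \emph{low}-index ones, and iterating this along backward orbits cannot force the sequence $\depth(I_n)$ to be eventually constant — a bounded integer sequence may oscillate forever, so "depth takes finitely many values" does not rescue the argument. The paper supplies the missing stabilization input from a different source: using \eqref{eventSG} it writes $I_{(n+1)\ell+j}=I_\ell^{\,n}I_{\ell+j}$ and invokes the Herzog--Hibi theorem that $\depth(I_\ell^n M)$ is constant for $n\gg0$, so the depth is eventually constant, say $d_j$, on each residue class $j$ modulo $\ell$; only then is Proposition \ref{PropSymbFpureDepthAinv}(1) used (in the spirit of your "reached from a large index" remark) to show each $d_j$ equals the global minimum. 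Without an input of Herzog--Hibi type your argument never establishes stabilization at all. A secondary error: you cannot "enlarge $\ell$ so that $\ell=p^e$" — multiples of $\ell$ still satisfy \eqref{eventSG}, but a multiple of $\ell$ is a power of $p$ only if $\ell$ already is.

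Part (2) has the same structural problem. The decisive ingredient in the paper is the Trung--Wang theorem: Noetherianity of $\R(\II)$ forces $\reg(I_n)$ to agree eventually with a linear quasi-polynomial $c_jn+b_j$ on each residue class modulo some $w$, and Proposition \ref{PropSymbFpureDepthAinv}(2) is then used \emph{only} to prove that all the slopes $c_j$ coincide. You hedge between "eventually linear" and "at least $O(n)$"; the weaker $O(n)$ bound is not enough, since a bounded sequence $\alpha_n/n$ that is monotone only along geometric subsequences $n_0p^{ek}$ need not converge, and your proposed squeeze between such subsequences is exactly the step left unexecuted. (Working with the individual $a_i(I_n)$ rather than $\max_i a_i(I_n)$ is also delicate, as these can be $-\infty$.) If you commit to the quasi-polynomiality statement, your argument essentially becomes the paper's; as written, both halves are missing the key external theorem that makes the Frobenius inequalities from Proposition \ref{PropSymbFpureDepthAinv} sufficient.
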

\begin{proof}

We begin with (1). Since $\R(\II)$ is Noetherian, there exists $\ell \in \ZZ_{>0}$ such that 
$ I_{n+\ell}=I_{\ell}I_{n}$  for every  $n\gs \ell$  \cite[Remark 2.4.3]{ratliff1979notes}. 
 Hence, for every $j=0,\ldots, \ell-1$, there exist
$d_j,\ell_j\in\NN$ such that 
$$\depth(I_{(n+1)\ell+j})=\depth((I_{\ell})^n I_{\ell+j})=d_j,$$
for $n\gs \ell_j$
  \cite[Theorem 1.1]{HeHi}.

 Let $\delta=\min\{\depth(I_{n})\}_{n\in\NN}$ and fix $s\in\ZZ_{>0}$  such that
 $\delta=\depth(I_{s})$. Let $q=p^e$ be such that $q>\ell$, and $q(s-1)>(\ell_j+1)\ell$ for every $j=0,\ldots, \ell-1$.
From Proposition \ref{PropSymbFpureDepthAinv} it follows that 
$$\depth(I_{q(s-1)+i})\ls \depth(I_{s})$$
for every $i=1,\ldots, q$. 
By our choice for $q$, for each $j=0,\ldots,\ell-1$ there exist  natural numbers $m\gs \ell_j+1$ and $1\ls i\ls q$
such that $q(s-1)+i=m\ell+j$.
Then,
\begin{align*}
\delta=\depth(I_{s}) & \gs \depth(I_{q(s-1)+i})\\
&=\depth(I_{mt+j} )=d_j.
\end{align*}
We conclude that $\delta=d_j$ for every $j=0,\ldots,\ell-1$ because $\delta$ is the minimum depth. 
Then,  
$$
\depth(I_{n})=\delta
, \mbox{ for } n\gg 0.$$

We proceed to prove (2). Since  $\R(\II)$ is  Noetherian and by Equation \eqref{eventSG}, the sequence $\reg(I_{n})$ eventually agrees with a linear quasi-polynomial \cite[Theorem 3.2]{Trung_Wang}. Then, 
 there exists $w\in\NN$ and $c_1,\ldots,c_w,b_1,\ldots,b_w\in\NN$
such that 
$
\reg(I_{n})=c_j n+b_j$  for $ n\equiv j \pmod w
$
and $n\gg 0$.
We want to show that $c_1=\ldots=c_w$.
Set $\alpha_{n} =\max\{ a_i (I_{n}) \}$ and notice that  $\alpha_n\neq -\infty$ for every $n\in\NN$. 
We have that 
$$\lim\limits_{m\to \infty} \frac{\alpha_{wm+j}}{wm+j}=c_j$$
for every $j=0,\ldots, w-1$. 
We fix $i,j\in \{1,\ldots, w\}$, and $e\in \NN$  such that $q=p^e>w$.
Fix $\varepsilon\in \RR_{>0}$ and  let $r\in \NN$ be such that $c_j-\frac{\alpha_{wm+j}}{wm+j}<\varepsilon$ for every $m\gs r$. 
From Proposition \ref{PropSymbFpureDepthAinv}, we obtain 
$$
\alpha_{wr+j}\ls \frac{\alpha_{q^\theta (wr+j-1)+b}}{q^\theta}
$$
for every $\theta\in\NN$ and $b=1,\ldots,q^\theta.$ 
Then,
$$
c_j-\varepsilon\ls \frac{\alpha_{wr+j}}{wr+j}\ls \frac{\alpha_{q^\theta (wr+j-1)+b}}{q^\theta (wr+j)}\ls
 \frac{\alpha_{q^\theta (wr+j-1)+b}}{q^\theta (wr+j-1)+b}.
$$
Since this inequality holds for every $b=1,\ldots,q^\theta$ and since $w<q^\theta$,  
there exists infinitely many pairs $\theta, b$ such that $q^\theta (wr+j-1)+b\cong i \pmod w$.
We conclude that $c_j-\varepsilon \ls c_i$ for every $\epsilon,$ and then $c_j\ls c_i$.
Since $i,j$ were chosen arbitrarily, we conclude that $c_1=\ldots=c_w$. 
\end{proof}

Under some extra assumptions we can say more about the stable value  $\lim\limits_{n\to\infty} \depth(I_{n})$.

\begin{corollary}\label{CM}
Assume Setup \ref{setupFpure}, and let $\II$ be an  $F$-split filtration such that $\R(\II)$ is a Noetherian  Cohen-Macaulay algebra. Then, $$\lim\limits_{n\to\infty} \depth(I_{n})=\dim(R)-\dim(\R(\II)/
\m \R(\II))+1.$$
\end{corollary}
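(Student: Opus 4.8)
The plan is to combine the depth stability from Theorem~\ref{mainRegDepth}(1) with a standard result expressing the depth of the Rees algebra in terms of the depths of its graded pieces. By Theorem~\ref{mainRegDepth}(1), the sequence $\depth(I_n)$ stabilizes, and the stable value equals $\delta := \min\{\depth(I_n)\}_{n\in\NN}$; so it suffices to compute $\delta$ under the extra hypothesis that $\R(\II)$ is Noetherian and Cohen-Macaulay. First I would invoke the characterization of the Cohen-Macaulay property of a graded algebra in terms of local cohomology: since $\R(\II)$ is Cohen-Macaulay of dimension $\dim(R)+1$ (by Proposition~\ref{dimOfblow}), we have $\HH{i}{\M}{\R(\II)} = 0$ for all $i < \dim(R)+1$, where $\M$ is the homogeneous maximal ideal of $\R(\II)$ lying over $\m$.

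Next I would use the graded structure. Writing $\R(\II) = \bigoplus_{n\in\NN} I_n T^n$, there is a standard spectral sequence (or the two-step computation via $\HH{\bullet}{\m\R(\II)}{-}$ and $\HH{\bullet}{\R(\II)_+}{-}$, cf.\ the Sancho de Salas sequence) relating $\HH{i}{\M}{\R(\II)}$ to the modules $\HH{j}{\m}{I_n}$. Concretely, for $n \gg 0$, each graded piece controls a term of the form $\HH{j}{\m}{I_n}$, and the vanishing $\HH{i}{\M}{\R(\II)} = 0$ for $i \le \dim(R)$ forces $\HH{j}{\m}{I_n} = 0$ for $j < \dim(R) - \dim(\R(\II)/\m\R(\II)) + 1$ and $n \gg 0$; conversely nonvanishing at the critical degree persists. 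Combining this with Proposition~\ref{PropSymbFpureDepthAinv}(1), which lets one push the depth of $I_n$ for large $n$ down to the depth of any $I_{\lceil n/p^e\rceil}$, one deduces that the minimum depth $\delta$ is attained asymptotically and equals precisely $\dim(R) - \dim(\R(\II)/\m\R(\II)) + 1$. The quantity $\dim(\R(\II)/\m\R(\II))$ is the dimension of the special fiber ring, which is well-defined since $\R(\II)$ is Noetherian.

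Concretely, the key steps in order are: (i) recall $\dim(\R(\II)) = \dim(R)+1$ from Proposition~\ref{dimOfblow} and that Cohen-Macaulayness means all lower local cohomology with respect to $\M$ vanishes; (ii) apply the Sancho de Salas / Grothendieck spectral sequence $\HH{i}{\m}{\HH{j}{\R(\II)_+}{\R(\II)}} \Rightarrow \HH{i+j}{\M}{\R(\II)}$, together with the identification $[\HH{j}{\R(\II)_+}{\R(\II)}]_n$-type computations, to translate the vanishing into a lower bound $\depth(I_n) \ge \dim(R) - \dim(\R(\II)/\m\R(\II)) + 1$ for $n \gg 0$; (iii) produce the matching upper bound by a local cohomology nonvanishing argument at the degree governed by the special fiber, i.e.\ show $\HH{d}{\m}{I_n} \ne 0$ infinitely often for $d = \dim(R) - \dim(\R(\II)/\m\R(\II)) + 1$; (iv) conclude by Theorem~\ref{mainRegDepth}(1) that the stable value of $\depth(I_n)$ is this common value.

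The main obstacle I anticipate is step (iii): carefully extracting from the Cohen-Macaulay hypothesis on $\R(\II)$ the precise nonvanishing statement that pins down $\delta$ from below as well as from above. The upper bound amounts to showing that the special fiber dimension really does detect the worst-case depth among the $I_n$, which requires a careful analysis of the bigraded local cohomology of $\R(\II)$ and an application of the $\frac{1}{p^e}$-grading trick from Proposition~\ref{PropSymbFpureDepthAinv}. One must also be slightly careful that $\dim(\R(\II)/\m\R(\II))$ is the right invariant (as opposed to, say, the analytic spread of some $I_n$), but this is handled by the Noetherian hypothesis which guarantees the special fiber ring is a well-behaved finitely generated graded algebra over $K$. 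Everything else is routine once these local cohomology bookkeeping facts are in place.
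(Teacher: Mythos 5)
Your outline identifies the right statement to prove, but the decisive step is asserted rather than proved, and it is exactly the hard part. Steps (ii) and (iii) --- that Cohen--Macaulayness of $\R(\II)$ forces $\HH{j}{\m}{I_n}=0$ for $j<\dim(R)-\dim(\R(\II)/\m\R(\II))+1$ and $n\gg 0$, and that nonvanishing persists at the critical index --- together constitute a Herzog--Hibi-type theorem whose proof is not routine bookkeeping. The relevant spectral sequence is $\HH{i}{\R(\II)_+}{\HH{j}{\m}{\R(\II)}}\Rightarrow \HH{i+j}{\M}{\R(\II)}$ (note the order: it is $\Gamma_{\R(\II)_+}\circ\Gamma_{\m\R(\II)}$ that computes $\Gamma_{\M}$, and $\HH{j}{\m}{\R(\II)}=\bigoplus_n \HH{j}{\m}{I_n}T^n$), and it does not immediately yield the sharp two-sided bound you need; you yourself flag (iii) as an obstacle you do not resolve. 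A second point you gloss over is that $\II$ is a general filtration rather than the powers of a single ideal, so the ``standard result expressing the depth of the Rees algebra in terms of the depths of its graded pieces'' is not directly applicable; one must first use Noetherianity (the eventual multiplicativity $I_{n+\ell}=I_\ell I_n$) to reduce to the classical situation.

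The paper's proof sidesteps all of this with a Veronese reduction: choose $\ell$ so that $\R(\ell)=\bigoplus_{n}I_{n\ell}T^n$ is generated in degree one over $R$, hence is the ordinary Rees algebra of $I_\ell$; it is Cohen--Macaulay because it is a direct summand of $\R(\II)$; then \cite[Theorem 1.1]{HeHi} gives $\lim_{n\to\infty}\depth(I_{n\ell})=\dim(\R(\ell))-\dim(\R(\ell)/\m\R(\ell))=\dim(R)+1-\dim(\R(\II)/\m\R(\II))$, the last equality because $\R(\II)/\m\R(\II)$ is finite over $\R(\ell)/\m\R(\ell)$. The conclusion for the full sequence then follows from Theorem \ref{mainRegDepth}(1), which already guarantees that $\depth(I_n)$ stabilizes, so its stable value is computed along the subsequence $\{n\ell\}$. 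If you want a self-contained argument along your lines you would essentially be reproving \cite[Theorem 1.1]{HeHi}; the efficient fix is to insert the Veronese reduction and that citation where your steps (ii)--(iii) currently stand.
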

\begin{proof}
There exists $\ell\in \ZZ_{>0}$ such that $\R(\ell)$ is generated in degree one as an algebra \cite[2.4.4]{ratliff1979notes}. 
If $\R=\R(\II)$ is Cohen-Macaulay, then so is $\R(\ell)=\sum_{n\in \NN}I_{n\ell}T^n$ as it is a direct summand of $\R$.  Therefore, 
\begin{align*}
\lim\limits_{n\to\infty} \depth(I_{n}) 
&= \dim(\R(\ell)) - \dim(\R(\ell)/  
\m \R(\ell))  \ \ \  \hbox{  \cite[Theorem 1.1]{HeHi}}   \\
&= \dim(R)- \dim(\R(\ell)/
\m \R(\ell))+1\\
&=\dim(R)- \dim(\R/\m \R) + 1. \qedhere
\end{align*}
\end{proof}


\section{Symbolic $F$-split ideals and  symbolic powers}\label{sectionSymbFpure}

We now focus on  $F$-split filtrations that are given by symbolic powers. We recall that, given a ring $R$ and an ideal $I \subseteq R$, for $n \in \NN$ the {\it $n$-th symbolic power of $I$} is defined as $I^{(n)} = I^nR_W$, where $W$ is the complement of the union of the minimal primes of $I$.

Throughout this section we assume Setup \ref{setupFpure} with $R$ regular. In the graded case, we further assume that $R_0$ is a field, i.e., $R$ is standard graded. We recall it here more explicitly for future reference:

\begin{setup}\label{setupSymbFpure}
Let $(R,\m,K)$ be an $F$-finite regular  ring  of  characteristic $p>0$ which is either local, or $R= \bigoplus_{n \geq 0} R_n = K[R_1]$ is a standard graded polynomial ring over the field $K$, with homogeneous maximal ideal $\m =  \bigoplus_{n > 0} R_n$.  We denote by
$$\R^s(I)=\bigoplus_{n \in \NN}I^{(n)}T^n\qquad \text{and}\qquad \gr^s(I)=\bigoplus_{n\in \NN}I^{(n)}/I^{(n+1)},$$
the {\it symbolic Rees algebra} and {\it symbolic associated graded algebra} of $I$, respectively. We also let
$$\R(I)=\bigoplus_{n \in \NN}I^{n}T^n\qquad \text{and}\qquad \gr(I)=\bigoplus_{n\in \NN}I^{n}/I^{n+1},$$
be the  {\it Rees algebra} and {\it \ associated graded algebra} of $I$, respectively.
\end{setup}

The interest in symbolic blowups has significantly increased, even in recent years \cites{Roberts,GrifoSeceleanu}.


\begin{definition}\label{DefSymbolicFpure}
We say that an ideal $I$ is {\it symbolic $F$-split} if $\II = \{I^{(n)}\}_{n \in\NN}$ is an  $F$-split filtration.
\end{definition}

We start by studying equality between ordinary and symbolic powers for symbolic $F$-split ideals. We use the  following remark to study symbolic powers.

\begin{remark} \label{remark torsion free} Assume Setup \ref{setupSymbFpure}. We note that $\gr(I)$ is torsion-free over $R/I$ if and only if $I^n=I^{(n)}$ for every $n\in\NN$. In fact, if $\gr(I)$ is torsion-free over $R/I$, then $\Ass_R (I^n/I^{n+1})\subseteq \Ass_R (R/I)$ for every $n\in \NN$. From $0\to I^n/I^{n+1}\to R/I^{n+1}\to R/I^{n}\to 0$ we obtain $\Ass_R (R/I^{n+1})\subseteq \Ass_R (I^n/I^{n+1})\cup \Ass_R (R/I^{n})$. Therefore, proceeding by induction on $n$  we obtain $\Ass_R (R/I^{n+1})\subseteq \Ass_R (R/I)$  for every $n\in \NN$, which implies $I^n=I^{(n)}$ for every $n\in\NN$. Conversely, if  $I^n=I^{(n)}$ for every $n\in\NN$, then $\Ass_R (I^n/I^{n+1})\subseteq \Ass_R (R/I^{n+1})=\Ass_R (R/I)$ for every $n\in\NN$. This implies that $\Ass_R (\gr(I)) \subseteq \Ass_R (R/I)$, i.e., $\gr(I)$ is torsion-free over $R/I$. 
\end{remark}

We can now rephrase a result due to Huneke, Simis and Vasconcelos as follows:
\begin{lemma}\cite[Corollary 1.10]{HSV89} \label{reducedAndTF}
Assume Setup \ref{setupSymbFpure}. 
Let $I\subseteq R$ be a radical ideal. Then $I^n=I^{(n)}$ for every $n\in\NN$ if and only if the associated graded algebra
 $ \gr (I)$
is reduced.
\end{lemma}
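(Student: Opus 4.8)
The plan is to prove the equivalence by combining Remark \ref{remark torsion free} with a standard fact about associated graded rings of radical ideals. By Remark \ref{remark torsion free}, the condition $I^n = I^{(n)}$ for every $n \in \NN$ is equivalent to $\gr(I)$ being torsion-free over $R/I$. Since $R/I$ is reduced (as $I$ is radical), a torsion-free module over it need not itself be reduced in general, so the content of the lemma is that for associated graded rings this stronger conclusion does hold. Thus the task reduces to showing: \emph{$\gr(I)$ is torsion-free over $R/I$ if and only if $\gr(I)$ is reduced}, given that $I$ is radical.

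For the direction ``reduced $\Rightarrow$ torsion-free'', I would argue as follows. Let $J$ be the ideal of $R/I$-torsion elements of $\gr(I)$; this is a nilpotent ideal precisely when every associated prime of $\gr(I)$ lies over a minimal prime of $R/I$ --- actually more carefully, one shows $\sqrt{(0)} \subseteq$ torsion and conversely. The cleanest route: the minimal primes of $\gr(I)$ correspond to minimal primes of $\Spec \R(I)$ lying over the irrelevant-type behavior; since $\gr(I) = \R(I)/(I T) \cdot \R(I)$-type quotient and $\dim \gr(I) = \dim R$ while $\dim R/I < \dim R$ in the relevant situations, a torsion element (killed by a nonzerodivisor on $R/I$) is supported on a proper closed subset, hence lies in a minimal prime, hence is nilpotent if $\gr(I)$ is reduced --- forcing it to be $0$. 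Conversely, for ``torsion-free $\Rightarrow$ reduced'': if $\gr(I)$ is torsion-free over $R/I$, then $\Ass_R \gr(I) \subseteq \Ass_R (R/I) = \Min(R/I)$, so $\gr(I)$ has no embedded primes and all associated primes are minimal; combined with the fact that $\gr(I)$ is generically reduced --- which follows because localizing at a minimal prime $\fp$ of $R/I$ makes $I R_\fp$ generated by a regular sequence (here is where regularity of $R$ and radicality of $I$ enter, via $I R_\fp$ being prime of the expected height in a regular local ring, hence a complete intersection), so $\gr(I)_\fp$ is a polynomial ring over the field $R_\fp/IR_\fp$ --- a ring with no embedded primes that is generically reduced is reduced by Serre's criterion $(R_0) + (S_1)$.

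The main obstacle I anticipate is the ``torsion-free $\Rightarrow$ reduced'' implication, specifically establishing that $\gr(I)$ satisfies $(R_0)$: one must verify that for every minimal prime $\fp$ of $I$, the localization $I R_\fp$ is generated by a regular sequence. Since $R$ is regular and $I$ is radical, $IR_\fp$ is a prime ideal (as $\fp$ is minimal) in the regular local ring $R_\fp$, but a prime in a regular local ring need not be a complete intersection in general --- however, here one only needs it after passing to $R_\fp$ where $\fp$ is the maximal ideal and $I R_\fp$ is $\fp R_\fp$-primary-adjacent; in fact $I_\fp$ has height equal to $\dim R_\fp$ only if $\fp$ is a maximal ideal in $\Spec R$, which isn't generally the case. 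The honest resolution is that one localizes so that $\fp$ becomes minimal over $I$, making $\sqrt{I R_\fp} = \fp R_\fp$ and $I R_\fp$ itself prime hence $= \fp R_\fp$ since it is radical; then $\fp R_\fp$ is generated by a regular sequence because $R_\fp$ is regular. This is the crux, and I would be careful to cite it cleanly or reduce directly to the statement of \cite[Corollary 1.10]{HSV89} as indicated, since the lemma is explicitly attributed there and the intended proof is likely just: invoke that corollary after Remark \ref{remark torsion free} translates the symbolic-power condition into torsion-freeness, so the proof may simply read ``This is a restatement of \cite[Corollary 1.10]{HSV89} via Remark \ref{remark torsion free}.''
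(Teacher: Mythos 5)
Your bottom line is right, and it matches the paper: the paper gives no proof of this lemma at all. It is stated as a rephrasing of \cite[Corollary 1.10]{HSV89}, with Remark \ref{remark torsion free} (proved immediately above it) supplying the translation between ``$I^n=I^{(n)}$ for all $n$'' and ``$\gr(I)$ is torsion-free over $R/I$''. So your closing sentence --- that the intended proof is just the citation filtered through the remark --- is exactly what the authors do. I will add, though, that your standalone sketch locates the difficulty in the wrong place. The direction ``torsion-free $\Rightarrow$ reduced'' is the easy one, and the obstacle you flag there is not an obstacle: for $\mathfrak{q}$ minimal over the radical ideal $I$ one has $IR_{\mathfrak{q}}=\mathfrak{q}R_{\mathfrak{q}}$, the maximal ideal of the regular local ring $R_{\mathfrak{q}}$, so $\gr(I)\otimes_R R_{\mathfrak{q}}$ is a polynomial ring over a field; together with $\Ass_R(\gr(I))\subseteq \MIN(R/I)$ this kills any nonzero nilpotent outright (its annihilator lies in some associated prime, and it survives localization there into a domain). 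You resolve this correctly yourself.

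The genuine gap in the self-contained route is in ``reduced $\Rightarrow$ torsion-free''. Your argument needs every minimal prime $P$ of $\gr(I)$ to contract to a minimal prime of $R/I$; otherwise a torsion element killed by some $r$ avoiding $\MIN(R/I)$ may fail to lie in a minimal prime $P$ with $r\in P$, and you cannot conclude it is nilpotent. This is not a formal consequence of a dimension count: with $\mathfrak{q}=P\cap R$, the chain $\dim(R)=\dim(\gr(I)/P)\leq \dim(R/\mathfrak{q})+\ell(IR_{\mathfrak{q}})\leq \dim(R)$ only forces the analytic spread $\ell(IR_{\mathfrak{q}})$ to equal $\height(\mathfrak{q})$, which does not by itself put $\mathfrak{q}$ in $\MIN(R/I)$. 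The assertion that, when $\gr(I)$ is reduced, contraction is a bijection between minimal primes of $\gr(I)$ and of $R/I$ is precisely \cite[Theorem 1.2]{HSV89}, a substantive theorem --- the paper's own Theorem \ref{Theorem initial torsion free} is a generalization of it, and its proof is long and delicate. So the hand-wave ``supported on a proper closed subset, hence lies in a minimal prime, hence nilpotent'' does not close without importing that theorem; the citation you fall back on is doing real work, and falling back on it (as the paper does) is the correct move.
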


\begin{proposition}\label{PropOrdSymFpure}
Assume Setup \ref{setupSymbFpure}. Let $I\subseteq R$ be a symbolic $F$-split ideal.
Then,
 $I^n=I^{(n)}$ for every $n\in\NN$ if and only if 
 $ \gr(I)$
is an  $F$-split ring.
\end{proposition}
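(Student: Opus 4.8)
The plan is to prove the two implications separately, using the two lemmas already recorded (Lemma~\ref{reducedAndTF} and, for the hard direction, the structural consequences of $F$-purity of filtrations).

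\textbf{The easy direction.} Suppose $I^n = I^{(n)}$ for every $n \in \NN$. Then the filtration $\{I^n\}_{n \in \NN}$ coincides with $\{I^{(n)}\}_{n \in \NN}$, which is an $F$-pure filtration by hypothesis. Hence by Theorem~\ref{mainCharP} the associated graded algebra $\gr(\{I^n\}) = \gr(I)$ is $F$-pure, as desired. (No further work is needed here; this is just an application of Theorem~\ref{mainCharP}.)

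\textbf{The hard direction.} Conversely, assume $\gr(I)$ is $F$-pure. Since $I$ is symbolic $F$-pure, it is in particular radical (Remark~\ref{symbFpareFp}), so by Lemma~\ref{reducedAndTF} it suffices to show $\gr(I)$ is reduced. An $F$-pure ring is automatically reduced: if $x^{p} = 0$ then purity of Frobenius forces $x = 0$ (equivalently, a splitting $\phi$ of Frobenius gives $x = \phi(x^{1/p}\cdot 1) $ — more carefully, $x = \phi((x^p)^{1/p}) = \phi(0) = 0$, using that $x = \phi(x^{p \cdot 1/p})$ after identifying $F$ with $\iota$). Therefore $\gr(I)$ is reduced, and Lemma~\ref{reducedAndTF} yields $I^n = I^{(n)}$ for all $n \in \NN$.

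\textbf{Main obstacle.} The forward implication is essentially immediate; the content is entirely in recognizing that $F$-purity of $\gr(I)$ upgrades to \emph{reducedness} of $\gr(I)$, at which point Lemma~\ref{reducedAndTF} (the Huneke--Simis--Vasconcelos criterion) closes the argument. The only subtlety to state carefully is that $\gr(I)$ here means the associated graded algebra of the \emph{ordinary} powers of $I$, and that reducedness is exactly the hypothesis of Lemma~\ref{reducedAndTF}; one should also record that $I$ is radical so that the lemma applies, which follows from Remark~\ref{symbFpareFp}. I expect no computational difficulty, so the write-up will be short: one sentence for each implication plus the reduced-ness observation.
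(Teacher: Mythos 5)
Your proposal is correct and follows essentially the same route as the paper: Theorem~\ref{mainCharP} for the forward implication, and $F$-pure $\Rightarrow$ reduced combined with Lemma~\ref{reducedAndTF} for the converse. Your extra care in noting that $I$ is radical (via Remark~\ref{symbFpareFp}) so that Lemma~\ref{reducedAndTF} applies is a detail the paper leaves implicit, but the argument is identical.
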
 
\begin{proof}
If $I^n=I^{(n)}$ for every $n\in\NN$, then $ \gr(I)=\gr^s_{I} (R)$. Hence, 
$ \gr(I)$ is   $F$-split by Theorem \ref{mainCharP}. 
Conversely, if $ \gr(I)$ is   $F$-split, then it is reduced. Therefore,  $I^n=I^{(n)}$ for every $n\in\NN$ by Lemma \ref{reducedAndTF}.
\end{proof}

For the proof of Theorem \ref{ThmCompare} we need the following  well-known  lemma. Here we denote by $\mu(I)$ the minimal number of (homogeneous) generators of $I$.

\begin{lemma}\label{Lemma Obs pe u(I)}
Assume Setup \ref{setupSymbFpure}.  Let $I\subseteq R$ be any ideal. If $r\gs \mu(I)( p-1)+1$, 
then  $I^r= I^{r- p} I^{[p]}$.
\end{lemma}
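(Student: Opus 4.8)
The plan is to prove the identity $I^r = I^{r-p}I^{[p]}$ for $r \geq \mu(I)(p-1)+1$ by a direct monomial-type counting argument on products of the generators. First I would fix a minimal generating set $f_1,\ldots,f_u$ of $I$, where $u = \mu(I)$, so that $I^r$ is generated by all products $f_1^{a_1}\cdots f_u^{a_u}$ with $a_1+\cdots+a_u = r$. The containment $I^{r-p}I^{[p]} \subseteq I^r$ is immediate since $I^{[p]} = (f_1^p,\ldots,f_u^p) \subseteq I^p$, so the content of the lemma is the reverse inclusion $I^r \subseteq I^{r-p}I^{[p]}$.

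For the reverse containment, the key combinatorial observation is a pigeonhole argument: if $a_1 + \cdots + a_u = r \geq u(p-1)+1$, then at least one exponent $a_i$ must satisfy $a_i \geq p$, because otherwise each $a_j \leq p-1$ and the sum would be at most $u(p-1) < r$. Having found such an $i$, I would factor the generator $g = f_1^{a_1}\cdots f_u^{a_u}$ as $g = f_i^p \cdot \big(f_i^{a_i - p}\prod_{j\neq i} f_j^{a_j}\big)$. The first factor $f_i^p$ lies in $I^{[p]}$, and the second factor is a product of exactly $r - p$ of the $f_j$'s, hence lies in $I^{r-p}$. Therefore $g \in I^{[p]}I^{r-p}$. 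Since the generators $g$ of this form generate $I^r$, this proves $I^r \subseteq I^{r-p}I^{[p]}$ and hence the claimed equality.

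I do not anticipate a serious obstacle here; the statement is elementary and the pigeonhole step is the entire mathematical content. The only point requiring a small amount of care is that in the graded case one should work with homogeneous generators (which is why the lemma says "(homogeneous) generators") so that $\mu(I)$ is well-defined and the argument respects the grading, but this changes nothing substantive. It is also worth remarking that the argument uses no special property of $R$ beyond being able to choose a finite generating set with $\mu(I)$ elements, so Setup \ref{setupSymbFpure} is more than enough.
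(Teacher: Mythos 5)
Your argument is correct and is exactly the paper's proof: the pigeonhole observation that some exponent $a_i$ must be at least $p$ when the total degree is at least $\mu(I)(p-1)+1$, followed by factoring out $f_i^p \in I^{[p]}$, with the reverse inclusion coming from $I^{[p]} \subseteq I^p$. No differences worth noting.
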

\begin{proof}
Let $u=\mu(I)$ and $f_1,\ldots, f_u$  a minimal set of generators of
$I.$
Let $\alpha_1,\ldots,\alpha_u\in\NN$ be such that 
$\alpha_1+\ldots+\alpha_u=r,$ 
then by assumption there must exist $\alpha_i$ such that $\alpha_i\gs p.$
Therefore, 
$
f^{\alpha_1}_1\cdots f^{\alpha_u}_u=
f^{\alpha_1}_1\cdots f^{\alpha_i-p}_i f^{\alpha_u}_u\cdot f^{p}_i
\in I^{r-p}I^{[p]}. 
$
This shows that $I^r\subseteq I^{r- p} I^{[p]}$.
To obtain the other containment, we observe that $I^{[p]}\subseteq I^p$.
\end{proof}

The following result gives  a finite test to verify whether all the symbolic and ordinary powers of a symbolic $F$-split ideal coincide. 

  \begin{theorem}\label{ThmCompare}
Assume Setup \ref{setupSymbFpure}. Let $I\subseteq R$ be a symbolic $F$-split ideal.
If  $I^n=I^{(n)}$ for every $n\ls\lceil\frac{\mu(I)(p-1)}{p}\rceil,$ then $I^n=I^{(n)}$ for every $n \in \NN.$
 \end{theorem}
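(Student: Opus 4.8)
The plan is to leverage the multiplicative structure of both filtrations to bootstrap from a finite range of equalities to all powers, using the symbolic $F$-purity hypothesis to control the behavior of the ordinary powers under Frobenius. First I would set $u = \mu(I)$ and let $N = \lceil \frac{u(p-1)}{p}\rceil$, so that by hypothesis $I^n = I^{(n)}$ for all $n \leq N$; note that $N$ is chosen precisely so that $u(p-1)+1 \leq pN + 1$, which will let Lemma \ref{Lemma Obs pe u(I)} interact correctly with the splitting in Definition \ref{DefFpureFilt}. The goal is to show $I^{(n+1)} \subseteq I^{n+1}$ for all $n$ (the reverse inclusion $I^{n+1}\subseteq I^{(n+1)}$ is automatic), and I would do this by induction on $n$, the base case $n+1 \leq N$ being the hypothesis.

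For the inductive step, suppose $n+1 > N$, so in particular $np+1 \geq pN+1 \geq u(p-1)+1$. By Lemma \ref{Lemma Obs pe u(I)} applied to $r = np+1$ (noting $np+1 \geq \mu(I)(p-1)+1$), we get $I^{np+1} = I^{np+1-p} I^{[p]} = I^{(n-1)p+1} I^{[p]}$. Now apply the splitting $\phi : R^{1/p}\to R$ witnessing that $\II = \{I^{(m)}\}_{m}$ is an $F$-pure filtration, which satisfies $\phi\big((I^{(np+1)})^{1/p}\big) \subseteq I^{(n+1)}$ for every $n$. Since $\phi$ is a splitting, it is $R$-linear; using the standard identity $I^{np+1} A^{1/p} = \big((I^{np+1})^{[p]}\big)^{1/p}$ — or more to the point, the fact that for an ideal $J$ and element $g$, $\phi\big((gJ)^{1/p}\big)$ relates to $g \cdot$ something is not quite right, so instead I would argue at the level of the Rees/associated-graded algebra or directly: we have $I^{(np+1)} \supseteq I^{np+1} = I^{(n-1)p+1}\cdot I^{[p]}$, hence $(I^{(np+1)})^{1/p} \supseteq \big(I^{(n-1)p+1}\big)^{1/p}\cdot (I^{[p]})^{1/p} = \big(I^{(n-1)p+1}\big)^{1/p}\cdot I$, and therefore
\[
I^{(n+1)} \supseteq \phi\big((I^{(np+1)})^{1/p}\big) \supseteq \phi\Big(\big(I^{(n-1)p+1}\big)^{1/p}\Big)\cdot I \supseteq I^{(n)}\cdot I,
\]
where the last inclusion is again the defining property of the $F$-pure filtration (with $n-1$ in place of $n$). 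But by the inductive hypothesis $I^{(n)} = I^n$, so $I^{(n+1)} \supseteq I^n \cdot I = I^{n+1}$, completing the induction.

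Actually, to be safe about the case $n = 0$ and the ranges, I would handle small $n$ directly from the hypothesis and only run the above argument once $np+1 > pN$, i.e. $n \geq N$, which is consistent with $n+1 > N$. The main obstacle I anticipate is making sure the arithmetic of Lemma \ref{Lemma Obs pe u(I)} lines up with the index $np+1$ appearing in the $F$-pure filtration definition — specifically verifying $np+1 \geq \mu(I)(p-1)+1$ whenever $n \geq N = \lceil \mu(I)(p-1)/p\rceil$, which follows since $pN \geq \mu(I)(p-1)$ — and being careful that the pushforward $\phi\big((J_1 J_2)^{1/p}\big)$ step uses only $R$-linearity of $\phi$ together with $(J_1 J_2)^{1/p} \supseteq J_1^{1/p}\cdot J_2$ when $J_2$ is extended from $R$; here $J_2 = I$ arises as $(I^{[p]})^{1/p} = IR^{1/p}$, which is exactly the identity recorded in Section \ref{secMprime}. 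Everything else is a routine induction, and the conclusion $I^n = I^{(n)}$ for all $n$ follows, which incidentally combined with Proposition \ref{PropOrdSymFpure} and Lemma \ref{reducedAndTF} also recovers that $\gr(I)$ is then $F$-pure.
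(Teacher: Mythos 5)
Your argument has the inclusions pointing the wrong way. The displayed chain
\[
I^{(n+1)} \supseteq \phi\big((I^{(np+1)})^{1/p}\big) \supseteq \phi\big((I^{(n-1)p+1})^{1/p}\big)\cdot I \supseteq I^{(n)}\cdot I = I^{n+1}
\]
establishes only $I^{n+1}\subseteq I^{(n+1)}$, which you yourself note is automatic; it never touches the containment you actually need, $I^{(n+1)}\subseteq I^{n+1}$. No rearrangement of these particular steps will produce it: the defining property of an $F$-pure filtration gives an \emph{upper} bound $\phi\big((I^{(np+1)})^{1/p}\big)\subseteq I^{(n+1)}$ on the image of the symbolic powers, so pushing ordinary powers through $\phi$ can only bound $I^{(n+1)}$ from below. (Your justification of the final inclusion by ``the defining property with $n-1$ in place of $n$'' also quotes that property in the reversed direction.) Moreover, a direct induction of this shape cannot close: to bound $I^{(n+1)}$ from above one would use $I^{(n+1)}=\phi\big((I^{(np+1)})^{1/p}\big)$ from Proposition \ref{from1toall}, which requires controlling the symbolic power with exponent $np+1$ --- far beyond what the inductive hypothesis $I^{(m)}=I^m$ for $m\leq n$ supplies.

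The correct route, and the one the paper takes, aims your ingredients at the \emph{ordinary} filtration instead: one proves by induction that $\phi\big((I^{np+1})^{1/p}\big)\subseteq I^{n+1}$ for every $n$. For $n<\lceil \mu(I)(p-1)/p\rceil$ this follows from the hypothesis $I^{(n+1)}=I^{n+1}$ together with symbolic $F$-purity; for larger $n$ one uses exactly your identity $(I^{np+1})^{1/p}=(I^{(n-1)p+1})^{1/p}\,I$ from Lemma \ref{Lemma Obs pe u(I)} and $R$-linearity of $\phi$ to get $\phi\big((I^{np+1})^{1/p}\big)=\phi\big((I^{(n-1)p+1})^{1/p}\big)\,I\subseteq I^{n}I=I^{n+1}$. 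This compatibility makes $\gr(I)$ an $F$-pure, hence reduced, ring, and then the Huneke--Simis--Vasconcelos criterion (Lemma \ref{reducedAndTF}, packaged as Proposition \ref{PropOrdSymFpure}) converts reducedness of $\gr(I)$ into $I^{n}=I^{(n)}$ for all $n$. That last step is the engine of the proof, not the afterthought your closing remark makes it out to be; your arithmetic with $N=\lceil\mu(I)(p-1)/p\rceil$ and your use of Lemma \ref{Lemma Obs pe u(I)} are correct, but they must feed into this reducedness argument rather than into a direct comparison of symbolic and ordinary powers.
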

 \begin{proof}
 
By  Proposition \ref{PropOrdSymFpure}, it suffices to show $\gr(I)$ is  $F$-split.  Let $\phi$ be  such that $I$ is symbolic $F$-split with respect to $\phi$. Proceeding as in Theorem \ref{mainCharP} it suffices to prove $\phi \big(( I^{np+1} )^{1/p}\big) \subseteq   I^{n+1}$
for every $n\in\NN$. By assumption this inclusion holds for $n< \lceil\frac{\mu(I)(p-1)}{p}\rceil$, as for these values  $I^{(n+1)}=I^{n+1}$. We fix $n\gs  \lceil\frac{\mu(I)(p-1)}{p}\rceil$. Then, $I^{n p+1}= I^{(n-1)p+1}I^{[p]}$ by Lemma \ref{Lemma Obs pe u(I)}. The latter  is equivalent to
$( I^{np+1})^{1/p}=( I^{(n-1)p+1})^{1/p}I$.
Therefore, by induction on $n$
\[
\phi\big(  ( I^{np+1})^{1/p}   \big)
=\phi\big(( I^{(n-1)p+1})^{1/p} I   \big)
=\phi\big(( I^{(n-1)p+1})^{1/p}       \big)I
\subseteq   I^{n}I=I^{n+1}. \qedhere
\]
 \end{proof}

We continue with a version of Fedder's Criterion for symbolic $F$-split ideals. This improves Proposition \ref{PropFedder} as it only requires to verify that a finite  intersection of colon ideals is not contained in $\m^{[p]}$. We recall that the {\it big height} of an ideal $I$, denoted by $\bh(I)$, is the largest height of a minimal prime of $I$.

\begin{theorem}\label{ThmFedders}
Assume Setup \ref{setupSymbFpure}. Let $I\subseteq R$ be a radical ideal and set $H=\bh(I)$. Let $\delta=1$ if $p\ls H$ and $\delta=0$ otherwise.
Then, $I$ is symbolic   $F$-split if and only if

\begin{equation*}\label{nonCont}
\bigcap^{ \max\{0, H-1-\delta\}}_{n=0}\big(( I^{(n+1)})^{[p]}:_R I^{(np+1)}\big) \not\subseteq\m^{[p]}.
\end{equation*}

\end{theorem}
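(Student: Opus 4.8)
The plan is to reduce this finite Fedder-type criterion to the infinite one in Proposition \ref{PropFedder}, so the content is entirely in showing that once the colon-inclusion holds for small $n$, it holds for all $n$. Write $C_n = \big(( I^{(n+1)})^{[p]}:_R I^{(np+1)}\big)$, so Proposition \ref{PropFedder} says $I$ is symbolic $F$-pure iff $\bigcap_{n\in\NN} C_n \not\subseteq \m^{[p]}$. The claim is that this intersection equals the finite intersection $\bigcap_{n=0}^{\max\{0,H-1-\delta\}} C_n$, or at least that the finite intersection is not contained in $\m^{[p]}$ iff the infinite one is not. The ``only if'' direction of the theorem is trivial (a smaller intersection is larger). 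For the ``if'' direction I would assume $f$ lies in the finite intersection and outside $\m^{[p]}$, and show that the splitting $\phi = \Phi(f^{1/p}\cdot -)$ it defines (via the trace map of Remark \ref{Trace}) in fact satisfies $\phi\big((I^{(np+1)})^{1/p}\big) \subseteq I^{(n+1)}$ for every $n$, which by Proposition \ref{PropFedder} gives symbolic $F$-purity.

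First I would record the two standard facts that make symbolic powers behave like the setting of Theorem \ref{ThmCompare} and Lemma \ref{Lemma Obs pe u(I)}: for a radical ideal of big height $H$ in a regular ring, one has $(I^{(a)})^{[p]} = I^{((a+\ldots)\,p)}$-type comparisons via the theorem of Ein--Lazarsfeld--Smith / Hochster--Huneke, or more precisely the Skoda-type / Briançon--Skoda containments. The key structural input I expect to need is a ``linear recursion'' on symbolic powers analogous to $I^{r} = I^{r-p}I^{[p]}$: namely that for $n$ large enough, $I^{(np+1)} = I^{((n-1)p+1)}\cdot I^{[p]}$, or at least $(I^{(np+1)})^{1/p} = (I^{((n-1)p+1)})^{1/p}\cdot I$. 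This is exactly the step carried out in the proof of Theorem \ref{ThmCompare} for ordinary powers; for symbolic powers one needs the fact that $I^{(np+1)}$ agrees with $I^{np+1}$ modulo the relevant localization, or uses the containment $I^{(np+1)} \subseteq I^{((n-1)p+1)}I^{[p]}$ coming from $I^{(a+b)}\supseteq I^{(a)}I^{(b)}$ together with $I^{(p)} \supseteq I^{[p]}$ and the reverse inclusion from the comparison theorem bounding $I^{(np+1)}$ below by a product involving $I^{[p]}$ once $n$ exceeds roughly $H-1$.

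The heart of the argument is then an induction: assuming $f \in C_0 \cap \ldots \cap C_{H-1-\delta}$, I would show $f \in C_n$ for all larger $n$ by descending the index. Concretely, translate $f\in C_n$ into $\phi\big((I^{(np+1)})^{1/p}\big)\subseteq I^{(n+1)}$ via Remark \ref{Trace}; once we know $(I^{(np+1)})^{1/p} = (I^{((n-1)p+1)})^{1/p}\cdot I$ for $n$ in the relevant range, $R$-linearity of $\phi$ gives
\[
\phi\big((I^{(np+1)})^{1/p}\big) = \phi\big((I^{((n-1)p+1)})^{1/p}\big)\cdot I \subseteq I^{(n)}\cdot I \subseteq I^{(n+1)},
\]
using the inductive hypothesis $f \in C_{n-1}$. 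The base of the induction is covered by the hypothesis; the threshold $H-1-\delta$ must be chosen precisely so that the recursion $I^{(np+1)}=I^{((n-1)p+1)}I^{[p]}$ kicks in exactly past it, and the $\delta$ correction accounts for whether $p\le H$ (needing one extra small case) or not. I would verify the bookkeeping of this threshold carefully, as that is the one genuinely delicate point.

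The main obstacle I anticipate is precisely justifying the symbolic analogue of $I^{np+1} = I^{(n-1)p+1}I^{[p]}$, i.e., pinning down the smallest $n$ for which $I^{(np+1)} = I^{((n-1)p+1)}\,I^{[p]}$ (equivalently $(I^{(np+1)})^{1/p} = (I^{((n-1)p+1)})^{1/p}\,I$). One inclusion is formal from $I^{(a)}I^{(b)}\subseteq I^{(a+b)}$ applied with $b=p$ together with $I^{[p]}\subseteq I^{(p)}$ giving $I^{((n-1)p+1)}I^{[p]} \subseteq I^{(np+1)}$; the reverse inclusion is where the big height enters, and requires either a Rees-valuation/Noetherian-symbolic-Rees argument or the uniform symbolic-power comparison theorems to show that every element of $I^{(np+1)}$ is, once $n$ is past the stated bound, a combination of $p$-th powers from a generating set. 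Getting the exact constant $H-1-\delta$ rather than something cruder is the part that needs care; a weaker bound would still prove a (less sharp) finite criterion, so the essential difficulty is sharpness, and I would organize the proof so that the induction is clearly separated from this one quantitative lemma.
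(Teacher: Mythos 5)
There is a genuine gap at exactly the point you flag as "the main obstacle": your induction hinges on the global identity $I^{(np+1)} = I^{((n-1)p+1)}\,I^{[p]}$ (equivalently $(I^{(np+1)})^{1/p} = (I^{((n-1)p+1)})^{1/p}\cdot I$) for $n$ past the threshold, and you do not supply a proof of it. No such global multiplicative recursion for symbolic powers is available — the analogue of Lemma \ref{Lemma Obs pe u(I)} for symbolic powers fails in general, since $I^{(np+1)}$ is typically much larger than any product of lower symbolic powers with $I^{[p]}$, and neither the Ein--Lazarsfeld--Smith/Hochster--Huneke comparisons nor Brian\c{c}on--Skoda give an equality of this shape. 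So as written the inductive step cannot be carried out.

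The paper's proof avoids this entirely by a localization trick that is the real content of the argument. To show $J\,I^{((n+1)p+1)} \subseteq (I^{(n+2)})^{[p]}$ it suffices to check the containment locally at every prime in $\Ass_R\big(R/(I^{(n+2)})^{[p]}\big)$, and by flatness of Frobenius together with the definition of symbolic powers this set is exactly $\Ass_R(R/I)$, i.e.\ the minimal primes of $I$. After localizing at such a prime $Q$, every symbolic power $I^{(m)}R_Q$ becomes the ordinary power $\widetilde{Q}^m$ of the maximal ideal of the regular local ring $R_Q$, which has at most $H$ generators; now Lemma \ref{Lemma Obs pe u(I)} applies verbatim and yields $\widetilde{Q}^{(n+1)p+1} = \widetilde{Q}^{(n+1)p+1-p}\,\widetilde{Q}^{[p]}$ once $(n+1)p+1 \geq H(p-1)+1$, which is precisely what the threshold $n \geq H-1-\delta$ guarantees. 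The colon-ideal computation then descends the index as in your sketch, but only after localization. Your overall architecture (reduce to Proposition \ref{PropFedder}, stabilize the colon ideals by a descending induction) matches the paper, and your bookkeeping of $\delta$ is on the right track, but without the reduction to the minimal primes the key recursion is simply unavailable, so the proof as proposed does not go through.
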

\begin{proof}

By Proposition \ref{PropFedder}, it suffices to show that 
$$\bigcap_{n\in\NN} \big(( I^{(n+1)})^{[p]}:_R I^{(np+1)}\big) = \bigcap^{ \max\{0, H-1-\delta\}}_{n=0}\big(( I^{(n+1)})^{[p]}:_R I^{(np+1)}\big).$$ 
We note that if $H = 0$, then $I=0$ and the result follows.  If $H=1,$ then $I$ is a principal ideal. Therefore,
$$
 ( I^{(n+1)})^{[p]}:_R I^{(np+1)}
=  I^{np+p}:_R I^{np+1}
 =I^{p-1} = I^{[p]}:_R I
 $$
and the result  follows.
Hence, for the rest of the proof we may assume $H\gs 2.$
Let $J$ be the ideal $( I^{(H-\delta)})^{[p]}:_R I^{((H-1-\delta)p+1)}$.
We claim that $J\subseteq \left( I^{(n+1)}\right)^{[p]}:_R I^{(np+1)}
$ for every $n\gs H-1-\delta$.
We proceed by induction on $n$. The base of induction follows from the definition of $J$.
Suppose that $J\subseteq ( I^{(n+1)})^{[p]}:_R I^{(np+1)}$, we need to show that $J I^{((n+1)p+1)}\subseteq ( I^{(n+2)})^{[p]}$, and it suffices to show this containment locally at every prime ideal in $\Ass_R  ( R/{I^{(n+2)}}^{[p]})=\Ass_R (R/ I^{(n+2)})= \Ass_R (R/I)$, where the first equality holds by flatness of Frobenius.   
Let $Q\in\Ass_R(R/I)$ and set $\widetilde{Q}=QR_Q$.
We observe that since $n+1\gs H-\delta,$ we have $(n+1)p+1\gs Hp-\delta p+1\gs H(p-1)+1$. Then,
Lemma \ref{Lemma Obs pe u(I)} implies
$$
( \widetilde{Q}^{n+2})^{[p]}:_R \widetilde{Q}^{(n+1)p+1}
=\widetilde{Q}^{[p]}( \widetilde{Q}^{n+1})^{[p]}:_R \widetilde{Q}^{[p]}\widetilde{Q}^{(np+1)}\supseteq ( \widetilde{Q}^{n+1})^{[p]}:_R \widetilde{Q}^{(np+1)}.
$$
Since $JR_Q\subseteq ( \widetilde{Q}^{n+1})^{[p]}:_R \widetilde{Q}^{(np+1)}$ by induction hypothesis, the proof of our claim follows. We conclude that
$$
\bigcap_{n\in\NN} \big(( I^{(n+1)})^{[p]}:_R I^{(np+1)}\big) =
\bigcap^{H-1-\delta}_{n=0} \big(( I^{(n+1)})^{[p]}:_R I^{(np+1)}\big),
$$
and the result follows.
\end{proof}

\begin{remark} 
The correction given by $\delta=0$ in the case $p>H$ of Theorem \ref{ThmFedders} is needed. Indeed, if we could always use $\delta=1$, that is, if the condition
$$\bigcap^{ \max\{0, H-2\}}_{n=0}\big(( I^{(n+1)})^{[p]}:_R I^{(np+1)}\big) \not\subseteq\m^{[p]}$$
implies that $I$ is symbolic $F$-split, then every  $F$-split ideal $I$ such that $\bh(I)=2$ would be symbolic $F$-split. This is not the case, as we show in Example \ref{ExFpureNotSymb}.
\end{remark}

\begin{corollary}\label{CorH}
Assume Setup \ref{setupSymbFpure}.  Let $I\subseteq R$ be a radical ideal and set $H=\bh(I)$.
If $I^{(H(p-1))}\not\subseteq \m^{[p]}$, then $I$ is symbolic $F$-split.
\end{corollary}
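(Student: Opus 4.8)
The plan is to deduce Corollary \ref{CorH} directly from the refined Fedder-type criterion in Theorem \ref{ThmFedders}. Since $I$ is radical by hypothesis and $H = \bh(I)$, Theorem \ref{ThmFedders} tells us that it suffices to prove the non-containment
\[
\bigcap_{n=0}^{\max\{0,\,H-1-\delta\}} \big((I^{(n+1)})^{[p]} :_R I^{(np+1)}\big) \not\subseteq \m^{[p]},
\]
where $\delta \in \{0,1\}$. So the first step is to exhibit a single element witnessing this non-containment for every index $n$ in the range simultaneously; the natural candidate, given the hypothesis $I^{(H(p-1))} \not\subseteq \m^{[p]}$, is an element $f \in I^{(H(p-1))} \setminus \m^{[p]}$.

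Next I would check that such an $f$ lies in each colon ideal $(I^{(n+1)})^{[p]} :_R I^{(np+1)}$ for $0 \le n \le H-1$. This reduces to showing
\[
I^{(H(p-1))} \cdot I^{(np+1)} \subseteq (I^{(n+1)})^{[p]} = (I^{[p]})^{(n+1)\text{-ish}}\ ,
\]
and here the key mechanism is the multiplicativity of symbolic powers, $I^{(a)} I^{(b)} \subseteq I^{(a+b)}$, together with the Frobenius identity $(I^{(m)})^{[p]} = I^{(mp)}$ (valid in a regular ring, since localizing and taking $p$-th powers interact well with flatness of Frobenius — more precisely $(I^{(n+1)})^{[p]} \supseteq$ or even $=$ the relevant symbolic power after localizing at the minimal primes of $I$; I should be a little careful and may instead argue locally at each $Q \in \Ass_R(R/I)$ exactly as in the proof of Theorem \ref{ThmFedders}). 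Concretely, $I^{(H(p-1))} I^{(np+1)} \subseteq I^{(H(p-1)+np+1)}$, and one needs $H(p-1) + np + 1 \ge (n+1)p$, i.e. $Hp - H + np + 1 \ge np + p$, i.e. $H(p-1) \ge p - 1$, i.e. $H \ge 1$, which holds since $I \ne R$ (if $H = 0$ then $I = 0$ and the statement is trivial, or vacuous). Thus $f$ belongs to every colon ideal in the finite intersection, and since $f \notin \m^{[p]}$ the non-containment holds.

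Finally I would package this: the chosen $f$ lies in $\bigcap_{n=0}^{\max\{0,H-1-\delta\}}\big((I^{(n+1)})^{[p]} :_R I^{(np+1)}\big)$ because this intersection contains $\bigcap_{n=0}^{H-1}\big((I^{(n+1)})^{[p]} :_R I^{(np+1)}\big)$, hence by Theorem \ref{ThmFedders} the ideal $I$ is symbolic $F$-pure, completing the proof. I expect the only real subtlety — the ``main obstacle'' — to be justifying the identity $(I^{(m)})^{[p]} = I^{(mp)}$ or, more safely, carrying out the colon-ideal containment after localizing at each associated prime $Q$ of $R/I$ (where $I R_Q = Q R_Q$ is, up to radical, a prime of a regular local ring, so symbolic powers there behave well and the elementary inequality $H(p-1) \ge p-1$ does the job); everything else is a short direct computation.
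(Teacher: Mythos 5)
Your overall route is the same as the paper's: pick $f \in I^{(H(p-1))} \setminus \m^{[p]}$ and show it lies in every colon ideal $(I^{(n+1)})^{[p]} :_R I^{(np+1)}$ via $I^{(H(p-1))} I^{(np+1)} \subseteq I^{(H(p-1)+np+1)} \subseteq (I^{(n+1)})^{[p]}$, then invoke the Fedder-type criterion (in fact, since the containment holds for \emph{all} $n$, Proposition \ref{PropFedder} already suffices and the finite truncation of Theorem \ref{ThmFedders} is not needed). The gap is in your justification of the second containment. The identity $(I^{(m)})^{[p]} = I^{(mp)}$ is false: only $(I^{(m)})^{[p]} \subseteq I^{(mp)}$ holds, and that is the wrong direction for your purposes. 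Consequently your arithmetic check $H(p-1)+np+1 \geq (n+1)p$ (equivalently $H \geq 1$) only verifies $I^{(H(p-1)+np+1)} \subseteq I^{((n+1)p)}$, which is strictly weaker than membership in $(I^{(n+1)})^{[p]}$ and does not prove the corollary.

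The correct argument — which is the content of \cite[Lemma 2.6]{GrifoHuneke}, cited by the paper at exactly this point — is the localization-plus-pigeonhole argument you gesture at but do not carry out. By flatness of Frobenius, $\Ass_R\big(R/(I^{(n+1)})^{[p]}\big) = \Ass_R\big(R/I^{(n+1)}\big) = \MIN(I)$, so it suffices to check the containment after localizing at each $Q \in \MIN(I)$. There $I^{(k)}R_Q = (QR_Q)^k$ with $QR_Q$ the maximal ideal of the regular local ring $R_Q$ of dimension $h = \Ht(Q) \leq H$, and the relevant inequality is the pigeonhole bound $\n^{h(p-1)+kp+1} \subseteq (\n^{k+1})^{[p]}$ for a regular local ring $(S,\n)$ of dimension $h$ (write each exponent $a_i = pb_i + r_i$ with $r_i \leq p-1$ and observe $\sum b_i \geq k+1$). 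Since $H(p-1)+np+1 \geq h(p-1)+np+1$, this gives $I^{(H(p-1)+np+1)}R_Q \subseteq (I^{(n+1)})^{[p]}R_Q$ as needed. So your proof is repairable, but as written the key step rests on a false identity and an inequality that checks the wrong target ideal.
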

\begin{proof}
For every $n\in \NN$ we have 
$I^{(H(p-1))} I^{(np+1)}\subseteq I^{(H(p-1)+np+1)} \subseteq  ( I^{(n+1)})^{[p]}$ \cite[Lemma 2.6]{GrifoHuneke}. Therefore,
 $I^{(H(p-1))}\subseteq ( I^{(n+1)})^{[p]}:_R I^{(np+1)}$. The result now follows from Theorem \ref{ThmFedders}.
\end{proof}

\begin{example}\label{SqFree_Example}
Let $I$ be a square-free monomial ideal in a polynomial ring $K[x_1,\ldots, x_r]$.
We observe that 
$$
(x_1\cdots x_r)^{p-1}\in\left( \bigcap^{r} _{n=0}\big(( I^{(n+1)})^{[p]}:_R I^{(np+1)}\big)\right) \smallsetminus \m^{[p]}
$$
Then, by  Theorem \ref{ThmFedders}, 
$I$ is symbolic $F$-split. This can also be proven via monomial valuations (see Example \ref{monExamp}(2)).
\end{example}


We also obtain the following sufficient condition for  $\R^s(I)$ to be $F$-split.

\begin{proposition}\label{PropH2}
Assume Setup \ref{setupSymbFpure}.  Let $I\subseteq R$ be a radical ideal and set $H=\bh(I)$. Let $\delta'=1$ if $p\ls H-1$ and $\delta'=0$ otherwise.
If
$$
\bigcap^{H-2-\delta'}_{n=0}\big(( I^{(n+1)})^{[p]}:_R I^{((n+1)p)}\big) \not\subseteq\m^{[p]},
$$
then  $\R^s(I)$ is  $F$-split. In particular, $\R^s(I)$ is  $F$-split  if $I^{((H-1)(p-1))}\not\subseteq \m^{[p]}$.
\end{proposition}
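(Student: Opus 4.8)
The plan is to reduce the statement of Proposition~\ref{PropH2} to a weaker version of Fedder's criterion, namely to the condition appearing in Remark~\ref{remOnlyR(I)F-pure}: to prove $\R^s(I)$ is $F$-pure it suffices to produce a splitting $\phi\colon R^{1/p}\to R$ with $\phi\big((I^{((n+1)p)})^{1/p}\big)\subseteq I^{(n+1)}$ for every $n\in\NN$. Arguing exactly as in the proof of Proposition~\ref{PropFedder} (using the trace map $\Phi$ from Remark~\ref{Trace} and the colon-characterization of $\phi=\Phi(f^{1/p}-)$), such a splitting exists precisely when
\[
\bigcap_{n\in\NN}\big((I^{(n+1)})^{[p]}:_R I^{((n+1)p)}\big)\not\subseteq\m^{[p]}.
\]
So the crux is to show that this infinite intersection is already achieved by the finite sub-intersection over $0\le n\le H-2-\delta'$.

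First I would dispose of the small-$H$ cases: if $H=0$ then $I=0$ and the claim is vacuous, and if $H=1$ then $I$ is principal, so all colon ideals $(I^{(n+1)})^{[p]}:_R I^{((n+1)p)}=I^{(n+1)p}:_R I^{(n+1)p}=R$ (note $I^{((n+1)p)}=I^{(n+1)p}$ here), and the intersection is $R\not\subseteq\m^{[p]}$. For $H\ge 2$, set $J=(I^{(H-1-\delta')})^{[p]}:_R I^{((H-1-\delta')p)}$, which is the last term of the finite intersection; I claim $J\subseteq (I^{(n+1)})^{[p]}:_R I^{((n+1)p)}$ for all $n\ge H-2-\delta'$, and I would prove this by induction on $n$ in the spirit of the proof of Theorem~\ref{ThmFedders}. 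For the inductive step one must show $J\cdot I^{((n+2)p)}\subseteq (I^{(n+2)})^{[p]}$, and since $\Ass_R(R/(I^{(n+2)})^{[p]})=\Ass_R(R/I^{(n+2)})=\Ass_R(R/I)$ by flatness of Frobenius, it suffices to check this after localizing at each $Q\in\Ass_R(R/I)$, where $\widetilde Q=QR_Q$ is a power of the maximal ideal. There, with $n+1\ge H-1-\delta'$, one has $(n+2)p\ge (H-\delta')p\ge H(p-1)+1$ when $p>H-1$, and $(n+2)p$ is already a multiple of $p$, so one can peel off a factor $\widetilde Q^{[p]}$: from $I^{((n+2)p)}R_Q=\widetilde Q^{(n+2)p}=\widetilde Q^{[p]}\cdot\widetilde Q^{(n+1)p}$ (using Lemma~\ref{Lemma Obs pe u(I)} to absorb the Frobenius power into an ordinary power, or more simply $\widetilde Q^{(n+2)p}=(\widetilde Q^p)^{[p]\text{-part}}$-type bookkeeping) one gets
\[
(\widetilde Q^{n+2})^{[p]}:_R \widetilde Q^{(n+2)p}\supseteq (\widetilde Q^{n+1})^{[p]}:_R \widetilde Q^{(n+1)p},
\]
and the right-hand side contains $JR_Q$ by the induction hypothesis. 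This closes the induction, hence the infinite intersection equals the finite one over $0\le n\le H-2-\delta'$, and the first assertion of the Proposition follows.

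For the final ``in particular'' statement, I would invoke the same containment of symbolic powers used in Corollary~\ref{CorH}: by \cite[Lemma~2.6]{GrifoHuneke}, $I^{(H(p-1))}\cdot I^{((n+1)p)}\subseteq I^{(H(p-1)+(n+1)p)}\subseteq (I^{(n+1)})^{[p]}$ whenever $H(p-1)+(n+1)p\ge (n+1)p^{}$ in the sense required — more precisely since $H(p-1)+(n+1)p \ge p(n+1+H) - H \ge p(n+2)$ once $p\ge H$, wait, the clean inequality is $H(p-1)+(n+1)p\ge (n+1)p + H(p-1) \ge p\,\big((n+1)+1\big)$ exactly when $H(p-1)\ge p$, i.e.\ $p\le H(p-1)$, which holds for $H\ge2$; in any case one gets $I^{(H(p-1))}\subseteq (I^{(n+1)})^{[p]}:_R I^{((n+1)p)}$ for all $n$, so $I^{((H-1)(p-1))}\not\subseteq\m^{[p]}$ together with $I^{(H(p-1))}\subseteq I^{((H-1)(p-1))}$ forces the intersection to avoid $\m^{[p]}$.

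\textbf{Main obstacle.} I expect the delicate point to be pinning down the exact bound $H-2-\delta'$ and verifying that the numerics in the localized inductive step (the inequality ensuring $\widetilde Q^{(n+2)p}$ is large enough to factor out $\widetilde Q^{[p]}$, which is where the $\delta'$ correction — distinguishing $p\le H-1$ from $p>H-1$ — enters) are sharp; the structural argument (reduce to Remark~\ref{remOnlyR(I)F-pure}, then an induction mimicking Theorem~\ref{ThmFedders}) is otherwise routine given the tools already developed.
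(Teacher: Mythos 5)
Your overall strategy is exactly the paper's: reduce to producing a splitting with $\phi\big((I^{((n+1)p)})^{1/p}\big)\subseteq I^{(n+1)}$ via Remark \ref{remOnlyR(I)F-pure}, translate this into the non-containment of $\bigcap_{n}\big((I^{(n+1)})^{[p]}:_R I^{((n+1)p)}\big)$ in $\m^{[p]}$ as in Proposition \ref{PropFedder}, and then collapse the infinite intersection to the finite one by the induction of Theorem \ref{ThmFedders}, localizing at $Q\in\Ass_R(R/I)$ and peeling off $\widetilde Q^{[p]}$ via Lemma \ref{Lemma Obs pe u(I)}. The numerics you need are $(n+2)p\gs (H-\delta')p\gs H(p-1)+1$, which amounts to $\delta'p\ls H-1$ and holds in both cases of the definition of $\delta'$ (not only ``when $p>H-1$'' as you wrote at one point); this is precisely why $\delta'$ is defined with the threshold $H-1$ rather than $H$. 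Up to that phrasing, the main assertion is correctly proved and matches the paper.

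The ``in particular'' paragraph, however, ends with a deduction that goes the wrong way. You establish $I^{(H(p-1))}\subseteq (I^{(n+1)})^{[p]}:_R I^{((n+1)p)}$ for all $n$ and then try to conclude from $I^{((H-1)(p-1))}\not\subseteq\m^{[p]}$ together with $I^{(H(p-1))}\subseteq I^{((H-1)(p-1))}$; but the non-containment of the \emph{larger} ideal $I^{((H-1)(p-1))}$ in $\m^{[p]}$ says nothing about the smaller ideal $I^{(H(p-1))}$, so this does not close the argument. The correct (and simpler) computation is to put the \emph{smaller} exponent directly into the colon: $(H-1)(p-1)+(n+1)p=H(p-1)+np+1$, so
\[
I^{((H-1)(p-1))}\, I^{((n+1)p)}\subseteq I^{(H(p-1)+np+1)}\subseteq \big(I^{(n+1)}\big)^{[p]}
\]
by the same containment from \cite[Lemma 2.6]{GrifoHuneke} used in Corollary \ref{CorH}, whence $I^{((H-1)(p-1))}$ itself lies in every term of the intersection and the hypothesis $I^{((H-1)(p-1))}\not\subseteq\m^{[p]}$ finishes the proof. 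This is the one-line fix the paper intends by ``proceed as in Corollary \ref{CorH}.''
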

\begin{proof}
We note that $\R^s(I)$ is  $F$-split if there is a splitting $\phi':R^{1/p}\to R$ for which  the inclusion $$\phi'\big((I^{((n+1)p)})^{1/p}\big)\subseteq I^{(n+1)}$$ holds for every $n\in \NN$ (see Remark \ref{remOnlyR(I)F-pure}).  
Let  $\delta'=1$ if $p\ls H-1$ and $\delta'=0$ otherwise.
We can adapt Lemma \ref{PropFedder} and the proof of Theorem \ref{ThmFedders} to obtain that
 $$\bigcap^{H-2-\delta'}_{n=0}\big(( I^{(n+1)})^{[p]}:_R I^{((n+1)p)}\big) \not\subseteq\m^{[p]}$$
  implies that $\R^s(I)$ is  $F$-split.  
  
For the last statement, we can proceed as in Corollary \ref{CorH} to obtain. 
$$
I^{((H-1)(p-1))}\subseteq \big(( I^{(n+1)})^{[p]}:_R I^{((n+1)p)}\big), 
$$
whence the conclusion follows.
\end{proof}

Since every symbolic $F$-split ideal is  $F$-split, one may ask whether these two conditions are equivalent.
The following  example  shows that this is not the case.

\begin{example}\label{ExFpureNotSymb}
Let $R=K[a,b,c,d]$ be a polynomial ring  and $\chara p \gs 3$. Consider the following matrix 
$$A=\begin{bmatrix}a^2&b&d\\c&a^2&b-d\end{bmatrix}.$$
Let $I=I_2(A)$ be the ideal generated by the $2\times 2$ minors of $A$. The ring $R/I$ is   $F$-split, in fact strongly $F$-regular \cite[Proposition 4.3]{singh99}. The ideal $I$ is prime of height 2. Moreover, the symbolic and ordinary powers of $I$ coincide \cite[Corollary 4.4]{GrifoHuneke}. Considering $p=3$ we verify with Macaulay2  \cite{grayson2002macaulay} that  $( I^{2})^{[3]}:_R I^{4} \subseteq\m^{[3]}$. Therefore, $I$ is not symbolic $F$-split by Theorem \ref{ThmFedders}.
\end{example}

The  following definition due to Huneke  provides a sufficient condition for an ideal to be symbolic $F$-split.

\begin{definition}[Huneke]
Assume Setup \ref{setupSymbFpure}.  Let $I\subseteq R$ be a radical ideal of height $h$. We say that $I$ is {\it $F$-K\"{o}nig} 
if there exists a regular sequence $f_1,\ldots, f_h\in I$ such that $R/(f_1,\ldots, f_h)$ is  $F$-split. 

In particular, if $I$ is an ideal generated by a regular sequence and such that $R/I$ is F-pure, then $I$ is $F$-K{\"o}nig.
\end{definition}

\begin{proposition}\label{FkoningFsymb}
Assume Setup \ref{setupSymbFpure}. If  $I\subseteq R$  is equidimensional and $F$-K\"{o}nig,
then it is symbolic $F$-split.
\end{proposition}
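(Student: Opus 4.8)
The plan is to produce a splitting $\phi : R^{1/p} \to R$ realizing the $F$-purity of the filtration $\II = \{I^{(n)}\}_{n \in \NN}$, built directly out of the regular sequence witnessing the $F$-K\"onig property. So let $f_1,\ldots, f_h \in I$ be a regular sequence with $A := R/(f_1,\ldots,f_h)$ $F$-pure, where $h = \height(I)$, and write $\fa = (f_1,\ldots,f_h)$. Since $I$ is equidimensional of height $h$ and $\fa \subseteq I$ has height $h$ generated by a regular sequence, $\fa$ is unmixed of height $h$, so $I$ is one of the (finitely many) minimal primes... more precisely $I \supseteq \fa$, both have height $h$, $I$ radical, so $I$ is an intersection of some of the associated primes of $R/\fa$; the key consequence I want to extract is the containment of symbolic powers $\fa^{(n)} \subseteq I^{(n)}$ for all $n$ — indeed localizing at a minimal prime $\fp$ of $I$, which is also a minimal prime of $\fa$, gives $\fa R_\fp = \fa^{(n)}R_\fp \cdot (\text{stuff})$... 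I should be careful: what is actually true and useful is $(\fa)^{(n)} = \bigcap_{\fp \in \Min(I)} \fa^n R_\fp \cap R \subseteq \bigcap_{\fp \in \Min(I)} I^n R_\fp \cap R = I^{(n)}$, using $\fa \subseteq I$ so $\fa^n R_\fp \subseteq I^n R_\fp$. But since $\fa$ is a complete intersection of height $h$ and $R$ is regular hence Cohen-Macaulay, $R/\fa$ has no embedded primes, and moreover $\fa^n$ has no embedded primes either (the associated graded ring of a complete intersection is a polynomial ring over $R/\fa$, so $\gr_\fa(R)$ is torsion-free over $R/\fa$, whence $\fa^n = \fa^{(n)}$ as an intersection over $\Min(\fa) \supseteq \Min(I)$). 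Therefore $\fa^n \subseteq I^{(n)}$ for every $n$.

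Next, since $R$ is regular and $A = R/\fa$ is $F$-pure, Fedder's criterion gives an element $g \in (\fa^{[p]} :_R \fa) \setminus \m^{[p]}$; equivalently, writing $\Phi$ for the trace map (Remark \ref{Trace}), the map $\phi := \Phi(g^{1/p}\cdot -) : R^{1/p}\to R$ is a surjection satisfying $\phi(\fa^{1/p}) \subseteq \fa$, hence by $R$-linearity and the standard iteration $\phi((\fa^n)^{1/p}) \subseteq \fa^{\lceil n/p\rceil}$ for all $n$; in particular $\phi((\fa^{np+1})^{1/p}) \subseteq \fa^{n+1}$. Since $\phi$ is surjective, by the Remark following Definition \ref{DefFpureFilt} we may adjust $\phi$ by multiplying its argument by a suitable $p$-th root so that it becomes an honest \emph{splitting} of Frobenius (this does not disturb the containments above, since multiplying by $R$-elements only shrinks images). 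Now combine: for every $n \in \NN$,
\[
\phi\big((I^{(np+1)})^{1/p}\big) \subseteq \phi\big((I^{(np+1)})^{1/p}\big),
\]
and I need the left side to land in $I^{(n+1)}$. This is where the argument has to be done with care, because $\phi$ was built from $\fa$, not $I$, and $\fa^n \subseteq I^{(n)}$ goes the \emph{wrong} way.

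So the actual strategy must instead produce the splitting intrinsically to $I$. Here is the corrected plan. Because $\gr_\fa(R)$ is a polynomial ring over $R/\fa$ and $R/\fa$ is $F$-pure, $R/\fa$ is reduced, and since $\fa$ is a radical ideal that is a complete intersection with $I \in \Min(\fa)$... hmm, $\fa$ need not be radical. Let me reorganize: the cleanest route is to localize. Apply Proposition \ref{PropFedder}: it suffices to show $\bigcap_{n} (I^{(n+1)\,[p]} :_R I^{(np+1)}) \not\subseteq \m^{[p]}$. I claim the single element $g \in (\fa^{[p]}:_R\fa)\setminus \m^{[p]}$ obtained from Fedder applied to $R/\fa$ lies in every colon ideal $I^{(n+1)\,[p]} :_R I^{(np+1)}$. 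To see this, test the containment $g \cdot I^{(np+1)} \subseteq I^{(n+1)\,[p]}$ locally at each $\fp \in \Ass(R/I^{(n+1)\,[p]}) = \Min(I)$ (using flatness of Frobenius as in the proof of Theorem \ref{ThmFedders}). At such $\fp$, which is also a minimal prime of $\fa$, we have $I R_\fp = \fp R_\fp = \fa^{\mathrm{sat}}$... no: $\fa R_\fp$ is $\fp R_\fp$-primary but need not equal $\fp R_\fp$. This is the crux and the main obstacle: controlling $I^{(np+1)}R_\fp = \fp^{np+1}R_\fp$ versus $I^{(n+1)\,[p]}R_\fp = (\fp^{n+1})^{[p]}R_\fp$ and showing $g$ multiplies one into the other, using only that $g \in (\fa^{[p]}:\fa)R_\fp$ and $\fa R_\fp$ is $\fp R_\fp$-primary generated by a regular sequence of length $h = \dim R_\fp$. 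I expect this reduces, after passing to the regular local ring $R_\fp$ with maximal ideal $\fp R_\fp$ and system of parameters $f_1,\ldots,f_h$, to Fedder's criterion for the \emph{parameter ideal} together with Lemma \ref{Lemma Obs pe u(I)}-type manipulations bounding $\fp^{np+1}$ in terms of $\fa^{\lceil \cdot\rceil}$ via the integral-closure/reduction relationship $\fp^{n} \subseteq \overline{\fa^{?}}$; but in fact the clean statement is that in a regular local ring $(R_\fp,\fp)$, $F$-purity of $R_\fp/\fa$ for $\fa$ a parameter ideal is equivalent to $\fa$ being $F$-pure as a filtration up to its $\fp$-adic comparison, and one then invokes that $\fp$ itself is symbolic $F$-pure (trivially, being maximal, via the trace map, $(x_1\cdots x_h)^{p-1}$ in coordinates) with a compatible splitting. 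The hard part is to make the bookkeeping of these two filtrations — $\fa$-adic and $\fp$-adic — interlock so that one global element $g$ works for all $n$; I would handle it by the reduction to $R_\fp$, choosing coordinates so $\fp = (x_1,\ldots,x_d)$, and checking $g := (x_1\cdots x_d)^{p-1}\cdot(\text{unit from the }F\text{-pure quotient})$ does the job, then globalizing by clearing the finitely many minimal primes simultaneously.
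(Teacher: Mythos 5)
Your proposal does not close; it ends with an acknowledged open step (``This is the crux and the main obstacle'') and a sketch of a local analysis at the minimal primes of $I$ that you never carry out. The difficulty you run into --- trying to compare the $\fa$-adic and $\fp$-adic filtrations after localizing, where $\fa R_\fp$ is only $\fp R_\fp$-primary --- is real, but it is also entirely avoidable. The missing idea is to use the \emph{specific} Fedder element for the complete intersection $J=(f_1,\ldots,f_h)$, namely $g=(f_1\cdots f_h)^{p-1}$, which by \cite[Proposition 2.1]{FedderFputityFsing} lies outside $\m^{[p]}$ because $R/J$ is $F$-pure. This element visibly lies in $J^{h(p-1)}\subseteq I^{h(p-1)}\subseteq I^{(h(p-1))}$, and since $I$ is equidimensional one has $\bh(I)=\Ht(I)=h$. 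Now Corollary \ref{CorH} of the paper applies verbatim: $I^{(H(p-1))}\not\subseteq\m^{[p]}$ with $H=\bh(I)$ implies $I$ is symbolic $F$-pure. That corollary already packages the containment $I^{(H(p-1))}\,I^{(np+1)}\subseteq (I^{(n+1)})^{[p]}$ from \cite[Lemma 2.6]{GrifoHuneke}, which is exactly the uniform-in-$n$ colon membership you were trying to verify by hand via localization.

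In short: you correctly assembled the input data (the regular sequence, Fedder's criterion for $R/\fa$, the direction of the containment $\fa^n\subseteq I^{(n)}$), but you tried to propagate the splitting through the $\fa$-adic filtration rather than observing that a single well-chosen element of $I^{(h(p-1))}\smallsetminus\m^{[p]}$ already certifies symbolic $F$-purity via Corollary \ref{CorH}. Without that bridge the argument as written is incomplete, and the localization route you outline would still require controlling $\fp^{np+1}R_\fp$ against $(\fp^{[p]})^{n+1}R_\fp$ using only data about the parameter ideal $\fa R_\fp$, which you do not do.
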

\begin{proof}
Let $h=\Ht(I)$  and  $f_1,\ldots, f_h\in I$ a regular sequence such that $R/(f_1,\ldots, f_h)$ is  $F$-split.
We consider $J=(f_1,\ldots, f_h)$.  Since $J$ is  $F$-split, we have $f_1^{p-1}\cdots f_h^{p-1}\in J^{h(p-1)}\smallsetminus \m^{[p]}$
\cite[Proposition 2.1]{FedderFputityFsing}.
The result now follows from Corollary \ref{CorH} since $J^{h(p-1)}\subseteq I^{(h(p-1))}$. 
\end{proof}

\begin{example}\label{ExZhibek}
Let $A$ and $B$ be two generic matrices of size $n\times n$ with entries in disjoint sets of variables. Let $J$ be the ideal generated by the entries of $AB-BA$ and $I$ the ideal generated by the off-diagonal entries of this matrix. Then if $n= 2$, or $3$, the ideals $I$ and $J$ are  $F$-K\"{o}nig \cite{Kady18}, and hence symbolic $F$-split.
\end{example}

We now mention and answer a question that was raised by  Huneke at the BIRS-CMO workshop on {\it Ordinary and Symbolic Powers of Ideals} during the summer of 2017 at  Casa Matem\'atica Oaxaca which arose in connections with the Conforti-Cornu\'ejols conjecture \cite{CC}.

\begin{question}[Huneke]\label{Huneke}
Let $Q\subseteq R$ be a prime  ideal such that $R/Q$ is  $F$-split, and $Q^{(n)}=Q^n$ for every $n\in\NN.$ Is $Q$ $F$-K\"onig?
\end{question}

The following  example shows that the answer to this question is negative.

\begin{example}\label{negativeHuneke}
Let $I$ and $R$ be as in Example \ref{ExFpureNotSymb} with $p\gs 3$. Then, $I$ is a prime ideal of height 2.
As noted before, $I^{(n)}=I^n$ for every $n\in \NN$; however, $I$ is not $F$-K\"onig. Indeed, by Proposition \ref{FkoningFsymb} and its proof it suffices to show $I^{(2(p-1))}=I^{2(p-1)}\subseteq \m^{[p]}$. Assume that the variable $a$ has degree $1$ and that the variables $b$, $c$, and, $d$ have degree $2$. Hence, $I$ is generated in degree $4$ and then $I^{2(p-1)}$ is generated in degree $8(p-1)$. On the other hand, if $f:=a^{n_1}b^{n_2}c^{n_3}d^{n_4}\not\in \m^{[p]}$, we must have $n_i\ls p-1$ for each $i$. Therefore, such an $f$ has degree at most $7(p-1)$.
\end{example}

\section{Symbolic and ordinary powers of determinantal ideals} 

In this section we prove our main results on symbolic powers of several types of determinantal ideals. A key point in our proofs is the construction of specific polynomials that allow us to directly apply Fedder's Criterion for an ideal to be symbolic $F$-split, Theorem \ref{ThmFedders};  this construction is inspired by the 
work of Seccia in the context of Knutson ideals 
 \cites{SecciaGen,SecciaHankel}.

\begin{notation}
Let $A$ be an $r\times s$ matrix, and $i,j,k,\ell\in\ZZ$ be such that $1\ls i \ls k\ls r$ and $1\ls j\ls \ell\ls s$.
We denote by $A^{[i,k]}_{[j,\ell]}$
the submatrix
of $A$ with row indices $i,\ldots,k$ and column indices $j,\ldots,\ell$.
\end{notation}

In the next subsections we repeatedly use the following lemma.
\begin{lemma} \label{Lemma initial square-free} Let $R$ be a $\NN$-graded polynomial ring over an $F$-finite field $K$, and let $\m$ be the homogeneous maximal ideal of $R$. Let $I$ be a radical homogeneous ideal with $H={\rm bigheight}(I)$. Assume that there exist a homogeneous polynomial $f \in R$ and a monomial order $<$ such that ${\rm in}_<(f)$ is square-free. 
\begin{enumerate}
\item If $f \in I^{(H)}$, then $I$ is symbolic $F$-split.
\item If $f^{p-1} \in (I^n)^{[p]}:I^{np}$ for every $n \in\NN$, then the Rees algebra $\R(I)$ is  $F$-split.
\end{enumerate}
\end{lemma}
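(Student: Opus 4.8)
The plan is to reduce both statements to an application of the Fedder-type criteria already established, namely Theorem~\ref{ThmFedders} for part~(1) and Proposition~\ref{PropH2} (together with Remark~\ref{remOnlyR(I)F-pure}) for part~(2). The only real content is a single observation about monomial orders: if $g\in R$ is a homogeneous polynomial whose initial monomial $\IN_<(g)$ is square-free, then $g^{p-1}\notin\m^{[p]}$. Indeed, write $m=\IN_<(g)=x_{i_1}\cdots x_{i_k}$ with the $x_{i_j}$ pairwise distinct; then $\IN_<(g^{p-1})=m^{p-1}=x_{i_1}^{p-1}\cdots x_{i_k}^{p-1}$, since taking initial terms is multiplicative. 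No variable appears to a power $\geq p$ in $m^{p-1}$, so $m^{p-1}\notin\m^{[p]}$. If $g^{p-1}$ were in $\m^{[p]}$, then so would be its initial term (as $\m^{[p]}$ is a monomial ideal and the initial term of an element of a monomial ideal lies in that ideal), a contradiction. Hence $g^{p-1}\notin\m^{[p]}$.

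For part~(1): assume $f\in I^{(H)}$ where $H=\bh(I)$. By Corollary~\ref{CorH}, it suffices to show $I^{(H(p-1))}\not\subseteq\m^{[p]}$. Since $f\in I^{(H)}$ and symbolic powers of a radical ideal satisfy $I^{(a)}I^{(b)}\subseteq I^{(a+b)}$, we get $f^{p-1}\in (I^{(H)})^{p-1}\subseteq I^{(H(p-1))}$. By the observation above, $f^{p-1}\notin\m^{[p]}$, so $I^{(H(p-1))}\not\subseteq\m^{[p]}$, and Corollary~\ref{CorH} gives that $I$ is symbolic $F$-pure.

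For part~(2): assume $f^{p-1}\in (I^n)^{[p]}:_R I^{np}$ for every $n\in\NN$. By Remark~\ref{remOnlyR(I)F-pure}, to conclude that $\R(I)$ is $F$-pure it suffices to exhibit a splitting $\phi:R^{1/p}\to R$ with $\phi\big((I^{(n+1)p})^{1/p}\big)\subseteq I^{n+1}$ for all $n$; in fact it suffices to find such a splitting for the ordinary powers, i.e.\ with $\phi\big((I^{(n+1)p})^{1/p}\big)$ replaced by $\phi\big((I^{(n+1)p})^{1/p}\big)$. Concretely, we use the trace map $\Phi$ of Remark~\ref{Trace} and set $\phi=f^{(p-1)/p}\cdot\Phi=\Phi(f^{(p-1)/p}\,-)$. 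By Remark~\ref{Trace}, the condition $\phi\big((I^{(n+1)p})^{1/p}\big)\subseteq I^{n+1}$ is equivalent to $f^{p-1}\in (I^{n+1})^{[p]}:_R I^{(n+1)p}$, which is exactly our hypothesis (with $n$ replaced by $n+1$; the case $n=0$ is automatic since $I^{[p]}\subseteq\m^{[p]}$ plays no role and $f^{p-1}\in (R)^{[p]}:I^{p}$ trivially... more precisely one checks $f^{p-1}\in(I)^{[p]}:I^{p}$ is the $n=0$ case of the hypothesis). Finally $\phi$ is a splitting because $\phi$ is surjective: by Remark~\ref{Trace}, $\phi$ is surjective iff $f^{p-1}\notin\m^{[p]}$, which holds by the square-free observation above. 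This completes part~(2).

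The main obstacle is essentially bookkeeping: making sure the indexing in the hypothesis $f^{p-1}\in (I^n)^{[p]}:I^{np}$ lines up with the indexing $\phi\big((I_{(n+1)p})^{1/p}\big)\subseteq I_{n+1}$ demanded by Remark~\ref{remOnlyR(I)F-pure}, and confirming that the relevant colon/symbolic-power containments behave as expected. The one genuinely non-formal ingredient, the fact that a square-free initial monomial forces $f^{p-1}\notin\m^{[p]}$, is elementary once one recalls that $\IN_<$ is multiplicative and that $\m^{[p]}$ is a monomial ideal. I do not expect any serious difficulty beyond being careful with these matchings.
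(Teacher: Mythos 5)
Your proposal is correct and follows essentially the same route as the paper: part (1) reduces to Corollary \ref{CorH} via $f^{p-1}\in (I^{(H)})^{p-1}\subseteq I^{(H(p-1))}$ together with the observation that a square-free initial monomial forces $f^{p-1}\notin\m^{[p]}$, and part (2) uses the trace map $\Phi(f^{(p-1)/p}\,\cdot\,)$ to induce a surjective $\R(I)$-linear map $\R(I)^{1/p}\to\R(I)$, exactly as in the paper. The only blemishes are cosmetic (a duplicated phrase in part (2) and slightly muddled remarks about the $n=0$ case), not mathematical.
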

\begin{proof}
We first prove (1). The assumption that ${\rm in}_<(f)$ is a square-free monomial implies that $f^{p-1} \notin \m^{[p]}$. Since $f^{p-1} \in (I^{(H)})^{p-1} \subseteq I^{(H(p-1))}$, we conclude by Corollary  \ref{CorH} that $I$ is symbolic $F$-split.

In order to prove (2), we let $\Phi$ be the trace map (see Remark \ref{Trace}), and we consider $\phi=\Phi(f^{p-1}-)$. Because of our assumptions, the map $\phi$ induces an $\R(I)$-linear map $\Psi: \left(\R(I)\right)^{1/p}\to \R(I)$. As above, we have that $f^{p-1} \notin \m^{[p]}$, and therefore $\Psi$ is surjective. It follows that $\R(I)$ is  $F$-split.
\end{proof}


\subsection{Ideals of minors of a generic matrix} \label{SubsectionDeterminantal} In this subsection we use the following setup.

\begin{setup}\label{setupDetGen}
Let $r,s\in \ZZ_{>0}$ be such that $r\leq s$. Let $X=(x_{i,j})$ be a generic $r\times s$ matrix of variables, $K$ be an $F$-finite field of characteristic $p>0$, $R=K[X]$, and $\m=(x_{i,j})$. For $t \in \ZZ_{>0}$ such that $t \leq r$, we let $I_t(X)$ be the ideal generated by the $t\times t$ minors of $X$. We let
\[
f_u(X)=\left( \prod^{r-1}_{\ell=u} \det\left( X^{[r-\ell+1,r]}_{[1,\ell]}\right) \det\left( X^{[1,\ell]}_{[s-\ell+1 ,s]}\right)\right) \cdot \left(\prod^{s-r+1}_{\ell=1} 
\det\left( X^{[1,r]}_{[\ell,r+\ell-1]}\right)\right),
\]
 for $u\leq r$.
We consider the lexicographical monomial order on $R$ induced by
$$x_{1,1}>x_{1,2}>\ldots >x_{1,s}>x_{2,1}>x_{2,2}>\cdots >x_{r,s-1}>x_{r,s}.$$
\end{setup}

\begin{remark}\label{sqft}
For any $t\in \NN$ we note that  the initial form $\IN_<(f_t(X))$ is a square-free monomial.
\end{remark}

We begin by showing that generic determinantal ideals are symbolic $F$-split.

\begin{theorem}\label{ThmSFPGen}
Assuming Setup \ref{setupDetGen}, 
the  ideal $I_t(X)$ is symbolic $F$-split.
\end{theorem}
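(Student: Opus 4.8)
The plan is to apply Lemma~\ref{Lemma initial square-free}(1) to the polynomial $f_t(X)$ from Setup~\ref{setupDetGen}. Two things need to be checked: first, that $\IN_<(f_t(X))$ is a square-free monomial, and second, that $f_t(X) \in I_t(X)^{(H)}$, where $H = \bh(I_t(X))$. The first point is recorded in Remark~\ref{sqft}, so the heart of the matter is the symbolic membership.

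First I would recall the relevant facts about generic determinantal ideals from Bruns--Vetter: $I_t(X)$ is a prime ideal (hence its big height is just its height), and its height is $(r-t+1)(s-t+1)$. Since $R/I_t(X)$ is a normal domain, the symbolic powers can be detected by order of vanishing along the generic point of the irreducible variety $V(I_t(X))$; more usefully, there is the classical description (again from \cite{BookDet}, via the work of DeConcini--Eisenbud--Procesi) of $I_t(X)^{(n)}$ in terms of products of minors: a product of minors $\delta_{1}\cdots\delta_{k}$, where $\delta_i$ has size $t_i\times t_i$, lies in $I_t(X)^{(n)}$ precisely when $\sum_{i}\max\{t_i - t + 1,\, 0\} \geq n$. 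So I would compute, for each of the three families of factors appearing in $f_t(X)$ --- namely $\det(X^{[r-\ell+1,r]}_{[1,\ell]})$ and $\det(X^{[1,\ell]}_{[s-\ell+1,s]})$ for $u \le \ell \le r-1$, and $\det(X^{[i,r+\ell-1]}_{[i,r+\ell-1]})$ for $1 \le \ell \le s-r+1$ --- its size, and hence its contribution $\max\{\text{size} - t + 1, 0\}$ to the exponent, and then sum everything up and verify that the total is at least $H = (r-t+1)(s-t+1)$ when $u = t$.

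The key computation: the two ``corner'' minors of size $\ell$ each contribute $(\ell - t + 1)$ for $\ell$ ranging over $t, t+1, \dots, r-1$ (the terms with $\ell < t$ contribute $0$, which is why the product starts at $u = t$), giving $2\sum_{\ell=t}^{r-1}(\ell-t+1) = 2\binom{r-t+1}{2} = (r-t)(r-t+1)$; and the $(s-r+1)$ diagonal minors, each of size $r$, each contribute $(r-t+1)$, giving $(s-r+1)(r-t+1)$. Summing: $(r-t)(r-t+1) + (s-r+1)(r-t+1) = (r-t+1)\big((r-t) + (s-r+1)\big) = (r-t+1)(s-t+1) = H$. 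Thus $f_t(X) \in I_t(X)^{(H)}$, and Lemma~\ref{Lemma initial square-free}(1) gives the conclusion.

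I expect the main obstacle to be bookkeeping rather than conceptual: one must be careful that the sizes of the submatrices written in Setup~\ref{setupDetGen} are exactly as claimed (the submatrix $X^{[r-\ell+1,r]}_{[1,\ell]}$ has $\ell$ rows and $\ell$ columns, so it is a genuine $\ell\times\ell$ minor, and similarly $X^{[i,r+\ell-1]}_{[i,r+\ell-1]}$ is an $r\times r$ minor, though the role of the index $i$ in that formula needs to be read correctly --- it seems to be a fixed starting index, possibly $i=\ell$, making these the $r\times r$ minors sitting on consecutive ``diagonal blocks''), and that the combinatorial criterion for symbolic membership is being applied with the right normalization. A secondary subtlety is to make sure that for the square-freeness claim of Remark~\ref{sqft} the chosen lexicographic order indeed makes the leading term of each factor the product of the main antidiagonal (or diagonal) entries, and that across the different factors these leading monomials involve pairwise disjoint sets of variables, so that no variable is repeated in $\IN_<(f_t(X))$; this is what forces $f_t(X)^{p-1} \notin \m^{[p]}$ and is precisely the hypothesis of Lemma~\ref{Lemma initial square-free}.
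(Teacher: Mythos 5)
Your proposal is correct and follows essentially the same route as the paper: the paper also reduces to showing $f_t(X)\in I_t(X)^{(h)}$ with $h=(r-t+1)(s-t+1)$ by bounding the symbolic order of each minor factor via $I_\ell(X)\subseteq I_t(X)^{(\ell-t+1)}$ (\cite[Proposition 10.2]{BookDet}, which is the containment form of the DeConcini--Eisenbud--Procesi criterion you invoke) and summing exactly as you do, then concludes with Remark \ref{sqft} and Lemma \ref{Lemma initial square-free}(1). Your side remarks on the bookkeeping (the sizes of the submatrices and the stray index $i$ in the definition of $f_u(X)$) are apt but do not alter the argument.
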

\begin{proof}
Let $h=(s-t+1)(r-t+1)=\Ht(I_t(X))$. Let $f= f_t(X)$, and note that 
\begin{align*}
f& \in \left( \prod^{r-1}_{\ell=t} I_\ell(X)\right)^2     I_r(X)    ^{s-r+1}\\
&\subseteq \left( \prod^{r-1}_{\ell=t} \ I_t(X)^{(\ell-t+1)}\right)^2\left(     I_t(X)    ^{(r-t+1)}\right)^{s-r+1} \quad \hbox{\cite[Proposition 10.2]{BookDet}}\\
&\subseteq  I_t(X)^{((r-t)(r-t+1))}   I_t(X)^{((r-t+1)(s-r+1))}\\
&\subseteq I_t(X)^{(s-t+1)(r-t+1)}= I_t(X)^{(h)}.
\end{align*}
The conclusion follows from Remark \ref{sqft} and Lemma \ref{Lemma initial square-free} (1).
\end{proof}

From the previous proposition we obtain the following  consequences.

\begin{theorem}\label{mainDetReg}
Assuming Setup \ref{setupDetGen}, the limit $$\lim\limits_{n\to\infty} \frac{\reg\big(R/I_t(X)^{(n)}\big)}{n}$$
  exists and $$ \depth\big(R/I_t(X)^{(n)}\big)$$ stabilizes for $n\gg 0$.
  Furthermore, if 
  $1\ls t\ls \min\{m,r\}$, then  $$\lim\limits_{n\to\infty} \depth\big(R/I_t(X)^{(n)}\big)=\dim(R)- \dim\left(\R^s\left(I_t(X)\right)/\m \R^s\left(I_t(X)\right)\right)=t^2-1.$$
\end{theorem}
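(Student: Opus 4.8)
The plan is to combine Theorem \ref{ThmSFPGen}, which gives that $I_t(X)$ is symbolic $F$-pure, with the asymptotic results of Section 4 and a dimension count for the symbolic fiber cone. First I would observe that the symbolic Rees algebra $\R^s(I_t(X))$ is Noetherian: this is a known fact for ideals of minors of a generic matrix (the symbolic and ordinary Rees algebras differ, but the symbolic one is still finitely generated), so we are in the situation of Theorem \ref{mainRegDepth}. Applying that theorem with $\II = \{I_t(X)^{(n)}\}_{n\in\NN}$ immediately yields both that $\lim_{n\to\infty}\frac{\reg(R/I_t(X)^{(n)})}{n}$ exists and that $\depth(R/I_t(X)^{(n)})$ stabilizes for $n\gg 0$; note $R$ is regular, so the ``as a consequence'' clause of Theorem \ref{mainRegDepth}(2) applies to $R/I_t(X)^{(n)}$.

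For the stable depth value, I would appeal to Corollary \ref{CM}, which requires $\R^s(I_t(X))$ to be Cohen-Macaulay; this is exactly the $F$-rationality (indeed strong $F$-regularity, Theorem \ref{MainThmFsing}(1)) of the symbolic Rees algebra, which gives Cohen-Macaulayness. Corollary \ref{CM} then gives
\[
\lim_{n\to\infty}\depth(R/I_t(X)^{(n)}) = \dim(R) - \dim\bigl(\R^s(I_t(X))/\m\R^s(I_t(X))\bigr) + 1,
\]
and since $\depth(R/I_t(X)^{(n)}) = \dim(R) - 1 - (\text{codimension shift})$... more directly, writing the statement in terms of the Rees algebra rather than $R/I^{(n)}$, I record that $\depth(I_t(X)^{(n)}) = \depth(R/I_t(X)^{(n)}) + 1$, which reconciles the indexing in Corollary \ref{CM} (stated for the filtration ideals $I_n$) with the statement here (phrased for the quotients). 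So the middle equality in the theorem is Corollary \ref{CM} verbatim.

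The remaining and genuinely substantive step is the computation $\dim(\R^s(I_t(X))/\m\R^s(I_t(X))) = \dim(R) - t^2 + 1$, equivalently the claim that the stable depth equals $t^2 - 1$. The symbolic fiber cone $\R^s(I_t(X))/\m\R^s(I_t(X))$ is the analytic spread of the symbolic filtration, and I would identify it using the known description of symbolic powers of generic determinantal ideals via Bruns--Vetter \cite[Chapter 10]{BookDet}: $I_t(X)^{(n)}$ is generated by products of minors whose ``shape'' has weight $\geq n$, and the fiber cone is the associated toric/initial algebra. Alternatively—and this is the route I'd actually take to avoid re-deriving the fiber dimension—I would use that the minimum value of $\depth(R/I_t(X)^{(n)})$ over all $n$ has already been computed in \cite{BookDet} to be $t^2 - 1$ (attained, e.g., for large $n$ in the generic range, or via the explicit free resolutions of symbolic powers there), and combine this with the already-established fact that the depth stabilizes to its \emph{minimum} value (Theorem \ref{mainRegDepth}(1)). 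That pins the stable value at $t^2 - 1$ and, running Corollary \ref{CM} backwards, also identifies the fiber dimension.

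\textbf{Main obstacle.} The delicate point is the last equality, $t^2 - 1$: one must be careful that the ``minimum depth over all symbolic powers'' computed in \cite{BookDet} really is $t^2-1$ in the stated range $1 \le t \le \min\{m,r\}$ and that it is actually attained, so that Theorem \ref{mainRegDepth}(1) (stable $=$ minimum) can be invoked. The rest is bookkeeping: matching the index conventions between $I_n$ and $R/I_n$, and confirming Noetherianity and Cohen--Macaulayness of $\R^s(I_t(X))$ from the results already cited. I do not anticipate difficulty there, since both are quoted from the literature and from Theorem \ref{MainThmFsing}(1) of this paper.
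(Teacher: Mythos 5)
Your proposal is correct and follows essentially the same route as the paper: Noetherianity of $\R^s(I_t(X))$ plus Theorem \ref{ThmSFPGen} feed into Theorem \ref{mainRegDepth}, Cohen--Macaulayness of the symbolic Rees algebra feeds into Corollary \ref{CM} (with the same $\depth(I_n)=\depth(R/I_n)+1$ index shift you note), and the value $t^2-1$ is pinned down exactly as you suggest, by combining ``stable $=$ minimum'' with the computation of the minimum depth from Bruns--Vetter (\cite[Propositions 9.23 and 10.8]{BookDet}). The only cosmetic difference is that the paper cites \cite[Corollary 3.3]{brunsconca98} for Cohen--Macaulayness rather than deducing it from strong $F$-regularity, which is also valid since Theorem \ref{ThmSymbFregGen} does not depend on this result.
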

\begin{proof}
We know  that  $\R^s\left(I_t(X)\right)$ is Noetherian \cite[Proposition 10.2, Theorem 10.4]{BookDet}. 
  Hence, the result follows by combining  Theorem \ref{ThmSFPGen}  and Theorem \ref{mainRegDepth}.
Since  $\R^s\left(I_t(X)\right)$ is Cohen-Macaulay  \cite[Corollary 3.3]{brunsconca98}, we have that  
\begin{align*}
\lim\limits_{n\to\infty} \depth\big(R/I_t(X)^{(n)}\big) &=\dim(R)- \dim\left(\R^s\left(I_t(X)\right)/\m \R^s\left(I_t(X)\right)\right)&\quad \hbox{ by Corollary \ref{CM},}\\
&=\min\{ \depth\big(R/I_t(X)^{(n)}\big)\}&\quad \hbox{ by Theorem \ref{mainRegDepth},}\\
 & =\grade \big(\m\gr^s(I_t(X))\big) &\quad \hbox{\cite[Proposition 9.23]{BookDet}},\\
&=t^2-1&  \quad  \hbox{\cite[Proposition 10.8]{BookDet}.}
\end{align*}
\end{proof}

We now show that  $\R^s(I_t(X))$ and  $\gr^s(I_t(X))$ are strongly $F$-regular. This strengthens a result of Bruns and Conca  \cite{FRatGeneric} showing that $\R^s(I_t(X))$ is $F$-rational using techniques from the theory of Sagbi bases \cite{Sagbi}. 

\begin{theorem}\label{ThmSymbFregGen}
Assuming Setup \ref{setupDetGen}, the algebras $\R^s(I_t(X))$ and $\gr^s(I_t(X))$ are strongly $F$-regular.
\end{theorem}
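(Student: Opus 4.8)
The plan is to upgrade the $F$-purity statement of Theorem~\ref{ThmSFPGen} to strong $F$-regularity by exploiting the fact that $\R^s(I_t(X))$ is already known to be a normal Cohen–Macaulay domain (indeed a Krull domain since it is a symbolic Rees algebra of a prime ideal in a regular ring, and Cohen–Macaulay by \cite{brunsconca98}). Recall the standard tight-closure criterion \cite{HoHuStrong}: an $F$-finite reduced ring that is $F$-pure and has an element $c$ avoiding the minimal primes such that the localization $A_c$ is strongly $F$-regular, and such that some power of $c$ multiplies $A^{1/p^e}$ into $A\cdot 1$ compatibly, is strongly $F$-regular; more precisely, if $A$ is a normal domain, it suffices to produce a single nonzero $c$ for which the inclusion $A\to A^{1/p^e}$, $1\mapsto c^{1/p^e}$, splits for some $e$, provided $A_c$ is already strongly $F$-regular. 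So the strategy is: (i) identify a good choice of $c$, (ii) check $\R^s(I_t(X))_c$ is strongly $F$-regular, and (iii) produce the splitting sending $1\mapsto c^{1/p}$ using the same Fedder-type/trace-map machinery that gave $F$-purity, together with the square-free leading-term polynomial $f_t(X)$.

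First I would recall that by Theorem~\ref{ThmSFPGen} the ideal $I_t(X)$ is symbolic $F$-pure via a splitting $\phi$ of the form $\Phi(f_t(X)^{p-1}\,\cdot\,)$, where $\Phi$ is the trace map on $R=K[X]$; by the proof of Theorem~\ref{mainCharP} this induces an explicit splitting $\varphi$ of $\R^s(I_t(X))^{1/p}\to\R^s(I_t(X))$ and of the associated graded algebra. So both algebras are $F$-pure (hence reduced), and since $I_t(X)$ is prime, $\R^s(I_t(X))$ is a domain; being normal Cohen–Macaulay it is in particular a normal domain to which the localization criterion applies. Next I would choose the multiplier $c$. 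A natural choice is an element supported away from the singular locus of $\R^s(I_t(X))$: since $R[1/x_{1,1}]$ (inverting one suitable variable) makes $I_t(X)$ into an ideal generated by a regular sequence of indeterminates after a change of coordinates — concretely, after inverting a maximal minor the generic determinantal ring becomes a polynomial ring, and the symbolic powers become ordinary powers of a complete intersection prime — the symbolic Rees algebra localizes to a polynomial ring (or an extension of one) which is regular, hence strongly $F$-regular. Thus $c$ can be taken to be (the image in $\R^s$ of) a product of such minors, e.g. a power of $x_{1,1}$ or of an $r\times r$ minor, and $\R^s(I_t(X))_c$ is strongly $F$-regular.

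Then I would verify that the $F$-pure splitting can be modified to send $1\mapsto c^{1/p^e}$ for some $e\gg 0$: this is where the compatibility of $c$ with $f_t(X)$ matters. Concretely, I would show $c\cdot f_t(X)^{p^e-1}\notin \m^{[p^e]}$ — equivalently that the leading monomial of $c$ times the square-free leading monomial of $f_t(X)$ does not lie in $\m^{[p^e]}$ for suitable $e$, which holds because $\IN_<(f_t(X))$ is square-free (Remark~\ref{sqft}) and $c$ is a monomial in a bounded number of variables — so that the trace map applied to $c^{1/p^e} f_t(X)^{(p^e-1)/p^e}$ is surjective and still carries the symbolic filtration correctly; passing to the Rees algebra as in Theorem~\ref{mainCharP} gives a splitting of $\R^s(I_t(X))\to \R^s(I_t(X))^{1/p^e}$ sending $1\mapsto c^{1/p^e}$. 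By the localization criterion \cite[Theorem 3.3]{HoHuStrong} (strong $F$-regularity is detected on a single element whose localization is strongly $F$-regular), $\R^s(I_t(X))$ is strongly $F$-regular; the same argument applied to $\gr^s(I_t(X))$ — which is likewise a normal Cohen–Macaulay domain by \cite{BookDet} and inherits the induced splitting — gives its strong $F$-regularity. The main obstacle I anticipate is step (iii): arranging a single element $c$ that simultaneously (a) cuts out a strongly $F$-regular locus after localization and (b) is compatible with the symbolic filtration so that $\Phi(c^{1/p^e} f_t(X)^{(p^e-1)/p^e}\,\cdot\,)$ still maps $(I_t(X)^{(np^e+1)})^{1/p^e}$ into $I_t(X)^{(n+1)}$; keeping track of these two requirements at once, rather than either one alone, is the delicate part, and it likely requires choosing $c$ to be a product of the very minors that already appear as factors of $f_t(X)$ so that the filtration bookkeeping in the proof of Theorem~\ref{ThmSFPGen} goes through verbatim.
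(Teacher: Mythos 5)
Your high-level strategy is the same as the paper's: combine the Hochster--Huneke localization criterion \cite[Theorem 3.3]{HoHuStrong} with the trace-map splitting coming from symbolic $F$-purity (Theorem \ref{ThmSFPGen}), choosing a $c$ whose localization is already strongly $F$-regular. But the two steps you flag as delicate are exactly where the proposal has genuine gaps. First, the splitting $1\mapsto c^{1/p^e}$. The paper takes $c=x_{r,1}^{p-1}$, where $x_{r,1}$ is a variable \emph{not} dividing $\IN_<(f_t(X))$ (possible because $t\ge 2$), and --- crucially --- inserts the complementary square-free monomial $g=\prod_{i,j}x_{i,j}/(x_{r,1}\IN_<(f_t(X)))$, so that $f_t^{p-1}g^{p-1}x_{r,1}^{p-1}$ has initial term $\prod_{i,j}x_{i,j}^{p-1}$ and, by homogeneity, the trace map sends its $p$-th root to exactly $1$; since $g$ is just an extra multiplier, $f_t^{p-1}g^{p-1}$ still lies in $\bigl(I_t(X)^{(n+1)}\bigr)^{[p]}:I_t(X)^{(np+1)}$ and the induced map $\Psi$ on $\R^s$ satisfies $\Psi(x_{r,1}^{(p-1)/p})=1$. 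Your proposal omits this auxiliary $g$ entirely, and your suggested $c$ (``a product of the very minors that already appear as factors of $f_t(X)$'') is the wrong choice: if $\IN_<(c)$ and $\IN_<(f_t)$ share a variable, then $\IN_<(c)\,\IN_<(f_t)^{p^e-1}\in\m^{[p^e]}$ for every $e$, so the leading-term test for $cf_t^{p^e-1}\notin\m^{[p^e]}$ fails. Moreover, surjectivity of $\Phi(c^{1/p^e}f^{(p^e-1)/p^e}\,\cdot\,)$ is weaker than what the criterion needs, namely an $\R^s$-linear $\psi$ with $\psi(c^{1/p^e})=1$ exactly; bridging that gap is precisely what the explicit monomial bookkeeping with $g$ accomplishes.

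Second, the localization. The paper does not localize directly into a regular ring; it argues by induction on $t$, using the standard isomorphism $R[x_{r,1}^{-1}]\cong S[x_{r,1}^{-1}]$ that carries $I_t(X)$ to $I_{t-1}(U)$ for a smaller generic matrix and respects symbolic powers \cite[Lemma 10.1]{BookDet}, so that the localized symbolic Rees and associated graded algebras are strongly $F$-regular by the inductive hypothesis (base case $t=1$, where $I_1(X)=\m$). Your claim that after inverting a suitable minor ``the symbolic Rees algebra localizes to a polynomial ring'' is not correct: the Rees algebra of an ideal generated by $h\ge 2$ variables is not regular (it is defined by $2\times 2$ minors of a $2\times h$ matrix), though it is strongly $F$-regular --- which is essentially the base case of the induction you would still need to supply. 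Finally, the appeals to normality and Cohen--Macaulayness of $\R^s$ and $\gr^s$ are not needed for the criterion and do not substitute for the two missing ingredients above.
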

\begin{proof}
We know that 
 $\R^s(I_t(X))$ and $\gr^s(I_t(X))$ are Noetherian  \cite[Proposition 10.2, Theorem 10.4]{BookDet}.
We proceed by induction on $t$.
If $t=1$, then the result follows because  $I_1(X) =\m$.
We now assume the result is true for $(t-1)\times (t-1)$-minors of a generic matrix.
If we let $f=f_t(X)$, then $\IN_<(f)$ is a square-free monomial which is not divisible by $x_{r,1}$, because $t\geq 2$.
Let $g=\frac{\prod_{i,j} x_{i,j}}{x_{r,1}\IN_<(f)}$.
We note that $\IN_< (f^{p-1} g^{p-1})=\frac{\prod_{i,j} x_{i,j}^{p-1}}{x_{r,1}^{p-1}}$,  
and as a consequence $f^{p-1} g^{p-1}\not\in \m^{[p]}$.
Let  $\phi=\Phi\left(f^{(p-1)/p} g^{(p-1)/p}-\right),$ where $\Phi: R^{1/p}\to R$ denotes the trace map introduced in Remark \ref{Trace}.
We note that $\phi\left(x_{r,1}^{(p-1)/p}\right)=1$.

We have 
$f\in I_t(X)^{(\Ht(I_t(X))}$  as shown in the proof of Theorem \ref{ThmSFPGen}.
It follows that $f^{p-1}\in \left( I_t(X)^{(n+1)}\right)^{[p]}:I_t(X)^{(np+1)}$ for every $n\in\ZZ_{\geq 0}$  \cite[Lemma 2.6]{GrifoHuneke} (see also  proof of Corollary \ref{CorH}). As a consequence, $\phi$ induces  maps $\Psi: \R^s(I_{t}(X))^{1/p} \to \R^s(I_{t}(X)) $ and $\overline{\Psi}: \gr^s(I_{t}(X))^{1/p} \to \gr^s(I_{t}(X)) $ which satisfy 
\begin{equation}\label{xr1}
\Psi\left(x_{r,1}^{(p-1)/p}\right)=1\quad  \text{and} \quad \overline{\Psi}\left(\overline{x_{r,1}}^{(p-1)/p}\right)=1.
\end{equation}

Let $A=K[U]$, where $U=(u_{i,j})_{\substack{1\ls i\ls r-1,\\ 2\ls j\ls s}}$ is a generic matrix of size $(r-1)\times (s-1)$, and let 
$$S= A[x_{1,1},\ldots,x_{r-1,1}, x_{r,1},\ldots,x_{r,s}].$$
We have an isomorphism  $\gamma: R[x^{-1}_{r,1}]\to S[x^{-1}_{r,1}]$ defined by
$x_{i,j}\mapsto u_{i,j } + x_{r,j}x_{i,1} x^{-1}_{r,1}$, 
$x_{i,1}\mapsto x_{i,1}$, 
and  
$x_{r,j}\mapsto x_{r,j}$ for $i\leq r-1$ and $j\geq 2$.
Furthermore, we have  $\gamma\left(I_t(X)R[x^{-1}_{r,1}]\right)=I_{t-1}(U) S[x^{-1}_{r,1}]$, and then $\gamma(I_t(X)^{(n)}R[x^{-1}_{r,1}])=I_{t-1}(U)^{(n)} S[x^{-1}_{r,1}]$  for every $n\in\ZZ_{\geq 0}$ \cite[Lemma 10.1]{BookDet}.
By the induction hypothesis, 
$\R^s(I_{t-1}(U))$ and $\gr^s(I_{t-1}(U))$ are strongly $F$-regular. It follows that $\R^s(I_{t-1}(U))\otimes_A S[x^{-1}_{r,1}]$ and $\gr^s(I_{t-1}(U))\otimes_A S[x^{-1}_{r,1}]$ are strongly $F$-regular,  
because strong $F$-regularity is preserved by adding variables and localizing. Therefore, thanks to the isomorphism $\gamma$, the rings $\R^s(I_{t}(X))\otimes_R  R[x^{-1}_{r,1}]$ and $\gr^s(I_{t}(X))\otimes_R R[x^{-1}_{r,1}]$
 are  also strongly $F$-regular. From this and Equation \eqref{xr1}, we  conclude that $\R^s(I_t(X))$ and $\gr^s(I_t(X))$ are strongly $F$-regular \cite[Theorem 3.3]{HoHuStrong}.
\end{proof}

We now show that the ordinary Rees algebra of a generic determinantal ideal is  $F$-split. We note that it was already known that $\R(I_t(X))$ is $F$-rational \cite{FRatGeneric}. However, $F$-rationality does not imply that the ring is $F$-split.

\begin{theorem}\label{ThmReesFpureGen}
In addition to assuming Setup \ref{setupDetGen}, suppose that $p>\min\{t,r-t\}$. Then the Rees algebra $\R(I_t(X))$ is  $F$-split.
\end{theorem}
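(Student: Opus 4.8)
The plan is to apply Lemma \ref{Lemma initial square-free}(2), which reduces the statement to exhibiting a homogeneous polynomial $f \in R$ such that $\IN_<(f)$ is square-free and $f^{p-1} \in (I_t(X)^n)^{[p]} :_R I_t(X)^{np}$ for every $n \in \NN$. The natural candidate is a suitable product of minors, analogous to the polynomial $f_t(X)$ used for the symbolic case in Theorem \ref{ThmSFPGen}; by Remark \ref{sqft} its initial form is automatically a square-free monomial, so the only real content is the colon membership. Since $I_t(X)$ is a prime ideal whose ordinary and symbolic powers agree only up to height considerations, one cannot reuse the symbolic argument verbatim: here one needs $f^{p-1} \in (I_t(X)^n)^{[p]} :_R I_t(X)^{np}$, i.e. $f^{p-1} I_t(X)^{np} \subseteq (I_t(X)^n)^{[p]} = (I_t(X)^{[p]})^n$, a statement about ordinary powers.

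First I would observe that it suffices to find a single homogeneous $f$ lying in $I_t(X)^{\alpha}$ for $\alpha$ large enough that, combined with the pigeonhole-type argument of Lemma \ref{Lemma Obs pe u(I)}, one gets $f^{p-1} I_t(X)^{np} \subseteq I_t(X)^{\alpha(p-1) + np}$ and then factors $p$-th powers out. Concretely, if $f \in I_t(X)^{\alpha}$ with $\alpha(p-1) \geq \mu(I_t(X))(p-1)$ — or more precisely if $\alpha \geq \mu(I_t(X))$ — then repeated application of Lemma \ref{Lemma Obs pe u(I)} to $I_t(X)^{\alpha(p-1)+np}$ lets one extract an $n$-fold bracket power, yielding membership in $(I_t(X)^{[p]})^n = (I_t(X)^n)^{[p]}$. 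This is where the hypothesis $p > \min\{t, r-t\}$ should enter: it must be exactly what guarantees that the product of minors one writes down (with the number of factors dictated by the straightening-type containments from \cite{BookDet}) has degree/order large enough, as a power of $I_t(X)$, to run this extraction. I would need to track the numerology carefully, choosing $f$ to be a product of $\det$'s of nested square and rectangular submatrices so that $f \in I_t(X)^{\beta}$ with $\beta$ computed from the same chain of inclusions $I_\ell(X) \subseteq I_t(X)^{(\ell - t + 1)}$ used in Theorem \ref{ThmSFPGen}, but now extracting ordinary-power membership where possible, and using the bound on $p$ to compensate for the gap between ordinary and symbolic powers.

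The main obstacle I anticipate is precisely reconciling the symbolic-power containments available in the literature (\cite[Proposition 10.2]{BookDet}) with the ordinary-power statement required here: the clean inclusions $I_\ell(X) \subseteq I_t(X)^{(\ell-t+1)}$ are symbolic, and one does not have $I_\ell(X) \subseteq I_t(X)^{\ell - t + 1}$ in general. So the polynomial $f$ must be chosen more delicately — likely built mostly from $t$-minors and $r$-minors in a way that keeps it in a genuine ordinary power $I_t(X)^{\beta}$ — and the restriction $p > \min\{t, r-t\}$ is what makes $\beta$ (a function of $t, r, s$) exceed $\mu(I_t(X))$, or more precisely makes the per-step bound $(n+1)p+1 \geq \mu(\cdot)(p-1)+1$ hold at each stage of the induction on $n$ inside the colon computation. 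Once $f$ is fixed and the colon membership verified for all $n$, invoking Lemma \ref{Lemma initial square-free}(2) closes the argument; I would present the verification as an induction on $n$ mirroring the proof of Theorem \ref{ThmFedders}, localizing at associated primes of $I_t(X)$ (which are just the minimal prime, since $I_t(X)$ is prime) and applying Lemma \ref{Lemma Obs pe u(I)} to the localized power ideal.
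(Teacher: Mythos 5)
Your reduction to Lemma \ref{Lemma initial square-free}(2) and your choice of candidate polynomial are both on target, but the mechanism you propose for verifying $f^{p-1}\in (I_t(X)^n)^{[p]}:_R I_t(X)^{np}$ does not work, and it is not the one the paper uses. The pigeonhole extraction via Lemma \ref{Lemma Obs pe u(I)} requires exponents of size roughly $\mu(I_t(X))(p-1)=\binom{r}{t}\binom{s}{t}(p-1)$; to run it starting from $f^{p-1}I_t(X)^{np}\subseteq I_t(X)^{\alpha(p-1)+np}$ you would need $f\in I_t(X)^{\alpha}$ with $\alpha$ at least about $\binom{r}{t}\binom{s}{t}$, hence $\deg f\gs t\binom{r}{t}\binom{s}{t}$, which is incompatible with $\IN_<(f)$ being a square-free monomial (forcing $\deg f\ls rs$) except in trivial cases. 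Relatedly, you have misidentified the role of the hypothesis $p>\min\{t,r-t\}$: it is not a numerological condition making a pigeonhole bound work, but the non-exceptionality condition under which the primary decomposition $I_t(X)^{m}=\bigcap_{\ell=1}^{t} I_\ell(X)^{((t-\ell+1)m)}$ of \emph{ordinary} powers as intersections of \emph{symbolic} powers of the smaller minor ideals is valid \cite[Corollary 10.13]{BookDet}.

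That decomposition is precisely the missing idea that reconciles the symbolic containments with the ordinary-power statement --- the obstacle you correctly flagged but did not overcome. The paper takes $f=f_1(X)$ (the full product, over all $\ell$ from $1$ to $r$), which lies in $I_\ell(X)^{(\Ht(I_\ell(X)))}$ simultaneously for every $\ell\ls r$ by the computation in the proof of Theorem \ref{ThmSFPGen}; then \cite[Lemma 2.6]{GrifoHuneke} gives $f^{p-1}I_\ell(X)^{((t-\ell+1)np)}\subseteq \big(I_\ell(X)^{((t-\ell+1)n)}\big)^{[p]}$ for each $\ell$, and applying this factor by factor inside the intersection yields $f^{p-1}I_t(X)^{np}\subseteq \bigcap_{\ell=1}^t \big(I_\ell(X)^{((t-\ell+1)n)}\big)^{[p]}=\big(I_t(X)^n\big)^{[p]}$, commuting $(-)^{[p]}$ with the intersection by flatness of Frobenius and then invoking the decomposition again. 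No induction on $n$, no localization at associated primes, and no use of Lemma \ref{Lemma Obs pe u(I)} is needed. Your instinct that $f$ should be ``built mostly from $t$-minors and $r$-minors'' points in the wrong direction: $f$ must involve minors of every size $\ell=1,\ldots,r$ so that it serves all the ideals $I_\ell(X)$ appearing in the intersection at once.
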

\begin{proof}
Let $f=f_1(X)$, and note that  
$f\in I_\ell(X)^{(\Ht(I_\ell(X))}$  for every $\ell\leq r$, as shown in the proof of Theorem \ref{ThmSFPGen}.
It follows that $f^{p-1}\in \left( I_\ell(X))^{(n+1)}\right)^{[p]}: I_\ell(X) ^{(np+1)}$ for every $\ell\leq r$ and $n\in\ZZ_{\geq 0}$  \cite[Lemma 2.6]{GrifoHuneke} (cf.  proof of Corollary \ref{CorH}).
 Thus,
$$
f^{p-1} I_\ell (X)^{((n+1)p)}\subseteq f^{p-1} I_\ell (X)^{(np+1)}\subseteq \left( I_\ell(X)^{(n+1)}\right)^{[p]}
$$
for every $\ell \leq r$ and $n\in\ZZ_{\geq 0}$. Then,
\begin{align*}
f^{p-1}   I_t(X)^{np}& =f^{p-1}\left(   \bigcap^{ t}_{\ell=1}     I_\ell(X) ^{((t-\ell+1)np)}  \right) &\quad \hbox{\cite[Corollary 10.13]{BookDet}}\\\\
& \subseteq   \bigcap^{ t}_{\ell=1} f^{p-1}\left(    I_\ell(X)^{((t-\ell+1)np)}  \right)&\\
& \subseteq   \bigcap^{ t}_{\ell=1} \left(    I_\ell(X)^{((t-\ell+1)n)}  \right)^{[p]}&\\
& =   \left(  \bigcap^{ t}_{\ell=1}   I_\ell(X)^{((t-\ell+1)n)}  \right)^{[p]} &\\
& =  \left( I_t(X)^{n}\right)^{[p]} &\quad \hbox{\cite[Corollary 10.13]{BookDet}}. 
\end{align*}
The conclusion follows from Remarks \ref{sqft} and Lemma \ref{Lemma initial square-free} (2).
\end{proof}

We end this subsection with the following results which  provide bounds for the degrees of the defining equations of symbolic and ordinary  Rees algebras and  associated graded algebra of determinantal ideals of generic matrices.

\begin{theorem}\label{ThmDegGenOrd}
Assume Setup \ref{setupDetGen}.  
 Set  $\mu = \binom{r}{t}\binom{s}{t}$. 
\begin{enumerate} 
\item  Suppose $\deg(x_{i,j})=0$ for every $i,j$, 
then the defining equations of $\R(I_t(X))$  over $R$ have degree at most $\min\{ rs+1, \mu\}$.
\item   Suppose $\deg(x_{i,j})=1$ for every $i,j$, 
then the defining equations of $\R(I_t(X))$  over $K$ have total degree at most  $rs+\mu (t+1).$
\end{enumerate}
\end{theorem}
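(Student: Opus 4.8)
The plan is to apply the two degree-bound theorems from Section~3 --- namely Theorem~\ref{defEqblowup1} (for bound~(1), where the variables have degree $0$ so the ground ring is $R$) and Theorem~\ref{defEqblowup2} (for bound~(2), where $R$ is standard graded over $K$) --- to the filtration of ordinary powers $\II = \{I_t(X)^n\}_{n\in\NN}$. The essential inputs these theorems require are: that $R$ is $F$-finite and $F$-pure (clear, $R$ is a polynomial ring over an $F$-finite field), that $\R(I_t(X))$ is a finitely generated $F$-pure $R$-algebra (finite generation is automatic since $\R(I)$ of an ordinary-power filtration is always Noetherian, and $F$-purity is Theorem~\ref{ThmReesFpureGen}), and that $I_t(X)$ has positive height (true: $\Ht(I_t(X)) = (s-t+1)(r-t+1) \geq 1$). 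First I would record that, for the ordinary Rees algebra, $\R(I_t(X)) = R[I_t(X)T]$ is generated in a single degree $e_1 = 1$, with $v_1 = \nu_1 = \mu = \binom{r}{t}\binom{s}{t}$ generators, namely the $t\times t$ minors of $X$.

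For part~(1): I would substitute into the first displayed bound of Theorem~\ref{defEqblowup1}, with $\ell = 1$, $e_1 = 1$, $v_1 = \mu$, and $\dim(R) = rs$. The bound reads
\[
\dim(R) + 1 + \sum_{i=1}^\ell e_i v_i - \max\Bigl\{\dim(R)+1, \sum_{i=1}^\ell v_i\Bigr\} = rs + 1 + \mu - \max\{rs+1, \mu\},
\]
and since $rs + 1 + \mu - \max\{rs+1,\mu\} = \min\{rs+1, \mu\}$ (using $a + b - \max\{a,b\} = \min\{a,b\}$ with $a = rs+1$, $b = \mu$), this is exactly the claimed bound $\min\{rs+1, \mu\}$.

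For part~(2): here $R = K[X]$ is standard graded, generated by $\mu' := rs$ variables of degree one, so I would apply Theorem~\ref{defEqblowup2} with the role of ``$\mu$'' there played by $rs$, with $\ell = 1$, $e_1 = 1$, $\nu_1 = \mu = \binom{r}{t}\binom{s}{t}$, and $w_1 = \beta_R(I_t(X)) = t$, since $I_t(X)$ is generated in degree $t$ so $\beta_R(I_t(X)) = t$. The first displayed bound of Theorem~\ref{defEqblowup2} gives total degree at most
\[
\dim(R) + 1 + rs + \nu_1(w_1 + e_1) - \max\{\dim(R)+1,\ rs + \nu_1\} = rs + 1 + rs + \mu(t+1) - \max\{rs+1,\ rs+\mu\}.
\]
Since $\mu = \binom{r}{t}\binom{s}{t} \geq 1$ we have $rs + \mu \geq rs + 1$, so the max equals $rs + \mu$, and the bound simplifies to $rs + 1 + rs + \mu(t+1) - rs - \mu = rs + 1 + \mu t = rs + \mu t + 1 \leq rs + \mu(t+1)$, which is (slightly stronger than, hence implies) the stated bound $rs + \mu(t+1)$.

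The only genuine subtlety --- and the step I would be most careful about --- is correctly identifying $\beta_R(I_t(X))$ in part~(2): one must be sure that $I_t(X)$ is generated in degree exactly $t$ so that $\beta_R(I_t(X)) = t$ (it is, the $t$-minors are homogeneous of degree $t$ and generate), and one must keep straight the two separate roles of ``$\mu$'': the $\mu = \binom{r}{t}\binom{s}{t}$ in the statement of the present theorem is $\nu_1$ in the notation of Theorem~\ref{defEqblowup2}, while the ``$\mu$'' appearing in Theorem~\ref{defEqblowup2} is the number $rs$ of degree-one algebra generators of $R$ over $K$. Everything else is the routine arithmetic simplification $a + b - \max\{a,b\} = \min\{a,b\}$ and the observation $\mu \geq 1$. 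No further ideas are needed; the content is entirely in the $F$-purity of $\R(I_t(X))$ already established in Theorem~\ref{ThmReesFpureGen}.
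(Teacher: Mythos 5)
Your proposal is correct and follows essentially the same route as the paper, whose proof is the one-line citation of Theorem \ref{ThmReesFpureGen} together with the degree-bound results of Section~3; your explicit substitutions ($\ell=1$, $e_1=1$, $v_1=\nu_1=\mu$, $w_1=t$, $\dim(R)=rs$) and the simplification $a+b-\max\{a,b\}=\min\{a,b\}$ are exactly the intended computation. The only difference is that for part~(2) you correctly invoke Theorem \ref{defEqblowup2} (total degrees over $K$) where the paper's proof cites only Theorem \ref{defEqblowup1}, and your resulting bound $rs+\mu t+1$ is in fact slightly sharper than the stated $rs+\mu(t+1)$.
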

\begin{proof}
The result follows from Theorems  \ref{ThmReesFpureGen} and \ref{defEqblowup1}, and Proposition \ref{dimOfblow}.
\end{proof}

\begin{theorem}\label{ThmDegGenSymb}
Assume Setup \ref{setupDetGen} and that $t<r$.  
For $j=t,\ldots, r$, set  $\mu_j= \binom{r}{j}\binom{s}{j}$. 
\begin{enumerate} 
\item  Suppose $\deg(x_{i,j})=0$ for every $i,j$,
then the defining equations of $\R^s(I_t(X))$  over $R$ have degree at most 
$\min\{rs+1+\sum^{r}_{j=t+1}\mu_{j}(j-t),\sum^{r}_{j=t}\mu_{j}(j-t+1) \}$, and of $\gr^s(I_t(X))$ over $R/I_t(X)$, have degree at most $\min\{rs+\sum^{r}_{j=t+1}\mu_{j}(j-t),\sum^{r}_{j=t}\mu_{j}(j-t+1) \}$.
\item Suppose  $\deg(x_{i,j})=1$  for every $i,j$,
then the defining equations of $\R^s(I_t(X))$  and $\gr^s(I_t(X))$  over $K$ have total degree at most  
$rs+\sum^{r}_{j=t}\mu_{j}(2j-t+1)$.
\end{enumerate}
\end{theorem}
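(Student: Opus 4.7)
The plan follows the same template as the proof of Theorem \ref{ThmDegGenOrd}: apply the general bounds of Theorem \ref{defEqblowup1} (for part (1)) and Theorem \ref{defEqblowup2} (for part (2)) to the symbolic filtration $\II = \{I_t(X)^{(n)}\}_{n \in \NN}$. All hypotheses hold: $\R^s(I_t(X))$ and $\gr^s(I_t(X))$ are $F$-pure (in fact strongly $F$-regular, by Theorem \ref{ThmSymbFregGen}); $\R^s(I_t(X))$ is Noetherian (as invoked in the proof of Theorem \ref{mainDetReg} via \cite[Proposition 10.2, Theorem 10.4]{BookDet}); and $I_t(X)$ has positive height since $t \le r \le s$.

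The key input is the combinatorial data describing $\R^s(I_t(X))$ as an $R$-algebra. By the standard generation results for symbolic powers of generic determinantal ideals \cite[Proposition 10.2]{BookDet}, $\R^s(I_t(X))$ is generated over $R$ by the individual $j$-minors of $X$ for $t \leq j \leq r$: a $j$-minor sits in $I_t(X)^{(j-t+1)}$, and so has Rees-degree $e_j = j - t + 1$ and $R$-degree $j$. Consequently, in the notation of Theorems \ref{defEqblowup1} and \ref{defEqblowup2}, the number of generators in Rees-degree $e_j$ is $\nu_j = v_j = \mu_j = \binom{r}{j}\binom{s}{j}$, and $w_j + e_j = j + (j - t + 1) = 2j - t + 1$. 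Proposition \ref{dimOfblow} yields $\dim \R^s(I_t(X)) = rs + 1$ and $\dim \gr^s(I_t(X)) = rs$, while $R$ is a standard graded polynomial ring on $\mu = rs$ variables.

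For part (1), substituting these data into the bound of Theorem \ref{defEqblowup1} for $\R^s(I_t(X))$ over $R$ yields
\[
rs + 1 + \sum_{j=t}^{r}(j-t+1)\mu_j \;-\; \max\bigl\{\,rs + 1,\ \textstyle\sum_{j=t}^{r} \mu_j\,\bigr\},
\]
and a case analysis on which term achieves the maximum gives the two alternative bounds $\sum_{j=t}^{r}(j-t+1)\mu_j$ and $rs + 1 + \sum_{j=t+1}^{r}(j-t)\mu_j$ (the term for $j=t$ being zero), whence their $\min$ is a valid bound. The parallel substitution for $\gr^s(I_t(X))$ over $R/I_t(X)$ produces the second claimed $\min$ expression. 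For part (2), applying Theorem \ref{defEqblowup2} to the same data and simplifying yields the common upper bound $rs + \sum_{j=t}^{r}\mu_j(2j - t + 1)$ for both $\R^s(I_t(X))$ and $\gr^s(I_t(X))$ over $K$.

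The main obstacle is the combinatorial step of verifying that the $\binom{r}{j}\binom{s}{j}$ many $j$-minors suffice to generate $\R^s(I_t(X))$ as an $R$-algebra, so that each contributes exactly one new polynomial variable of total degree $2j - t + 1$ to the ambient presentation; this is where we invoke the classical symbolic-power description of \cite[Proposition 10.2]{BookDet}. Once the generator count is in hand, everything else is routine bookkeeping with the formulas of Theorems \ref{defEqblowup1} and \ref{defEqblowup2} and the dimensions coming from Proposition \ref{dimOfblow}.
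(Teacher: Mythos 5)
Your proposal is correct and follows essentially the same route as the paper: establish $F$-purity of $\R^s(I_t(X))$ and $\gr^s(I_t(X))$, invoke the generation $\R^s(I_t(X))=R[I_t(X)T,\ldots,I_r(X)T^{r-t+1}]$ from \cite[Proposition 10.2, Theorem 10.4]{BookDet}, and feed the resulting generator counts and degrees into Theorems \ref{defEqblowup1}/\ref{defEqblowup2} together with Proposition \ref{dimOfblow}. The only differences are cosmetic: you derive $F$-purity from strong $F$-regularity (Theorem \ref{ThmSymbFregGen}) rather than from Theorems \ref{ThmSFPGen} and \ref{mainCharP}, and you carry out explicitly the substitution and case analysis that the paper leaves implicit.
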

\begin{proof}
By Theorem \ref{ThmSFPGen} and Theorem \ref{mainCharP}, the algebras $\R^s(I_t(X))$   and  $\gr^s(I_t(X))$ are  $F$-split. Both parts of the result now follow from Theorem \ref{defEqblowup2}, Proposition \ref{dimOfblow}, and the equality 
$$ \R^s\left(I_t(X)\right)=R[I_t(X) T,I_{t+1}(X) T^2,\ldots, I_{r}(X) T^{r-t+1}] \quad \text{ \cite[Proposition 10.2, Theorem 10.4]{BookDet}.}$$
\end{proof}

\subsection{Ideals of minors of a symmetric matrix} In this subsection we use the following setup.

\begin{setup} \label{setup symmetric} Let $r \in \ZZ_{>0}$, and $Y=(y_{i,j})$ be a generic symmetric matrix of size $r\times r$. Let $K$ be an $F$-finite field of  characteristic $p>0$, $R=K[Y]$, and $\m=(y_{i,j})$. For $t \in \ZZ_{>0}$ with $t \leq r$ we let $I_t(Y)$ be ideal generated by the $t\times t$ minors of $Y$. We let 
\[
f_u(Y)= \prod^{r}_{\ell=u} \det\left( Y^{[1,\ell]}_{[r-\ell+1 ,r]}\right),
\]
for $u\leq r.$
We consider the lexicographical monomial order on $R$ induced by
$$y_{1,1}>y_{1,2}>\ldots >y_{1,r}>y_{2,2}>\ldots >y_{r,r}.$$
\end{setup}

\begin{remark}\label{sqftSym}
For any $t\in \NN$ we note that  the initial form $\IN_<(f_t(Y))$ is a square-free monomial.
\end{remark}

We now show that ideals of minors of a generic symmetric matrix is symbolic $F$-split. 
\begin{theorem}\label{ThmSFPSym}
Assuming Setup \ref{setup symmetric}, the ideal $I_t(Y)$ is symbolic $F$-split. In particular, the rings $\R^s(I_t(Y))$ and $\gr^s(I_t(Y))$ are  $F$-split.
\end{theorem}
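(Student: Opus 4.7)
The plan is to mirror the strategy used for generic matrices in Theorem~\ref{ThmSFPGen}, combining a height computation with a membership of the auxiliary polynomial $f_t(Y)$ in a sufficiently large symbolic power, and then invoking Lemma~\ref{Lemma initial square-free}(1). The ``in particular'' statement about $\R^s(I_t(Y))$ and $\gr^s(I_t(Y))$ will then follow immediately from Theorem~\ref{mainCharP}.

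First I would recall that $h := \bh(I_t(Y)) = \height(I_t(Y)) = \binom{r-t+2}{2}$ (the generic symmetric determinantal ideal is prime of this height; this is classical, cf.\ \cite{BookDet}). Next, for each $\ell$ with $t \leq \ell \leq r$, the minor $\det\bigl(Y^{[1,\ell]}_{[r-\ell+1,r]}\bigr)$ is an $\ell\times \ell$ minor of $Y$, hence lies in $I_\ell(Y)$. The key input will be the symbolic-power containment
\[
I_\ell(Y) \subseteq I_t(Y)^{(\ell - t + 1)} \quad \text{for all } \ell \geq t,
\]
which is the symmetric analogue of \cite[Proposition 10.2]{BookDet} used in the proof of Theorem~\ref{ThmSFPGen}; this containment holds for symmetric determinantal ideals and can be extracted from the De Concini--type primary decomposition / straightening results for symmetric minors, which I will cite from the literature on symmetric determinantal ideals. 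Multiplying these containments across $\ell = t, t+1, \ldots, r$, and using that symbolic powers form a filtration, I obtain
\[
f_t(Y) = \prod_{\ell=t}^{r} \det\bigl(Y^{[1,\ell]}_{[r-\ell+1,r]}\bigr)
\in \prod_{\ell=t}^{r} I_t(Y)^{(\ell - t + 1)}
\subseteq I_t(Y)^{\bigl(\sum_{\ell=t}^{r}(\ell-t+1)\bigr)}
= I_t(Y)^{\bigl(\binom{r-t+2}{2}\bigr)} = I_t(Y)^{(h)}.
\]

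By Remark~\ref{sqftSym}, the initial form $\IN_<(f_t(Y))$ is square-free, so Lemma~\ref{Lemma initial square-free}(1) applies and yields that $I_t(Y)$ is symbolic $F$-pure. Finally, applying Theorem~\ref{mainCharP} to the $F$-pure filtration $\{I_t(Y)^{(n)}\}_{n\in\NN}$ gives that $\R^s(I_t(Y))$ and $\gr^s(I_t(Y))$ are both $F$-pure.

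The main obstacle I anticipate is locating (or verifying) the symmetric-matrix analogue $I_\ell(Y) \subseteq I_t(Y)^{(\ell - t + 1)}$ of \cite[Proposition 10.2]{BookDet}; the rest is a straightforward counting argument ($\sum_{\ell=t}^{r}(\ell-t+1) = \binom{r-t+2}{2}$, which matches the height exactly) followed by a direct appeal to Lemma~\ref{Lemma initial square-free}. Should a clean reference for this containment not be available, a backup plan is to deduce it from the primary decomposition of products of symmetric determinantal ideals, or to argue localizing at each minimal prime of $I_t(Y)$ (which are the analogues of the ideals $I_t$ of suitable submatrices) and check the order of vanishing of each factor directly.
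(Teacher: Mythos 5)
Your proposal is correct and follows essentially the same route as the paper: the paper likewise writes $h=\frac{(r-t+1)(r-t+2)}{2}=\Ht(I_t(Y))$, places $f_t(Y)\in\prod_{\ell=t}^{r}I_\ell(Y)\subseteq\prod_{\ell=t}^{r}I_t(Y)^{(\ell-t+1)}\subseteq I_t(Y)^{(h)}$ using exactly the containment $I_\ell(Y)\subseteq I_t(Y)^{(\ell-t+1)}$ (cited there as a known result for symmetric determinantal ideals), and then concludes via Remark~\ref{sqftSym}, Lemma~\ref{Lemma initial square-free}(1), and Theorem~\ref{mainCharP}. The containment you were worried about is indeed available in the literature, so no backup argument is needed.
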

\begin{proof}
Let $h=\frac{(r-t+1)(r-t+2)}{2}=\Ht(I_t(Y ))$ \cite[Corollary 2.4]{HtSymm}.
Let $f=f_t(Y)$, and note that
\begin{align*}
f & \in  \prod^{r}_{\ell=t} I_\ell (Y)\\
&\subseteq  \prod^{r}_{\ell=t} \left( I_t(Y)\right)^{(\ell-t+1)} \quad \hbox{\cite[Theorem 4.4]{JMV}}\\
& \subseteq  I_t(Y)   ^{(h)}.
\end{align*}
The first statement now follows from Remark \ref{sqftSym} and Lemma \ref{Lemma initial square-free} (1), and the second statement from Theorem \ref{mainCharP}.
\end{proof}

\begin{lemma}\label{LemmaSymNoeth}
Assuming Setup \ref{setup symmetric}, the rings $\R^s(I_t(Y))$ and $\gr^s(I_t(Y))$ are Noetherian. Moreover, $ \R^s\left(I_t(Y)\right)=R[I_t(Y) T,I_{t+1}(Y) T^2,\ldots, I_{r}(Y) T^{r-t+1}]$.
\end{lemma}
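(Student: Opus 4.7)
The plan is to establish an explicit description of $I_t(Y)^{(n)}$ as a sum of products of ordinary powers of higher-order minors, from which the stated generator set and Noetherianity will both follow. Specifically, I aim to prove the formula
\[
I_t(Y)^{(n)} = \sum_{\substack{(a_t,\ldots,a_r)\in\NN^{r-t+1}\\ \sum_{i=t}^r (i-t+1)a_i = n}} I_t(Y)^{a_t} I_{t+1}(Y)^{a_{t+1}} \cdots I_r(Y)^{a_r}
\]
for every $n\in\NN$.

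The inclusion $\supseteq$ is essentially already available: by the containment results of [JMV, Theorem 4.4] used in the proof of Theorem \ref{ThmSFPSym}, one has $I_{t+k}(Y) \subseteq I_t(Y)^{(k+1)}$, so each summand on the right lies in $I_t(Y)^{(\sum_i (i-t+1)a_i)} = I_t(Y)^{(n)}$. For the reverse inclusion $\subseteq$, I would appeal to the primary decomposition of products of ideals of minors of a symmetric matrix from [JMV], expressing $I_{t_1}(Y)\cdots I_{t_k}(Y)$ as intersections of symbolic powers, and invert the relation to decompose the symbolic power itself. In the generic matrix setting the analogous formula, [BookDet, Theorem 10.4], is obtained through standard bitableaux and the straightening law; I expect a parallel argument, using standard monomials adapted to the symmetric setting, to carry over.

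Granted the formula, the equality
\[
\R^s(I_t(Y)) = R[I_t(Y)T,\, I_{t+1}(Y)T^2,\, \ldots,\, I_r(Y)T^{r-t+1}]
\]
is immediate, since the $n$-th graded component of the right-hand algebra is exactly the right-hand side of the symbolic power formula. In particular $\R^s(I_t(Y))$ is a finitely generated $R$-algebra, and therefore Noetherian.

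For the Noetherianity of $\gr^s(I_t(Y))$, consider the extended symbolic Rees algebra $B = \bigoplus_{n\in\ZZ} I_t(Y)^{(n)} T^n \subseteq R[T, T^{-1}]$, with the convention $I_t(Y)^{(n)} = R$ for $n\leq 0$. Adding $T^{-1}$ to the generators above shows that $B$ is finitely generated over $R$, hence Noetherian. Since $T^{-1}$ is a homogeneous nonzerodivisor in $B$ and $B/(T^{-1}) \cong \gr^s(I_t(Y))$, the associated graded algebra is also Noetherian. The principal obstacle is the $\subseteq$ direction of the symbolic power formula, which requires the combinatorial input from [JMV] on the primary decomposition of products of symmetric determinantal ideals; the remaining steps are formal consequences of finite generation.
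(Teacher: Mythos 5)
Your proposal is correct and follows essentially the same route as the paper: the symbolic power decomposition you identify as the ``principal obstacle'' is precisely what the paper quotes from \cite[Proposition 4.3]{JMV} (in the form $I_t(Y)^{(n)}=\sum I_{t+n_1-1}(Y)\cdots I_{t+n_s-1}(Y)$ over $n_1,\ldots,n_s\gs 1$ with $s\ls n$ and $n_1+\cdots+n_s\gs n$), so no new straightening-law argument is needed, and the remaining deductions of finite generation and of Noetherianity of $\gr^s(I_t(Y))$ via the extended Rees algebra match the paper's ``the conclusion clearly follows.''
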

\begin{proof}
We have that
$I_{t+n-1}(Y)\subseteq I_t(Y)^{(n)}  \mbox{ for every  } n\gs 1$ \cite[Theorem 4.4]{JMV}.
Moreover,   we have 
that
$I_t(Y)^{(n)}=\sum I_{t+n_1-1}(Y)\cdots I_{t+n_s-1}(Y),$ 
where the sum ranges over the integers  $n_1,\ldots, n_s\gs 1$, such that $s\ls n$ and  $n_1+\cdots+n_s\gs n$ \cite[Proposition 4.3]{JMV}. The conclusion clearly follows.
\end{proof}

We obtain the following homological consequences.

\begin{theorem}\label{mainDetRegSym}
Assuming Setup \ref{setup symmetric}, the limit $$\lim\limits_{n\to\infty} \frac{\reg(R/I_t(Y)^{(n)})}{n}$$
  exists and $$ \depth(R/I_t(Y)^{(n)})$$ stabilizes for $n\gg 0$.
\end{theorem}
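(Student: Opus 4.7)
The plan is to prove this theorem as an immediate application of Theorem \ref{mainRegDepth} to the filtration of symbolic powers $\II = \{I_t(Y)^{(n)}\}_{n \in \NN}$, in complete analogy with how Theorem \ref{mainDetReg} was deduced in the generic matrix case. Theorem \ref{mainRegDepth} has exactly two hypotheses: that $\II$ is an $F$-pure filtration, and that its Rees algebra $\R(\II) = \R^s(I_t(Y))$ is Noetherian. Both hypotheses are precisely what was established in the immediately preceding results of this subsection.

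First, I would invoke Theorem \ref{ThmSFPSym}, which shows that $I_t(Y)$ is symbolic $F$-pure, so that $\II = \{I_t(Y)^{(n)}\}_{n \in \NN}$ is an $F$-pure filtration by definition. Next, I would invoke Lemma \ref{LemmaSymNoeth}, which establishes that $\R^s(I_t(Y))$ is Noetherian (in fact, the lemma gives the explicit generating degrees coming from $I_t(Y),\ldots,I_r(Y)$, but for this theorem only the Noetherianity is needed). With both inputs in hand, Theorem \ref{mainRegDepth} immediately yields that $\lim_{n\to\infty} \reg(I_t(Y)^{(n)})/n$ exists and $\depth(I_t(Y)^{(n)})$ stabilizes.

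Finally, I would translate these conclusions from $I_t(Y)^{(n)}$ to $R/I_t(Y)^{(n)}$ using the standard identities $\reg(R/J) = \reg(J) - 1$ and $\depth(R/J) = \depth(J) - 1$, valid for any proper homogeneous ideal $J \subseteq R$ in the polynomial ring. Dividing by $n$ preserves the existence of the limit for regularity, and the $-1$ shift preserves eventual stabilization for depth. There is no real obstacle: the work of establishing symbolic $F$-purity of $I_t(Y)$ (via the polynomial $f_t(Y)$ with square-free initial monomial) and the Noetherianity of the symbolic Rees algebra (via \cite{JMV}) have already been carried out, and this theorem is a two-line corollary of \ref{mainRegDepth}, \ref{ThmSFPSym}, and \ref{LemmaSymNoeth}.
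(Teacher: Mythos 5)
Your proposal is correct and is exactly the paper's argument: the authors likewise cite Lemma \ref{LemmaSymNoeth} for the Noetherianity of $\R^s(I_t(Y))$ and combine Theorems \ref{ThmSFPSym} and \ref{mainRegDepth}. The passage from $I_t(Y)^{(n)}$ to $R/I_t(Y)^{(n)}$ that you spell out is already built into the statement of Theorem \ref{mainRegDepth}(2), so nothing further is needed.
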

\begin{proof}
Since   $\R^s\left(I_t(Y)\right)$ is Noetherian by Lemma \ref{LemmaSymNoeth}, 
 the result follows by combining  Theorems \ref{ThmSFPSym}  and  \ref{mainRegDepth}.
\end{proof}

We now show that the symbolic Rees algebra of a determinantal ideal of a generic symmetric matrix is strongly $F$-regular.
\begin{theorem}
Assuming Setup \ref{setup symmetric},   $\R^s(I_t(Y))$ is strongly $F$-regular. 
\end{theorem}
\begin{proof}
We set $h=\Ht(I_t(Y))$.
We know that 
 $\R^s(I_t(Y))$  Noetherian by Lemma \ref{LemmaSymNoeth}.

If $t=r$, then $I_t(Y)$ is principal, and so, $\R^s(I_t(Y))=R[\det(Y)T]$, which is isomorphic to a polynomial ring over $K$. Then, $\R^s(I_t(Y))$ is strongly $F$-regular.

We proceed by induction on $t$.
If $t=1$, then the result follows because  $I_1(Y) =\m$.
We now assume the result is true for $(t-1)\times (t-1)$-minors of a generic symmetric matrix.
If we let
\[
f= \det\left( Y^{[2,r]}_{[2,r]}\right) \cdot \prod^{r-1}_{\ell=1} \det\left( Y^{[1,\ell]}_{[r-\ell+1 ,r]}\right),
\]
as in the proof of Theorem \ref{ThmSFPSym}, we have that $f\in I_t(Y)^{(h-1)}$. In addition, 
 then $\IN_<(f)$ is a square-free monomial which is not divisible by $y_{1,1}$, because $t\geq 2$.
We have that $f^{p-1} \not\in \m^{[p]}$.
Let  $\phi=\Phi\left(f^{(p-1)/p}-\right),$ where $\Phi: R^{1/p}\to R$ denotes the trace map introduced in Remark \ref{Trace}.
We note that $\phi\left(y_{1,1}^{(p-1)/p}\right)=1$.
It follows that $f^{p-1}\in \left( I_t(Y)^{(n+1)}\right)^{[p]}:I_t(Y)^{((n+1)p)}$ for every $n\in\ZZ_{\geq 0}$ by Proposition \ref{PropH2}. 

As a consequence, $\phi$ induces  maps $\Psi: \R^s(I_{t}(X))^{1/p} \to \R^s(I_{t}(X))$ which satisfy 
\begin{equation}\label{yr1}
\Psi\left(y_{1,1}^{(p-1)/p}\right)=1\quad  \text{and} \quad \overline{\Psi}\left(\overline{y_{1,1}}^{(p-1)/p}\right)=1.
\end{equation}

Let $A=K[U]$, where $U=(u_{i,j})_{\substack{2\ls i\ls r-1,\\ 2\ls j\ls r}}$ is a generic symmetric matrix of size $(r-1)\times (r-1)$, and let 
$$S= A[y_{1,1},\ldots,y_{2,1},\ldots,y_{r,1}].$$
We have an isomorphism  $\gamma: R[y^{-1}_{1,1}]\to S[y^{-1}_{y,1}]$ defined by
$y_{i,j}\mapsto u_{i,j } + y_{1,j}x_{i,1}y^{-1}_{1,1}$, 
$y_{i,1}\mapsto y_{i,1}$, 
and  
$y_{1,j}\mapsto y_{1,j}$ for $j\geq 2$
\cite{MVSymm} (see also \cite[Lemma 1.1]{HtSymm}.

Furthermore, we have  $\gamma\left(I_t(Y)R[y^{-1}_{1,1}]\right)=I_{t-1}(U) S[y^{-1}_{1,1}]$, and then $\gamma(I_t(Y)^{(n)}R[x^{-1}_{1,1}])=I_{t-1}(U)^{(n)} S[y^{-1}_{1,1}]$  for every $n\in\ZZ_{\geq 0}$ \cite[Lemma 10.1]{BookDet}.
By the induction hypothesis, 
$\R^s(I_{t-1}(U))$ and $\gr^s(I_{t-1}(U))$ are strongly $F$-regular. It follows that $\R^s(I_{t-1}(U))\otimes_A S[y^{-1}_{1,1}]$ is strongly $F$-regular,  
because strong $F$-regularity is preserved by adding variables and localizing. Therefore, thanks to the isomorphism $\gamma$, the ring $\R^s(I_{t}(Y))\otimes_R  R[y^{-1}_{1,1}]$ 
is also strongly $F$-regular. From this and Equation \eqref{yr1}, we  conclude that $\R^s(I_t(Y))$ is strongly $F$-regular \cite[Theorem 3.3]{HoHuStrong}.
\end{proof}

We now show that the ordinary  Rees algebra of a generic symmetric determinantal ideal is  $F$-split.
\begin{theorem}\label{ThmReesFpureSym}
In addition to Setup \ref{setup symmetric}, suppose that $p>\min\{t,r-t\}$. Then the Rees algebra $\R(I_t(Y))$ is  $F$-split.
\end{theorem}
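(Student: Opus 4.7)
The plan is to mirror the argument of Theorem~\ref{ThmReesFpureGen}, producing a polynomial $f$ whose $(p-1)$-st power witnesses the $F$-purity of $\R(I_t(Y))$ via Lemma~\ref{Lemma initial square-free}(2). I would take
$$f = f_1(Y) = \prod_{\ell=1}^r \det\!\left(Y^{[1,\ell]}_{[r-\ell+1,r]}\right),$$
whose initial form is a square-free monomial by Remark~\ref{sqftSym} (applied with $t=1$). Thus it suffices to establish the containment $f^{p-1} I_t(Y)^{np} \subseteq (I_t(Y)^n)^{[p]}$ for every $n\in\NN$.

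The first key step is to observe that $f_1(Y) = \bigl(\prod_{j=1}^{\ell-1}\det(Y^{[1,j]}_{[r-j+1,r]})\bigr)\cdot f_\ell(Y)$ for every $1\leq \ell\leq r$, so the computation in the proof of Theorem~\ref{ThmSFPSym} yields $f\in I_\ell(Y)^{(\Ht(I_\ell(Y)))}$ for every such $\ell$. Applying \cite[Lemma~2.6]{GrifoHuneke} one obtains
$$f^{p-1}\in \bigl(I_\ell(Y)^{(m+1)}\bigr)^{[p]}:_R I_\ell(Y)^{(mp+1)}$$
for every $\ell\leq r$ and every $m\in\NN$, and in particular $f^{p-1} I_\ell(Y)^{((m+1)p)}\subseteq \bigl(I_\ell(Y)^{(m+1)}\bigr)^{[p]}$. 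The second key step is to invoke an intersection formula of the form
$$I_t(Y)^n = \bigcap_{\ell=1}^{t} I_\ell(Y)^{((t-\ell+1)n)}, \tag{$\star$}$$
which is the symmetric analogue of \cite[Corollary~10.13]{BookDet}. Granting $(\star)$, the chain of inclusions in the proof of Theorem~\ref{ThmReesFpureGen} transfers verbatim:
\begin{align*}
f^{p-1} I_t(Y)^{np} &= f^{p-1}\bigcap_{\ell=1}^{t} I_\ell(Y)^{((t-\ell+1)np)}
\subseteq \bigcap_{\ell=1}^{t} f^{p-1} I_\ell(Y)^{((t-\ell+1)np)}\\
&\subseteq \bigcap_{\ell=1}^{t} \bigl(I_\ell(Y)^{((t-\ell+1)n)}\bigr)^{[p]}
= \Bigl(\bigcap_{\ell=1}^{t} I_\ell(Y)^{((t-\ell+1)n)}\Bigr)^{[p]}
= \bigl(I_t(Y)^n\bigr)^{[p]},
\end{align*}
where flatness of Frobenius is used in passing the intersection outside of the bracket-power. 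Lemma~\ref{Lemma initial square-free}(2) then delivers the $F$-purity of $\R(I_t(Y))$.

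The main obstacle is the intersection formula $(\star)$, which is precisely the point at which the characteristic hypothesis $p > \min\{t,r-t\}$ should enter the argument; indeed, in the generic matrix case this hypothesis was only used to invoke \cite[Corollary~10.13]{BookDet}. The containment $\subseteq$ in $(\star)$ is immediate from $I_t(Y)\subseteq I_\ell(Y)^{(t-\ell+1)}$ (via \cite[Theorem~4.4]{JMV}), so the substantive content of $(\star)$ is the reverse inclusion, which one would expect to follow from a primary decomposition of $I_t(Y)^n$ analogous to the generic case. Once $(\star)$ is in hand, the remainder of the proof is purely formal and proceeds in complete parallel to Theorem~\ref{ThmReesFpureGen}.
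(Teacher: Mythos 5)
Your proposal is correct and follows essentially the same route as the paper: the paper also takes $f=f_1(Y)$, deduces $f\in I_\ell(Y)^{(\Ht(I_\ell(Y)))}$ for all $\ell\leq r$ from the proof of Theorem \ref{ThmSFPSym}, and runs exactly the chain of inclusions you write. The intersection formula $(\star)$ that you flag as the remaining obstacle is precisely what the paper cites \cite[Theorem 4.4]{JMV} for (the same reference already invoked in Theorem \ref{ThmSFPSym}), so no gap remains.
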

\begin{proof}

Let $f=f_1(Y)$, and note that  
$f\in I_\ell(Y)^{(\Ht(I_\ell(Y))}$  for every $\ell\leq r$, as shown in the proof of Theorem  \ref{ThmSFPSym}.
It follows that $f^{p-1}\in \left( I_\ell(Y))^{(n+1)}\right)^{[p]}:I_\ell(Y) ^{(np+1)}$ for every $\ell\leq r$ and $n\in\ZZ_{\geq 0}$  \cite[Lemma 2.6]{GrifoHuneke} (cf.  proof of Corollary \ref{CorH}).
 Thus,
$$
f^{p-1} I_\ell (Y)^{((n+1)p)}\subseteq f^{p-1} I_\ell (Y)^{(np+1)}\subseteq \left( I_\ell(Y)^{(n+1)}\right)^{[p]}
$$
for $n\in\ZZ_{\geq 0}$. 
Then,
\begin{align*}
f^{p-1}   I_t(Y)^{np}& =f^{p-1}\left(   \bigcap^{ t}_{\ell=1}     I_\ell(Y) ^{((t-\ell+1)np)}  \right) &\quad \hbox{\cite[Theorem 4.4]{JMV}}\\\\
& \subseteq   \bigcap^{ t}_{\ell=1} f^{p-1}\left(    I_\ell(Y)^{((t-\ell+1)np)}  \right)&\\
& \subseteq   \bigcap^{ t}_{\ell=1} \left(    I_\ell(Y)^{((t-\ell+1)n)}  \right)^{[p]}&\\
& =   \left(  \bigcap^{ t}_{\ell=1}   I_\ell(Y)^{((t-\ell+1)n)}  \right)^{[p]} &\\
& =  \left( I_t(Y)^{n}\right)^{[p]} &\quad \hbox{\cite[Theorem 4.4]{JMV}}. 
\end{align*}
The result follows from Remark \ref{sqftSym} and Lemma \ref{Lemma initial square-free} (2).
\end{proof}

We end with the following results about degrees of defining equations for ordinary Rees and associated graded algebras in the case of generic symmetric matrices.

\begin{theorem}\label{ThmDegSymOrd}
Assume Setup \ref{setup symmetric}.  
 Set  $\mu = \frac12 \binom{r}{t}^2$. 
\begin{enumerate} 
\item  Suppose $\deg(y_{i,j})=0$ for every $i,j$, 
then the defining equations of $\R(I_t(Y)$  over $R$ have degree at most $\min\{\binom{r+1}{2}+1, \mu\}$.
\item   Suppose $\deg(y_{i,j})=1$ for every $i,j$, 
then the defining equations of $\R(I_t(Y))$  over $K$ have total degree at most  $\binom{r+1}{2}+\mu (t+1).$
\end{enumerate}
\end{theorem}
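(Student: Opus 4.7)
The plan is to apply the same machinery used for the analogous statement in the generic matrix case (Theorem \ref{ThmDegGenOrd}). The key ingredients are already in place: Theorem \ref{ThmReesFpureSym} guarantees that $\R(I_t(Y))$ is $F$-pure (under the hypothesis $p>\min\{t,r-t\}$ that is in force in Setup \ref{setup symmetric}), and then the general degree bounds for defining equations of $F$-pure blowup algebras, proved in Theorems \ref{defEqblowup1} and \ref{defEqblowup2}, apply directly.

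First I would record the numerical data we need to feed into the degree formulas. The ring $R = K[Y]$ is a polynomial ring in $\binom{r+1}{2}$ variables, so $\dim(R) = \binom{r+1}{2}$, and in the standard graded case $R$ is generated over $K$ by $\binom{r+1}{2}$ elements of degree one. The ordinary Rees algebra is generated in a single degree over $R$, namely $\ell = 1$, $e_1 = 1$, and the number of generators of $I_t(Y)$ is (at most) $\mu = \tfrac{1}{2}\binom{r}{t}^2$. In the graded case the ideal $I_t(Y)$ is generated in degree $t$, so $w_1 = \beta_R(I_t(Y)) = t$. By Proposition \ref{dimOfblow}, since $I_t(Y)$ has positive height, $\dim(\R(I_t(Y))) = \binom{r+1}{2}+1$.

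For part (1), apply Theorem \ref{defEqblowup1} to the filtration $\{I_t(Y)^n\}_{n \in \NN}$. With the data above, the theorem produces the bound
\[
\dim(R)+1+\sum_i e_i v_i - \max\left\{\dim(R)+1,\sum_i v_i\right\} = \binom{r+1}{2}+1+\mu - \max\left\{\binom{r+1}{2}+1,\mu\right\},
\]
which simplifies to $\min\{\binom{r+1}{2}+1,\mu\}$, as claimed. For part (2), apply Theorem \ref{defEqblowup2}: with the additional data $w_1 = t$ and taking into account that $R$ is generated over $K$ by $\binom{r+1}{2}$ elements of degree one, the resulting bound simplifies to at most $\binom{r+1}{2}+\mu(t+1)$ after absorbing the max term and using $\mu \geq 1$.

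There is no real obstacle here; the work is all done in Theorems \ref{ThmReesFpureSym}, \ref{defEqblowup1}, \ref{defEqblowup2}, and Proposition \ref{dimOfblow}. The only mild subtlety is matching the numerical invariants in the general formulas to the concrete generating data for $I_t(Y)$ as a graded ideal of $R$, but this is routine.
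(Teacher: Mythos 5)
Your proposal matches the paper's proof exactly: the paper likewise deduces the result from Theorem \ref{ThmReesFpureSym}, the general degree bounds of Theorems \ref{defEqblowup1} and \ref{defEqblowup2}, and Proposition \ref{dimOfblow}, with the same numerical bookkeeping for $\dim(R)=\binom{r+1}{2}$, $e_1=1$, $v_1=\mu$, and $w_1=t$. One small caveat: the hypothesis $p>\min\{t,r-t\}$ needed to invoke Theorem \ref{ThmReesFpureSym} is not actually contained in Setup \ref{setup symmetric}, so it must be assumed separately rather than being ``in force'' there (the paper's own statement has the same omission).
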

\begin{proof}
The result follows from Theorems  \ref{ThmReesFpureSym} and \ref{defEqblowup1}, and Proposition \ref{dimOfblow}.
\end{proof}

\begin{theorem}\label{ThmDegSymSymb}
Assume Setup \ref{setup symmetric}.  
For $j=t,\ldots, r$, set  $\mu_j = \frac12 \binom{r}{j}^2$. 
\begin{enumerate} 
\item  Suppose $\deg(y_{i,j})=0$ for every $i,j$,
then the defining equations of $\R^s(I_t(Y))$  over $R$ have degree at most 
$\min\{\binom{r+1}{2}+1+\sum^{r}_{j=t+1}\mu_{j}(j-t),\sum^{r}_{j=t}\mu_{j}(j-t+1) \}$, and of $\gr^s(I_t(Y))$ over $R/I_t(Y)$, have degree at most $\min\{\binom{r+1}{2}
+\sum^{r}_{j=t+1}\mu_{j}(j-t),\sum^{r}_{j=t}\mu_{j}(j-t+1) \}$.
\item Suppose  $\deg(y_{i,j})=1$  for every $i,j$,
then the defining equations of $\R^s(I_t(Y))$  and $\gr^s(I_t(Y))$  over $K$ have total degree at most  
$\binom{r+1}{2}+\sum^{r}_{j=t}\mu_{j}(2j-t+1)$.
\end{enumerate}
\end{theorem}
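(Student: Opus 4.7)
The plan is to reduce the theorem to a direct application of Theorems~\ref{defEqblowup1} and~\ref{defEqblowup2}, once the $F$-purity and the explicit generating structure of the symbolic blowup algebras are in place. Both of these inputs have already been established earlier in the paper: Theorem~\ref{ThmSFPSym} shows that $I_t(Y)$ is symbolic $F$-pure, whence Theorem~\ref{mainCharP} yields that $\R^s(I_t(Y))$ and $\gr^s(I_t(Y))$ are $F$-pure. Moreover, Lemma~\ref{LemmaSymNoeth} supplies the presentation
\[
\R^s(I_t(Y))=R[I_t(Y)\, T,\,I_{t+1}(Y)\,T^2,\ldots, I_r(Y)\,T^{\,r-t+1}],
\]
so in the notation of Theorems~\ref{defEqblowup1} and~\ref{defEqblowup2} the generating $T$-degrees are $e_j=j-t+1$ for $t\leq j\leq r$, and $I_j(Y)$ contributes $\nu_j=\mu_j$ generators, each a $j\times j$ minor of $Y$ of $R$-degree $j$. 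Positive height of $I_t(Y)$ (required by both theorems) is clear since $t\leq r$.

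For part~(1), where $\deg(y_{i,j})=0$, I would treat $R$ as the base ring and apply Theorem~\ref{defEqblowup1}. Substituting $\dim(R)=\binom{r+1}{2}$ (from Proposition~\ref{dimOfblow}) together with the $e_j$'s and $\nu_j$'s into the bound, and separating the two cases inside the $\max$, one recovers the minimum stated in the theorem for $\R^s(I_t(Y))$ over $R$. The second formula in Theorem~\ref{defEqblowup1}, applied to the same data, yields the corresponding bound for $\gr^s(I_t(Y))$ over $R/I_t(Y)$.

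For part~(2), where $\deg(y_{i,j})=1$, the base is the field $K$ and $R$ is standard graded with $\mu=\binom{r+1}{2}$ generators in degree one. I would invoke Theorem~\ref{defEqblowup2} with $w_j=j$ (the $R$-degree in which $I_j(Y)$ is generated), so that $w_j+e_j=2j-t+1$, and the summation $\sum_{j=t}^{r}\mu_j(2j-t+1)$ appears directly; combining with $\dim(R)$ produces the stated total-degree bound for both the symbolic Rees and the symbolic associated graded algebra.

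I do not anticipate a genuine obstacle: the essential ingredients, namely the symbolic $F$-purity of $I_t(Y)$ and the description of $\R^s(I_t(Y))$ as a finitely generated $R$-algebra, are already in hand, and the present statement is exactly what falls out of plugging those ingredients into the general degree bounds proved in Section~3. The only point that requires mild care is bookkeeping the two arguments of the $\max$ in Theorems~\ref{defEqblowup1} and~\ref{defEqblowup2} so that the resulting expressions take the compact form written in the statement.
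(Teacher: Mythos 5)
Your proposal is correct and follows essentially the same route as the paper, whose proof is simply the citation chain Theorem~\ref{ThmSFPSym} (symbolic $F$-purity, hence $F$-purity of the blowup algebras), Lemma~\ref{LemmaSymNoeth} (the presentation $\R^s(I_t(Y))=R[I_t(Y)T,\ldots,I_r(Y)T^{r-t+1}]$), Proposition~\ref{dimOfblow}, and the general degree bounds of Section~3. Your identification of the data $e_j=j-t+1$, $\nu_j=\mu_j$, $w_j=j$ and the resulting bookkeeping is exactly what the paper leaves implicit.
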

\begin{proof}
The result  follows from Theorem \ref{ThmSFPSym}, Theorem \ref{defEqblowup2}, Proposition \ref{dimOfblow}, and Lemma \ref{LemmaSymNoeth}.
\end{proof}

\subsection{Ideals of Pfaffians of a skew-symmetric matrix}

For the convenience of the reader, we recall the definition of Pfaffians.
\begin{definition}
Let $Z=(z_{i,j})$ be a generic $r\times r$ skew symmetric matrix, i.e., $z_{i,j}=-z_{j,i}$ for every  $1\ls i<j\ls r$, and $z_{i,i}=0$ for every $1\ls i\ls m$. A minor of the form $\det\left(Z^{[i_1,\ldots, i_{2t}]}_{[i_1,\ldots, i_{2t}]}\right)$  is the square of a polynomial $\pf\left(Z_{[i_1,\ldots, i_{2t}]}\right) \in R=K[Z]$. Such a polynomial is called a {\it $2t$-Pfaffian} of $Z$.   
\end{definition}

 In this subsection we use the following setup.

\begin{setup}\label{setup pfaffians}
Let $r\in \ZZ_{>0}$, and $Z=(z_{i,j})$ be a generic skew symmetric matrix of size $r\times r$. Let $K$ be an $F$-finite field of  characteristic $p>0$, $R=K[Z]$, and $\m=(z_{i,j})$. For $t \in \ZZ_{>0}$ such that $2t \leq r$, we let $P_{2t}(Z)$ be the ideal generated by the $2t$-Pfaffians of $Z$. If $r$ is odd, we set $b=\lfloor r/2 \rfloor$, and we let
\begin{align*}
&f_{2u}(Z)= \\
&\left( \prod^{b-1}_{\ell=u}
 \pf\left(Z_{[1,\ldots,2\ell]}\right)
\pf\left(Z_{[1,\ldots,\ell, \ell+2, \ldots, 2\ell+1]}\right) 
 \pf\left(Z_{[r+1-2\ell,\ldots,r]}\right) 
 \pf\left(Z_{[r-2\ell,\ldots, r-\ell -1, r-\ell+1,  \ldots,r]}\right)\right)\cdot  \\
& \; \; \; \; \; \;\; \; \; \left(\pf\left(Z_{[1,\ldots,r-1]}\right)
\pf\left(Z_{[2,\ldots,r]}\right) 
 \pf\left(Z_{[1,\ldots, b ,b+2,\ldots,r]}\right)\right),
\end{align*}
for $u\leq r/2$.
If  $r$ is even, we set $b=r/2$, and we let
\begin{align*}
&f_{2u}(Z)=\\ 
&\left( \prod^{b-1}_{\ell=u}
 \pf\left(Z_{[1,\ldots,2\ell]}\right)
\pf\left(Z_{[1,\ldots,\ell, \ell+2, \ldots, 2\ell+1]}\right) 
 \pf\left(Z_{[r+1-2\ell,\ldots,r]}\right) 
 \pf\left(Z_{[r-2\ell\ldots, r-\ell -1, r-\ell+1  \ldots,r]}\right)\right)
\pf(Z),
\end{align*}
for $u\leq r/2$.
We consider the lexicographical monomial order on $R$ induced by
$$z_{1,r}>z_{1,r-1}>\ldots >z_{1,2}>z_{2,r}>\ldots >z_{r-1,r}.$$
\end{setup}

\begin{remark}\label{sqftPf}
For any $t\in \NN$ we note that  the initial form $\IN_<(f_{2t}(Z))$ is a square-free monomial.
\end{remark}

In the following result we show ideals of Pfaffians are symbolic $F$-split.

\begin{theorem}\label{ThmSFPPf}
Assuming Setup \ref{setup pfaffians}, the ideal $P_{2t}(Z)$ is symbolic $F$-split.
\end{theorem}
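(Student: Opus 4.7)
The plan is to follow the same strategy used in Theorem \ref{ThmSFPGen} and Theorem \ref{ThmSFPSym}: apply Lemma \ref{Lemma initial square-free}(1) to the polynomial $f = f_{2t}(Z)$. The square-free initial form hypothesis is already recorded as Remark \ref{sqftPf}, so the only substantive content is to verify the symbolic-power membership $f_{2t}(Z) \in P_{2t}(Z)^{(h)}$, where $h = \bh(P_{2t}(Z)) = \Ht(P_{2t}(Z)) = \binom{r-2t+2}{2}$ (the second equality uses that $P_{2t}(Z)$ is prime and \cite[Theorem 2.3]{HtSkew}).

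For the containment, I would exploit the fact that every factor of $f_{2t}(Z)$ is a Pfaffian $\pf(Z_{[i_1,\ldots,i_{2\ell}]})$ for some $\ell \geq t$, and hence lies in $P_{2\ell}(Z)$. The analogue for Pfaffians of \cite[Theorem 4.4]{JMV} used in the symmetric case, namely \cite[Theorem 4.6]{JMV}, gives the containment
\[
P_{2\ell}(Z) \subseteq P_{2t}(Z)^{(\ell - t + 1)} \qquad \text{for every } \ell \geq t,
\]
and this combined with super-multiplicativity of the symbolic power filtration lets one read off a total symbolic exponent from the multiplicities of the factors in $f_{2t}(Z)$.

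The computation then splits in two parity cases, both of which reduce to the same arithmetic identity. Set $m = b - t + 1$. For $r$ odd, $f_{2t}(Z)$ collects four Pfaffians of size $2\ell$ for each $\ell = t, \ldots, b-1$ together with three Pfaffians of size $2b = r-1$, giving
\[
f_{2t}(Z) \in \Bigl(\prod_{\ell=t}^{b-1} P_{2t}(Z)^{4(\ell-t+1)}\Bigr) \cdot P_{2t}(Z)^{3m} \subseteq P_{2t}(Z)^{\bigl(2(m-1)m + 3m\bigr)} = P_{2t}(Z)^{(2m^2+m)},
\]
and one checks $h = \tfrac{(2m)(2m+1)}{2} = 2m^2 + m$. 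For $r$ even, the same product is multiplied by a single $\pf(Z)$ of size $r = 2b$ instead of three size-$(r-1)$ Pfaffians, yielding exponent $2(m-1)m + m = 2m^2 - m$, which matches $h = \tfrac{(2m-1)(2m)}{2}$. Remark \ref{sqftPf} then supplies the square-free initial form, and Lemma \ref{Lemma initial square-free}(1) concludes the proof.

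The only genuine difficulty is the bookkeeping for the parity cases—one has to verify that the ad hoc collection of Pfaffians appearing in the definition of $f_{2u}(Z)$ was precisely engineered so that its symbolic exponent, computed via \cite[Theorem 4.6]{JMV}, lands on the nose at $\binom{r-2t+2}{2}$. Since the extra factor in the odd-$r$ case contributes $3m$ instead of $m$, and the height formula also increases by exactly $2m$ between the even and odd cases, the two computations align; once the matching is performed the rest is a verbatim application of the machinery established in Section~\ref{sectionSymbFpure}.
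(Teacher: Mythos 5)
Your proposal is correct and follows essentially the same route as the paper: factor $f_{2t}(Z)$ into Pfaffians, push each factor into a symbolic power via \cite[Theorem 4.6]{JMV}, verify the total exponent equals $h=\Ht(P_{2t}(Z))$ in both parity cases, and conclude with Remark \ref{sqftPf} and Lemma \ref{Lemma initial square-free}(1). Your arithmetic with $m=b-t+1$ matches the paper's exponent counts exactly, and your citation of part (1) of the lemma is the right one (the paper's text cites part (2) there, which is evidently a typo).
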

\begin{proof}
We set $h=\frac{(r-2t+1)(r-2t+2)}{2}=\Ht(P_{2t}(Z))$ \cite[Theorem 2.3]{HtSkew}. Let $b=\lfloor r/2 \rfloor$, and $f=f_{2t}(Z)$. If $r$ is odd, 
we have that 
\begin{align*}
f& \in \left( \prod^{b-1}_{\ell=t} P_{2\ell}(Z)\right)^4 \cdot P_{2b}(Z)^3 &\\
&\subseteq \left( \prod^{b-1}_{\ell=t}  P_{2t}(Z)^{(\ell-t+1)}\right)^4 \cdot
\left(  P_{2t}(Z)^{(b-t+1)}\right)^3&
\quad \hbox{\cite[Theorem 4.6]{JMV}}\\
&= \left( \prod^{b-t}_{a=1}  P_{2t}(Z)^{(a)}\right)^4 \cdot
\left(  P_{2t}(Z)^{(b-t+1)}\right)^3&\\
&\subseteq   P_{2t}(Z)^{(2(b-t)(b-t+1))} 
\cdot P_{2t}(Z)^{(3(b-t+1))}&\\
&\subseteq   P_{2t}(Z)^{(2(b-t)(b-t+1)+3(b-t+1))}& \\
&=  P_{2t}(Z)^{(h)}.&
\end{align*}

On the other hand, if $r$ is even we have that
\begin{align*}
f& \in \left( \prod^{b-1}_{\ell=t} P_{2\ell}(Z)\right)^4 \cdot P_{r}(Z)& \\
&\subseteq \left( \prod^{b-1}_{\ell=t}  P_{2t}(Z)^{(\ell-t+1)}\right)^4 \cdot
P_{2t}(Z)^{(b-t+1)}&\quad \hbox{ \cite[Theorem 4.6]{JMV}}\\
&= \left( \prod^{b-t}_{a=1}  P_{2t}(Z)^{(a)}\right)^4 \cdot
P_{2t}(Z)^{(b-t+1)}&\\
&\subseteq   P_{2t}(Z)^{(2(b-t)(b-t+1))} 
\cdot P_{2t}(Z)^{(b-t+1)}&\\
&\subseteq   P_{2t}(Z)^{(2(b-t)(b-t+1)+(b-t+1))}& \\
&=   P_{2t}(Z)^{(h)}.&
\end{align*}
The conclusion follows from Remarks \ref{sqftPf} and Lemma \ref{Lemma initial square-free} (2).
\end{proof}


The previous result leads to the following homological consequences. 

\begin{theorem}\label{mainDetRegPf}
Assuming Setup \ref{setup pfaffians},  the limit $$\lim\limits_{n\to\infty} \frac{\reg(R/P_{2t}(Z)^{(n)})}{n}$$
  exists and $$ \depth(R/P_{2t}(Z)^{(n)})$$ stabilizes for $n\gg 0$,
  Furthermore, 
  $$\lim\limits_{n\to\infty} \depth(R/P_{2t}(Z)^{(n)})=\dim(R)- \dim\left(\R^s\left(P_{2t}(Z)\right)/\m \R^s\left(P_{2t}(Z)\right)\right)=t(2t-1)-1.$$
\end{theorem}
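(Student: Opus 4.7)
The plan is to follow the same template used in the proof of Theorem \ref{mainDetReg}, adapted to the Pfaffian setting. By Theorem \ref{ThmSFPPf}, the ideal $P_{2t}(Z)$ is symbolic $F$-pure, so once we know $\R^s(P_{2t}(Z))$ is Noetherian, Theorem \ref{mainRegDepth} immediately yields both the existence of $\lim_{n\to\infty} \reg(R/P_{2t}(Z)^{(n)})/n$ and the stabilization of $\depth(R/P_{2t}(Z)^{(n)})$, with the stable value equal to $\min_{n}\depth(R/P_{2t}(Z)^{(n)})$. Noetherianness follows from the explicit description of symbolic powers of Pfaffian ideals (analogous to Lemma \ref{LemmaSymNoeth} and established in \cite{JMV}): one shows $P_{2(t+n-1)}(Z)\subseteq P_{2t}(Z)^{(n)}$ and
\[
P_{2t}(Z)^{(n)} \;=\; \sum P_{2(t+n_1-1)}(Z)\cdots P_{2(t+n_s-1)}(Z),
\]
over integers $n_1,\dots,n_s\geq 1$ with $s\leq n$ and $n_1+\cdots+n_s\geq n$, whence
\[
\R^s(P_{2t}(Z)) \;=\; R[P_{2t}(Z)T, P_{2(t+1)}(Z)T^2,\dots, P_{2b}(Z)T^{b-t+1}]
\]
with $b=\lfloor r/2\rfloor$, which is visibly finitely generated over $R$.

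For the exact value of the stable depth, I would apply Corollary \ref{CM}, which requires Cohen-Macaulayness of $\R^s(P_{2t}(Z))$. This is available in the literature on Pfaffian symbolic blow-up algebras (see \cite{baetica98} and references therein). Corollary \ref{CM} then gives
\[
\lim_{n\to\infty}\depth(R/P_{2t}(Z)^{(n)}) \;=\; \dim(R) - \dim\bigl(\R^s(P_{2t}(Z))/\m\,\R^s(P_{2t}(Z))\bigr) \;=\; \min_{n}\depth(R/P_{2t}(Z)^{(n)}).
\]
It then suffices to identify this minimum with $t(2t-1)-1$. By Theorem \ref{mainRegDepth} this minimum equals $\grade\bigl(\m\,\gr^s(P_{2t}(Z))\bigr)$, and the resulting grade computation is the Pfaffian analogue of \cite[Propositions 9.23 and 10.8]{BookDet}: one reduces to computing the dimension of the special fiber of the symbolic blowup, which is governed by the variety of $2t$-Pfaffians inside a smaller skew-symmetric space and is known to have dimension $\binom{r}{2} - \bigl(t(2t-1)-1\bigr)$.

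The heart of the argument, and the main obstacle, is precisely this last step: extracting the exact numerical value $t(2t-1)-1$ from the structure of the special fiber $\R^s(P_{2t}(Z))/\m\,\R^s(P_{2t}(Z))$. Everything else is a direct translation of the generic-matrix argument, using Theorems \ref{ThmSFPPf}, \ref{mainRegDepth}, and Corollary \ref{CM} as black boxes; by contrast, the value $t(2t-1)-1$ requires either a direct dimension count on the fiber cone (exploiting that its generators are the initial forms of the Pfaffians $P_{2\ell}(Z)T^{\ell-t+1}$ for $t\leq \ell \leq b$) or a reference to the Pfaffian analogue of \cite[Proposition 10.8]{BookDet}. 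Once that value is in hand, assembling the three statements of the theorem is immediate.
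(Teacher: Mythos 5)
Your skeleton is exactly the paper's: symbolic $F$-purity of $P_{2t}(Z)$ (Theorem \ref{ThmSFPPf}) plus Noetherianness of $\R^s(P_{2t}(Z))$ feed into Theorem \ref{mainRegDepth} for the limit and the stabilization, and Cohen--Macaulayness of $\R^s(P_{2t}(Z))$ (from \cite{baetica98}) feeds into Corollary \ref{CM} and \cite[Proposition 9.23]{BookDet} to identify the stable depth with $\grade\big(\m\gr^s(P_{2t}(Z))\big)$. (The paper cites \cite{YoungPf} and \cite[Section 3]{baetica98} for Noetherianness rather than rebuilding the analogue of Lemma \ref{LemmaSymNoeth}, but your route to that fact is fine.) The problem is the step you yourself single out as ``the heart of the argument'': the evaluation $\grade\big(\m\gr^s(P_{2t}(Z))\big)=t(2t-1)-1$. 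You offer it as either a direct dimension count or ``a reference to the Pfaffian analogue of \cite[Proposition 10.8]{BookDet}'' and then assert the fiber dimension is ``known to be'' $\binom{r}{2}-(t(2t-1)-1)$ --- but that statement is exactly what has to be proved (no such reference is on record; \cite[Proposition 10.8]{BookDet} covers only the generic case), and as written your justification is circular. This is a genuine gap: every other step of the theorem is an application of black boxes already established, and this is the one computation the paper must carry out from scratch.

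For the record, the paper fills it by adapting the poset argument of \cite[Proposition 10.8]{BookDet}. Let $H$ be the poset of all Pfaffians of $Z$ ordered by $\pf(Z_{[i_1,\ldots,i_{2u}]})\ls \pf(Z_{[a_1,\ldots,a_{2v}]})$ iff $u\gs v$ and $i_s\ls a_s$ for all $s$, and let $\Omega\subseteq H$ be the subposet of $2s$-Pfaffians with $s\gs t$, i.e.\ $\Omega=\{\delta\in H\mid \delta\ls [r-2t+1,\ldots,r]\}$. Cohen--Macaulayness of $\R^s(P_{2t}(Z))$ passes to $\gr^s(P_{2t}(Z))$, and then $\grade\big(\m\gr^s(P_{2t}(Z))\big)=\rk(H)-\rk(\Omega)$. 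Every maximal chain of $H$ has length $\rk(H)=\dim(R)=\binom{r}{2}$, and a maximal chain of $\Omega$ extends to a maximal chain of $H$ by adjoining a maximal chain of Pfaffians of the trailing $2t\times 2t$ submatrix, so $\rk(\Omega)=\binom{r}{2}-\binom{2t}{2}+1$ and
\[
\rk(H)-\rk(\Omega)=\binom{2t}{2}-1=t(2t-1)-1.
\]
Supplying this count (or an equivalent computation of the dimension of the special fiber) is what your proposal still needs.
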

\begin{proof}
We recall that  $\R^s\left(P_{2t}(Z)\right)$ is Noetherian \cite{YoungPf} (see also \cite[Section 3]{baetica98}). 
  Hence, the result follows by combining  Theorem \ref{ThmSFPPf}  and  Theorem \ref{mainRegDepth}.
Since  $\R^s\left(P_{2t}(Z)\right)$ is Cohen-Macaulay  \cite[Corollary 3.2]{baetica98}, we have that  
\begin{align*}
\lim\limits_{n\to\infty} \depth\big(R/P_{2t}(Z)^{(n)}\big) &=\dim(R)- \dim\left(\R^s\left(P_{2t}(Z)\right)/\m \R^s\left(P_{2t}(Z)\right)\right)&\quad \hbox{ by Remark \ref{CM},}\\
&=\min\{ \depth\big(R/P_{2t}(Z)^{(n)}\big)\}&\quad \hbox{ by Theorem \ref{mainRegDepth},}\\
 & =\grade \big(\m\gr^s(P_{2t}(Z))\big) &\quad \hbox{\cite[Proposition 9.23]{BookDet}}.
\end{align*}
It remains to show that $\grade \big(\m\gr^s(P_{2t}(Z))\big) =t(2t-1) -1 $. This computation is already known in the generic case in arbitrary characteristic  \cite[Proposition 10.8]{BookDet}, we adapt the proof for ideals of Pfaffians. 
We let $H$ be the poset of  all the Pfaffians of $Z$, and consider the partial order induced by  
$$
\pf\left(Z_{[i_1,\ldots, i_{2u}]}\right) \ls \pf\left(Z_{[a_1,\ldots, a_{2v}]}\right) \Longleftrightarrow u\gs v \mbox{ and } i_s\ls a_s \text{ for all } 1\ls s\ls 2v.$$
We let $\Omega$ be the subposet  of $H$  consisting of the $2s$-Pfaffians with $s\gs t$.  We note that $\Omega$ is also given by 
$$
\Omega=
   \{ \delta\in H\mid \delta \ls [r-2t+1,\ldots, r ]\}.
$$
Since $\R^s(P_{2t}(Z))$ is Cohen-Macaulay then so is 
  $\gr^s(P_{2t}(Z))$ \cite[Proof of Proposition 2.4]{Varb}. Then, we have that  $\grade \big(\m\gr^s(P_{2t}(Z))\big)=\rk(H)-\rk(\Omega)$  \cite[Proof of Proposition 10.8]{BookDet}, where the  rank of a poset $P$ is defined as 
$$\rk(P) = \max\{i\mid \mbox{ there exists a chain } p_1<p_2<\cdots <p_i  \mbox{ of elements of } P\}.$$   We  note that every maximal chain of $H$ has length $\rk(H) = \dim(R)= \binom{r}{2}$ \cite[Lemma 5.13(d) and Proposition 5.10]{BookDet}.  We also note that a maximal chain of  $\Omega$ can be extended to a maximal chain of $H$ by adjoining a maximal chain of  Pfaffians of the submatrix of $Z$ with rows $\{r-t+1,\ldots, r\}$ and columns $\{r-t+1,\ldots, r\}$. Then, $\rk(H) -\rk(\Omega) = \binom{r}{2}-\left(\binom{r}{2}-\binom{2t}{2}+1\right)=t(2t-1)-1$, finishing the proof.
\end{proof}

We now show that  $\R^s(P_{2t}(Z))$   and  $\gr^s(P_{2t}(Z))$ are strongly $F$-regular.

\begin{theorem}\label{ThmSymbFregPf}
Assuming Setup \ref{setup pfaffians}, the rings $\R^s(P_{2t}(Z))$ and $\gr^s(P_{2t}(Z))$ are strongly $F$-regular.
\end{theorem}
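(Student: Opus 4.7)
The plan is to mimic verbatim the structure of the proof of Theorem \ref{ThmSymbFregGen}, with the role played by the entry $x_{r,1}$ of the generic matrix replaced by the entry $z_{r-1,r}$ of the skew-symmetric matrix. The Noetherianity of both algebras is known (it is already invoked in the proof of Theorem \ref{mainDetRegPf}), so what remains is to produce (a) a Frobenius splitting at level $e=1$ compatible with the symbolic filtration that sends some $(p-1)/p$-th power to $1$, and (b) an isomorphism, after inverting the corresponding element, reducing to a smaller Pfaffian ideal. One then invokes the standard criterion of Hochster--Huneke \cite[Theorem 3.3]{HoHuStrong} to conclude strong $F$-regularity.

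I would argue by induction on $t$. For the base case $t=1$ one has $P_2(Z)=\m$, and strong $F$-regularity of $\R(\m)$ and $\gr(\m)$ in a polynomial ring is handled in exactly the same way as the $t=1$ case of Theorem \ref{ThmSymbFregGen}. For the inductive step, take $f=f_{2t}(Z)$. The argument in the proof of Theorem \ref{ThmSFPPf} shows that $f\in P_{2t}(Z)^{(h)}$ for $h=\Ht(P_{2t}(Z))$, and Remark \ref{sqftPf} ensures that $\IN_<(f)$ is a square-free monomial. A direct inspection of each Pfaffian factor in Setup \ref{setup pfaffians} (the initial forms being the expected anti-diagonal products in the chosen lex order) shows that, provided $t\geq 2$, the variable $z_{r-1,r}$ does not divide $\IN_<(f)$. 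Setting
\[
g=\frac{\prod_{i<j}z_{i,j}}{z_{r-1,r}\IN_<(f)},
\]
we get $\IN_<(f^{p-1}g^{p-1})=\tfrac{\prod_{i<j}z_{i,j}^{p-1}}{z_{r-1,r}^{p-1}}\notin\m^{[p]}$, so the map $\phi=\Phi(f^{(p-1)/p}g^{(p-1)/p}-)$ built from the trace $\Phi$ of Remark \ref{Trace} satisfies $\phi(z_{r-1,r}^{(p-1)/p})=1$. Because $f\in P_{2t}(Z)^{(h)}$, \cite[Lemma 2.6]{GrifoHuneke} (as in Corollary \ref{CorH}) yields $f^{p-1}\in (P_{2t}(Z)^{(n+1)})^{[p]}:_{R}P_{2t}(Z)^{(np+1)}$ for every $n\in\ZZ_{\geq 0}$, so $\phi$ induces $\Psi:\R^s(P_{2t}(Z))^{1/p}\to\R^s(P_{2t}(Z))$ and $\overline{\Psi}:\gr^s(P_{2t}(Z))^{1/p}\to\gr^s(P_{2t}(Z))$ with $\Psi(z_{r-1,r}^{(p-1)/p})=\overline{\Psi}(\overline{z_{r-1,r}}^{(p-1)/p})=1$.

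The final step is to identify the localization at $z_{r-1,r}$ with a smaller Pfaffian ring: using the Pfaffian analogue of \cite[Lemma 10.1]{BookDet}, there is an isomorphism $\gamma:R[z_{r-1,r}^{-1}]\to S[z_{r-1,r}^{-1}]$, where $S$ is a polynomial extension of $A=K[Z']$ in the entries $z_{i,r-1},z_{i,r}$ (for $1\leq i\leq r-2$) and $Z'$ is a generic $(r-2)\times(r-2)$ skew-symmetric matrix in new variables, such that $\gamma(P_{2t}(Z)^{(n)}R[z_{r-1,r}^{-1}])=P_{2(t-1)}(Z')^{(n)}S[z_{r-1,r}^{-1}]$ for all $n$. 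Strong $F$-regularity is preserved under adjoining polynomial variables and localization, so by the inductive hypothesis $\R^s(P_{2t}(Z))[z_{r-1,r}^{-1}]$ and $\gr^s(P_{2t}(Z))[z_{r-1,r}^{-1}]$ are strongly $F$-regular. Combining this with the splittings $\Psi$ and $\overline{\Psi}$ built above and applying \cite[Theorem 3.3]{HoHuStrong} finishes the inductive step. The main obstacle I foresee is the verification that inverting $z_{r-1,r}$ (via the appropriate change of variables) identifies $P_{2t}(Z)$ with $P_{2(t-1)}(Z')$ \emph{and} commutes with the symbolic filtration; this is the Pfaffian analogue of \cite[Lemma 10.1]{BookDet} and is the only nontrivial ingredient not immediately dictated by the generic-matrix template. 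The secondary technical point is the monomial check that $z_{r-1,r}\nmid\IN_<(f_{2t}(Z))$ for $t\geq 2$, which is a straightforward but case-by-case inspection of the anti-diagonal initial terms of each Pfaffian appearing in $f_{2t}(Z)$ for both parities of $r$.
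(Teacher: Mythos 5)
Your proposal is correct and follows essentially the same route as the paper's proof: induction on $t$ via a splitting $\Phi(f_{2t}(Z)^{(p-1)/p}g^{(p-1)/p}-)$ that sends a distinguished variable to $1$, followed by localization at that variable to reduce to $(2t-2)$-Pfaffians and an appeal to \cite[Theorem 3.3]{HoHuStrong}. The only difference is that the paper inverts $z_{1,2}$ rather than $z_{r-1,r}$ (immaterial, by the index symmetry $i\mapsto r+1-i$), and the "Pfaffian analogue of \cite[Lemma 10.1]{BookDet}" you flag as the main obstacle is precisely \cite[Lemma 1.2]{HtSkew}, which the paper cites for the explicit isomorphism $\gamma$.
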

\begin{proof}
 We know that 
 $\R^s(P_{2t}(Z))$ and $\gr^s(P_{2t}(Z))$ are Noetherian \cite{YoungPf} (see also \cite[Section 3]{baetica98}).
We proceed by induction on $t$.
If $t=1$, then the result follows because  $P_{2t}(Z)=\m$.
We now assume the result is true for  the ideal of $(2t-2)$-Pfaffians.

Let $f=f_{2t}(Z)$, and note that $\IN_<(f)$ is a square-free monomial which is not divisible by $z_{1,2}$, because $t\geq 2$.
Let $g=\frac{\prod_{i<j} z_{i,j}}{z_{1,2}\IN_<(f)}$.
We note that $\IN(f^{p-1} g^{p-1})=\frac{\prod_{i<j} z_{i,j}^{p-1}}{z_{1,2}^{p-1}}$, and so, $f^{p-1} g^{p-1}\not\in \m^{[p]}$.
Let  $\phi=\Phi(f^{(p-1)/p} g^{(p-1)/p}-),$ where $\Phi: R^{1/p}\to R$ denotes the trace map introduced in Remark \ref{Trace}. 
We note that $\varphi(z_{1,2}^{(p-1)/p})=1$.

We have 
$f\in P_{2t}(Z)^{(\Ht(P_{2t}(Z))}$, as shown in the proof of Theorem \ref{ThmSFPPf}.
We therefore have that $f^{p-1}\in \left( P_{2t}(Z)^{(n+1)}\right)^{[p]}:P_{2t}(Z)^{(np+1)}$ for every $n\in\ZZ_{\geq 0}$  \cite[Lemma 2.6]{GrifoHuneke} (see also  proof of Corollary \ref{CorH}). Thus, $\phi$ induces  maps $\Psi: \R^s(P_{2t}(Z))^{1/p} \to \R^s(P_{2t}(Z)) $ and $\overline{\Psi}: \gr^s(P_{2t}(Z))^{1/p} \to \gr^s(P_{2t}(Z)) $ which satisfy 
\begin{equation}\label{xrpf}
\Psi\left(z_{1,2}^{(p-1)/p}\right)=1\quad  \text{and} \quad \overline{\Psi}\left(\overline{z_{1,2}}^{(p-1)/p}\right)=1.
\end{equation}

Let $A=K[U]$, where $U=(u_{i,j})_{3\ls i<j\ls \ls r}$ be a generic skew-symmetric matrix of size $(r-2)\times (r-2)$, and let
$S= A[z_{1,3},\ldots,z_{1,r},z_{2,3},\ldots,z_{2,r}]$.
We have an isomorphism  $\gamma: R[z^{-1}_{1,2}]\to S[z^{-1}_{1,2}]$ defined by
$z_{i,j}\mapsto u_{i,j } - z_{1,j}z_{2,i} z^{-1}_{1,2}+ z_{1,i}z_{2,j} z^{-1}_{1,2}$ for $3\ls i<j\ls \ls r$, 
$z_{1,i}\mapsto z_{1,i}$ for $i\gs 2$, 
and  
$z_{2,i}\mapsto z_{2,i}$ for $i\gs 3$. 
Furthermore, $\gamma(P_{2t}(Z))R[z^{-1}_{1,2}])=P_{2t-2}(U) S[z^{-1}_{1,2}]$, and then $\gamma(P_{2t}(Z))^{(n)}R[z^{-1}_{1,2}])=\Psi(P_{2t-2}(U))^{(n)} S[z^{-1}_{1,2}])$  for every $n\in\ZZ_{\geq 0}$ \cite[Lemma 1.2]{HtSkew} (see also  \cite[Lemma 10.1]{BookDet}). 
By the induction hypothesis, the rings
$\R^s(P_{2t-2}(U))$ and $\gr^s(P_{2t-2}(U))$ are strongly $F$-regular.
It follows that
$\R^s(P_{2t-2}(U))\otimes_A S[z^{-1}_{1,2}]$ and $\gr^s(P_{2t-2}(U))\otimes_A S[z^{-1}_{1,2}]$ are strongly $F$-regular,
because strong $F$-regularity is preserved by adding variables and localizing. 
Therefore, thanks to the isomorphism $\gamma$, the rings $\R^s(P_{2t}(Z))\otimes_R  R[z^{-1}_{1,2}]$ and $\gr^s(P_{2t}(Z))\otimes_R R[z^{-1}_{1,2}]$ are also strongly $F$-regular. From this and Equation \eqref{xrpf}, we  conclude that  $\R^s(P_{2t}(Z))$ and $\gr^s(P_{2t}(Z))$ are strongly $F$-regular \cite[Theorem 3.3]{HoHuStrong}.
\end{proof}

In the following result we show that  ordinary  Rees algebras of ideals of Pfaffians are also  $F$-split.

\begin{theorem}\label{ThmReesFpurePf}
In addition to assuming Setup \ref{setup pfaffians}, suppose that $p>\min\{2t,r-2t\}$. Then the Rees algebra $\R(P_{2t}(Z))$ is  $F$-split.
\end{theorem}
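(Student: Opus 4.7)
The plan is to mimic the approach used to prove Theorems \ref{ThmReesFpureGen} and \ref{ThmReesFpureSym}. Specifically, I would take $f = f_{2}(Z)$, the ``largest'' of the polynomials $f_{2u}(Z)$ introduced in Setup \ref{setup pfaffians}. By Remark \ref{sqftPf}, the initial form $\IN_<(f)$ is a square-free monomial, so in view of Lemma \ref{Lemma initial square-free}(2) it suffices to verify that $f^{p-1} \in (P_{2t}(Z)^n)^{[p]} :_R P_{2t}(Z)^{np}$ for every $n \in \NN$.

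The first step is to observe that, since the product defining $f_{2u}(Z)$ ranges from $\ell=u$ to $b-1$ (with $b=\lfloor r/2\rfloor$) and since taking $u=1$ yields the largest such product, the factorization of $f_{2\ell}(Z)$ divides $f$ for every $\ell \leq b$. Running through the same containments as in the proof of Theorem \ref{ThmSFPPf}, but for each $\ell \leq b$ simultaneously, one obtains
\[
f \;\in\; P_{2\ell}(Z)^{(\Ht(P_{2\ell}(Z)))} \quad \text{for every } \ell \text{ with } 2\ell \leq r.
\]
Then, exactly as in the proof of Corollary \ref{CorH} (invoking \cite[Lemma 2.6]{GrifoHuneke}), this containment gives
\[
f^{p-1} \;\in\; \bigl(P_{2\ell}(Z)^{(n+1)}\bigr)^{[p]} :_R P_{2\ell}(Z)^{(np+1)}
\]
for each $\ell \leq b$ and each $n \in \NN$. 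In particular, $f^{p-1} P_{2\ell}(Z)^{((n+1)p)} \subseteq (P_{2\ell}(Z)^{(n+1)})^{[p]}$ for every $\ell$ and $n$.

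Next, I would plug these containments into the intersection formula for ordinary powers of ideals of Pfaffians furnished by \cite[Theorem 4.6]{JMV}, namely
\[
P_{2t}(Z)^{n} \;=\; \bigcap_{\ell=1}^{t} P_{2\ell}(Z)^{((t-\ell+1)n)},
\]
valid (for the relevant range of $\ell$ and $n$) under the characteristic hypothesis $p > \min\{2t, r-2t\}$. Applying this formula with $n$ replaced by $np$ and using flatness of Frobenius over the regular ring $R$ to pull $(-)^{[p]}$ out of an intersection, one obtains the chain
\begin{align*}
f^{p-1}\, P_{2t}(Z)^{np}
&= f^{p-1}\, \bigcap_{\ell=1}^{t} P_{2\ell}(Z)^{((t-\ell+1)np)} \\
&\subseteq \bigcap_{\ell=1}^{t} f^{p-1}\, P_{2\ell}(Z)^{((t-\ell+1)np)} \\
&\subseteq \bigcap_{\ell=1}^{t} \bigl(P_{2\ell}(Z)^{((t-\ell+1)n)}\bigr)^{[p]} \\
&= \Bigl(\, \bigcap_{\ell=1}^{t} P_{2\ell}(Z)^{((t-\ell+1)n)} \Bigr)^{[p]}
= \bigl(P_{2t}(Z)^{n}\bigr)^{[p]},
\end{align*}
which is precisely the hypothesis of Lemma \ref{Lemma initial square-free}(2). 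Combined with Remark \ref{sqftPf}, this yields the $F$-purity of $\R(P_{2t}(Z))$.

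The main obstacle, as I see it, is not the mechanical computation above but rather the bookkeeping needed to show that the single polynomial $f = f_2(Z)$ lies in \emph{all} the symbolic powers $P_{2\ell}(Z)^{(\Ht(P_{2\ell}(Z)))}$ simultaneously for $\ell = 1, \dots, b$. This requires carefully comparing the pattern of Pfaffian factors appearing in $f_2(Z)$ with the bounds produced by \cite[Theorem 4.6]{JMV}, and handling the even/odd parity of $r$ as in the proof of Theorem \ref{ThmSFPPf}. The role of the hypothesis $p > \min\{2t, r-2t\}$ is precisely to secure the intersection formula of \cite[Theorem 4.6]{JMV} in the required generality; aside from this, the argument is characteristic-free in spirit and runs parallel to the generic and symmetric cases already treated.
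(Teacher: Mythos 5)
Your proposal is correct and follows essentially the same route as the paper: take $f=f_2(Z)$, observe (via the computation in Theorem \ref{ThmSFPPf} and the fact that each $f_{2\ell}(Z)$ divides $f_2(Z)$) that $f$ lies in $P_{2\ell}(Z)^{(\Ht(P_{2\ell}(Z)))}$ for all $\ell$, and then push $f^{p-1}$ through the primary decomposition of ordinary powers of Pfaffian ideals before applying Lemma \ref{Lemma initial square-free}(2). The only discrepancy is bibliographic: the intersection formula $P_{2t}(Z)^{n}=\bigcap_{\ell=1}^{t}P_{2\ell}(Z)^{((t-\ell+1)n)}$ is taken in the paper from \cite[Proposition 2.6]{DNPaf} rather than from \cite[Theorem 4.6]{JMV}, which is used only for the symbolic containments.
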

\begin{proof}

Let $f=f_{2}(Z)$, and note that $f\in P_{2 \ell}(Z)^{\Ht(P_{2 \ell}(Z))}$, as shown in the proof of Theorem \ref{ThmSFPPf}. It follows that $f^{p-1}\in \left( P_{2\ell}(Z)^{(n+1)}\right)^{[p]}: P_{2\ell}(Z) ^{(np+1)}$ for every $\ell\leq r/2$ and $n\in\ZZ_{\geq 0}$   \cite[Lemma 2.6]{GrifoHuneke} (cf.  proof of Corollary \ref{CorH}). 
Thus, 
$$
f^{p-1} P_{2\ell} (Z)^{((n+1)p)}\subseteq f^{p-1} I_\ell (X)^{(np+1)}\subseteq \left(  P_{2\ell} (Z)^{(n+1)}\right)^{[p]}
$$
for every $\ell \leq r/2$ and $n\in\ZZ_{\geq 0}$. We then get that
\begin{align*}
f^{p-1}   P_{2t} (Z)^{np}& =f^{p-1}\left(   \bigcap^{ t}_{\ell=1}    P_{2\ell} (Z) ^{((t-\ell+1)np)}  \right)&\quad \hbox{\cite[Proposition 2.6]{DNPaf}}\\
& \subseteq   \bigcap^{ t}_{\ell=1} f^{p-1}\left(     P_{2\ell} (Z)^{((t-\ell+1)np)}  \right)&\\
& \subseteq   \bigcap^{ t}_{\ell=1} \left(    P_{2\ell} (Z)^{((t-\ell+1)n)}  \right)^{[p]}&\\
& \subseteq   \left(  \bigcap^{ t}_{\ell=1}  P_{2\ell} (Z)^{((t-\ell+1)n)}  \right)^{[p]}&\\
& =  \left(  P_{2t} (Z)^{n}\right)^{[p]}& \quad \hbox{\cite[Proposition 2.6]{DNPaf}}.
\end{align*}
The result follows from Remark \ref{sqftPf} and Lemma \ref{Lemma initial square-free} (2).
\end{proof}

As in the previous subsections, we end with the following results about degrees of  defining equations for ordinary blowup algebras for ideals of Pfaffians of generic skew-symmetric matrices.

\begin{theorem}\label{ThmDegPfOrd}
Assume Setup \ref{setup pfaffians}.  
 Set  $\mu = \binom{r}{2t}$. 
\begin{enumerate} 
\item  Suppose $\deg(z_{i,j})=0$ for every $i,j$, 
then the defining equations of $\R(P_{2t} (Z))$  over $R$ have degree at most $\min\{ \binom{r}{2}+1, \mu\}$.
\item   Suppose $\deg(z_{i,j})=1$ for every $i,j$, 
then the defining equations of $\R(P_{2t} (Z))$  over $K$ have total degree at most  $\binom{r}{2}+\mu (t+1).$
\end{enumerate}
\end{theorem}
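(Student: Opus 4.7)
The plan is to apply the general machinery developed earlier in the paper for bounding the degrees of defining equations of $F$-pure blowup algebras. The argument reduces to combining three ingredients: the $F$-purity of $\R(P_{2t}(Z))$ established in Theorem \ref{ThmReesFpurePf}, the general degree bound of Theorem \ref{defEqblowup1} (for part (1), where the defining equations are viewed over $R$) or Theorem \ref{defEqblowup2} (for part (2), viewing them over $K$), together with the dimension formula from Proposition \ref{dimOfblow}. This tacitly uses the hypothesis $p>\min\{2t,r-2t\}$ inherited from Theorem \ref{ThmReesFpurePf}.

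First I would collect the relevant numerical data. Since $R=K[Z]$ is a polynomial ring in the $\binom{r}{2}$ entries of the strictly upper triangular part of $Z$, we have $\dim(R)=\binom{r}{2}$, and in part (2) the number of degree-one generators of $R$ over $K$ is also $\binom{r}{2}$. The ideal $P_{2t}(Z)$ is minimally generated by the $\binom{r}{2t}=\mu$ many $2t$-Pfaffians, each of degree $t$ in the $z_{i,j}$, so $w_1=\beta_R(P_{2t}(Z))=t$. For the ordinary Rees algebra there is a single generating $T$-degree $e_1=1$, with $v_1=\nu_1=\mu$.

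For part (1), Proposition \ref{dimOfblow} gives $\dim(\R(P_{2t}(Z)))=\binom{r}{2}+1$, and substituting into Theorem \ref{defEqblowup1} produces
$$\binom{r}{2}+1+\mu-\max\bigl\{\binom{r}{2}+1,\mu\bigr\}=\min\bigl\{\binom{r}{2}+1,\mu\bigr\},$$
which is the desired bound. For part (2), Theorem \ref{defEqblowup2} contributes the $\binom{r}{2}$ degree-one variables of $R$ together with the $\mu$ Pfaffian generators lifted to $\R(P_{2t}(Z))$, each carrying total $K$-degree at most $w_1+e_1=t+1$; a routine simplification of the resulting maximum then yields the claimed total-degree bound $\binom{r}{2}+\mu(t+1)$.

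The main obstacle here is really only bookkeeping: the substantive work was already completed in proving that $\R(P_{2t}(Z))$ is $F$-pure, which itself rested on the Seccia-style polynomial $f_2(Z)$ constructed in the proof of Theorem \ref{ThmSFPPf}. What remains is just to verify that the formulas of Theorems \ref{defEqblowup1} and \ref{defEqblowup2} specialize, under the numerical data above, to the two expressions stated in the theorem.
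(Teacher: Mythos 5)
Your proposal is correct and follows exactly the paper's argument: the paper's proof is the one-line citation of Theorem \ref{ThmReesFpurePf}, Theorem \ref{defEqblowup1} (with Theorem \ref{defEqblowup2} implicitly for the total-degree bound over $K$), and Proposition \ref{dimOfblow}, and your numerical bookkeeping ($\dim R=\binom{r}{2}$, $e_1=1$, $v_1=\nu_1=\mu$, $w_1=t$) correctly specializes those general bounds to the stated formulas.
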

\begin{proof}
The result follows from Theorems  \ref{ThmReesFpurePf} and \ref{defEqblowup1}, and Proposition \ref{dimOfblow}.
\end{proof}

\begin{theorem}\label{ThmDegPfSymb}
Assume Setup \ref{setup pfaffians}.  
For $j=t,\ldots, \lfloor r \rfloor$, set  $\mu_j= \binom{r}{2j}$. 
\begin{enumerate} 
\item  Suppose $\deg(z_{i,j})=0$ for every $i,j$,
then the defining equations of $\R^s(P_{2t} (Z))$  over $R$ have degree at most 
$\min\{\binom{r}{2}+1+\sum^{r}_{j=t+1}\mu_{j}(j-t),\sum^{r}_{j=t}\mu_{j}(j-t+1) \}$, and of $\gr^s(P_{2t} (Z))$ over $R/P_{2t} (Z)$, have degree at most $\min\{\binom{r}{2}+\sum^{\lfloor r/2 \rfloor}_{j=t+1}\mu_{j}(j-t),\sum^{\lfloor r/2 \rfloor}_{j=t}\mu_{j} (j-t+1)\}$.
\item Suppose  $\deg(z_{i,j})=1$  for every $i,j$,
then the defining equations of $\R^s(P_{2t} (Z))$  and $\gr^s(P_{2t} (Z))$  over $K$ have total degree at most  
$\binom{r}{2}+\sum^{\lfloor r/2 \rfloor}_{j=t}\mu_{j}(2j-t+1)$.
\end{enumerate}
\end{theorem}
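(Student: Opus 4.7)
The plan is to mirror the proofs of Theorems \ref{ThmDegGenSymb} and \ref{ThmDegSymSymb} line by line, plugging in the numerical data appropriate to the Pfaffian setting. The entry point is Theorem \ref{ThmSFPPf}, which says that $P_{2t}(Z)$ is symbolic $F$-pure; combined with Theorem \ref{mainCharP}, this gives that both $\R^s(P_{2t}(Z))$ and $\gr^s(P_{2t}(Z))$ are $F$-pure, which is precisely what is needed in order to invoke Theorems \ref{defEqblowup1} and \ref{defEqblowup2}.

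To feed in the correct numerics, I would collect the following four facts. First, $R = K[Z]$ is a polynomial ring on $\binom{r}{2}$ variables, so $\dim R = \binom{r}{2}$, and in the standard grading $R$ is generated by $\mu = \binom{r}{2}$ elements of degree one. Second, Proposition \ref{dimOfblow} then yields $\dim \R^s(P_{2t}(Z)) = \binom{r}{2}+1$ and $\dim \gr^s(P_{2t}(Z)) = \binom{r}{2}$, using the Noetherianity of $\R^s(P_{2t}(Z))$ already recalled in the proof of Theorem \ref{mainDetRegPf}. Third, I would use the algebra description
\[
\R^s(P_{2t}(Z)) = R\bigl[P_{2t}(Z) T,\, P_{2t+2}(Z) T^2,\, \ldots,\, P_{2\lfloor r/2 \rfloor}(Z) T^{\lfloor r/2\rfloor - t + 1}\bigr],
\]
so that the generating degrees of $\R^s(P_{2t}(Z))$ as an $R$-algebra are $e_j = j - t + 1$ for $j = t, \ldots, \lfloor r/2 \rfloor$, each with $\nu_j = \mu_j = \binom{r}{2j}$ generators. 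Fourth, each $2j$-Pfaffian is a polynomial of degree $j$ in the entries of $Z$, so $w_j = j$ and $w_j + e_j = 2j - t + 1$.

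Part (1) then follows by direct substitution into Theorem \ref{defEqblowup1}: the $\min$ that appears in the statement accounts for the two values of $\max\{\dim(R)+1,\, \sum \nu_j\}$ in that theorem, each of which produces a valid upper bound. Part (2) follows analogously from Theorem \ref{defEqblowup2}, choosing the branch in which that max equals $\dim(R)+1$ to obtain the cleaner displayed bound $\binom{r}{2} + \sum \mu_j(2j-t+1)$.

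The one step that needs care beyond pure arithmetic is the algebra-generator description of $\R^s(P_{2t}(Z))$. For generic and symmetric matrices this is classical (see \cite[Proposition 10.2]{BookDet} and Lemma \ref{LemmaSymNoeth}); for Pfaffians it should follow from the containments $P_{2\ell}(Z) \subseteq P_{2t}(Z)^{(\ell-t+1)}$ of \cite[Theorem 4.6]{JMV} together with the symbolic power formula of \cite[Proposition 2.6]{DNPaf}, exactly paralleling the derivation of Lemma \ref{LemmaSymNoeth} in the symmetric case. The hard part will be verifying that no new algebra generators of $\R^s(P_{2t}(Z))$ appear beyond those coming from $P_{2j}(Z) T^{j-t+1}$ with $j \leq \lfloor r/2 \rfloor$; once that is confirmed, the rest of the argument is bookkeeping.
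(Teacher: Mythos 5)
Your proposal is correct and follows essentially the same route as the paper: $F$-purity of $\R^s(P_{2t}(Z))$ and $\gr^s(P_{2t}(Z))$ via Theorems \ref{ThmSFPPf} and \ref{mainCharP}, then Theorem \ref{defEqblowup2} (and \ref{defEqblowup1}) together with Proposition \ref{dimOfblow} and the description $\R^s(P_{2t}(Z))=R[P_{2t}(Z)T,\ldots,P_{2\lfloor r/2\rfloor}(Z)T^{\lfloor r/2\rfloor-t+1}]$. The only divergence is that the paper disposes of the ``hard part'' you flag --- the algebra-generator description --- by citing the literature (\cite{YoungPf}, \cite[Section 3]{baetica98}) rather than rederiving it from \cite{JMV} and \cite{DNPaf}.
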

\begin{proof}
By Theorem \ref{ThmSFPPf} and Theorem \ref{mainCharP}, the algebras $\R^s(P_{2t} (Z))$   and  $\gr^s(P_{2t} (Z))$ are  $F$-split. Both parts of the result now follow from Theorem \ref{defEqblowup2}, Proposition \ref{dimOfblow}, and the equality 
\[
\R^s(P_{2t}(Z)) =R[P_{2t}(Z)T,  P_{2t+2}(Z)T^2,\ldots, P_{2\lfloor r/2 \rfloor}(Z)T^{\lfloor r/2\rfloor }] \quad \text{ \cite{YoungPf} (see also \cite[Section 3]{baetica98}).}\qedhere
\]

\end{proof}

\subsection{Ideals of minors of a Hankel matrix}

We first recall the definition of Hankel matrix.
\begin{definition}
Let $j,c\in \ZZ_{>0}$, with $j \leq c$. Let $w_1,\ldots, w_c$ be variables. We denote by $W^c_j$ the $j\times (c+1-j) $ {\it Hankel matrix}, which has the following entries 
$$W^c_j=\begin{pmatrix}
w_1&w_2&\cdots&w_{c+1-j}\\
w_2&w_3&\cdots&\cdots\\
w_3&\cdots&\cdots&\cdots\\
\vdots&\vdots&\vdots&\vdots\\
w_j&\cdots&\cdots&w_c
\end{pmatrix}.$$
\end{definition}

\begin{setup} \label{setup hankel}
Let $j,c \in \ZZ_{>0}$, with $j \leq c$, and $W^c_j$ be the $j \times (c+1-j)$ Hankel matrix. Let $K$ be an $F$-finite field of  characteristic $p>0$, $R=K[W^c_j]$, and $\m=(w_1,\ldots,w_c)$. For $t \in \ZZ_{>0}$ with $t \leq \min\{j, c+1-j\}$, we denote by $I_t(W_j^c)$ the ideal generated by the minors of size $t$ of $W^c_j$. If $c$ is odd, we set $m=\frac{c+1}{2}$, and we let 
\[
f_{\rm odd}(W_j^c)= \det\left( W^c_m\right)\det\left( (W^c_m)^{[2,m]}_{[1,m-1]}\right).
\]
If $c$ is even, we set $m=\frac{c}{2}$, and we let
\[
f_{\rm even}(W_j^c)= \det \left((W^c_m)^{[1,m]}_{[1,m]}\right) \det \left( (W^c_m)^{[1,m]}_{[2,m+1]}\right).  
\]
Consider the lexicographical monomial order on $R$ induced by
$$w_1>w_3>\ldots >w_c>w_2>w_4>\ldots >w_{c-1}.$$
\end{setup}

\begin{remark}\label{sqftHank}
We note that the initial forms $\IN_<(f_{\rm odd}(W_j^c))$ and $\IN_<(f_{\rm even}(W_j^c))$ are   square-free monomials.
\end{remark}

\begin{remark}\label{RemChangeHankel}
 It is well-known that $I_t(W^c_j)$ only depends on $c$ and $t$, that is, $I_t(W_j^c)=I_t(W_t^c)$ for every $t\ls \min\{j, c+1-j\}$.
\end{remark}

\begin{theorem}\label{ThmSFPHankel}
Assuming Setup \ref{setup hankel}, the ideal $I_t(W^c_j)$ is symbolic $F$-split for every $t\leq \min\{j,c+1-j\}$.  In particular, the rings $\R^s(I_t(W^c_j))$ and $\gr^s(I_t(W^c_j))$ are  $F$-split.
\end{theorem}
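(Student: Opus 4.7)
The plan is to mirror the arguments used for Theorems \ref{ThmSFPGen}, \ref{ThmSFPSym}, and \ref{ThmSFPPf}: exhibit a polynomial $f \in R$ whose initial form under the monomial order of Setup \ref{setup hankel} is square-free and which lies in $I_t(W^c_j)^{(h)}$, where $h = \bh(I_t(W^c_j))$. Once this is done, Lemma \ref{Lemma initial square-free}(1) yields that $I_t(W^c_j)$ is symbolic $F$-pure, and Theorem \ref{mainCharP} immediately delivers the $F$-purity of both $\R^s(I_t(W^c_j))$ and $\gr^s(I_t(W^c_j))$. The natural candidate for $f$ is $f_{\rm odd}(W_j^c)$ when $c$ is odd and $f_{\rm even}(W_j^c)$ when $c$ is even; Remark \ref{sqftHank} already takes care of the square-free requirement, so the entire argument reduces to establishing the membership in the $h$-th symbolic power.

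For the actual containment I would use the standard fact $\Ht(I_t(W^c_j)) = c - 2t + 2$, together with the Hankel analogue of the inclusion $I_s(W^c_j) \subseteq I_t(W^c_j)^{(s-t+1)}$ valid for $t \leq s \leq \min\{j, c+1-j\}$; the latter is the Hankel counterpart of \cite[Proposition 10.2]{BookDet} and \cite[Theorem 4.4]{JMV}, and by Remark \ref{RemChangeHankel} the precise value of $j$ plays no role. Suppose first that $c = 2m - 1$ is odd, so that $h = 2m - 2t + 1$. Then $\det(W^c_m) \in I_m(W^c_m) \subseteq I_t(W^c_j)^{(m-t+1)}$ and $\det((W^c_m)^{[2,m]}_{[1,m-1]}) \in I_{m-1}(W^c_m) \subseteq I_t(W^c_j)^{(m-t)}$, hence
\[
f_{\rm odd}(W_j^c) \in I_t(W^c_j)^{(m-t+1)} \cdot I_t(W^c_j)^{(m-t)} \subseteq I_t(W^c_j)^{(2m - 2t + 1)} = I_t(W^c_j)^{(h)}.
\]
If instead $c = 2m$ is even, so that $h = 2m - 2t + 2$, then both factors of $f_{\rm even}(W_j^c)$ are $m \times m$ minors of the $m \times (m+1)$ matrix $W^c_m$, each one in $I_m(W^c_m) \subseteq I_t(W^c_j)^{(m-t+1)}$, and their product therefore lies in $I_t(W^c_j)^{(2(m-t+1))} = I_t(W^c_j)^{(h)}$.

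The main obstacle is locating the Hankel version of the asymptotic containment $I_{t+n-1}(W_j^c) \subseteq I_t(W_j^c)^{(n)}$, which is the only algebraic input beyond the combinatorics of the monomial order; this is the Hankel counterpart of \cite[Theorem 4.4]{JMV} and of \cite[Proposition 10.2]{BookDet}, and I would expect to extract it from Conca's work \cite{conca98} on Hankel symbolic powers or from the explicit formula implicit in \cite{Hankel}. Once this ingredient is secured, the argument is entirely parallel to the three preceding determinantal cases, and the passage to the $F$-purity of $\R^s(I_t(W^c_j))$ and $\gr^s(I_t(W^c_j))$ via Theorem \ref{mainCharP} is automatic.
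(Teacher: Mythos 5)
Your proposal is correct and follows essentially the same route as the paper: the paper likewise takes $f_{\rm odd}$ or $f_{\rm even}$, invokes Remark \ref{sqftHank}, and uses the containment $I_s(W^c_m)\subseteq I_t(W^c_m)^{(s-t+1)}$ — which is exactly \cite[Theorem 3.16 (a)]{conca98}, the ingredient you correctly identified as the only missing input — to place $f$ in $I_t(W^c_m)^{(h)}$ with the same parity-split computation, before concluding via Lemma \ref{Lemma initial square-free}(1) and Theorem \ref{mainCharP}.
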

\begin{proof}
Let $m=\lfloor\frac{c+1}{2} \rfloor$, and observe that $t \leq m$. Moreover, we have that $I_t(W^c_j)=I_t(W^c_m)$ by Remark \ref{RemChangeHankel}. 

If $c$ is odd, let $f=f_{\rm odd}(W^c_j)$, and observe that $h=\Ht(I_t(W^c_j ))=c-2t+2=2m-2t+1$. We then have
\begin{align*}
f& \in  I_m(W^c_m)I_{m-1}(W^c_m)\\
&\subseteq  I_t(W^c_m)^{(m-t+1)}  I_{t}(W^c_m)^{(m-t)}   &\quad \hbox{\cite[Theorem 3.16 (a)]{conca98}}\\
&\subseteq  I_{t}(W^c_m)^{(2m-2t+1)} =  I_t(W^c_m)^{(h)}.
\end{align*}
If $c$ is even we let $f=f_{\rm even}(W^c_j)$, and we observe that $h=\Ht(I_t(W^c_j ))=c-2t+2=2m-2t+2$. In this case we have that 
\begin{align*}
f& \in  I_m(W^c_m)I_{m}(W^c_m)\\
&\subseteq   I_t(W^c_m)^{(m-t+1)}  I_{t}(W^c_m)^{(m-t+1)}  & \quad \hbox{\cite[Theorem 3.16 (a)]{conca98}}\\
&\subseteq  I_{t}(W^c_m) ^{(2m-2t+2)} =  I_t(W^c_m)^{(h)}.
\end{align*}
In both cases, we have shown that $f \in I_t(W^c_m)^{(h)}$. The first statement now follows from Remark \ref{sqftHank} and Lemma \ref{Lemma initial square-free} (1), and the second statement from Theorem \ref{mainCharP}.
\end{proof}

We obtain the following homological consequences.

\begin{theorem}\label{mainDetRegHankel}
Assuming Setup \ref{setup symmetric}, the limit $$\lim\limits_{n\to\infty} \frac{\reg(R/I_t(W^c_j )^{(n)})}{n}$$
  exists and $$ \depth(R/I_t(W^c_j )^{(n)})$$ stabilizes for $n\gg 0$.
\end{theorem}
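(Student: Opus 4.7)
The plan is to mirror the proofs of the analogous Theorems \ref{mainDetReg}, \ref{mainDetRegSym}, and \ref{mainDetRegPf}: once one has (i) that the filtration of symbolic powers is $F$-pure and (ii) that the symbolic Rees algebra is Noetherian, the conclusion is immediate from Theorem \ref{mainRegDepth}. Symbolic $F$-purity of $I_t(W^c_j)$ is already provided by Theorem \ref{ThmSFPHankel}, so the task reduces to verifying Noetherianity of $\R^s(I_t(W^c_j))$.

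To establish Noetherianity I would invoke the known description of symbolic powers of Hankel determinantal ideals from Conca's paper \cite{conca98}. Using Remark \ref{RemChangeHankel} we may first reduce to the square Hankel case by replacing $W^c_j$ with $W^c_m$, where $m = \min\{j,\, c+1-j\}$. An analogue of Lemma \ref{LemmaSymNoeth} should then apply: the containment $I_{t+n-1}(W^c_m) \subseteq I_t(W^c_m)^{(n)}$ together with a Conca-style expression of $I_t(W^c_m)^{(n)}$ as a finite sum of products $I_{t+n_1-1}(W^c_m)\cdots I_{t+n_s-1}(W^c_m)$ with $n_1+\cdots+n_s \gs n$ yields the finite presentation
\[
\R^s(I_t(W^c_j)) \;=\; R\bigl[\,I_t(W^c_m)T,\; I_{t+1}(W^c_m)T^2,\; \ldots,\; I_m(W^c_m)T^{m-t+1}\,\bigr],
\]
which is visibly a finitely generated $R$-algebra and hence Noetherian.

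With these two ingredients in hand, Theorem \ref{mainRegDepth}(2) produces the limit $\lim_{n\to\infty}\reg(I_t(W^c_j)^{(n)})/n$, and therefore also $\lim_{n\to\infty}\reg(R/I_t(W^c_j)^{(n)})/n$, since for a proper homogeneous ideal the regularities of $I$ and $R/I$ differ by one. Simultaneously, Theorem \ref{mainRegDepth}(1) gives the eventual stabilization of $\depth(R/I_t(W^c_j)^{(n)})$. The only genuine obstacle is extracting the exact expression of the symbolic powers from \cite[Theorem 3.16]{conca98} in a form strong enough to guarantee that the finitely many ideals $I_t(W^c_m),\ldots,I_m(W^c_m)$ generate $\R^s(I_t(W^c_j))$; this mirrors the role played by the results of \cite{JMV} in Lemma \ref{LemmaSymNoeth}, and should go through analogously.
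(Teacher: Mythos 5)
Your proposal is correct and follows essentially the same route as the paper: the paper's proof simply cites Conca's result (\cite[Theorem 4.1]{conca98}) for the Noetherianity of $\R^s(I_t(W^c_j))$ and then combines Theorem \ref{ThmSFPHankel} with Theorem \ref{mainRegDepth}. Your reconstruction of the finite generation via the explicit presentation $R[I_t(W^c_m)T,\ldots,I_m(W^c_m)T^{m-t+1}]$ is exactly the form the paper itself uses later (in Theorem \ref{ThmDegHankelSymb}), so there is no substantive difference.
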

\begin{proof}
Since   $\R^s\left(I_t(W^c_j )\right)$ is Noetherian \cite[Theorem 4.1]{conca98}, 
 the result follows by combining   Theorems \ref{ThmSFPHankel} and \ref{mainRegDepth}.
\end{proof}

We now show that  ordinary  Rees algebras of a  determinantal ideals  of Hankel matrices are  $F$-split.

\begin{theorem}\label{ThmReesFpureHankel}
Assume Setup \ref{setup hankel}. Then, the Rees algebra $\R(I_t(W^c_j))$ is  $F$-split.
\end{theorem}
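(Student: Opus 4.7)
The plan is to follow the template of Theorems \ref{ThmReesFpureGen}, \ref{ThmReesFpureSym}, and \ref{ThmReesFpurePf}. Set $m=\lfloor (c+1)/2\rfloor$ and recall from Remark \ref{RemChangeHankel} that $I_\ell(W^c_j)=I_\ell(W^c_m)$ for every $1\le \ell\le t$. Let $f$ denote $f_{\rm odd}(W^c_j)$ if $c$ is odd and $f_{\rm even}(W^c_j)$ if $c$ is even. By Remark \ref{sqftHank}, $\IN_<(f)$ is a square-free monomial, so by Lemma \ref{Lemma initial square-free}(2) it is enough to show that $f^{p-1}\in (I_t(W^c_j)^n)^{[p]}:_R I_t(W^c_j)^{np}$ for every $n\in\NN$.

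The first step is to refine the computation from the proof of Theorem \ref{ThmSFPHankel} to obtain the containment $f\in I_\ell(W^c_m)^{(\Ht(I_\ell(W^c_m)))}$ for \emph{every} $1\le\ell\le t$, not only for $\ell=t$. Since $\Ht(I_\ell(W^c_m))=c-2\ell+2$, this follows exactly as in the proof of Theorem \ref{ThmSFPHankel} by decomposing $f$ as a product of two determinants and applying \cite[Theorem 3.16(a)]{conca98}, which gives $I_s(W^c_m)\subseteq I_\ell(W^c_m)^{(s-\ell+1)}$ for $s\ge \ell$: in the odd case one gets $f\in I_\ell^{(m-\ell+1)}\,I_\ell^{(m-\ell)}\subseteq I_\ell^{(2m-2\ell+1)}=I_\ell^{(c-2\ell+2)}$, and the even case is analogous. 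Raising to the $(p-1)$-st power and invoking \cite[Lemma 2.6]{GrifoHuneke} (cf.\ the proof of Corollary \ref{CorH}) then yields
\[
f^{p-1}\,I_\ell(W^c_m)^{((n+1)p)}\ \subseteq\ f^{p-1}\,I_\ell(W^c_m)^{(np+1)}\ \subseteq\ \bigl(I_\ell(W^c_m)^{(n+1)}\bigr)^{[p]}
\]
for every $n\in\NN$ and every $1\le\ell\le t$.

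The second step is to combine these containments, one for each $\ell$, into a single statement about $I_t^{np}$. As in the proofs of Theorems \ref{ThmReesFpureGen} and \ref{ThmReesFpurePf}, what is needed is an intersection formula of the type
\[
I_t(W^c_m)^n\ =\ \bigcap_{\ell=1}^{t} I_\ell(W^c_m)^{((t-\ell+1)n)},
\]
which is the Hankel analogue of \cite[Corollary 10.13]{BookDet} and \cite[Proposition 2.6]{DNPaf}. Intersecting the displayed containments over $\ell=1,\dots,t$ and substituting yields $f^{p-1}\,I_t(W^c_m)^{np}\subseteq (I_t(W^c_m)^n)^{[p]}$, which is what we wanted.

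The principal obstacle is this final ingredient: producing (or citing) the Hankel analogue of the Bruns--Vetter intersection formula. Such an identity should be extractable from Conca's analysis of the primary decomposition of products of determinantal ideals of a Hankel matrix in \cite{conca98}, and the fact that no characteristic restriction appears in the statement (in contrast with Theorems \ref{ThmReesFpureGen}, \ref{ThmReesFpureSym}, and \ref{ThmReesFpurePf}) strongly suggests that this intersection formula holds in every characteristic in the Hankel setting. If a direct reference is not at hand, one could alternatively verify the required containment $f^{p-1}I_t^{np}\subseteq (I_t^n)^{[p]}$ by localising at the minimal primes of $I_t(W^c_m)$ and exploiting the normality and Cohen--Macaulayness of Hankel determinantal rings.
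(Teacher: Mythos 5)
Your proposal follows the paper's proof essentially verbatim: the same polynomial $f$, the same reduction via Lemma \ref{Lemma initial square-free}(2), the containment $f\in I_\ell(W^c_m)^{(\Ht(I_\ell(W^c_m)))}$ for every $\ell\le t$ extracted from the proof of Theorem \ref{ThmSFPHankel}, and the same final intersection argument. The one ingredient you flag as a possible gap --- the identity $I_t(W^c_m)^{N}=\bigcap_{\ell=1}^{t} I_\ell(W^c_m)^{(N(t+1-\ell))}$ in all characteristics --- is precisely what the paper cites from \cite[Theorem 3.16 (a)]{conca98}, so there is no gap.
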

\begin{proof}

Let $m=\lfloor\frac{c+1}{2}\rfloor$, and observe that $t\leq m$.
We have that $I_\ell(W^c_j)=I_\ell(W^c_m)$ for every $\ell\leq m$ by Remark \ref{RemChangeHankel}. 
If $c$ is odd, we set $f=f_{\rm odd}(W^c_j)$, otherwise we set $f=f_{\rm even}(W^c_j)$. 
From the proof of Theorem  \ref{ThmSFPHankel}, we see that $f\in I_\ell(W^c_m)  ^{(\Ht(I_\ell(W^c_m) ))}$ for every $\ell \ls  m$. It follows that $f^{p-1}\in \left( I_\ell(W^c_m)^{(n+1)}\right)^{[p]}:\left( I_\ell(W^c_m) \right)^{(np+1)}$ for every $\ell \ls  m$ and $n\in\ZZ_{\geq 0}$ \cite[Lemma 2.6]{GrifoHuneke} (cf.  proof of Corollary \ref{CorH}). Thus, 
$$
f^{p-1} I_\ell (W^c_m)^{((n+1)p)}\subseteq f^{p-1} I_\ell (W^c_m)^{(np+1)}\subseteq \left( I_\ell(W^c_m)^{(n+1)}\right)^{[p]}.
$$
for all $\ell \leq m$ and $n \in \NN$.  
Then,
\begin{align*}
f^{p-1}   I_t(W^c_m)^{np}& =f^{p-1} \left( \bigcap^t_{\ell=1}    I_\ell(W^c_m)^{(np(t+1-\ell))}\right) & \quad \hbox{\cite[Theorem 3.16 (a)]{conca98}}\\
& \subseteq \bigcap^t_{\ell=1} \left(f^{p-1}     I_\ell(W^c_m)^{(np(t+1-\ell))} \right)\\
& \subseteq \bigcap^t_{\ell=1} \left(  I_\ell(W^c_m)^{(n(t+1-\ell))} \right)^{[p]}\\
& =\left(  \bigcap^t_{\ell=1}  I_\ell(W^c_m)^{(n(t+1-\ell))} \right)^{[p]}\\
&=\left(   I_t(W^c_m)^{n}\right)^{[p]}& \quad \hbox{\cite[Theorem 3.16 (a)]{conca98}}.
\end{align*}
The conclusion follows from Remarks \ref{sqftHank} and Lemma \ref{Lemma initial square-free} (2).
\end{proof}

Finally, we prove the following results about degrees of defining equations for ordinary Rees and associated graded algebra for ideals of minors of generic Hankel matrices.

\begin{theorem}\label{ThmDegHankelOrd}
Assume Setup \ref{setup hankel}.  
 Set  $\mu =\binom{c+1-t}{t}$. 
\begin{enumerate} 
\item  Suppose $\deg(w_i)=0$ for every $i$, 
then the defining equations of $\R(I_t(W^c_j))$  over $R$ have degree at most $\min\{c, \mu\}$.
\item   Suppose $\deg(w_i)=1$ for every $i$, 
then the defining equations of $\R(I_t(W^c_j))$  over $K$ have total degree at most  $c+\mu (t+1).$
\end{enumerate}
\end{theorem}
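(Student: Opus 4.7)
The plan is to mimic the argument already used for the analogous theorems \ref{ThmDegGenOrd}, \ref{ThmDegSymOrd}, and \ref{ThmDegPfOrd} in the previous subsections. Namely, I will combine three ingredients: the $F$-purity of $\R(I_t(W^c_j))$ proved in Theorem \ref{ThmReesFpureHankel}, the dimension formula from Proposition \ref{dimOfblow}, and the bounds on defining equations of $F$-pure Rees algebras obtained in Theorems \ref{defEqblowup1} and \ref{defEqblowup2}.

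The first step is to identify the numerical invariants required to feed into these theorems. Since $R = K[w_1,\ldots,w_c]$, we have $\dim(R)=c$. The Rees algebra $\R(I_t(W^c_j)) = R[I_t(W^c_j)\cdot T]$ is generated in a single $T$-degree, so in the notation of Theorems \ref{defEqblowup1} and \ref{defEqblowup2} we have $\ell=1$ and $e_1=1$. By Remark \ref{RemChangeHankel}, $I_t(W^c_j)=I_t(W^c_t)$, so the generators are the $t\times t$ minors of the $t\times (c+1-t)$ Hankel matrix $W^c_t$; there are $\binom{c+1-t}{t}=\mu$ such minors (one for each choice of $t$ columns out of $c+1-t$), which gives $v_1 = \nu_1 = \mu$. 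Finally $I_t(W^c_j)$ has positive height since $t\leq \min\{j,c+1-j\}$ forces it to be a proper nonzero ideal.

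With these invariants in hand, part (1) follows immediately by plugging $\dim(R)=c$, $\ell=1$, $e_1=1$, $v_1=\mu$ into Theorem \ref{defEqblowup1}, which after the cancellation
\[
\dim(R)+1+e_1 v_1-\max\{\dim(R)+1,v_1\}=\min\{\dim(R)+1,\mu\},
\]
yields the stated bound. For part (2), the ring $R$ is standard graded over $K$ in $c$ variables, and each generator of $I_t(W^c_j)$ is a $t\times t$ minor of $W^c_t$, homogeneous of degree $t$, so $w_1=\beta_R(I_t(W^c_j))=t$. Substituting these values into Theorem \ref{defEqblowup2} and simplifying gives the claimed total degree bound.

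The argument is essentially bookkeeping, and the only genuine input beyond the general machinery is Theorem \ref{ThmReesFpureHankel}, which has already been established. The one place that requires a small amount of care is the count of generators: although the Hankel structure causes many $t\times t$ minors of $W^c_j$ to coincide, the reduction via Remark \ref{RemChangeHankel} to the matrix $W^c_t$ (and the fact that the bounds in Theorems \ref{defEqblowup1} and \ref{defEqblowup2} only require an upper bound on the number of generators) makes $\mu = \binom{c+1-t}{t}$ a valid choice. Assembling these observations yields both parts of the theorem.
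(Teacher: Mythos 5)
Your proposal follows exactly the same route as the paper: the paper's proof of this theorem is a one-line citation of Theorem \ref{ThmReesFpureHankel}, Theorem \ref{defEqblowup1}, and Proposition \ref{dimOfblow}, and your bookkeeping (with $\dim(R)=c$, $\ell=1$, $e_1=1$, $v_1=\nu_1=\mu=\binom{c+1-t}{t}$ via Remark \ref{RemChangeHankel}, and $\beta_R(I_t(W^c_j))=t$) is precisely what that citation leaves implicit. You are in fact slightly more careful than the paper in explicitly invoking Theorem \ref{defEqblowup2} for part (2), which is what the total-degree bound over $K$ actually requires.

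One point deserves attention: in part (1) your computation correctly yields
\[
\dim(R)+1+e_1v_1-\max\{\dim(R)+1,v_1\}=\min\{c+1,\mu\},
\]
but you then assert this "yields the stated bound," whereas the theorem claims $\min\{c,\mu\}$. These differ by one when $\mu\geq c+1$. The analogous statements for generic, symmetric, and Pfaffian matrices all read $\min\{\dim(R)+1,\mu\}$, so the stated $\min\{c,\mu\}$ here is almost certainly a typo for $\min\{c+1,\mu\}$; but as written your argument proves the latter, not the former, and you should either say so or supply an additional argument. Part (2) is fine: the machinery actually gives $c+1+\mu t$, which is at most the stated $c+\mu(t+1)$ since $\mu\geq 1$.
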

\begin{proof}
The result follows from Theorems \ref{ThmReesFpureHankel} and \ref{defEqblowup1}, and Proposition \ref{dimOfblow}.
\end{proof}

\begin{theorem}\label{ThmDegHankelSymb}
Assume Setup \ref{setup hankel}.  
For $j=t,\ldots, m$, set  $\mu_j = \binom{c+1-j}{j}$. 
\begin{enumerate} 
\item  Suppose $\deg(w_{i})=0$ for every $i$. The defining equations of $\R^s(I_t(W^c_j))$  over $R$ have degree at most 
$\min\{c+1+\sum^{m}_{j=t+1}\mu_{j}(j-t),\sum^{m}_{j=t}\mu_{j}(j-t+1) \}$, and the defining equations of $\gr^s(I_t(W^c_j))$ over $R/I_t(W^c_j)$ have degree at most $\min\{c+\sum^{m}_{j=t+1}\mu_{j}(j-t),\sum^{m}_{j=t}\mu_{j}(j-t+1) \}$.
\item Suppose  $\deg(w_{i})=1$  for every $i$,
then the defining equations of $\R^s(I_t(W^c_j))$  and $\gr^s(I_t(W^c_j))$  over $K$ have total degree at most  
$c+\sum^{m}_{j=t}\mu_{j}(2j-t+1)$.
\end{enumerate}
\end{theorem}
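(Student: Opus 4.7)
The plan is to mirror the strategy used in the analogous Theorems \ref{ThmDegGenSymb}, \ref{ThmDegSymSymb}, and \ref{ThmDegPfSymb}, substituting the relevant inputs for the Hankel setting. By Theorem \ref{ThmSFPHankel}, the ideal $I_t(W^c_j)$ is symbolic $F$-pure, and therefore Theorem \ref{mainCharP} yields that both $\R^s(I_t(W^c_j))$ and $\gr^s(I_t(W^c_j))$ are $F$-pure. This is the hypothesis needed to apply the general degree bounds from Theorems \ref{defEqblowup1} and \ref{defEqblowup2}.

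To apply those general bounds, I will identify the numerical data specific to the Hankel case. Setting $m=\lfloor (c+1)/2 \rfloor$ and using $I_\ell(W^c_j) = I_\ell(W^c_m)$ for $\ell \leq m$ (Remark \ref{RemChangeHankel}), the explicit description of the symbolic Rees algebra given by Conca \cite[Theorem 4.1]{conca98} yields
\[
\R^s(I_t(W^c_j)) = R[I_t(W^c_m)T,\, I_{t+1}(W^c_m)T^2,\, \ldots,\, I_m(W^c_m)T^{m-t+1}].
\]
So the generating degrees of $\R^s(I_t(W^c_j))$ as an $R$-algebra are $e_\ell = \ell - t + 1$ for $\ell = t, \ldots, m$, and the number of generators at degree $e_\ell$ is $\mu_\ell = \binom{c+1-\ell}{\ell}$. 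In the standard graded case ($\deg(w_i)=1$), the module $I_\ell(W^c_m)$ is generated in degree $\ell$, so $w_\ell = \ell$ in the notation of Theorem \ref{defEqblowup2}. Finally, $\dim(R) = c$ and $\mu = c$ as the number of variables, and Proposition \ref{dimOfblow} gives $\dim(\R^s(I_t(W^c_j))) = c+1$ and $\dim(\gr^s(I_t(W^c_j))) = c$, since $I_t(W^c_j)$ has positive height.

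With these inputs in place, part (1) of the statement follows by plugging into Theorem \ref{defEqblowup1} and part (2) into Theorem \ref{defEqblowup2}. The $\min$ appearing in each bound of the statement is simply the unwinding of the $\max\{\dim(R)+1,\sum v_i\}$ (resp.\ $\max\{\dim(R), \sum v_i\}$) term in those theorems: since it is not known a priori which of the two arguments of the $\max$ dominates, taking the minimum of the two resulting expressions always yields a valid upper bound. This same bookkeeping is carried out explicitly in the proof of Theorem \ref{ThmDegGenSymb}, and is entirely formal here.

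The main (modest) obstacle is just the arithmetic of unwinding the abstract bounds of Theorems \ref{defEqblowup1} and \ref{defEqblowup2} into the closed-form expressions displayed in the statement. Since the symbolic $F$-purity has already been established in Theorem \ref{ThmSFPHankel} and the explicit generating set of $\R^s(I_t(W^c_j))$ is available from \cite{conca98}, no new ideas are required beyond careful substitution.
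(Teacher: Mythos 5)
Your proposal is correct and follows essentially the same route as the paper: establish $F$-purity via Theorem \ref{ThmSFPHankel} and Theorem \ref{mainCharP}, then feed the explicit generating degrees of $\R^s(I_t(W^c_j))=R[I_t(W^c_j)T,\ldots,I_m(W^c_j)T^{m-t+1}]$ from \cite{conca98} into the general bounds of Theorems \ref{defEqblowup1}/\ref{defEqblowup2} together with Proposition \ref{dimOfblow}. The only (harmless) difference is that the paper cites Theorem \ref{defEqblowup2} for both parts, while you more precisely route part (1) through Theorem \ref{defEqblowup1}.
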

\begin{proof}
The result follows from  Theorem \ref{ThmSFPHankel},  Theorem \ref{defEqblowup2}, Proposition \ref{dimOfblow}, and the equality 
\[
\R^s\left(I_t(W^c_j)\right)=R[I_t(W^c_j) T,I_{t+1}(W^c_j) T^2,\ldots, I_{m}(W^c_j) T^{m-t+1}] \quad \text{ \cite[Proposition 4.1]{conca98}.}\qedhere
\]
\end{proof}

\subsection{Binomial edge ideals}

We now give another example of symbolic $F$-split ideals,  the binomial edge ideals, which  are generated by minors of certain matrices related to graphs. 

	\begin{definition}[{\cites{HHHKR10,Ohtani}}]
		Let $G=(V(G),E(G))$ be a simple graph such that $V(G)=[n]=\{1,2,\ldots,n\}$.  Let $K$ be a field and $S=K[x_1,\ldots,x_n,y_1,\ldots,y_n]$ the ring of polynomials in $2n$ variables. The {\it binomial edge ideal}, $\cJ_G$, of $G$  is defined by
		$$
		\cJ_G=\left(x_iy_j-x_jy_i\; | \; \hbox{ for }\{i,j\}\in E(G)\right).
		$$
	\end{definition}

\begin{definition}[{\cite{HHHKR10}}]
A graph $G$ on $[n]$ is {\it closed} if $G$ has a labeling of the vertices such that 
for all edges $\{i,j\}$ and $\{k,l\}$ with $i<j$ and $k<l$ one has $\{j,l\}\in E(G)$ if $i=k$, and $\{i,k\}\in E(G)$ if $j=l$.
\end{definition}


The binomial ideals of closed graphs class of graphs can be characterized via Gr{\"o}bner bases for binomial edge ideals \cite{HHHKR10}. This class of binomial edge  ideals has  been studied in several works   \cite{HHHKR10, CRPI, EHH,EneHerzogSymbolic}.  For example, it is know that, for closed graphs, a binomial edge ideal is equidimensional if and only if it is Cohen-Macaulay \cite[Theorem 3.1]{EHH}. Since their initial ideals correspond to a bipartite graphs, we have that the ordinary and symbolic powers of closed binomial edge ideals coincide  \cite[Corollary 3.4]{EneHerzogSymbolic}. This follows from the analogous result for monomial edge ideals of bipartite graphs \cite[Theorem 5.9]{SVV}.

\begin{proposition}\label{propBinomialEdge}
Let $G$ be a closed connected  graph such that $\cJ_G$ is equidimensional. Then, 
 $\cJ_G$ is symbolic $F$-split.  In particular, , the rings $\R^s(\cJ_G)$ and $\gr^s(\cJ_G)$ are  $F$-split.
\end{proposition}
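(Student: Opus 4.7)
The strategy is to apply part (1) of Lemma \ref{Lemma initial square-free}, which will give symbolic $F$-purity of $\cJ_G$, after which Theorem \ref{mainCharP} immediately yields $F$-purity of $\R^s(\cJ_G)$ and $\gr^s(\cJ_G)$. The two tasks are then: (a) identify the big height $H$ of $\cJ_G$; and (b) exhibit a polynomial $f \in \cJ_G^{(H)}$ whose initial form under some monomial order is a squarefree monomial.

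For (a), I would combine two inputs mentioned in the discussion preceding the proposition. Since $G$ is connected, the minimal prime $P_\emptyset(G)$ of $\cJ_G$ has height $n-1$, where $n=|V(G)|$, and the equidimensionality assumption forces every minimal prime to have this height. Hence $H = \bh(\cJ_G) = n-1$. Moreover, for closed graphs we have $\cJ_G^m = \cJ_G^{(m)}$ for every $m \in \NN$ \cite[Corollary 3.4]{EneHerzogSymbolic}, so in particular $\cJ_G^{n-1} = \cJ_G^{(H)}$. This reduces the task (b) to producing $f \in \cJ_G^{n-1}$ with squarefree initial term.

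For (b), I would use the structural fact that for a connected closed graph the defining labeling on $[n]$ realizes a Hamiltonian path, i.e., $\{i,i+1\} \in E(G)$ for every $i=1,\ldots,n-1$ (this is standard for closed graphs; equivalently, they are proper interval graphs ordered by their intervals). Setting $g_i = x_i y_{i+1} - x_{i+1} y_i \in \cJ_G$ and $f = \prod_{i=1}^{n-1} g_i$, one has $f \in \cJ_G^{n-1} = \cJ_G^{(H)}$. Taking the lexicographic order induced by $x_1 > x_2 > \cdots > x_n > y_1 > y_2 > \cdots > y_n$, the initial term of $g_i$ is $x_i y_{i+1}$, so
\[
\IN_<(f) \;=\; \prod_{i=1}^{n-1} x_i\, y_{i+1},
\]
which is squarefree. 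Lemma \ref{Lemma initial square-free}(1) then yields that $\cJ_G$ is symbolic $F$-pure, and Theorem \ref{mainCharP} gives the claim on $\R^s(\cJ_G)$ and $\gr^s(\cJ_G)$.

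The only real obstacle is ensuring the two structural inputs are applied correctly: that closed plus connected plus equidimensional forces $H=n-1$, and that a closed labeling on a connected graph realizes all consecutive pairs $\{i,i+1\}$ as edges. Both are standard in the theory of binomial edge ideals, so the proof should be essentially the direct verification described above, with no deeper technical issues.
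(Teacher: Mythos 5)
Your proposal is correct and follows essentially the same route as the paper: both identify $\bh(\cJ_G)=n-1$ from connectedness and equidimensionality, use the equality of ordinary and symbolic powers for closed graphs, take $f=\prod_{i=1}^{n-1}(x_iy_{i+1}-x_{i+1}y_i)$ along the Hamiltonian path, and conclude via the squarefree-initial-term criterion. The only cosmetic difference is that you invoke Lemma \ref{Lemma initial square-free}(1) while the paper applies Corollary \ref{CorH} directly after noting $f^{p-1}\notin\m^{[p]}$; these are the same argument.
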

\begin{proof}
Since $G$ is connected and $S/\cJ_G$ is equidimensional we have that  $\bh(\cJ_G)=n-1$ (\cite[Theorem 3.1]{EHH}, \cite[Corollary 3.4]{HHHKR10}). We also  have that $\cJ_G^{(n)}=\cJ_G^{n}$ \cite[Corollary 3.4]{EneHerzogSymbolic}.
Now, since a closed graph is a {\it proper interval graph}, it contains a Hamiltonian graph \cite{CRPI, BH}.
We assume with out loss of generality that this path is given by $1,\ldots, n$, in this order.
We set $f=\prod^{n-1}_{i=1}(x_iy_{i+1}-x_{i+1}y_i).$
Then, $f^{p-1}\in \cJ^{((n-1)(p-1))}\smallsetminus (x^p_1,\ldots,x^p_n,y^p_1,\ldots,y^p_n)$.
Therefore, 
 $\cJ_G$ is symbolic $F$-split by Corollary \ref{CorH}. The second statement follows from Theorem \ref{mainCharP}.
\end{proof}

\section{Examples of monomial $F$-split filtrations}\label{SecDet}



In this section we present several classes of filtrations of monomial ideals that are  $F$-split. The list of examples include, symbolic powers and rational powers of squarefree monomial ideals, and initial ideals of symbolic and ordinary powers of  determinantal ideals of generic and Hankel matrices of variables, and of Pfaffians of generic skew-symmetric matrices.

Throughout this section  we assume the following setup.

\begin{setup}\label{setupMonomialFpure}
Let $R$ be a standard graded polynomial ring $R=K[x_1,\ldots, x_d]$  over an $F$-finite field $K$ of characteristic $p>0$. For a vector $\fn=(n_1,\ldots, n_d)\in \NN^d$ we set $\fx^\fn=x_1^{n_1}\cdots x_d^{n_d}$.
\end{setup}

\subsection{ $F$-split filtrations obtained from  monomial valuations}
Assuming Setup \ref{setupMonomialFpure},  let $\fa=(a_1,\ldots, a_d)\in \NN^d$ and consider the following function on the set of monomials in $R$ 
$$v(\fx^\fn)=\fn\cdot \fa.$$
We extend $v$ to the entire $R$ by setting 
\begin{equation*}\label{monVal}
v(f)=v(\sum c_i \fx^{\fn_i} ):=\min\{v(\fx^{\fn_i})\}
\end{equation*}
for a polynomial  $f=\sum c_i \fx^{\fn_i} \in R$ with $0\neq c_i\in K$. Such a function is called a {\it monomial valuation} of $R$ \cite[Definition 6.1.4]
{huneke2006integral}. 

The following is the main result of this subsection.

\begin{theorem}\label{thm_F-pureMon}
Assuming Setup  \ref{setupMonomialFpure}, let $v_1,\ldots, v_r$ be monomial valuations of $R$. For each $n\in \NN$ we set  $$I_n = \{f\in R\mid v_i(f)\gs n, \text{ for every }1\ls i\ls r\}.$$ 
\begin{enumerate}
\item The sequence $\II=\{I_n\}_{n\in \NN}$ is an  $F$-split filtration of monomial ideals.
\item $\R(\II)$ is  Noetherian and strongly $F$-regular.
\end{enumerate}

\end{theorem}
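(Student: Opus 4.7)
The plan is to use the trace map together with a monomial computation to build an $F$-pure splitting for part (1), and then to recognize $\R(\II)$ as a normal affine semigroup ring for part (2).

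For part (1), I would first verify that each $I_n$ is a monomial ideal: the formula $v_i(\sum c_j \fx^{\fn_j}) = \min_j v_i(\fx^{\fn_j})$ forces $v_i(f) \geq n$ to hold iff every monomial in the support of $f$ has $v_i$-value at least $n$. The filtration axioms $I_0 = R$, $I_{n+1} \subseteq I_n$, and $I_m I_n \subseteq I_{m+n}$ are then immediate from this together with the additivity of each $v_i$ on monomials. To produce the $F$-pure splitting, let $\Phi : R^{1/p} \to R$ denote the trace map of Remark \ref{Trace} and set $\phi = \Phi\big((x_1 \cdots x_d)^{(p-1)/p} \cdot -\big)$; since $\Phi((x_1 \cdots x_d)^{(p-1)/p}) = 1$, this is a splitting. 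By the criterion in Remark \ref{Trace}, the containment $\phi\big((I_{np+1})^{1/p}\big) \subseteq I_{n+1}$ is equivalent to $(x_1 \cdots x_d)^{p-1} \cdot I_{np+1} \subseteq I_{n+1}^{[p]}$. Since $I_{n+1}^{[p]}$ is a monomial ideal, a monomial $\fx^{\fs}$ lies in $I_{n+1}^{[p]}$ iff $\lfloor \fs / p \rfloor \cdot \fa_i \geq n+1$ for every weight $\fa_i$. For any generator $\fx^{\fn}$ of $I_{np+1}$, the shifted exponent $\fs = \fn + (p-1)\mathbf{1}$ satisfies $\lfloor \fs / p \rfloor = \lceil \fn / p \rceil$, and hence
\[
\lfloor \fs / p \rfloor \cdot \fa_i = \lceil \fn / p \rceil \cdot \fa_i \geq \frac{\fn \cdot \fa_i}{p} \geq \frac{np+1}{p} = n + \tfrac{1}{p}.
\]
Since the left-hand side is an integer, it must be at least $n + 1$, which is exactly what is needed.

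For part (2), I would identify $\R(\II)$ with the affine monoid algebra $K[S]$, where
\[
S = \{(\fn, n) \in \NN^{d+1} : \fn \cdot \fa_i \geq n \text{ for } 1 \leq i \leq r\}
\]
and $\fa_i$ is the weight vector of $v_i$. The semigroup $S$ is the intersection of $\NN^{d+1}$ with finitely many rational half-spaces through the origin, and is therefore finitely generated by Gordan's lemma; in particular $\R(\II)$ is a finitely generated $K$-algebra and thus Noetherian. Because the defining inequalities are homogeneous of degree one, $S$ is saturated in $\ZZ^{d+1}$, so $K[S]$ is a normal affine semigroup ring. Invoking the classical result that normal affine semigroup rings over $F$-finite fields of positive characteristic are strongly $F$-regular (they arise as direct summands of polynomial rings via their toric presentation) then completes the proof of part (2).

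The main obstacle is really only the last step: appealing to the strong $F$-regularity of normal affine semigroup rings in prime characteristic. This is well established, but the paper should pin down a specific reference; the rest of the argument is a careful combinatorial check combined with the standard trace-based construction of splittings from Remark \ref{Trace}.
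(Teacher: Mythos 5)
Your proposal is correct, and for part (1) it is the paper's argument in thin disguise: the map $\Phi\big((x_1\cdots x_d)^{(p-1)/p}\cdot -\big)$ is exactly the projection onto monomials with exponents divisible by $p$ that the paper writes down explicitly, and your Fedder-style check $(x_1\cdots x_d)^{p-1}I_{np+1}\subseteq I_{n+1}^{[p]}$ via $\lfloor \fs/p\rfloor=\lceil \fn/p\rceil$ is equivalent to the paper's direct evaluation $v_i\big(\phi((\fx^{\fn})^{1/p})\big)\geq \lceil (np+1)/p\rceil=n+1$. The only genuine divergence is in part (2): the paper deduces normality of the monoid $\cM$ \emph{from} the $F$-purity established in part (1) (via the result that an $F$-pure, equivalently seminormal, affine semigroup ring has normal underlying monoid), whereas you establish saturation directly from the description of $S$ as the intersection of $\NN^{d+1}$ with rational half-spaces through the origin. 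Your route makes part (2) logically independent of part (1) and is arguably more self-contained; the paper's route trades that for reusing the splitting it has already built. Both then finish identically, with Gordan's lemma for Noetherianity and Hochster's theorem that normal affine semigroup rings are direct summands of polynomial rings, hence strongly $F$-regular.
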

\begin{proof}
We begin with the proof of (1). 
We note that each $I_n$ is a monomial ideal by the definition of valuation  \cite[Definition 6.1.1]{huneke2006integral}. 
Let $\rho: K^{1/p}\to K$ be an splitting. 
Let $\phi: R^{1/p}\to R$ be the $K$-linear map defined on the  monomials of $R$ as:
$$\phi((c\fx^\fn)^{1/p})=\phi(c^{1/p}(x_1^{n_1}\cdots x_d^{n_d})^{1/p})=
\begin{cases}
\rho(c^{1/p})x_1^{n_1/p}\cdots x_d^{n_d/p}, &\mbox{ if }n_1\equiv \cdots \equiv n_d\equiv 0\pmod{p},\\
0.& \mbox{otherwise;}
\end{cases}
$$
In particular, $\phi(c^{1/p}(\fx^\fn)^{1/p})=\phi(c^{1/p})\phi((\fx^\fn)^{1/p})$. 
If ${\widetilde{c}},c\in K$ and $\fx^{\widetilde{\fn}},\fx^\fn\in R$ are monomial, then
$$
\phi( ({\widetilde{c}} \fx^{\widetilde{\fn}}) c^{1/p} \fx^{\fn/p})=\phi( {\widetilde{c}}  c^{1/p} )\phi(\fx^{\widetilde{\fn}}\fx^{\fn/p})
={\widetilde{c}} \phi(  c^{1/p}) \fx^{\widetilde{\fn}}\phi(\fx^{\fn/p})
={\widetilde{c}}  \fx^{\widetilde{\fn}}\phi(  c^{1/p} \fx^{\fn/p}).
$$
Then,
\begin{equation}\label{R-hom}
\phi(f(g^{1/p}))=f\phi(g^{1/p})\quad\text{for every}\quad f,g\in R.
\end{equation}

We have that $\phi$ is an $R$-homomorphism and thus a splitting of the natural inclusion $R\hookrightarrow R^{1/p}$. 


Now, let $n\in \NN$ be arbitrary and $\fx^\fn=x_1^{n_1}\cdots x_d^{n_d}\in I_{np+1}$  be such that $\phi((\fx^\fn)^{1/p})\neq 0$. Therefore, $p|n_i$  for every $i=1,\ldots, d$ and then $$v_i\big(\phi((\fx^\fn)^{1/p})\big)=\sum_{j=1}^d\frac{n_j}{p}v_i(x_j)\gs \left\lceil \frac{np+1}{p}\right\rceil =n+1,$$ for every $1\ls i\ls r$. 
It follows that $\phi((\fx^\fn)^{1/p})\in I_{n+1}$. Therefore $\phi( (I_{np+1})^{1/p})\in I_{n+1}$ for every $n\in \NN$, which implies $\II$ is an  $F$-split filtration.

We continue with the proof of (2). Let $\fa_i=(v_i(x_1),\ldots, v_i(x_d))\in \NN^d$ and let $\cM_i$ be the {\it affine semigroup} $$\cM_i=\{(\fn,n)\in \NN^{d+1}\mid (\fa_i,-1)\cdot (\fn,n)\gs 0 \}\subseteq \NN^{d+1}$$
for every $1\ls i\ls r$. Then, $\cM=\cM_1\cap \ldots \cap \cM_r$ is a finitely generated affine semigroup \cite[Theorem 1.1 and Corollary 1.2]{VCA}. 
We  note that $\R(\II)=K[\cM]$, and so, $\R(\II)$ is a finitely generated $K$-algebra.

Since $\R(\II)=K[\cM]$ is  $F$-split regardless of the characteristic of the field by part (1), we have that 
$\cM$ is a normal monoid \cite[Corollary 6.3]{SemiNormal}.
As a consequence, $\R(\II)$ is a strongly $F$-regular ring \cite[Theorem 1]{HochsterTori}, finishing the proof.
\end{proof}

In the following example we include several well-studied filtrations of monomial ideals covered by Theorem \ref{thm_F-pureMon}.

\begin{example}\label{monExamp}
Some examples of  $F$-split filtrations of monomial ideals.
\begin{enumerate}
\item ({\it Rational powers of monomial ideals}) Let $I$ be a monomial ideal and $u_1,\ldots, u_r$ its {\it Rees valuations}, which in this setting are also monomial valuations (see \cite[Proposition 10.3.4]{huneke2006integral}). For each $i$, set $u_i(I)=\min\{u_i(f)\mid f\in I\}$ and let $u=\mcm(u_1(I), \ldots, u_r(I))$.  For each $1\ls i\ls, r$,  consider the monomial valuation $v_i=\frac{u}{u_i(I)}u_i$. Then, the monomial ideal  $$I_n = \{f\in R\mid v_i(f)\gs n, \text{ for every }1\ls i\ls r\}$$ is the {\it $\frac{n}{u}$-rational power} of $I$ (see \cite[Proposition 10.5.5]{huneke2006integral}, \cite{Lewis20}). Therefore, by Theorem \ref{thm_F-pureMon} (1), $\II=\{I_n\}_{n\in \NN}$ is an  $F$-split filtration. Thus,  $\R(\II)$  is Noetherian and strongly $F$-regular  by Theorem \ref{thm_F-pureMon} (2)  and $\gr(\II)$ are  $F$-split by Theorem \ref{mainCharP}. Since these are algebras are Noetherian, the conclusions of Theorem \ref{mainRegDepth} hold for $\II$. 

The sequence of {\it integral closure powers} $\{\overline{I^n}\}_{n\in \NN}$ is a subsequence of  the the rational powers. Indeed, one has  $\overline{I^{n}}=I_{nu}$ \cite[Proposition 10.5.2 (5)]{huneke2006integral}. Thus, the conclusions of Theorem \ref{mainRegDepth} hold for $\{\overline{I^n}\}_{n\in \NN}$ and, since direct summands of strongly $F$-regular  rings are strongly $F$-regular, the {\it normal Rees algebra} $\overline{\R(I)}=\bigoplus_{n\in \NN}\overline{I^n}T^n$ is also strongly $F$-regular.

\item ({\it Symbolic powers of squarefree monomial ideals}) Let $I$ be a squarefree monomial ideal and $\fp_1,\ldots, \fp_r$ be its minimal primes. The functions $v_i(f)=\max\{n\mid f\in \fp_i^n\}$ are monomial valuations, therefore, the {\it symbolic powers} of $I$
$$I^{(n)} = \{g\in R\mid v_i(g)\gs n, \text{ for every }1\ls i\ls r\}$$
form an  $F$-split filtration. 
\end{enumerate}
\end{example}

\subsection{ $F$-split filtrations obtained from  initial ideals}

\begin{setup}\label{setupInitial}
Assume Setup \ref{setupMonomialFpure} and suppose that $R$ is equipped with a monomial order $<$. 
If $\II$ is a filtration, then $\IN_<(\II)=\{ \IN(I_n)\}_{n\in\NN}$ is also a filtration. 
\end{setup}



In the following proposition, we provide sufficient conditions for certain filtrations and algebras obtained from initial ideals to be  $F$-split.

\begin{proposition}\label{propInSymbOrd}
Assuming Setup \ref{setupInitial}, let $I\subseteq R$ be a homogeneous equidimensional radical ideal of height $h$   such that $\IN_<(I)$ is radical. 
\begin{enumerate}
\item If there exists $f\in I^{(h)}$ such that $\IN_<(f)$ is square-free, then the filtration $\{ \IN_<(I^{(n)})\}_{n\in\NN}$ is  $F$-split.
\item If there exists $f\in \bigcap_{n\in \NN} \left( I^n\right)^{[p]}:I^{np}$ such that $\IN_<(f)$ is square-free, then $\R(\{\IN_<(I^{n})\})$ is  $F$-split.
\end{enumerate}

\end{proposition}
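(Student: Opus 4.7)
The plan is to apply Fedder-type reasoning directly to the filtration of initial ideals, built on the observation that taking initial ideals commutes with Frobenius powers. Specifically, the key technical input is the identity $\IN_<(L^{[p]})=\IN_<(L)^{[p]}$ for every homogeneous ideal $L\subseteq R$. The inclusion $\IN_<(L)^{[p]}\subseteq\IN_<(L^{[p]})$ is immediate from $\IN_<(h)^p=\IN_<(h^p)$. For the reverse, if $\{j_1,\ldots,j_k\}$ is a Gr\"obner basis of $L$, I would verify that $\{j_1^p,\ldots,j_k^p\}$ is a Gr\"obner basis of $L^{[p]}$ via Buchberger's criterion: the identity $\lcm(m^p,m'^p)=\lcm(m,m')^p$ for monomials gives $S(j_i^p,j_j^p)=S(j_i,j_j)^p$, and the freshman's dream $(\sum c_l j_l)^p=\sum c_l^p j_l^p$ in characteristic $p$ lifts a standard representation of $S(j_i,j_j)$ to one of $S(j_i^p,j_j^p)$.

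For part (1), set $g=\IN_<(f)^{p-1}$. Since $\IN_<(f)$ is square-free, $g\notin\m^{[p]}$, so the $R$-linear map $\phi=\Phi(g^{1/p}\cdot-)$ associated to the trace $\Phi$ of Remark \ref{Trace} is surjective. The core verification is the inclusion $g\cdot\IN_<(I^{(np+1)})\subseteq\IN_<(I^{(n+1)})^{[p]}$ for every $n\geq 0$. For any generator $m=\IN_<(\alpha)$ with $\alpha\in I^{(np+1)}$, one has $gm=\IN_<(f^{p-1}\alpha)$; by the multiplicativity of symbolic powers $f^{p-1}\in I^{(h(p-1))}$, so $f^{p-1}\alpha\in I^{(h(p-1)+np+1)}$; and since $I$ is equidimensional with $h=\bh(I)$, the inclusion $I^{(h(p-1)+np+1)}\subseteq(I^{(n+1)})^{[p]}$ from the proof of Corollary \ref{CorH} applies. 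Therefore $gm\in\IN_<((I^{(n+1)})^{[p]})=\IN_<(I^{(n+1)})^{[p]}$ by the commutation. The remark after Definition \ref{DefFpureFilt} then converts the surjective $\phi$ into an actual splitting that still respects the filtration, yielding $F$-purity of $\{\IN_<(I^{(n)})\}$.

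Part (2) proceeds analogously, using Remark \ref{remOnlyR(I)F-pure} to reduce $F$-purity of $\IN_<(\R(I))=\R(\{\IN_<(I^n)\})$ to exhibiting a splitting $\phi'$ with $\phi'\bigl((\IN_<(I^{(n+1)p}))^{1/p}\bigr)\subseteq\IN_<(I^{n+1})$ for every $n$. Taking again $g=\IN_<(f)^{p-1}\notin\m^{[p]}$, the hypothesis $f\cdot I^{(n+1)p}\subseteq(I^{n+1})^{[p]}$ combined with the commutation yields $\IN_<(f)\cdot\IN_<(I^{(n+1)p})\subseteq\IN_<(I^{n+1})^{[p]}$, and premultiplying by $\IN_<(f)^{p-2}$ produces the required inclusion $g\cdot\IN_<(I^{(n+1)p})\subseteq\IN_<(I^{n+1})^{[p]}$.

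The main obstacle, and only genuinely new ingredient, is the Gr\"obner-basis commutation $\IN_<(L^{[p]})=\IN_<(L)^{[p]}$; once this is in place, both parts are direct transpositions of Lemma \ref{Lemma initial square-free} to the filtration of initial ideals of symbolic and ordinary powers.
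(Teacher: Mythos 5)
Your proposal is correct and follows essentially the same route as the paper: both reduce to showing that $\IN_<(f)^{p-1}=\IN_<(f^{p-1})$ lies in the relevant colon ideals $\IN_<(I^{(n+1)})^{[p]}:\IN_<(I^{(np+1)})$ (resp. $\IN_<(I^{n+1})^{[p]}:\IN_<(I^{(n+1)p})$) but not in $\m^{[p]}$, using $\IN_<(f^{p-1})\IN_<(J)\subseteq\IN_<(f^{p-1}J)$ together with the identity $\IN_<(L^{[p]})=\IN_<(L)^{[p]}$, and then invoking Proposition \ref{PropFedder} and the argument of Lemma \ref{Lemma initial square-free}(2). The only difference is that you supply a (correct) Gr\"obner-basis proof of the commutation $\IN_<(L^{[p]})=\IN_<(L)^{[p]}$, which the paper uses without comment.
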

\begin{proof}
We begin with (1). Since  $ f\in I^{(h)}$, we have that $f^{p-1}\in \left( I^{(n+1)}\right)^{[p]}:I^{(np+1)}$ for every $n\in\NN$ \cite[Lemma 2.6]{GrifoHuneke} (cf.  proof of Corollary \ref{CorH}).
Thus,
\begin{align*}
\IN_<(f^{p-1})\IN_<(I^{(np+1))})&=\IN_<(f^{p-1} I^{(np+1)})\\
&\subseteq \IN_<( (I^{(n+1)})^{[p]})\\
&=\IN_<(I^{(n+1)})^{[p]}
\end{align*} 
Then, $\IN_<(f^{p-1})\not\in \m^{[p]}$ and 
$\IN_<(f^{p-1})\in  \left( \IN_<(I^{(n+1)})\right)^{[p]}:\IN_<(I^{(np+1)})$ for every $n\in\NN$.
It follows that $\{ \IN_<(I^{(n)})\}_{n\in\NN}$ is an  $F$-split filtration by Proposition \ref{PropFedder}.

We continue with (2).  The assumptions of this part guarantee that
\begin{align*}
\IN_<(f^{p-1})\IN_<(I^{np})&=\IN_<(f^{p-1} I^{np})\\
&\subseteq \IN_<( (I^{(n)})^{[p]})\\
&=\IN_<(I^{n})^{[p]}.
\end{align*}
Then, $\IN_<(f^{p-1})\not\in \m^{[p]}$ and 
$\IN_<(f^{p-1})\in  \left( \IN_<(I^{n})\right)^{[p]}:\IN_<(I^{np})$ for every $n\in\NN$.
We conclude that    $\R(\{\IN_<(I^n)\})$ is  $F$-split proceeding as in the proof of Lemma \ref{Lemma initial square-free} (2).
\end{proof}

Our next goal is to prove a technical result, Theorem \ref{Theorem initial torsion free}, which is crucially used in the rest of this section. First we need two lemmas.

\begin{lemma} \label{lemma EM} 
Assume Setup \ref{setupInitial} with  $K$ a perfect field. Let  $P \subseteq R$ be a homogeneous prime ideal  such that $\IN_<(P)$ is radical, and let $Q=(x_1,\ldots,x_h)$. Suppose that  $\IN_<(P)\subseteq Q$. 
Let $\{ x^{\fn_1} T^{b_1},\ldots,  x^{\fn_m} T^{b_m}\}$ be a set of generators of $S=\bigoplus_{n\gs 0}  \IN_<(P^{(n)}) T^n$ as an $R$-algebra. Write $\fn_i=(n_{i,1}\ldots,n_{i,d})$ for each $1\ls i\ls m$ and 
suppose that $p\geq \max\{n_{i,j}, b_i\}_{1\ls i\ls m,\, 1\ls j\ls d}$.
Then, $\IN_<(P^{(n+1)}) \subseteq  Q \IN_<(P^{(n)})$ for every $n \in\NN$.
\end{lemma}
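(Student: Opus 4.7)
The approach is to work monomially with the given generators of the initial symbolic Rees algebra. Because $\IN(\R^s(P))$ is generated as an $R$-algebra by $\{x^{\fn_i} T^{b_i}\}_{i=1}^m$, its $T$-degree-$(n+1)$ component $\IN_<(P^{(n+1)}) \cdot T^{n+1}$ is spanned, as an $R$-module, by products $\prod_i (x^{\fn_i})^{c_i}$ with $\sum_i c_i b_i = n+1$. It therefore suffices to show that each such monomial lies in $Q \cdot \IN_<(P^{(n)})$.

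Since $n+1 \geq 1$, some $c_i$ is positive, and the corresponding generator $x^{\fn_i}$ lies in $\IN_<(P^{(b_i)}) \subseteq \IN_<(P) \subseteq Q$. Hence some $x_j$ with $j \leq h$ divides $x^{\fn_i}$, producing a factorization
$$\prod_k (x^{\fn_k})^{c_k} = x_j \cdot y, \qquad y = (x^{\fn_i}/x_j) \cdot (x^{\fn_i})^{c_i - 1} \prod_{k \neq i} (x^{\fn_k})^{c_k},$$
where $y$ has effective $T$-weight $n+1-b_i$. If $b_i = 1$ then $y \in \IN_<(P^{(n)})$ and we are done.

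The main obstacle is the case in which every contributing generator has $b_i \geq 2$: then the factorization above only places $y$ in $\IN_<(P^{(n+1-b_i)})$, which is too weak. To handle this, I would use the hypothesis $p \geq \max\{n_{i,j}, b_i\}$. The key observation is that $\IN_<(P) = \IN_<(P^{(1)})$ is generated by the level-one generators $\{x^{\fn_\ell} : b_\ell = 1\}$, so every $x^{\fn_i}$ with $b_i \geq 2$ is divisible by some $x^{\fn_\ell}$ with $b_\ell = 1$; substituting $x^{\fn_i} = (x^{\fn_i}/x^{\fn_\ell}) \cdot x^{\fn_\ell}$ inside the representation converts it into one that involves a level-one generator. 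The $p$-bound is what allows one to arrange these substitutions so that the resulting representation still has total $T$-weight exactly $n+1$ while now involving some $c_\ell \geq 1$ with $b_\ell = 1$: the constraints $n_{i,j} \leq p$ and $b_i \leq p$ prevent the exponents and weights from accumulating outside the semigroup defining $\IN(\R^s(P))$. Once such a representation is obtained, the easy case applies and yields the desired containment $\IN_<(P^{(n+1)}) \subseteq Q \cdot \IN_<(P^{(n)})$. The technical crux, and what I expect to be the most delicate step, is executing this rewriting rigorously and verifying that the $p$-bound suffices to control the carry-overs.
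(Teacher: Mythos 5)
Your reduction to the algebra generators, and the observation that each $x^{\fn_i}$ is divisible by some $x_j$ with $j\leq h$ because $x^{\fn_i}\in \IN_<(P^{(b_i)})\subseteq \IN_<(P)\subseteq Q$, both match the paper's proof. The gap is the case $b_i\geq 2$, which is the entire content of the lemma, and the rewriting strategy you propose for it cannot work. A generator $x^{\fn_i}T^{b_i}$ with $b_i\geq 2$ belongs to a minimal generating set precisely because it is \emph{not} a product of algebra elements of lower $T$-degree; and while $x^{\fn_i}$ is indeed divisible, as a monomial of $R$, by some $x^{\fn_\ell}$ with $b_\ell=1$, the cofactor $x^{\fn_i}/x^{\fn_\ell}$ is merely an element of $R$ --- there is no reason for it to lie in $\IN_<(P^{(b_i-1)})$, so the factorization $x^{\fn_i}T^{b_i}=(x^{\fn_i}/x^{\fn_\ell})T^{b_i-1}\cdot x^{\fn_\ell}T$ does not take place inside $\IN(\R^s(P))$. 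The containments $\IN_<(P^{(a)})\,\IN_<(P^{(b)})\subseteq \IN_<(P^{(a+b)})$ go the wrong way for division, and the bound on $p$ does nothing to control such carry-overs.

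What is actually needed, and what the paper uses, is the Zariski--Nagata differential characterization of symbolic powers. Choose $g\in P^{(b_i)}$ with $\IN_<(g)=x^{\fn_i}$ and $j\leq h$ with $n_{i,j}\geq 1$. Then $\partial_{x_j}(g)\in P^{(b_i-1)}$, and the real role of the hypothesis on $p$ is to guarantee that $\partial_{x_j}(x^{\fn_i})=n_{i,j}\,x^{\fn_i}/x_j$ does not vanish in characteristic $p$, so that $\IN_<(\partial_{x_j}g)$ is a nonzero scalar times $x^{\fn_i}/x_j$. Hence $x^{\fn_i}/x_j\in \IN_<(P^{(b_i-1)})$ and $x^{\fn_i}\in x_j\,\IN_<(P^{(b_i-1)})\subseteq Q\,\IN_<(P^{(b_i-1)})$; feeding this into your reduction to generators finishes the argument. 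Without this differential input --- i.e., without knowing that stripping a single variable off a generator drops the symbolic order by exactly one --- your factorization only places the cofactor in $\IN_<(P^{(n+1-b_i)})$, as you note, and no purely combinatorial rewriting in terms of level-one generators can close that gap.
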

\begin{proof}
Set $\cJ:=\bigoplus_{n\gs 0}  \IN_<(P^{(n+1)}) T^n\subseteq S$.  
We have that   $\mathcal{A}:=\{ x^{\fn_1} T^{b_1-1},\ldots,  x^{\fn_m} T^{b_m-1}\}$ generates $\cJ$ as an $S$-ideal.
 Fix $ x^{\fn_i} T^{b_i-1}\in\mathcal{A}$. We claim that there exists $1\ls j\ls h$ such that $n_{i,j}\gs 1$. If not, we would get that $ x^{\fn_i} T^{b_i-1} R_Q=R_Q$, and therefore $\IN_<(P^{(b_i)})\not\subseteq Q$. However, this contradicts the assumption that $\IN_<(P)\subseteq Q$. 
It follows from this claim and by the assumption on $p$ that $\partial_j (x^{\fn_i} T^{b_i-1})\neq 0$.
Let $g\in P^{(b_i)}$ be such that $\IN_< (g)=x^{\fn_i}$, and set $\partial_j=\frac{\partial}{\partial x_j}$. We have that
$$
0\neq \partial_j (x^{\fn_i} T^{b_i-1})=\partial_j (\IN_< (g) T^{b_i-1})
= \IN_< (\partial_j(g)) T^{b_i-1}
\in \IN_<(P^{(b_i-1)})T^{b_i-1}
$$
by the characterization of symbolic powers in terms of differential operators \cite{Zariski, Nagata, SurveySymbPowers,ZariskiNagata}.
Hence
$$x^{\fn_i} T^{b_i-1}\in  x_j \IN_<(P^{(b_i-1)})T^{b_i-1}\subseteq Q \IN_<(P^{(b_i-1)})T^{b_i-1}\subseteq Q\cdot  S.$$
We have that $\cJ\subseteq Q\cdot S$, and therefore $\IN_<(P^{(n+1)}) \subseteq  Q \IN_<(P^{(n)})$ for every $n \in\NN$.
\end{proof}

\begin{lemma} \label{GisEquid} 
Let $(S,\n)$ be a universally catenary local domain. Let $\II$ be a  filtration of non-zero  $S$-ideals, such that the Rees algebra $\R(\II)$ is finitely generated as $S$-algebra. Then $\gr(\II)$ is  equidimensional of dimension $\dim(S)$.
\end{lemma}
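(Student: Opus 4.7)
The plan is to realize $\gr(\II)$ as a quotient of the extended Rees algebra $B := \bigoplus_{n \in \ZZ} I_n T^n \subseteq S[T,T^{-1}]$ (with $I_n := S$ for $n \leq 0$) by a single regular element, and then to deduce the equidimensionality of $\gr(\II)$ from a dimension computation inside $B$ that uses the dimension formula for finitely generated algebras over a universally catenary ring. The hypothesis that $\R(\II)$ is finitely generated over $S$ yields, as in the proof of Proposition \ref{dimOfblow}, an integer $\ell$ with $I_{n+\ell} = I_\ell I_n$ for $n \geq \ell$, so $B$ is module-finite over $S[I_\ell T^\ell, T^{-1}]$; hence $B$ is a finitely generated $S$-algebra, a Noetherian domain (being a subring of $S[T,T^{-1}]$), of dimension $\dim(S) + 1$. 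The element $T^{-1}$ is a homogeneous non-zero-divisor of $B$ with $B/(T^{-1}) \cong \gr(\II)$, so the minimal primes of $\gr(\II)$ correspond bijectively to the minimal primes of $(T^{-1})$ in $B$.

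Next I would identify one maximal ideal of $B$ of maximal possible height. Set $\M := \n \oplus \bigoplus_{n \neq 0} B_n$; this is a maximal ideal of $B$ because $B/\M \cong S/\n$. Since $S$ is universally catenary, the dimension formula holds with equality for the finitely generated $S$-algebra $B$. Applied at $\M$, with $\M \cap S = \n$, $\kappa(\M) = \kappa(\n)$, and $\operatorname{trdeg}_S(B) = 1$, it yields $\Ht(\M) = \Ht(\n) + 1 = \dim(B)$. Moreover, $B$ is itself universally catenary (as a finitely generated algebra over $S$), so $B_\M$ is a catenary Noetherian local domain of dimension $\dim(B)$.

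Finally, for any minimal prime $P$ of $(T^{-1})$ in $B$, Krull's principal ideal theorem gives $\Ht(P) = 1$, and the general chain inequality $\Ht(P) + \dim(B/P) \leq \dim(B)$ produces the upper bound $\dim(B/P) \leq \dim(S)$. For the matching lower bound, $P$ is homogeneous because $T^{-1}$ is, and every homogeneous element of $B$ not lying in $\M$ is a unit of $S \subseteq B$, so $P \subseteq \M$. In the catenary local domain $B_\M$ we then have $\dim(B_\M/PB_\M) = \dim(B_\M) - \Ht(PB_\M) = \dim(S)$, which is a lower bound for $\dim(B/P)$, completing the proof. The main obstacle is the identity $\Ht(\M) = \dim(B)$: this is precisely the place where the universal catenariness of $S$ is used in an essential way, via the dimension formula; once this is in hand, the catenariness of $B_\M$ closes the argument.
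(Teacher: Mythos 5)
Your proof is correct and follows essentially the same route as the paper's: pass to the extended Rees algebra $B$, note it is a finitely generated $S$-algebra domain of dimension $\dim(S)+1$ with $\gr(\II)\cong B/(T^{-1})$, and use catenarity to equate $\dim(B/P)$ with $\dim(B)-\Ht(P)=\dim(S)$ for each minimal prime $P$ of $(T^{-1})$. The only difference is that you make explicit, via the dimension formula applied to the maximal ideal $\M=\n\oplus\bigoplus_{n\neq 0}B_n$, the step the paper leaves implicit — namely that the unique homogeneous maximal ideal of $B$ has height $\dim(B)$, so that $\Ht(\J)+\dim(B/\J)=\dim(B)$ for homogeneous primes $\J$.
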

\begin{proof}
Consider the {\it extended Rees algebra} $A:=S[\II T,T^{-1} ]=\oplus_{n\in \ZZ} I_n T^n$ where $I_n=S$ for $n\ls0$. Thus, there exists $\ell \in \ZZ_{>0} $ such that  $A$ is an integral extension of $S[I_\ell T, T^-1]=\oplus_{n\in \ZZ} I_\ell^n T^n$; see Equation \eqref{eventSG}.  Therefore, $A$ has dimension $\dim(S) +1$ \cite[Theorem 2.2.5, Theorem 5.1.4(1)]{huneke2006integral}. Now, $T^{-1}$ is a homogeneous non-zero element of $A$ and then every minimal prime of $(t^{-1})$ has height one.  We also have that $A/(T^{-1})\cong \gr(\II)$.  
The result now follows by noticing that  $A$ is a catenary graded domain which has a unique homogeneous maximal ideal $\oplus_{n<0}I_nt^n\oplus \n \oplus_{n>0}I_nt^n$, and thus for any homogeneous ideal $\J\subseteq A$ one has $\dim(A/\J)+\Ht(\J)=\dim(A)$. 
\end{proof}

\begin{remark} \label{remark Sullivant} Assuming Setup \ref{setupInitial} with $K$ algebraically closed, Sullivant proved that for all $n \in \NN$ and all radical ideals $I$ such that $\IN_<(I)$ is radical one has ${\rm in}_<(I^{(n)}) \subseteq {\rm in}_<(I)^{(n)}$ \cite{Sull}. We point out that his proof works, more generally, if $K$ is just any perfect field.
\end{remark}

We are now ready to present the technical theorem (cf.  \cite[Theorem 1.2]{HSV89}).

\begin{theorem} \label{Theorem initial torsion free}
Assume Setup \ref{setupInitial} with  $K$ a perfect field. Let  $P \subseteq R$ be a homogeneous prime ideal  such that $\IN_<(P)$ is radical. Let $S=\bigoplus_{n\gs 0}  \IN_<(P^{(n)}) T^n$, $\J = \bigoplus_{n\gs 0}  \IN_<(P^{(n+1)}) T^n\subseteq S$, and $G=S/\J$. Assume that $S$ is Noetherian and let $b_1,\ldots, b_m$ be the generating degrees of $S$ and an $R$-algebra. Assume that $p> \lcm(b_1,\ldots, b_m)$ and that  $G$ is reduced. Then, there is a one-to-one correspondence between primes of $R$ minimal over $\IN_<(P)$ and minimal primes of $G$. 

More specifically, if $\mathfrak{q} \in \Spec(R)$ is minimal over $\IN_<(P)$, then $Q=\ker(G \to G \otimes_R R_q)$ is a minimal prime of $G$ such that $Q \cap R = q$, and every minimal prime of $G$ is of this form. 
\end{theorem}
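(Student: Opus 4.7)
My plan is to establish the correspondence explicitly by defining $Q_\mathfrak{q}:=\ker(G\to G\otimes_R R_\mathfrak{q})$ for each $\mathfrak{q}$ minimal over $\IN_<(P)$, and verifying the required properties. The essential tool is Lemma \ref{lemma EM}: whenever $\mathfrak{q}=(x_{i_1},\ldots,x_{i_h})$ is a monomial prime containing $\IN_<(P)$, the lemma (applicable under the assumption $p>\lcm(b_1,\ldots,b_m)$) yields $\IN_<(P^{(n+1)})\subseteq \mathfrak{q}\,\IN_<(P^{(n)})$ for every $n\geq 0$, and hence the Rees-algebra inclusion $\J\subseteq \mathfrak{q} S$. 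Combining this with the fact that $\mathfrak{q}$ minimal over the radical ideal $\IN_<(P)$ forces $\IN_<(P) R_\mathfrak{q}=\mathfrak{q} R_\mathfrak{q}$, and hence $\mathfrak{q} S_\mathfrak{q}\subseteq \J_\mathfrak{q}$, we obtain the equality $\J_\mathfrak{q}=\mathfrak{q} S_\mathfrak{q}$ and the identification
\[
G_\mathfrak{q}\;=\;S_\mathfrak{q}/\mathfrak{q} S_\mathfrak{q}\;=\;S\otimes_R\kappa(\mathfrak{q}).
\]

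The key technical step is to show that $S/\mathfrak{q} S$ (and hence its localization $G_\mathfrak{q}$) is a domain. Viewing $S$ as the semigroup ring $K[\cM]$ with $\cM=\{(\fn,n)\in\NN^{d+1}:x^\fn\in\IN_<(P^{(n)})\}$, and writing $\mathfrak{q}=(x_i:i\in F)$, the quotient $S/\mathfrak{q} S$ is $K$-spanned by the monomials $(\fn,n)\in\cM$ with $(\fn-e_i,n)\notin\cM$ for every $i\in F$ with $\fn_i\geq 1$; the minimality of $\mathfrak{q}$ over $\IN_<(P)$ is exactly what is required to verify that this subset is closed under the addition of $\cM$, so it is a sub-semigroup and $S/\mathfrak{q} S$ is the corresponding affine-semigroup ring, a domain. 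With this in hand $Q_\mathfrak{q}$ is a prime of $G$, and the factoring $R\to G\to G_\mathfrak{q}$ through $(G_\mathfrak{q})_0=\kappa(\mathfrak{q})$ gives $Q_\mathfrak{q}\cap R=\mathfrak{q}$ (an element $r\in R$ dies in $G_\mathfrak{q}$ exactly when $r\in\mathfrak{q}$).

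The remaining two checks, minimality of $Q_\mathfrak{q}$ in $G$ and exhaustiveness, both follow from the same primeness argument combined with Lemma \ref{lemma EM}. For minimality: any prime $Q^*\subsetneq Q_\mathfrak{q}$ of $G$ would have $Q^*\cap R\subsetneq\mathfrak{q}$, because an element $g\in Q_\mathfrak{q}\setminus Q^*$ is killed by some $s\in R\setminus\mathfrak{q}$ and primeness of $Q^*$ then excludes $Q^*\cap R=\mathfrak{q}$; but $Q^*\cap R\supseteq\IN_<(P)$, which contradicts the minimality of $\mathfrak{q}$ over $\IN_<(P)$. For exhaustiveness: any minimal prime $Q$ of $G$ has a monomial contraction $\mathfrak{q}'=Q\cap R$ containing $\IN_<(P)$ (monomiality is because the minimal primes of the monomial ideal $\J$ in the semigroup ring $S$ are monomial, and their contractions to $R$ are therefore monomial); if $\mathfrak{q}'$ strictly contained some minimal prime $\mathfrak{q}$ of $\IN_<(P)$, applying Lemma \ref{lemma EM} to $\mathfrak{q}$ would produce $\mathfrak{q} S$, a prime of $S$ by the previous paragraph, strictly contained in the preimage of $Q$ and still containing $\J$, contradicting minimality. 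Hence $\mathfrak{q}'$ is itself minimal over $\IN_<(P)$, and the domain property of $G_{\mathfrak{q}'}$ forces $Q=Q_{\mathfrak{q}'}$. The genuinely hard step is the semigroup-theoretic verification that $S/\mathfrak{q} S$ is a domain, which is where the minimality of $\mathfrak{q}$ over $\IN_<(P)$ and the numerical hypothesis on $p$ play an essential role; everything else is a localization-and-primeness bookkeeping that cascades from that statement.
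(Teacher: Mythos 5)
Your first direction is essentially the paper's own argument and is fine: from Lemma \ref{lemma EM} and the minimality of $\mathfrak{q}$ one gets $\J_\mathfrak{q}=\mathfrak{q} S_\mathfrak{q}$, hence $G_\mathfrak{q}\cong\gr_{\mathfrak{q} R_\mathfrak{q}}(R_\mathfrak{q})$ is a domain, and the localization bookkeeping gives minimality of $Q_\mathfrak{q}$ and $Q_\mathfrak{q}\cap R=\mathfrak{q}$. The gap is in the exhaustiveness direction, precisely at the step you call the key one: the claim that $S/\mathfrak{q} S$ is a domain (equivalently, that each $\IN_<(P^{(n)})/\mathfrak{q}\,\IN_<(P^{(n)})$ is torsion-free over $R/\mathfrak{q}$). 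Writing $J_n=\IN_<(P^{(n)})$, closure of your monomial set under addition requires: if $\fx^{\fn}\in J_n\smallsetminus\mathfrak{q} J_n$ and $\fx^{\fn'}\in J_{n'}\smallsetminus\mathfrak{q} J_{n'}$, then $\fx^{\fn+\fn'}\notin\mathfrak{q} J_{n+n'}$. Minimality of $\mathfrak{q}$ over the radical ideal $J_1$ gives no control over this, because $J_{n+n'}$ is in general much larger than $J_nJ_{n'}$. Indeed the mechanism you invoke already fails at the level of a single radical monomial ideal: for $J_1=(xy,yz,zx)$ and its minimal prime $\mathfrak{q}=(x,y)$ one has $xy\notin\mathfrak{q} J_1$ and $z\notin\mathfrak{q}$, yet $z\cdot xy=x\cdot yz\in\mathfrak{q} J_1$, so the product of two nonzero monomials of $S/\mathfrak{q} S$ (one in $T$-degree $1$, one in $T$-degree $0$) is zero. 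What you are asserting is essentially the conclusion of the theorem, and a telling symptom is that your argument never uses the hypothesis that $G$ is reduced, nor the bound $p>\lcm(b_1,\ldots,b_m)$ beyond its appearance in Lemma \ref{lemma EM}; reducedness is exactly the hypothesis of the Huneke--Simis--Vasconcelos theorem this result generalizes, and the statement is false without it.

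For contrast, the paper's proof of exhaustiveness is where all the work lies: after inverting the variables outside $Q\cap R$ to reduce to $Q\cap R=\m$, it uses reducedness of $G$ and minimality of $Q$ over $\J$ to produce an element $fT^n\notin Q$ with $f\m\subseteq J_{n+1}$; it then chooses a homogeneous system of parameters $a_1T^s,\ldots,a_dT^s$ for $S/\m S$ with $s=\lcm(b_1,\ldots,b_m)$, uses $fT^n$ to bound $\deg a_i\leq s$, and uses the differential-operator description of symbolic powers (this is where $p>s$ enters) to force each $a_i$ into the part of $J_1$ generated by variables; this shows $J_1$ is itself generated by variables, after which \cite[Theorem 1.2]{HSV89} finishes the argument. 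To repair your outline you would need to replace the unproven claim that $\mathfrak{q} S$ is prime with an argument of this kind that genuinely exploits the reducedness of $G$.
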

\begin{proof}
For all $n\gs 1$, set $J_n= \IN_<(P^{(n)}) $. Without loss of generality, we may assume that $K$ is infinite. For all $n \geq 1$, we have $J_1^n \subseteq J_n$, and by Remark \ref{remark Sullivant} we also have $J_n \subseteq J_1^{(n)}$. Let $\mathfrak{q}  \in \Spec(R)$ be minimal over $J_1$. After localizing at $\mathfrak{q} $, the above inclusions all become equalities. Moreover, since $J_1$ is radical, we have $J_1R_{\mathfrak{q} } = \mathfrak{q} R_{\mathfrak{q} }$, and therefore $J_nR_{\mathfrak{q} } = (\mathfrak{q} R_{\mathfrak{q} })^n$. It follows that $G \otimes_R R_{\mathfrak{q} } \cong \gr_{ \mathfrak{q} R_{\mathfrak{q} }}(R_{\mathfrak{q} })$, and since $R_{\mathfrak{q} }$ is regular, this associated graded ring is a domain. If we let $Q = \ker(G \to G \otimes_R R_{\mathfrak{q} })$, then $G/Q$ is a subring of a domain, hence a domain itself. It follows that $Q$ is a prime ideal of $G$, and it is easy to see that $Q \cap R = \ker(R/J_1 \to R_{\mathfrak{q} }/\mathfrak{q} R_{\mathfrak{q} }) = \mathfrak{q} $. Finally, the map $G \to G\otimes_R R_{\mathfrak{q} }$ becomes an isomorphism when localized at $Q$. Therefore $QG_Q = 0$, i.e., $Q$ is a minimal prime of $G$.

Now let $\ov{Q}$ be a minimal prime of $G$. Let $Q$ be a lift of $\ov{Q}$ to $S$, so that $Q$ is a prime of $S$ which is minimal over $\J$. Let $q = Q \cap R$, which is a monomial ideal. Hence, $\mathfrak{q}$ generated by variables,  say $x_1,\ldots,x_h$. 
Consider the multiplicative system $W=K[x_{h+1},\ldots,x_d]\smallsetminus \{0\}\subseteq R$, let
 $K'=K(x_{h+1},\ldots,x_d)=W^{-1}K[x_{h+1},\ldots,x_d]$ and $R'=W^{-1}R=K'[x_1,\ldots,x_h]$.
In addition, $W^{-1}q= W^{-1}Q' \cap W^{-1}R = (x_1,\ldots,x_h) R'$.
We replace $K$ by $K'$, and may assume that
 $Q \cap R = \m=(x_1,\ldots,x_h)$. By Lemma \ref{GisEquid} we have that $\dim(S/Q) = \dim(G) = d$.
We want to show that $\m$ is minimal over $J_1$.

First we want to show that, in our current setup, the monomial ideal $J_1$ is generated by variables. If not, after possibly relabeling the indeterminates, we may find integers $1 \leq \ell_1 \leq \ell_2 \leq d$, a square-free monomial ideal $A \subseteq (x_1,\ldots,x_{\ell_1})^2$, and an ideal $L=(x_{\ell_1+1},\ldots,x_{\ell_2})$ generated by variables such that $J_1 = A + L$.

By Lemma \ref{lemma EM} we have that $J_{n+1} \subseteq \m J_n$ for all $n \geq 0$, and thus $\J \subseteq \m S \subseteq Q$. Since $G = S/\J$ is reduced, and $Q$ is a minimal prime of $\J$, we have that $\J S_Q = QS_Q$, and thus $\J S_Q = \m S_Q$. In particular, there exist $n \geq 0$ and $fT^n \in S \smallsetminus Q$ such that $fT^n(\m S) \subseteq \J$. Thus, $f \in J_n$ is an element such that $f\m \subseteq J_{n+1}$, and $fT^n \notin Q$. In our assumptions, if $s=\lcm(b_1,\ldots, b_m)$, then we can find homogeneous elements $a_1T^s,\ldots,a_dT^s$ which form a full system of parameters for the finitely generated graded $K$-algebra $S/\m S$. In particular, notice that $a_iT^s \notin Q$ for all $i$, since $Q$ is a minimal prime of $\m S$. 

First, we claim that $a_i \notin \m^{s+1}$ for all $i=1,\ldots,d$, i.e., each $a_i$ has degree at most $s$. By way of contradiction, assume that $a_i \in \m^{s+1}$ for some $i$. We have that
\[
a_i T^s (fT^n)^{s+1}  \subseteq (f\m)^{s+1}T^{(n+1)(s+1)-1} \subseteq J_{(n+1)(s+1)} T^{(n+1)(s+1)-1} \subseteq \J \subseteq Q,
\]
which contradicts the fact that $a_iT^s$ and $fT^n$ do not belong to $Q$.

We now claim that $J_1^{(s)}  = (A+L)^{(s)} \subseteq (x_1,\ldots,x_{\ell_1})^{s+1} + L$. Let $R_1= K[x_1,\ldots,x_{\ell_1}]$, $\m_1 = (x_1,\ldots,x_{\ell_1})R_1$ and $B = A \cap R_1$. Since we have that $(A+L)^{(s)} \subseteq A^{(s)} + L$, it suffices to show that $B^{(s)} \subseteq \m_1^{s+1}$ in $R_1$. Since $p = {\rm char}(K) > s$, every $K$-linear differential operator $\partial$ of $R'$ of order at most $s$ can be written as $\partial_{x_i} \circ \partial'$ for some $K$-linear differential operator $\partial'$ of order at most $s-1$, and some $1 \leq i \leq \ell_1$. We have that
\[
\partial(B^{(s)}) = \partial_{x_i} (\partial'(B^{(s)})) \subseteq \partial_{x_i}(B) \subseteq \partial_{x_i}(\m_1^2) \subseteq \m_1 \ \text{(for instance, \cite[Proposition 2.14]{SurveySymbPowers}}).
\]
Therefore we have that $B^{(s)} \subseteq \m_1^{(s+1)} = \m_1^{s+1}$ \cite[Proposition 2.14]{SurveySymbPowers}. At this, point, we have shown that $(a_1,\ldots,a_d) \subseteq J_s \subseteq J_1^{(s)} \subseteq (x_1,\ldots,x_{\ell_1})^{s+1} + L$. Since each $a_i$ is homogeneous, and has degree at most $s$, we must have $(a_1,\ldots,a_d) \subseteq L$.  

Finally, let $I=(a_1,\ldots,a_d)$; we claim that $\sqrt{I} = J_1$. Once we have shown this, we have that $J_1  = \sqrt{I} \subseteq L \subseteq J_1$, which implies that $J_1=  L$ is generated by variables. Since $a_1T^s,\ldots,a_dT^s$ are a full system of parameters for $S/\m S$, we can find an integer $N \gg 0$ such that $(J_s^N T^{Ns}) S/\m S \subseteq (a_1T^s,\ldots,a_dT^s) S/\m S$, so that $J_sT^{Ns} \subseteq (a_1T^s,\ldots,a_dT^s) J_{Ns-s}T^{Ns} + (\m S)_{Ns}T^{Ns}$. In particular, we have a containment $J_s^N \subseteq (a_1,\ldots,a_d) + \m J_{Ns}$. Because $S$ is generated in degree at most $s$, we have that $J_{Ns} = J_s^N$, and we conclude that $J_{Ns} \subseteq (a_1,\ldots,a_d) + \m J_{Ns}$. It follows from graded Nakayama's Lemma that $J_{Ns} \subseteq (a_1,\ldots,a_d)$, and since $J_1^{Ns} \subseteq J_{Ns}$ we conclude that $J_1 \subseteq \sqrt{I}$. Since $I \subseteq J_1$, the other inclusion is trivial.
 
To conclude the proof, observe that since $J_1$ is generated by variables, we have that $J_1^n = J_1^{(n)}$ for all $n$. 
It follows that $G=\bigoplus_{n \geq 0} J_n/J_{n+1} =  \bigoplus_{n \geq 0} J_1^n/J_1^{n+1} = \gr_{J_1}(R)$ is reduced.
Then,  $\m$ is a minimal prime over $J_1$, by the one-to-one correspondence between minimal primes of $G$ and $R/J_1$ already established in this case \cite[Theorem 1.2]{HSV89}.
\end{proof}

From the previous theorem we obtain the following useful corollary.

\begin{corollary}\label{CorInSymbEq}
Assume Setup \ref{setupInitial} with  $K$ a perfect field. Let  $P \subseteq R$ be a homogeneous prime ideal  such that $\IN_<(P)$ is radical. Let $S=\bigoplus_{n\gs 0}  \IN_<(P^{(n)}) T^n$, $\J = \bigoplus_{n\gs 0}  \IN_<(P^{(n+1)}) T^n\subseteq S$, and $G=S/\J$. Assume  $S$ is Noetherian and let $b_1,\ldots, b_m$ be the generating degrees of $S$ and an $R$-algebra. Assume  $p> \lcm(b_1,\ldots, b_m)$ and that  $G$ is reduced. 
Then,  $\IN_<(P^{(n)}) = \IN_<(P)^{(n)}$ for all $n \geq 1$.
\end{corollary}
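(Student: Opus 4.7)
The plan is to combine Sullivant's containment (Remark \ref{remark Sullivant}) with the primary-decomposition information supplied by Theorem \ref{Theorem initial torsion free}. Write $J_n := \IN_<(P^{(n)})$. Sullivant gives $J_n \subseteq J_1^{(n)}$ for free, so only the reverse inclusion requires work.

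The key step is to show $\Ass_R(J_n/J_{n+1}) \subseteq \MIN_R(J_1)$ for every $n$. Fix $\p \in \Ass_R(J_n/J_{n+1})$, so $\p = \ann_R(y)$ for some nonzero $y \in G_n := J_n/J_{n+1}$. Since $J_1 \cdot G_n \subseteq J_{n+1}/J_{n+1} = 0$, one has $\p \supseteq J_1$. Next, view $y$ as a homogeneous element of $G = S/\J$: the proper homogeneous ideal $\ann_S(y) \subseteq S$ has a minimal prime $Q$, and any such minimal prime is necessarily in $\Ass_S(Sy) \subseteq \Ass_S(G)$. Because $G$ is reduced, $\J$ is radical in $S$, so $\Ass_S(G) = \MIN_S(G)$; combining this with Theorem \ref{Theorem initial torsion free} gives $Q \cap R \in \MIN_R(J_1)$. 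Contracting $\ann_S(y) \subseteq Q$ to $R$ yields $\p \subseteq Q \cap R$, and the sandwich $J_1 \subseteq \p \subseteq Q \cap R$ together with minimality of $Q \cap R$ over $J_1$ forces $\p = Q \cap R \in \MIN_R(J_1)$.

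With this in hand, an induction on $n$ using the short exact sequences $0 \to J_n/J_{n+1} \to R/J_{n+1} \to R/J_n \to 0$, starting from $\Ass_R(R/J_1) = \MIN_R(J_1)$ (as $J_1$ is radical), shows $\Ass_R(R/J_n) \subseteq \MIN_R(J_1)$ for every $n \geq 1$. Hence $J_n = \bigcap_{\mathfrak{q} \in \MIN_R(J_1)} \bigl( J_n R_\mathfrak{q} \cap R \bigr)$. Finally, the proof of Theorem \ref{Theorem initial torsion free} already records $J_n R_\mathfrak{q} = (\mathfrak{q} R_\mathfrak{q})^n = J_1^{(n)} R_\mathfrak{q}$ at each minimal prime $\mathfrak{q}$ of $J_1$, so intersecting back to $R$ delivers $J_n = J_1^{(n)}$. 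The only conceptual content is noticing that Theorem \ref{Theorem initial torsion free} has already done the heavy lifting by pinning down the associated primes of $G$; the rest is a clean unwinding in the spirit of Remark \ref{remark torsion free}.
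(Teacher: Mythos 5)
Your proof is correct and follows essentially the same route as the paper: extract from Theorem \ref{Theorem initial torsion free} (together with reducedness of $G$) that $G$ is torsion-free over $R/J_1$, run the short-exact-sequence induction of Remark \ref{remark torsion free} to get $\Ass_R(R/J_n)\subseteq \MIN(J_1)$, and combine with Sullivant's containment and the localizations $J_nR_{\mathfrak q}=(\mathfrak q R_{\mathfrak q})^n$ to conclude. The only difference is that you spell out in detail the derivation of torsion-freeness from the minimal-prime correspondence, which the paper compresses into a single citation of the theorem.
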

\begin{proof}
For all $n\gs 1$, set $J_n= \IN_<(P^{(n)}) $. By Theorem \ref{Theorem initial torsion free} we have that $G = \bigoplus_{n \geq 0} J_n/J_{n+1}$ is a torsion-free $R/J_1$-module. The same argument used in Remark \ref{remark torsion free} shows that 
$\Ass_R(R/J_n) \subseteq {\rm Min}(J_1)$ for all $n \geq 1$. Thus, since $J_n$ contains $J_1^n$, it must in fact contain $J_1^{(n)}$. Finally, because the containment $J_n \subseteq J_1^{(n)}$ always holds, we obtained the desired equality.
\end{proof}

The following observation shows how close is the equality $\IN_<(P^{(n)}) = \IN_<(P)^{(n)}$, for every $n\in\NN$, to $I$ being  symbolic $F$-split.
\begin{remark}
Assume Setup \ref{setupInitial} with  $K$ a perfect field. Let  $P \subseteq R$ be a homogeneous prime ideal  such that $\IN_<(P)$ is radical and $\IN_<(P^{(n)}) = \IN_<(P)^{(n)}$ for all $n \geq 1$. Then, 
$x_1\cdots x_d\in  \IN_<(P)^{(h)}=\IN_<(P^{(h)})$, where $h=\height(P)$.
Let $f\in P^{(h)}$ be an homogeneous polynomial such that $\IN_<(f)=x_1\cdots x_d.$ Thus, $f\not\in\m^{[p]},$ and so, $P$ is symbolic $F$-split
by Corollary \ref{CorH}.
\end{remark}

\subsection{Results in characteristic zero}

Let $J \subseteq R=\QQ[x_1,\ldots,x_d]$, and let $A=\ZZ[x_1,\ldots,x_d]$. Let $J_A = J \cap A$, and let $f_1,\ldots,f_s \in A$ be generators of $J_A$. Note that $(f_1,\ldots,f_s)R = J$. For every prime $p \in \ZZ$ we let $J(p) = J_A \cdot A(p)$, where $A(p) = \ZZ/(p)[x_1,\ldots,x_d]$. Note that, if $A/J_A$ is flat over $A$, then $J_A \otimes_\ZZ \ZZ/(p)$ can be identified with the ideal $J(p)$ of $A(p) \cong A \otimes_\ZZ \ZZ/(p)$.


\begin{lemma} \label{lemma symb}
Given an integer $n \in \NN$ and an ideal $J \subseteq R = \QQ[x_1,\ldots,x_d]$, we have that $(J(p))^{(n)} = ((J_A)^{(n)})(p)$ for all primes $p  \gg 0$.
\end{lemma}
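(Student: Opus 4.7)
The plan is to realize the symbolic power of $J_A$ in a way that is compatible with reduction mod $p$, using generic freeness to control the minimal primes of the reductions.

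First I would set up the minimal primes carefully. Let $Q_1,\ldots,Q_s$ be the minimal primes of $J$ in $R$, and let $\widetilde{Q}_i = Q_i \cap A$. Each $\widetilde{Q}_i$ is a prime of $A$ lying over $(0)\in\Spec(\ZZ)$, with $\widetilde{Q}_i R = Q_i$, and $A/\widetilde{Q}_i$ is a domain, torsion-free, hence flat, over $\ZZ$. By generic freeness \cite[Theorem 24.1]{matsumura}, there exists a positive integer $N$ such that $A/\widetilde{Q}_i$, $A/J_A$, $A/J_A^n$, and $A/J_A^{(n)}$ are all flat over $\ZZ[1/N]$. In particular, for every prime $p\nmid N$, the reduction $\widetilde{Q}_i(p) := \widetilde{Q}_i A(p)$ is the full base change, and $(J_A^{(n)})(p) = J_A^{(n)}\otimes_\ZZ\FF_p$ as submodules of $A(p)$.

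Next I would ensure that the minimal primes match up in characteristic $p$. For $p\nmid N$ (enlarging $N$ if necessary), each $\widetilde{Q}_i(p)$ is prime in $A(p)$ and has the same Krull dimension as $R/Q_i$: this is a standard consequence of generic freeness together with the fact that a finite family of geometrically integral $\QQ$-varieties has integral fibres for all but finitely many $p$. Then I would argue that the minimal primes of $J(p)$ in $A(p)$ are exactly $\widetilde{Q}_1(p),\ldots,\widetilde{Q}_s(p)$: any minimal prime of $J_A$ containing a rational prime $q$ becomes irrelevant once $p\neq q$, and the remaining minimal primes of $J_A$ (those lying over $(0)$) reduce to the $\widetilde{Q}_i(p)$ thanks to the dimension and flatness statements above.

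Finally I would compute both sides. Choose an element $g\in A$ that avoids every $\widetilde{Q}_i$; by prime avoidance, and after further enlarging $N$, the reduction $\overline{g}\in A(p)$ likewise avoids every $\widetilde{Q}_i(p)$ for $p\nmid N$. Then in $A[1/N]$ we have the description
\[
J_A^{(n)}\otimes_\ZZ\ZZ[1/N] \;=\; \bigl(J_A^n : g^\infty\bigr)\otimes_\ZZ\ZZ[1/N].
\]
Tensoring with $\FF_p$, flatness gives
\[
(J_A^{(n)})(p) \;=\; J(p)^n : \overline{g}^\infty,
\]
while the analogous computation using the minimal primes $\widetilde{Q}_i(p)$ in $A(p)$ yields $J(p)^{(n)} = J(p)^n : \overline{g}^\infty$, proving the lemma.

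The main obstacle is the middle step: confirming that for all $p\gg 0$ the primes $\widetilde{Q}_i(p)$ remain prime and are exactly the minimal primes of $J(p)$, and that no spurious minimal primes of $J_A$ lying over nonzero primes of $\ZZ$ interfere. This is where the ``$p\gg 0$'' hypothesis is essential, and the bookkeeping on generic freeness and geometric integrality will be the most delicate part of the argument.
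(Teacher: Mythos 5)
Your overall strategy (generic freeness plus reduction modulo $p$) is the same as the paper's in spirit, but the execution has genuine gaps. First, the claim that each $\widetilde{Q}_i(p)$ stays prime for $p \gg 0$ is false in general: the $Q_i$ are primes of $\QQ[x_1,\ldots,x_d]$ but need not be geometrically integral, so the fact about integral fibres of geometrically integral varieties does not apply. Already for $J=(x^2-2)\subseteq \QQ[x]$ the reduction splits into two primes for infinitely many $p$. What survives for $p\gg 0$ is only that $\widetilde{Q}_i(p)$ is radical and equidimensional, so the argument has to be rephrased in terms of the several minimal primes of each $\widetilde{Q}_i(p)$ (and then $\overline{g}\notin \widetilde{Q}_i(p)$ no longer guarantees $\overline{g}$ avoids each of them). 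Second, the identity $J_A^{(n)} = (J_A^n : g^\infty)$ requires $g$ to lie in \emph{every embedded prime} of $J_A^n$, not merely to avoid the minimal primes of $J_A$: with $g=1$ your stated condition holds but the colon returns $J_A^n$. Third, and most seriously, even after these repairs you must still show that for $p\gg 0$ no associated prime of $J(p)^n$ appears that is not minimal over $J(p)$ yet fails to contain $\overline{g}$; this control of associated primes under reduction modulo $p$ is exactly the nontrivial input, and your outline defers it to ``delicate bookkeeping'' without supplying it.

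For comparison, the paper's proof avoids minimal primes and colon ideals altogether. It fixes a minimal primary decomposition $(J_A)^n = I_1\cap\cdots\cap I_s$ with $(J_A)^{(n)} = I_1\cap\cdots\cap I_t$ the intersection of the components with minimal radical, applies generic freeness to all of $A/I_j$, $A/J_A^n$, $A/J_A^{(n)}$, $A/J_A$ so that these intersections are preserved by $-\otimes_\ZZ \ZZ/(p)$, and then invokes \cite[Theorem 2.3.9]{HHCharZero} to conclude that $I_1(p)\cap\cdots\cap I_t(p)$ has no embedded associated primes for $p\gg 0$, which forces it to equal $(J(p))^{(n)}$. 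That citation is precisely the ingredient your argument is missing.
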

\begin{proof}
Consider a minimal primary decomposition $(J_A)^n = I_1 \cap \ldots \cap I_s$ in $A$. We collect the primary components and write $(J_A)^n = (J_A)^{(n)} \cap I$, where $(J_A)^{(n)} = I_1 \cap \ldots \cap I_t$ is the $n$-th symbolic power of $J_A$ in $A$. Let $P_j = \sqrt{I_j}$. By generic freeness \cite[Lemma 8.1]{HoRo}, there exists  an element $a \in \ZZ$ such that all the modules $(A/I_j)_a$, $(A/J_A^n)_a$, $(A/J_A^{(n)})_a$ and $(A/J_A)_a$ are free over $\ZZ_a$. Since we are seeking to get the equality $(J(p))^{(n)} = ((J_A)^{(n)})(p)$ only for $p \gg 0$, without loss of generality we directly assume that all the above modules are free over $\ZZ$. By flatness, we have that $J_A^n \otimes_\ZZ \ZZ/(p) \cong J_A^n(p) = (J(p))^n = I_1(p) \cap \ldots \cap I_s(p)$ as ideals of $A(p)$ and, in particular, $((J_A)^{(n)})(p) = I_1(p) \cap \ldots I_t(p)$. It is left to show that $I_1(p) \cap \ldots \cap I_t(p) = (J(p))^{(n)}$. Since $(J_A)^{(n)} \otimes_\ZZ \QQ \cong J^{(n)}$, we have that for $p\gg 0$ there is no associated prime of $I_1(p) \cap \ldots \cap I_t(p)$ which is embedded \cite[Theorem 2.3.9]{HHCharZero}, and the desired equality follows.
\end{proof}

\begin{remark} \label{remark in} Assume that $<$ is a monomial order on $R = \QQ[x_1,\ldots,x_d]$, and let $J \subseteq R$ be an ideal. We have that $\IN_<(J_A)(p) = \IN_<(J(p))$ for all $p \gg 0$ \cite[Lemma 2.3]{SecciaHankel}. Moreover, any minimal monomial generating set of $\IN_<(J)$ is a minimal monomial generating set of $\IN_<(J(p))$ for $p \gg 0$.
\end{remark}

\begin{remark} \label{remark equal}
If $I \subseteq J$ are two ideals of $R=\QQ[x_1,\ldots,x_d]$ such that $I(p) = J(p)$ for all $p \gg 0$, then $I=J$. In fact, after localizing at a non-zero element $a \in \ZZ$ we may assume that $(J_A/I_A)_a$ is a free $\ZZ_a$-module, by generic freeness. Our assumptions guarantee that there is a sufficiently large prime integer $p$ such that $(J_A/I_A)_a \otimes_{\ZZ_a} \ZZ_a/(p) \cong J_A/I_A \otimes_\ZZ \ZZ/(p) \cong J(p)/I(p) = 0$, and since $(J_A/I_A)_a$ is free over $\ZZ_a$ this implies that $(I_A)_a=(J_A)_a$. In particular, $I=J$.
\end{remark}

\begin{theorem} \label{ThmInitialChar0}
Let $B = \QQ[x_1,\ldots,x_d]$ be equipped with a monomial order $<$. Let $f_1,\ldots,f_s \in A=\ZZ[x_1,\ldots,x_d]$ be homogeneous elements, and $Q=(f_1,\ldots,f_s)B$. Assume that $Q$ is prime, and that $\IN_<(Q)$ is radical. 
For a prime integer $p$ we let $S(p) = \bigoplus_{n\gs 0}  \IN_<(Q(p)^{(n)}) T^n$, $\J(p) = \bigoplus_{n\gs 0}  \IN_<(Q(p)^{(n+1)}) T^n$ and $G(p) = S(p)/\J(p)$. Assume that $S(p)$ is Noetherian and that $G(p)$ is reduced for all $p \gg 0$. If $K$ is any field of characteristic zero, $R=K[x_1,\ldots,x_d]$ is equipped with the same monomial order $<$ as $B$, and $P=QR$, then $\IN_<(P^{(n)}) = \IN_<(P)^{(n)}$ for all $n \geq 1$.
\end{theorem}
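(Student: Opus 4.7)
The plan is to reduce the problem modulo a sufficiently large prime and apply Corollary~\ref{CorInSymbEq}, then lift the resulting equality back to characteristic zero via Remark~\ref{remark equal}.

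First I would reduce to the case $K = \QQ$. Since $R = B \otimes_\QQ K$ is faithfully flat over $B$ and $K/\QQ$ is separable (we are in characteristic zero), the operations in sight commute with the base change: for any ideal $J \subseteq B$ one has $\IN_<(JR) = \IN_<(J)R$; the minimal primes of the monomial ideal $\IN_<(Q)$ are generated by variables and are therefore absolutely prime, giving $\IN_<(P)^{(n)} = \IN_<(Q)^{(n)}R$; and since $R/P = (B/Q) \otimes_\QQ K$ is reduced with associated primes all of height $\Ht(Q)$ lying over $Q$, a standard primary decomposition argument yields $P^{(n)} = Q^{(n)}R$. Hence it suffices to prove the equality $\IN_<(Q^{(n)}) = \IN_<(Q)^{(n)}$ in $B$.

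Next I would apply Corollary~\ref{CorInSymbEq} to $Q(p) \subseteq A(p)$ for each sufficiently large prime $p$. The hypotheses are verified as follows: $Q(p)$ is prime for $p \gg 0$ by generic flatness of $A/Q_A$ over $\ZZ$; by Remark~\ref{remark in}, $\IN_<(Q(p)) = \IN_<(Q_A)(p)$ is radical because $\IN_<(Q)$ is a squarefree monomial ideal whose monomial generators remain a generating set after reducing modulo $p$; Noetherianity of $S(p)$ and reducedness of $G(p)$ are among the assumptions of the theorem; and the inequality $p > \lcm(b_1,\ldots,b_m)$ holds for $p \gg 0$, since a finite generating set of $\R^s(Q)$ over $\QQ$ lifts to $A$ and its reductions modulo $p$ generate the algebras $S(p)$, giving a bound on the $b_i$ independent of $p$. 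Corollary~\ref{CorInSymbEq} then produces
\[
\IN_<\!\left(Q(p)^{(n)}\right) = \IN_<\!\left(Q(p)\right)^{(n)} \quad \text{in } A(p)
\]
for every $n \geq 1$ and every sufficiently large prime $p$.

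Finally, I would transfer this equality to characteristic zero. Combining Lemma~\ref{lemma symb} with Remark~\ref{remark in}, the left-hand side equals $\IN_<(Q^{(n)})(p)$ for $p \gg 0$. Since $\IN_<(Q)$ is a monomial ideal whose minimal primes are generated by variables, its symbolic power $\IN_<(Q)^{(n)}$ is also monomial and commutes with reduction modulo any prime, so the right-hand side equals $\IN_<(Q)^{(n)}(p)$. Therefore $\IN_<(Q^{(n)})(p) = \IN_<(Q)^{(n)}(p)$ for all $p \gg 0$, and Remark~\ref{remark equal} forces $\IN_<(Q^{(n)}) = \IN_<(Q)^{(n)}$ in $B$, completing the proof after the reduction step. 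The main obstacle is in the second step: ensuring that the hypotheses of Corollary~\ref{CorInSymbEq} can be verified uniformly across all sufficiently large primes $p$, most delicately the uniform bound on the generating degrees of the algebras $S(p)$ needed to secure the condition $p > \lcm(b_1,\ldots,b_m)$ simultaneously for all large $p$.
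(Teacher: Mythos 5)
Your proposal is correct and follows essentially the same route as the paper: reduce to $K=\QQ$, identify $\IN_<(Q)^{(n)}(p)$ and $\IN_<(Q^{(n)})(p)$ with $\IN_<(Q(p))^{(n)}$ and $\IN_<(Q(p)^{(n)})$ via Lemma~\ref{lemma symb} and Remark~\ref{remark in}, apply Corollary~\ref{CorInSymbEq} in characteristic $p\gg 0$, and lift the equality back with Remark~\ref{remark equal} (whose hypothesis $I\subseteq J$ is supplied by Sullivant's containment, Remark~\ref{remark Sullivant}, which you should cite explicitly). Your attention to the uniform bound on the generating degrees of $S(p)$, needed to secure $p>\lcm(b_1,\ldots,b_m)$ for all large $p$ at once, is a point the paper's own proof leaves implicit.
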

\begin{proof}
First we prove the statement for $K=\QQ$, that is, for $R=B$ and $P=Q$. Let $J=Q_A$ and fix $n \in \NN$; by Remarks \ref{remark Sullivant} and \ref{remark equal} we only have to show that $\IN_<(J)^{(n)}(p) = \IN_<(J^{(n)})(p)$ for all $p \gg 0$. By Lemma \ref{lemma symb} and Remark \ref{remark in}, for $p \gg 0$ we have that
\[
\IN_<(J)^{(n)}(p) = (\IN_<(J)(p))^{(n)} = \IN_<(Q(p))^{(n)}
\]
and
\[
\IN_<(J^{(n)})(p) =  \IN_<(J^{(n)}(p))= \IN_<(Q(p)^{(n)}).
\]
Moreover, by Remark \ref{remark in} we have that $\IN_<(Q(p))$ is square-free for all $p \gg 0$, given that $\IN_<(Q)$ is square-free by assumption. 
Since $S(p)$ is finitely generated and $G(p)$ is reduced for all $p \gg 0$, we conclude by Corollary \ref{CorInSymbEq} that $\IN_<(Q(p))^{(n)} = \IN_<(Q(p)^{(n)})$ for all $p \gg 0$, and the proof is complete in this case.

Now let $K$ be any field of characteristic zero and fix $n \in \NN$. 
Since Buchberger's algorithm is stable under base extensions, we have that $\IN_<(I)R= \IN_<(IR)$ for any ideal $I \subseteq B$. Moreover, as the natural inclusion $B \hookrightarrow R \cong B \otimes_\QQ K$ is flat and $\QQ \to K$ is separable, we have that $I^{(n)}R = (IR)^{(n)}$ for any radical ideal $I \subseteq B$. By what we have already shown we finally get that
\[
\IN_<(P^{(n)}) = \IN_<((QR)^{(n)}) = \IN_<(Q^{(n)})R = (\IN_<(Q)^{(n)})R = \IN_<(QR)^{(n)} =  \IN_<(P)^{(n)}.  \qedhere
\]
\end{proof}

\subsection{Main results of this section.} We are ready to present the main results of this section in the context of determinantal ideals. In the generic case, the equality between initial ideals of symbolic powers and symbolic powers of initial ideals was proved by Bruns and Conca \cite[Lemma 7.2]{BCInitial}. The methods developed in this article allow us to recover this result.

\begin{theorem}\label{ThmInitialGen}
Assume Setup \ref{setupDetGen} and $p>\min\{t,r-t \}$. 
Then, the filtration
 $\{\IN_<(I_t(X)^{(n)})\}_{n\in \NN}$  and the algebra $\R(\{\IN_<(I_t(X)^n)\})$  are   $F$-split. Moreover,  
$$\IN_<(I_t(X)^{(n)})=\IN_<(I_t(X))^{(n)}$$
for every $n\in\NN$.
\end{theorem}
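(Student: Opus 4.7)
The proof is in three parts: the $F$-purity of the filtration, the $F$-purity of the Rees algebra, and the equality of initial ideals. Throughout, I rely on the polynomials $f_t(X)$ and $f_1(X)$ introduced in Setup \ref{setupDetGen}, whose initial monomials are square-free by Remark \ref{sqft}.

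First, the $F$-purity of $\{\IN_<(I_t(X)^{(n)})\}_{n\in\NN}$ follows from Proposition \ref{propInSymbOrd}(1), applied to the prime ideal $P = I_t(X)$ of height $h=(s-t+1)(r-t+1)$ with $\IN_<(I_t(X))$ known to be radical. The witness element is $f_t(X)$: the chain of symbolic-power containments inside the proof of Theorem \ref{ThmSFPGen} gives $f_t(X) \in I_t(X)^{(h)}$, and Remark \ref{sqft} handles the initial form. For the $F$-purity of $\IN_<(\R(I_t(X)))$, I invoke Proposition \ref{propInSymbOrd}(2) with $f = f_1(X)$. The calculation in the proof of Theorem \ref{ThmReesFpureGen} --- whose hypothesis $p > \min\{t, r-t\}$ is inherited here --- supplies the containment $f_1(X)^{p-1} \in \bigcap_{n\in\NN}(I_t(X)^n)^{[p]}: I_t(X)^{np}$, while Remark \ref{sqft} again provides the square-freeness of $\IN_<(f_1(X))$.

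Finally, for the equality $\IN_<(I_t(X)^{(n)}) = \IN_<(I_t(X))^{(n)}$ I plan to apply Corollary \ref{CorInSymbEq} to $P = I_t(X)$. The algebra $S = \IN_<(\R^s(I_t(X)))$ is Noetherian because $\R^s(I_t(X))$ is, and the first step together with Theorem \ref{mainCharP} ensures that $G = S/\J$ is $F$-pure, hence reduced. The main obstacle is verifying the characteristic condition in Corollary \ref{CorInSymbEq}: should the bound forced by the generating degrees of $S$ not be implied by $p > \min\{t, r-t\}$, I plan to argue directly along the lines of the proof of Theorem \ref{Theorem initial torsion free}. Namely, reducedness of $G$ implies that $\IN_<(I_t(X)^{(n)})$ has no embedded primes, so one can match its primary components with those of $\IN_<(I_t(X))^{(n)}$ at each minimal prime $\mathfrak{q}$ of $\IN_<(I_t(X))$, using $\IN_<(I_t(X)) R_\mathfrak{q} = \mathfrak{q} R_\mathfrak{q}$ together with Sullivant's containment $\IN_<(I_t(X)^{(n)}) \subseteq \IN_<(I_t(X))^{(n)}$ and the trivial inclusion $\IN_<(I_t(X))^n \subseteq \IN_<(I_t(X)^{(n)})$ to pin down both sides to $(\mathfrak{q} R_\mathfrak{q})^n$.
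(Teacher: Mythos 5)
Your proposal follows the paper's proof essentially verbatim in its main line: $f_t(X)\in I_t(X)^{(h)}$ together with Remark \ref{sqft} gives $F$-purity of the filtration via Proposition \ref{propInSymbOrd}(1), the element $f_1(X)$ gives $F$-purity of $\IN_<(\R(I_t(X)))$ via Proposition \ref{propInSymbOrd}(2), reducedness of $G$ follows from Theorem \ref{mainCharP}, and the equality of initial ideals is then deduced from Corollary \ref{CorInSymbEq}. Two caveats about the material you add. First, the Noetherianity of $S=\IN_<(\R^s(I_t(X)))$ does not follow formally from the Noetherianity of $\R^s(I_t(X))$: initial algebras of finitely generated algebras need not be finitely generated, and the paper instead cites \cite[Lemma 7.1]{BCInitial} for this point. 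Second, your fallback for the characteristic condition is circular. The claim that reducedness of $G$ implies that $\IN_<(I_t(X)^{(n)})$ has no embedded primes is precisely the nontrivial content of Theorem \ref{Theorem initial torsion free} --- one must rule out minimal primes of $G$ contracting to non-minimal primes of $\IN_<(I_t(X))$ --- and that is exactly where the hypothesis $p>\lcm(b_1,\ldots,b_m)$ on the generating degrees enters (via Lemma \ref{lemma EM} and the system-of-parameters argument). So the fallback does not bypass the characteristic assumption; it silently assumes its consequence. That said, the worry you raise is legitimate: the paper's own proof also invokes Corollary \ref{CorInSymbEq} without verifying that $p>\min\{t,r-t\}$ implies the required bound on the generating degrees of $S$ (which are $1,\ldots,r-t+1$, so the lcm can far exceed $\min\{t,r-t\}$). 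You have correctly identified a point the authors gloss over, but you have not supplied a valid repair.
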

\begin{proof}
We consider $f_u(X)$ as in Notation \ref{setupDetGen}.
We have that $\IN_<(f_1(X))$ is square-free  and $f_1(X)\in  \bigcap_{n\in \NN} \left( I_t(X)^n\right)^{[p]}:I_t(X)^{np}$ 
  by the proof of Theorem \ref{ThmReesFpureGen}.
 Then, $\R(\{\IN_<(I_t(X)^n)\})$ is   $F$-split by Proposition \ref{propInSymbOrd} (2). We also have that $f_t(X)\in I_t(X)^{(\Ht(I_t(X)))}$ by the proof of Theorem  \ref{ThmSFPGen}. 
Then, $\{\IN_<(I_t(X)^{(n)})\}_{n\in \NN}$ is an  $F$-split filtration by Proposition \ref{propInSymbOrd} (1).
Thus,
 $$
 G:=\bigoplus_{n\in\NN}\frac{ \IN_<(I_t(X)^{(n)})}{ \IN_<(I_t(X)^{(n+1)})}
 $$
is an  $F$-split ring by Theorem \ref{mainCharP}, and so, it is reduced. 
Since $\R(\{\IN_< (I_t(X)^{(n)})\})$ is a finitely generated algebra \cite[Lemma 7.1.]{BCInitial}, we conclude that 
$$\IN_<(I_t(X)^{(n)})=\IN_<(I_t(X))^{(n)}$$
for every $n\in\NN$
by Corollary \ref{CorInSymbEq}.
\end{proof}

\begin{corollary} \label{CorInitialGen} Let $K$ be a field of characteristic zero, $X$ be a generic $r \times s$ matrix of variables, and $R=K[X]$. For every $t \leq \min \{r,s\}$ and every $n \in \NN$ we have that
\[
\IN_<(I_t(X)^{(n)})=\IN_<(I_t(X))^{(n)}.
\]
\end{corollary}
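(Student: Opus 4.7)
The plan is to deduce Corollary \ref{CorInitialGen} directly from Theorem \ref{ThmInitialChar0} using the prime-characteristic result Theorem \ref{ThmInitialGen}. Set $B=\QQ[X]$ with the monomial order $<$ of Setup \ref{setupDetGen}, let $f_1,\ldots,f_s\in A=\ZZ[X]$ be the $t\times t$ minors of $X$, and set $Q=(f_1,\ldots,f_s)B=I_t(X)B$. For a prime $p$, $Q(p)=I_t(X)A(p)$ is exactly the ideal of $t$-minors of the generic matrix $X$ over $\FF_p$, so the ambient situation at each prime $p$ is precisely the one governed by Setup \ref{setupDetGen}.

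First I would record the two standing hypotheses of Theorem \ref{ThmInitialChar0} that are already in the literature: $Q$ is prime, and $\IN_<(Q)$ is a radical (in fact square-free) monomial ideal (this is classical with our diagonal-type lex order, see \cite[Lemma 7.1]{BCInitial}). Next, for the Noetherianity of $S(p)=\bigoplus_{n\ge 0}\IN_<(Q(p)^{(n)})T^n$, I would invoke \cite[Proposition 10.2, Theorem 10.4]{BookDet}, which shows that $\R^s(I_t(X))$ is finitely generated over $A(p)$ in each prime characteristic (indeed, uniformly: it is generated by the minors of sizes $t,\ldots,r$ in degrees $1,\ldots,r-t+1$); taking initial ideals preserves finite generation, so $S(p)$ is Noetherian for every $p$.

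The key step is the reducedness of $G(p)=S(p)/\J(p)$ for $p\gg 0$. For any prime $p>\min\{t,r-t\}$, Theorem \ref{ThmInitialGen} says that the filtration $\{\IN_<(I_t(X)^{(n)})\}_{n\in\NN}$ is an $F$-pure filtration in $A(p)$. By Theorem \ref{mainCharP}, the associated graded algebra of an $F$-pure filtration is itself $F$-pure; but $F$-pure rings are reduced (Remark \ref{symbFpareFp}). Since $G(p)$ is exactly this associated graded algebra, it is reduced for all $p\gg 0$.

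With all hypotheses of Theorem \ref{ThmInitialChar0} verified, that theorem yields
\[
\IN_<(I_t(X)^{(n)})=\IN_<(I_t(X))^{(n)}
\]
for every $n\in\NN$ and every field $K$ of characteristic zero, which is the desired conclusion. The only step requiring real content is the reducedness of $G(p)$; everything else is a routine verification. I expect this to be the main obstacle only in the sense of bookkeeping, since Theorem \ref{ThmInitialGen} already packages the $F$-purity we need, and the generic Rees-algebra Noetherianity is standard from \cite{BookDet}.
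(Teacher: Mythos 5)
Your proposal is correct and follows exactly the paper's route: the paper proves this corollary as an immediate consequence of Theorems \ref{ThmInitialChar0} and \ref{ThmInitialGen}, with the reducedness of $G(p)$ coming from the $F$-purity established in Theorem \ref{ThmInitialGen} just as you describe. Your more detailed verification of the hypotheses of Theorem \ref{ThmInitialChar0} (primality, square-freeness of the initial ideal, and Noetherianity of $S(p)$ via \cite[Lemma 7.1]{BCInitial} and \cite{BookDet}) is accurate bookkeeping that the paper leaves implicit.
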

\begin{proof}
This is immediate consequence of Theorems  \ref{ThmInitialChar0} and \ref{ThmInitialGen}.
\end{proof}

Finally, we now turn our attention to the case of Pfaffians which, to the best of our knowledge, was not previously known.

\begin{theorem}\label{ThmInitialPf}
Assume Setup \ref{setup pfaffians} and $p>\min\{2t,r-2t \}$. 
Then, the filtration
$\{\IN_<(P_{2t}(Z)^{(n)})\}_{n\in\NN}$  and the algebra $\R(\{\IN_<(P_{2t}(Z)^n)\})$  are   $F$-split. Moreover,  
$$\IN_<(P_{2t}(Z)^{(n)})=\IN_<(P_{2t}(Z))^{(n)}$$
for every $n\in\NN$.
\end{theorem}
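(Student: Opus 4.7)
The plan is to follow the blueprint of Theorem \ref{ThmInitialGen} essentially verbatim, substituting the generic matrix polynomials $f_u(X)$ by the Pfaffian polynomials $f_{2u}(Z)$ introduced in Setup \ref{setup pfaffians}. The two key inputs are $f_{2}(Z)$ and $f_{2t}(Z)$, whose initial forms are square-free monomials by Remark \ref{sqftPf}.

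First I would handle the ordinary Rees algebra. Inspection of the proof of Theorem \ref{ThmReesFpurePf} already establishes the chain of containments $f_{2}(Z)^{p-1} \cdot P_{2t}(Z)^{np} \subseteq (P_{2t}(Z)^{n})^{[p]}$ for every $n\in\NN$, i.e., $f_{2}(Z) \in \bigcap_{n\in\NN} \bigl((P_{2t}(Z)^{n})^{[p]}:_R P_{2t}(Z)^{np}\bigr)$. Combined with the square-freeness of $\IN_<(f_{2}(Z))$, Proposition \ref{propInSymbOrd}(2) then yields at once that $\IN_<(\R(P_{2t}(Z)))$ is $F$-pure.

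Next I would turn to the symbolic filtration. From the proof of Theorem \ref{ThmSFPPf} we already know that $f_{2t}(Z) \in P_{2t}(Z)^{(\height P_{2t}(Z))}$, and $\IN_<(f_{2t}(Z))$ is square-free. So Proposition \ref{propInSymbOrd}(1) shows that $\{\IN_<(P_{2t}(Z)^{(n)})\}_{n\in\NN}$ is an $F$-pure filtration, and Theorem \ref{mainCharP} applied to this filtration gives that the associated graded ring
\[
G \;=\; \bigoplus_{n\in\NN} \frac{\IN_<(P_{2t}(Z)^{(n)})}{\IN_<(P_{2t}(Z)^{(n+1)})}
\]
is $F$-pure, in particular reduced.

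Finally, to deduce the equality $\IN_<(P_{2t}(Z)^{(n)}) = \IN_<(P_{2t}(Z))^{(n)}$ via Corollary \ref{CorInSymbEq}, I need the Noetherianity of $\IN_<(\R^s(P_{2t}(Z)))$ and the reducedness of $G$ already established above. Since $\R^s(P_{2t}(Z))$ is Noetherian with known bounded generating degrees, namely $\R^s(P_{2t}(Z)) = R[P_{2t}(Z)T, P_{2t+2}(Z)T^2, \ldots, P_{2\lfloor r/2\rfloor}(Z)T^{\lfloor r/2\rfloor-t+1}]$ (see \cite{YoungPf}, \cite[Section 3]{baetica98}), the initial algebra should likewise be finitely generated, paralleling the role of \cite[Lemma 7.1]{BCInitial} in the generic case. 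I expect that verifying this finite generation statement for the Pfaffian symbolic initial algebra is the main obstacle; once it is in place, the hypotheses of Corollary \ref{CorInSymbEq} are met under the assumption $p > \min\{2t, r-2t\}$, and the desired equality $\IN_<(P_{2t}(Z)^{(n)}) = \IN_<(P_{2t}(Z))^{(n)}$ for all $n\in\NN$ follows at once.
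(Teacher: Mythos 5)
Your proposal follows the paper's proof essentially verbatim: $f_{2}(Z)$ together with Proposition \ref{propInSymbOrd}(2) handles $\IN_<(\R(P_{2t}(Z)))$, $f_{2t}(Z)$ together with Proposition \ref{propInSymbOrd}(1) and Theorem \ref{mainCharP} gives the $F$-purity of the filtration and the reducedness of $G$, and Corollary \ref{CorInSymbEq} then yields the equality of initial ideals of symbolic powers with symbolic powers of the initial ideal. The finite generation of $\IN_<(\R^s(P_{2t}(Z)))$ that you flag as the main remaining obstacle is precisely the one input the paper imports from the literature (the proof of Proposition 3.1 in B\u{a}e\c{t}ic\u{a}'s work on Rees algebras of Pfaffian ideals), so no new argument is needed there.
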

\begin{proof}
We consider $f_{2u}(Z)$ as in Notation \ref{setup pfaffians}.
We have that $\IN_<(f_2)$ is a square-free monomial and $f_2(Z)\in  \bigcap_{n\in \NN} \left( P_{2t}(Z)^n\right)^{[p]}:P_{2t}(Z)^{np}$ 
  by the proof of Theorem \ref{ThmReesFpurePf}.
 Then, $\R(\{\IN_<(P_{2t}(Z)^n)\})$ is $F$-split  by  Proposition \ref{propInSymbOrd} (2). We also have that  $f_{2t}(Z)\in P_{2t}(Z)^{(\Ht(P_{2t}(Z)))}$ by the proof of Theorem  \ref{ThmSFPPf}, therefore $\{\IN_<(P_{2t}(Z)^{(n)})\}_{n\in\NN}$ is an  $F$-split filtration by Proposition \ref{propInSymbOrd} (1). In particular,
 $$
 G=\bigoplus_{n\in\NN}\frac{ \IN_<(P_{2t}(Z)^{(n)})}{ \IN_<(P_{2t}(Z)^{(n+1)})}.
 $$
is a $F$-split by Theorem \ref{mainCharP}, and so, it is reduced. 
As $\R(\{\IN_<(P_{2t}(Z)^{(n)})\})$ is a finitely generated algebra \cite[Proof of Proposition 3.1]{baetica98},  
we conclude that 
$$\IN_<(P_{2t}(Z)^{(n)})=\IN_<(P_{2t}(Z))^{(n)}$$
for every $n\in\NN$
by Corollary \ref{CorInSymbEq}.
\end{proof}

\begin{corollary} \label{CorInitialPf} Let $K$ be a field of characteristic zero, $Z$ be a generic $r \times r$ skew-symmetric matrix, and $R=K[Z]$. For every $t \leq \lfloor \frac{r}{2} \rfloor$ and every $n \in \NN$ we have that
\[
\IN_<(P_{2t}(Z)^{(n)})=\IN_<(P_{2t}(Z))^{(n)}.
\]
\end{corollary}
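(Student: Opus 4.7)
The plan is to apply Theorem \ref{ThmInitialChar0} directly, in complete analogy with the proof of Corollary \ref{CorInitialGen}. Let $f_1,\ldots,f_s \in A = \ZZ[Z]$ be the $2t$-Pfaffians of the generic skew-symmetric matrix $Z$, and set $Q = (f_1,\ldots,f_s)B$ where $B = \QQ[Z]$. Then $Q$ is a prime ideal (it is the ideal of $2t$-Pfaffians over $\QQ$), and $\IN_<(Q)$ is radical with respect to the monomial order introduced in Setup \ref{setup pfaffians} (this is the defining property of the Gröbner basis of Pfaffian ideals used throughout Section \ref{SecDet}). Moreover, extending scalars to any field $K$ of characteristic zero recovers $P = QR = P_{2t}(Z) \subseteq R = K[Z]$.

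It remains to verify the two characteristic-$p$ hypotheses of Theorem \ref{ThmInitialChar0}, namely that for all primes $p \gg 0$, the algebra $S(p) = \bigoplus_{n \geq 0} \IN_<(Q(p)^{(n)}) T^n$ is Noetherian and the associated graded $G(p) = S(p)/\J(p)$ is reduced. For $p$ large enough we have $p > \min\{2t, r-2t\}$, so Theorem \ref{ThmInitialPf} applies: it gives that the filtration $\{\IN_<(P_{2t}(Z)^{(n)})\}_{n \in \NN}$ is $F$-pure, hence by Theorem \ref{mainCharP} its associated graded algebra $G(p)$ is $F$-pure, and in particular reduced. The Noetherianity of $S(p)$ is precisely the finite generation of $\IN_<(\R^s(P_{2t}(Z)))$ which was already cited (via \cite[Proof of Proposition 3.1]{baetica98}) and used in the proof of Theorem \ref{ThmInitialPf}. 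With both hypotheses confirmed, Theorem \ref{ThmInitialChar0} immediately yields $\IN_<(P_{2t}(Z)^{(n)}) = \IN_<(P_{2t}(Z))^{(n)}$ for every $n \in \NN$.

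There is essentially no obstacle to overcome here: all of the substantive work has been done in Theorem \ref{ThmInitialPf} (the characteristic-$p$ case) and in Theorem \ref{ThmInitialChar0} (the spreading-out argument from $\ZZ$ to $\QQ$ and then to an arbitrary characteristic-zero field via flat base change combined with Buchberger's stability under field extensions). The proof of the corollary is thus a one-line invocation of these two theorems, exactly parallel to the statement and proof of Corollary \ref{CorInitialGen}.
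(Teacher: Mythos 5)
Your proposal is correct and matches the paper's own argument, which simply derives the corollary as an immediate consequence of Theorems \ref{ThmInitialChar0} and \ref{ThmInitialPf}; you have merely spelled out the verification of the hypotheses (primality of $Q$, radicality of $\IN_<(Q)$, Noetherianity of $S(p)$ and reducedness of $G(p)$ for $p\gg 0$) that the paper leaves implicit.
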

\begin{proof}
This is immediate consequence of Theorems  \ref{ThmInitialChar0} and \ref{ThmInitialPf}.
\end{proof}

The case of Hankel matrix was also known, and it is due to Conca \cite[Lemma 3.5 and Theorem 3.8]{conca98}. We recover it here.

\begin{theorem}\label{ThmInitialHankel}
Assume Setup \ref{setup hankel} and $p>\min\{t,r-t \}$. 
Then, the filtration
 $\{\IN_<\big(I_t(W_j^c\big)^{(n)})\}_{n\in \NN}$  and the algebra $\R(\{\IN_<(I_t(W_j^c)^n)\})$  are   $F$-split. Moreover,  
$$\IN_<\big(I_t(W_j^c)^{(n)}\big)=\IN_<\big(I_t(W_j^c)\big)^{(n)}$$
for every $n\in\NN$.
\end{theorem}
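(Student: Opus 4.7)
The plan is to mirror the strategy used for Theorems \ref{ThmInitialGen} and \ref{ThmInitialPf}, adapting the inputs to the Hankel setting. Let $m=\lfloor(c+1)/2\rfloor$ and pick $f=f_{\rm odd}(W_j^c)$ or $f=f_{\rm even}(W_j^c)$ according to the parity of $c$, as in Setup \ref{setup hankel}. By Remark \ref{sqftHank}, $\IN_<(f)$ is a square-free monomial. From the proof of Theorem \ref{ThmReesFpureHankel}, this $f$ satisfies $f^{p-1}\in \bigcap_{n\in\NN}\bigl((I_t(W_j^c))^n\bigr)^{[p]}:I_t(W_j^c)^{np}$, and from the proof of Theorem \ref{ThmSFPHankel} we have $f\in I_t(W_j^c)^{(h)}$ where $h=\Ht(I_t(W_j^c))$.

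First, I would apply Proposition \ref{propInSymbOrd}(2) with this $f$ to conclude that $\IN_<(\R(I_t(W_j^c)))$ is $F$-pure, and Proposition \ref{propInSymbOrd}(1) to conclude that the filtration $\{\IN_<(I_t(W_j^c)^{(n)})\}_{n\in\NN}$ is $F$-pure. Next, by Theorem \ref{mainCharP}, the associated graded algebra
\[
G=\bigoplus_{n\in\NN}\frac{\IN_<(I_t(W_j^c)^{(n)})}{\IN_<(I_t(W_j^c)^{(n+1)})}
\]
is $F$-pure, and in particular reduced.

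Finally, to upgrade the $F$-purity of the filtration to the equality $\IN_<(I_t(W_j^c)^{(n)})=\IN_<(I_t(W_j^c))^{(n)}$, I would invoke Corollary \ref{CorInSymbEq}. Its hypotheses require $\IN_<(I_t(W_j^c))$ to be radical (which holds by the choice of monomial order in Setup \ref{setup hankel}, as the standard literature on Hankel ideals confirms), the Noetherianity of $S=\bigoplus_{n\gs 0}\IN_<(I_t(W_j^c)^{(n)})T^n$, and a mild bound on $p$ against the generating degrees of $S$. Noetherianity of $S$ follows from finite generation of $\R^s(I_t(W_j^c))$ \cite[Proposition 4.1]{conca98}, since an initial ideal filtration of a finitely generated symbolic Rees algebra is again finitely generated; combined with reducedness of $G$, Corollary \ref{CorInSymbEq} yields the desired equality.

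The main obstacle I expect is the bookkeeping around Corollary \ref{CorInSymbEq}: one must identify the generating degrees of $S$ explicitly (which for Hankel ideals means the $T$-degrees $1,2,\ldots,m-t+1$ corresponding to the generators $I_t(W_j^c),I_{t+1}(W_j^c),\ldots,I_m(W_j^c)$ appearing in the symbolic Rees algebra via \cite[Proposition 4.1]{conca98}) and verify the characteristic assumption; the hypothesis $p>\min\{t,r-t\}$ in the statement should suffice after passing to the appropriate lcm of these generating degrees, in parallel with the analogous steps in Theorems \ref{ThmInitialGen} and \ref{ThmInitialPf}.
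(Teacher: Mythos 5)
Your proposal matches the paper's proof essentially step for step: the same choice of $f$ according to the parity of $c$, the same use of Proposition \ref{propInSymbOrd} (1) and (2) via the containments established in the proofs of Theorems \ref{ThmSFPHankel} and \ref{ThmReesFpureHankel}, the same passage through Theorem \ref{mainCharP} to obtain reducedness of $G$, and the same appeal to Corollary \ref{CorInSymbEq} together with Noetherianity of $\IN_<(\R^s(I_t(W^c_j)))$ from \cite[Theorem 4.1]{conca98}. The only difference is that you explicitly flag the characteristic bound and radicality hypotheses of Corollary \ref{CorInSymbEq}, which the paper's proof leaves implicit.
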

\begin{proof}
We set $ f=f_{\rm odd}(W_j^c)$ if $d$ is odd and  $ f=f_{\rm even}(W_j^c)$ if $d$ is even  as in Notation \ref{setup hankel}, and $W=W_j^c$.
We have that $\IN_<(f)$ is square-free and  $f\in  \bigcap_{n\in \NN} \left( I_t(W)^n\right)^{[p]}:I_t(W)^{np}$ by the proof of Theorem \ref{ThmReesFpureHankel}. It follows that $\R(\{\IN_<(I_t(W)^n)\})$ is an  $F$-split ring by Theorem \ref{mainCharP}, and so, it is reduced.  We also have that $f\in I_t(W)^{(\Ht(I_t(W)))}$ by the proof of Theorem   \ref{ThmSFPHankel}, and therefore $\{\IN_<(I_t(W)^{(n)})\}_{n\in \NN}$ is an  $F$-split filtration by Proposition \ref{propInSymbOrd} (1). We also have that
 $$
 G=\bigoplus_{n\in\NN}\frac{ \IN_<(I_t(W)^{(n)})}{ \IN_<(I_t(W)^{(n+1)})}
 $$
is a  $F$-split, and so, it is reduced. Since $\R(\{I_t(W)^{(n)})\})$ is a finitely generated algebra \cite[Theorem 4.1]{conca98}, we conclude that 
$$\IN_<(I_t(W)^{(n)})=\IN_<(I_t(W))^{(n)}$$
for every $n\in\NN$
by Corollary \ref{CorInSymbEq}.
\end{proof}

\begin{corollary} \label{CorInitialHankel} Let $K$ be a field of characteristic zero, $W^c_j$ be a $j \times c+1-j$ Hankel matrix of variables, and $R=K[W^c_j]$. For every $t \leq \min\{j,c+1-j\}$ and every $n \in \NN$ we have that
\[
\IN_<(I_t(W^c_j)^{(n)})=\IN_<(I_t(W^c_j))^{(n)}.
\]
\end{corollary}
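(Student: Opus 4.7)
The plan is to obtain this corollary in exactly the same one-line style as Corollaries \ref{CorInitialGen} and \ref{CorInitialPf}, namely by directly invoking the reduction-to-positive-characteristic principle Theorem \ref{ThmInitialChar0}, with the role of the positive-characteristic input played by Theorem \ref{ThmInitialHankel}. So the body of the proof will read: ``This is an immediate consequence of Theorems \ref{ThmInitialChar0} and \ref{ThmInitialHankel}.''

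What I would check behind that one-liner is that all the hypotheses of Theorem \ref{ThmInitialChar0} are indeed available in the Hankel setting. First, $I_t(W^c_j)$ is defined by $t \times t$ minors with integer coefficients, so there is a canonical $\ZZ$-form $Q \subseteq \ZZ[W^c_j]$ whose base change to $K$ recovers $I_t(W^c_j)$ over any field $K$; the primeness of $I_t(W^c_j)$ and the radicality (in fact, square-freeness) of $\IN_<(I_t(W^c_j))$ with respect to the monomial order of Setup \ref{setup hankel} are classical results of Conca. Next, for all large primes $p$, the symbolic Rees algebra $\R^s(I_t(W^c_j)(p))$ is finitely generated by \cite[Theorem 4.1]{conca98}, hence so is its initial algebra $S(p)$; this supplies the Noetherianity hypothesis. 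Finally, Theorem \ref{ThmInitialHankel} asserts that the associated graded algebra $G(p)$ of the filtration $\{\IN_<(I_t(W^c_j)(p)^{(n)})\}_{n\in\NN}$ is $F$-pure for every $p > \min\{t,r-t\}$, and $F$-pure rings are reduced, which provides precisely the reducedness of $G(p)$ that Theorem \ref{ThmInitialChar0} requires.

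With all of these verifications in hand, Theorem \ref{ThmInitialChar0} applies verbatim and yields the claimed equality over any field of characteristic zero. There is no real obstacle here: the genuine work has already been carried out in Theorems \ref{ThmInitialHankel} (the positive-characteristic content, via symbolic $F$-purity) and \ref{ThmInitialChar0} (the spreading-out argument), and this corollary is a purely formal combination of the two.
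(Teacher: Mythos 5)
Your proposal is correct and is exactly the paper's proof: the corollary is deduced by combining Theorem \ref{ThmInitialChar0} with Theorem \ref{ThmInitialHankel}, and your verification of the hypotheses (primeness, square-freeness of the initial ideal, Noetherianity of $S(p)$ via Conca, and reducedness of $G(p)$ via $F$-purity) matches what the paper implicitly relies on.
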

\begin{proof}
This is immediate consequence of Theorems  \ref{ThmInitialChar0} and \ref{ThmInitialHankel}.
\end{proof}

\begin{remark} \label{remark symmetric initial}
In the case of minors of a generic symmetric matrix $Y$, it is not known whether the algebra $\R(\{\IN_<(I_t(Y)^{(n)})\})$ is finitely generated. For this reason, we cannot use the same strategy used above for the other three types of determinantal ideals.
\end{remark}

\begin{remark}\label{RemW}
If $R$ is a standard graded polynomial ring over a field $K$ and  $I\subseteq R$ is a homogeneous ideal, we denote by $\alpha(I)$ the 
smallest degree of a minimal generator of $I$. 
Let $<$ be a monomial order on $R$.
We note that $\alpha(I)=\alpha(\IN_<(I))$. In particular, if $I$ 
and $\IN_<(I)$ are radical and $\IN_<(I^{(n)})=\IN_<(I)^{(n)}$ for every $n$, then their Waldschmidt constants coincide. Specifically,
$$
\widehat{\alpha}(I)=\lim\limits_{n\to\infty}\frac{\alpha(I^{(n)})}{n} =\lim\limits_{n\to\infty}\frac{\alpha(\IN_<(I)^{(n)})}{n} =\widehat{\alpha}(\IN_<(I)).
$$
In particular, Theorems \ref{ThmInitialGen}, \ref{ThmInitialPf}, and \ref{ThmInitialHankel} allow us to compute the Waldschmidt constant  of certain determinantal rings via their initial ideals, for which a formula has already been proved \cite{WCMonomial}. 
We point out that one can also compute directly that $\widehat{\alpha}(I_t(X))=\frac{r}{r-t+1}$  \cite[Lemma 7.1.]{BCInitial},
$\widehat{\alpha}(P_{2t}(Z))=\frac{\lfloor r/2\rfloor }{\lfloor r/2\rfloor -t+1}$    \cite[Proof of Proposition 3.1]{baetica98},  and
$\widehat{\alpha}(I_t(W^c_j))=\frac{\lfloor ( c+1)/2\rfloor }{\lfloor (c+1)/2\rfloor -t+1}$   \cite[Theorem 4.1]{conca98}.
\end{remark}

\begin{remark}\label{RemResurgence}
If $R$ is a polynomial ring, $<$ is a monomial order, and  $I\subseteq R$ is a homogeneous ideal, then 
$I^{(a)}\subseteq I^b$ implies that $\IN_<(I^{(a)})\subseteq \IN_<(I^b)$.
We recall that the resurgence of $I$ is defined by
$\rho(I)=\sup\{\frac{a}{b}\; |\;  I^{(a)}\not\subseteq I^b\} $.
If  $\IN_<(I)$ are radical, $\IN_<(I^{(n)})=\IN_<(I)^{(n)}$, and $\IN_<(I^{n})=\IN_<(I)^{n}$ for every $n$, then
$\rho(\IN_<(I))\leq \rho(I)$.
In particular,
this case occurs for ideals of minors of Hankel matrices (see Theorem \ref{ThmInitialHankel} and \cite[Theorem 3.16 (b)]{conca98}).
\end{remark}

Hoa and Trung showed that the  limit above exists for square-free monomial ideals.  In fact, they showed a stronger version for the $a$-invariants \cite[Theorems 4.7 and 4.9]{HoaTrung}.

\begin{corollary}\label{CorInitialTHms}
Assume Setup \ref{setup hankel}.   
 \begin{enumerate}
\item Assume Setup \ref{setupDetGen}.  Then  $\lim\limits_{n\to\infty} \frac{\reg\left(K[X]/\IN_{<}\left( I_t(X)^{(n)}\right)\right)}{n}$ exists. Moreover, for $n \gg 0$ we have that $\depth\left(K[X]/\IN_<\left( I_t(X)^{(n)}\right)\right)$ stabilizes.
\item Assume Setup \ref{setup pfaffians}.  Then  $\lim\limits_{n\to\infty} \frac{\reg\left(K[Z]/\IN_{<}\left( P_{2t}(Z)^{(n)}\right)\right)}{n}$ exists. Moreover, for $n \gg 0$ we have that $\depth\left(K[Z]/\IN_<\left( P_{2t}(Z)^{(n)}\right)\right)$ stabilizes.
\item Assume Setup \ref{setup hankel}.  Then  $\lim\limits_{n\to\infty} \frac{\reg\left(K[W^c_j]/\IN_{<}\left( I_t(W^c_j)^{(n)}\right)\right)}{n}$ exists. Moreover, for $n \gg 0$ we have that $\depth\left(K[W^c_j]/\IN_<\left( I_t(W^c_j)^{(n)}\right)\right)$ stabilizes.
\end{enumerate}
\end{corollary}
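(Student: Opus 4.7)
The plan is to treat all three parts by a uniform application of the $F$-pure filtration machinery developed in Section 4, using as input the structural results on initial ideals of symbolic powers of determinantal ideals proved just above (Theorems \ref{ThmInitialGen}, \ref{ThmInitialPf}, and \ref{ThmInitialHankel}). In each case the key intermediate claim is that $\{\IN_<(I^{(n)})\}_{n\in\NN}$ is an $F$-pure filtration whose Rees algebra is Noetherian; once both hold, Theorem \ref{mainRegDepth} applies verbatim and delivers the desired conclusions.

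More concretely, for part (1) let $I=I_t(X)$ and consider the filtration $\II=\{\IN_<(I^{(n)})\}_{n\in\NN}$ in $R=K[X]$. Theorem \ref{ThmInitialGen} already asserts that $\II$ is an $F$-pure filtration. Its Rees algebra is
\[
\R(\II)=\bigoplus_{n\geq 0}\IN_<(I^{(n)})T^n=\IN_<(\R^s(I)),
\]
which is a finitely generated $R$-algebra by \cite[Lemma 7.1]{BCInitial} (as recorded in the proof of Theorem \ref{ThmInitialGen}). Since moreover $\height(\IN_<(I))=\height(I)>0$, the hypotheses of Theorem \ref{mainRegDepth} are satisfied, and one concludes that $\lim_{n\to\infty}\reg(R/\IN_<(I^{(n)}))/n$ exists and that the sequence $\depth(R/\IN_<(I^{(n)}))$ is eventually constant.

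Parts (2) and (3) proceed identically. For (2) one takes $I=P_{2t}(Z)$ and uses Theorem \ref{ThmInitialPf} together with the finite generation of $\IN_<(\R^s(P_{2t}(Z)))$ (noted in the proof of Theorem \ref{ThmInitialPf} via \cite[Proof of Proposition 3.1]{baetica98}) to invoke Theorem \ref{mainRegDepth}. For (3) one takes $I=I_t(W^c_j)$ and uses Theorem \ref{ThmInitialHankel} together with the finite generation of $\IN_<(\R^s(I_t(W^c_j)))$ (in the proof of that theorem, from \cite[Theorem 4.1]{conca98}).

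There is no substantive obstacle: the real work has already been done in Theorems \ref{ThmInitialGen}, \ref{ThmInitialPf}, \ref{ThmInitialHankel} (producing the $F$-pure filtration) and in Theorem \ref{mainRegDepth} (extracting the asymptotic behavior). The only point requiring a little care is confirming the Noetherian hypothesis on the initial Rees algebra in each case, which is supplied by the external references already cited inside the proofs of the three theorems on initial ideals. Once this is noted, each part reduces to a single invocation of Theorem \ref{mainRegDepth}.
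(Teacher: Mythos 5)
Your proof is correct, but it takes a genuinely different route from the paper's. The paper proves this corollary by combining the equalities $\IN_<(I^{(n)})=\IN_<(I)^{(n)}$ from Theorems \ref{ThmInitialGen}, \ref{ThmInitialPf}, and \ref{ThmInitialHankel} with the external result of Hoa and Trung \cite{HoaTrung} that, for a square-free monomial ideal $J$, the limit $\lim_{n\to\infty}\reg(R/J^{(n)})/n$ exists and $\depth(R/J^{(n)})$ stabilizes; since $\IN_<(I)$ is square-free, the statement follows immediately. You instead stay entirely inside the machinery of the paper: you use only the facts that $\{\IN_<(I^{(n)})\}_{n\in\NN}$ is an $F$-pure filtration and that its Rees algebra $\IN_<(\R^s(I))$ is Noetherian (both of which are indeed established in, or cited within, the proofs of the three theorems on initial ideals), and then invoke Theorem \ref{mainRegDepth}. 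Your route is more self-contained -- it avoids the Hoa--Trung citation and does not even need the ``moreover'' part of those theorems (the identification of $\IN_<(I^{(n)})$ with a symbolic power), only the $F$-purity of the filtration and finite generation. What the paper's route buys in exchange is the extra information that the limit and the stable depth coincide with those of the symbolic powers of a concrete square-free monomial ideal, for which Hoa and Trung give combinatorial descriptions. One shared caveat, not a defect of your argument relative to the paper's: both proofs rest on Theorems \ref{ThmInitialGen}--\ref{ThmInitialHankel}, whose stated hypotheses include a lower bound on $p$ that the corollary as written omits.
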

\begin{proof}
If $J$ is a square-free monomial ideal in a polynomial ring $R$, it is already known that $\lim\limits_{n\to\infty} \frac{\reg(R/J^{(n)})}{n}$ exists, and that $\depth(R/J^{(n)})$ stabilizes \cite{HoaTrung}. Then, the result follows from Theorems \ref{ThmInitialGen}, \ref{ThmInitialPf}, and \ref{ThmInitialHankel}.
\end{proof}

\section*{Acknowledgments}
The authors would like to  thank Winfried Bruns, Giulio Caviglia, Aldo Conca, Elo\'isa Grifo, Craig Huneke, Delio Jaramillo,  Jack Jeffries, Manoj Kummini, Emmy Lewis, Pedro Ram\'irez-Moreno, Sandra Sandoval, and Matteo Varbaro for helpful discussions and comments. The first author was partially supported by the PRIN 2020 project 2020355B8Y ``Squarefree Gr{\"o}bner degenerations, special varieties and related topics'', by the MIUR Excellence Department Project CUP D33C23001110001, and by INdAM-GNSAGA. The second author was  supported by NSF Grant DMS \#2001645/2303605.
The third author was supported   CONACyT Grant 284598 and C\'atedras Marcos Moshinsky

\bibliographystyle{alpha}
\bibliography{References}

\end{document}